\renewcommand{\a}{\alpha}
\renewcommand{\b}{\beta}
\renewcommand{\d}{\delta}
\newcommand{\D}{\Delta}
\newcommand{\Si}{\Sigma}
\newcommand{\cM}{{\mathcal M}}
\newcommand{\cL}{{\mathcal L}}
\newcommand{\cE}{{\mathcal E}}
\newcommand{\cU}{{\mathcal U}}
\newcommand{\cX}{{\mathcal X}}
\newcommand{\cQ}{{\mathcal Q}}
\newcommand{\bR}{\mathbb R}
\newcommand{\be}{\begin{equation}}
\newcommand{\ee}{\end{equation}}
\newcommand{\bes}{\begin{equation*}}
\newcommand{\ees}{\end{equation*}}
\newcommand{\pa}{\partial}
\renewcommand{\to}{\rightarrow}
\newcommand{\Ga}{\mathsf{G}}
\newcommand{\DD}{\mathcal{D}}
\newcommand{\Ric}{\mathrm{Ric}}
\newcommand{\dnu}{\left( \nu'(h)\right)}
\theoremstyle{plain}
\newtheorem{lemma}{Lemma}[section]
\newtheorem{proposition}[lemma]{Proposition}
\newtheorem{theorem}[lemma]{Theorem}
\newtheorem{Theorem}{Theorem}
\newtheorem{corollary}[lemma]{Corollary}
\newtheorem{Corollary}[Theorem]{Corollary}
\newtheorem{conjecture}[Theorem]{Conjecture}
\numberwithin{equation}{section}
\newenvironment{manualtheorem}[1]{%
  \manualtheoreminner
}{\endmanualtheoreminner}
\newenvironment{manuallemma}[1]{%
  \manuallemmainner
}{\endmanuallemmainner}
\theoremstyle{definition}
\newtheorem{remark}[lemma]{Remark}
\newtheorem{definition}[lemma]{Definition}
\newtheorem{Definition}[Theorem]{Definition}
\DeclareMathOperator{\ps}{\tfrac{\partial}{\partial s}}
\DeclareMathOperator{\dt}{\tfrac{d}{d t}} 
\DeclareMathOperator{\ds}{\tfrac{d}{d s}}
\DeclareMathOperator{\tr}{tr} 
\DeclareMathOperator{\Div}{div}
\DeclareMathOperator{\dvol}{d\mathrm{vol}}
\DeclareMathOperator{\da}{d\sigma}
\DeclareMathOperator{\Ker}{Ker}
\DeclareMathOperator{\Range}{Range}
\DeclareMathOperator{\Dim}{\mathrm{dim}}
 \DeclareMathOperator{\Span}{span}
\DeclareMathOperator{\Int}{Int}
\DeclareMathOperator{\C}{\mathcal{C}}
\begin{document}

\title[Static vacuum extensions near a general static vacuum metric]{Static vacuum extensions with prescribed Bartnik boundary data near a general static vacuum metric}

\author{Zhongshan An}
\address{Department of Mathematics, University of Michigan, Ann Arbor, MI 48109 USA}
\email{zsan@umich.edu}
\author{Lan-Hsuan Huang}
\address{Department of Mathematics, University of Connecticut, Storrs, CT 06269, USA}
\email{lan-hsuan.huang@uconn.edu}
\thanks{The second author was partially supported by the NSF CAREER Award DMS-1452477, DMS-2005588, and DMS-2304966.}

\begin{abstract}
We introduce the notions of static regular of type~(I) and type~(II) and show that they are sufficient conditions for local well-posedness of solving asymptotically flat, static vacuum metrics with  prescribed Bartnik boundary data. We then show that hypersurfaces in a very general open and dense family of hypersurfaces are static regular of type~(II).  As applications, we confirm Bartnik's static vacuum extension conjecture for a large class of Bartnik boundary data, including those that can be far from Euclidean and have large ADM masses, and give many new examples of static vacuum metrics with intriguing geometry.   
\end{abstract}

\maketitle

\maketitle
{\small
\tableofcontents
\addtocontents{toc}{\setcounter{tocdepth}{2}} } 
\section{Introduction}\label{se:intro}

Let $n\ge 3$ and $M$ be a smooth $n$-dimensional manifold. For a Riemannian metric $g$ and a scalar-valued function $u$ on $M$, $n\ge 3$, we define the \emph{static vacuum operator}
\begin{align} \label{eq:static}
	S(g, u) := \big(-u \Ric_g  + \nabla^2_g u, \Delta_g u\big).
\end{align}
If $S( g,  u)=0$, $( g,  u)$ is called a  static vacuum \emph{pair}, and $(M, g,  u)$ is called a static vacuum \emph{triple}. We also refer to $ g$  as a static vacuum metric.  When $ u>0$, $(M,  g,  u)$ defines a Ricci flat ``spacetime'' metric $\mathbf{g} = \pm   u^2 dt^2 +  g$  (with either a positive or negative sign in the $dt^2$ factor)  on $\mathbb{R}\times M$ that carries a global Killing vector $\partial_t$. The most well-known examples of asymptotically flat, static vacuum metrics include the Euclidean metric and the family of (Riemannian) Schwarzschild metrics.

When $M$ has nonempty boundary $\partial M$, the induced boundary data $(g^\intercal, H_g)$ in \eqref{eq:bdv} is called the {\it Bartnik boundary data} of $g$, where $g^\intercal$ is the restriction of g (or the induced metric) on the tangent bundle of $\partial M$ and the mean curvature $H_g$ is the tangential divergence of the unit normal vector $\nu$. When $M$ is asymptotically flat, We choose $\nu$ to point towards infinity of $M$. 

The main content of this paper is to solve for asymptotically flat $(g, u)$ to $S(g, u)=0$ with prescribed Bartnik boundary data $(g^\intercal, H_g)$ and confirm the following conjecture of R. Bartnik~\cite[Conjecture 7]{Bartnik:2002} for a wider class of boundary data.
\begin{conjecture}[Bartnik's static extension conjecture]\label{conjecture}
Let $(\Omega_0, g_0)$ be a compact, connected, $n$-dimensional Riemannian manifold with scalar curvature $R_{g_0}\ge 0$ and with nonempty boundary $\Sigma$. Suppose $H_{g_0}$ is strictly positive somewhere. Then there exists a geometrically unique  asymptotically flat, static vacuum manifold $(M, g)$ with boundary $\partial M$ diffeomorphic to $\Sigma$ such that  $(g^\intercal, H_g) =  (g_0^\intercal, H_{g_0})$.
\end{conjecture}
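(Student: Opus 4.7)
The plan is to combine the paper's local well-posedness result, which applies on every static regular hypersurface of type~(II) and in particular on a dense family, with a global continuity method along a one-parameter family of Bartnik data. Given the target data $(g_0^\intercal, H_{g_0})$, I would fix a smooth path $(\gamma_t, H_t)$, $t \in [0,1]$, with $(\gamma_1, H_1) = (g_0^\intercal, H_{g_0})$, while $(\gamma_0, H_0)$ is the induced boundary data of a small round sphere in Euclidean space; the path is to be chosen so that each $(\gamma_t, H_t)$ admits a nonnegative scalar curvature fill-in (starting from $(\Omega_0, g_0)$ itself at $t = 1$) and so that $H_t$ is somewhere positive. Let $T \subseteq [0,1]$ be the set of parameters $t$ for which a unique asymptotically flat static vacuum extension realizing $(\gamma_t, H_t)$ exists; the goal is to show $T = [0,1]$, so that the target $t = 1$ is covered.

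Openness of $T$ follows from the paper: at any $t_0 \in T$, the boundary hypersurface of the corresponding extension is, by the density statement, either already static regular of type~(II) or can be approximated arbitrarily well by such a hypersurface, and local well-posedness then produces extensions for $t$ near $t_0$. The main obstacle is closedness, for which one needs a priori estimates along the path. \emph{First}, an upper bound on the ADM masses $m(g_t)$, which should follow by combining the positive mass theorem applied to the glued manifold $(\Omega_0 \cup_\Sigma M_t,\, g_0 \cup_\Sigma g_t)$ (permissible since $R_{g_0}\ge 0$, $R_{g_t} = 0$, and the mean curvatures match across $\Sigma$) with a geometric bound depending only on $(\Omega_0, g_0)$. \emph{Second}, uniform $C^{k,\alpha}_{\mathrm{loc}}$ bounds on $(g_t, u_t)$, obtained by writing the static vacuum system $S(g,u)=0$ in a harmonic-coordinate gauge in which it becomes quasi-linear elliptic, and by invoking Cheeger--Gromov compactness together with the weighted elliptic theory in the asymptotic region. \emph{Third}, and most delicately, estimates preventing $u_t$ from developing an interior zero and preventing the boundary geometry $\gamma_t$ from degenerating; the former corresponds to ruling out the appearance of a Killing horizon along the continuity path. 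These estimates, together with the local well-posedness theorem applied at any sub-sequential limit, would promote any $t_\infty \in \overline{T}$ to an interior point of $T$, forcing $T = [0,1]$.

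For uniqueness, given two asymptotically flat static vacuum extensions $(g, u)$ and $(g', u')$ with the same Bartnik data, I would normalize both by the asymptotic condition $u, u' \to 1$ at infinity, then use the doubling/Killing-development construction to glue across $\Sigma$ and invoke a static-uniqueness theorem of Bunting--Masood-ul-Alam / Robinson type, or alternatively, a boundary unique continuation argument for the overdetermined elliptic system $S(g,u)=0$ with Bartnik data prescribed. The hardest step, by a wide margin, will be the closedness ingredient (iii): ruling out horizon formation and boundary degeneration along a generic path is known to be subtle and is closely related to the obstructions that appear in Bartnik mass minimization and fill-in problems. A careful design of the path (for example, remaining within a class with uniform fill-in bounds and with uniformly positive $H_t$), and possibly a degree-theoretic refinement allowing for multiplicity of extensions at intermediate $t$, will likely be needed to close the argument in the full generality of the conjecture.
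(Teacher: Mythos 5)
The statement you are proving is labeled a \emph{conjecture} in the paper, and the paper does not prove it: its actual results (Theorems 1--4) establish only \emph{local} well-posedness --- existence, local geometric uniqueness, and smooth dependence for Bartnik data in a small $\C^{2,\alpha}\times\C^{1,\alpha}$ neighborhood of the induced data of a static regular hypersurface in a given static vacuum background --- together with density of static regular hypersurfaces in one-sided foliations. The paper moreover records that the full conjecture is expected to \emph{fail} for boundary data induced on hypersurfaces that are not outer embedded in a static vacuum manifold (the Anderson--Khuri and Anderson--Jauregui conjectured counterexamples cited in the introduction). So a global continuity-method proof of the conjecture as stated, with no additional hypothesis on $(g_0^\intercal, H_{g_0})$, cannot be correct as proposed; at best it could work for a restricted class of data, which is essentially what the paper's local results already deliver by a different (inverse function theorem) mechanism.

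Beyond this structural issue, the proposal has concrete gaps at every stage. For openness: Theorem \ref{existence} requires the kernel condition \eqref{eq:no-kernel} at the specific background $(M\setminus\Omega,\bar g,\bar u)$ whose boundary realizes $(\gamma_{t_0},H_{t_0})$; Theorem \ref{generic} only says static regular hypersurfaces are dense \emph{within a foliation of a fixed background}, and perturbing the boundary hypersurface to a nearby static regular one changes the Bartnik data being realized, so it does not repair a failure of static regularity at $t_0$ while keeping the target data on the path. For closedness: the positive mass theorem applied to the glued manifold gives nonnegativity, not an upper bound on $m(g_t)$ in terms of $(\Omega_0,g_0)$, and even granting a mass bound, the compactness, non-degeneration of the boundary, and absence of horizon formation (your item (iii)) are precisely the open core of the problem --- you acknowledge this but offer no argument. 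For uniqueness: the Bunting--Masood-ul-Alam/Robinson theorems concern the black-hole case $u=0$ on the boundary and do not apply to extensions with $u>0$ there; the uniqueness the paper obtains (Theorem \ref{exist}) is only local, in a $\C^{2,\alpha}_{-q}$ neighborhood of the background modulo the gauge group $\mathscr{D}(M\setminus\Omega)$, and global uniqueness remains open. In short, the proposal is a plausible research program sketch, not a proof, and it does not correspond to anything the paper actually establishes.
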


{Note that the mean curvature $H_{g_0}$ of $\Sigma \subset (\Omega_0, g_0)$ is calculated with respect to the \emph{outward} unit normal, specifically, the one consistent with $\partial M \subset (M, g)$.}

The conjecture originated from the Bartnik's program on quasi-local mass~\cite{Bartnik:1989, Bartnik:1993, Bartnik:1997, Bartnik:2002} that goes back to 1989 and has drawn great interest in mathematical relativity and differential geometry in the past two decades; see, for example, the recent survey by M. Anderson~\cite{Anderson:2019}. 

It has been speculated that the conjecture may not hold in general. As observed by Anderson-M. Khuri and by Anderson-J. Jauregui~\cite{Anderson-Khuri:2013, Anderson-Jauregui:2016}, if  $\Sigma$ is an immersed hypersurface in an asymptotically flat, static vacuum triple $(M,\bar g, \bar u)$ that is not \emph{outer} embedded, i.e. $\Sigma$ touches itself from the exterior region $M\setminus \Omega$, then the induced data $(\bar{g}^\intercal, H_{\bar{g}})$ is valid Bartnik boundary data on $\Sigma$, but $(\bar{g}, \bar u)$ in $M\setminus \Omega$ is \emph{not} a valid extension because $M\setminus \Omega$ is not a manifold-with-boundary. Those hypersurfaces are conjectured to be counter-examples to Conjecture~\ref{conjecture} \cite[Conjecture 5.2]{Anderson-Jauregui:2016}. {A very recent work of Anderson \cite{Anderson:2023} provides counter-examples to the conjecture.} 
 Nevertheless, Conjecture~\ref{conjecture} is of fundamental importance toward the structure theory of static vacuum manifolds, so it is highly desirable to prove it even under {additional natural} assumptions. 

Conjecture~\ref{conjecture} can be formulated as solving a geometric boundary value problem $T(g, u) = (0, 0, \tau, \phi)$ for given boundary data $(\tau, \phi)$ where 
\begin{align}\label{eq:bdv}
\begin{split}
	T(g, u) := \begin{array}{l}  
 \left\{ \begin{array}{l} -u \Ric_g  + \nabla^2_g u \\ \Delta_g u 
	\end{array} \right. \quad \mbox{ in } M\\
\left\{ \begin{array}{l} g^\intercal \\ H_g 
	\end{array} \right. \quad \mbox{ on } \partial M.
	\end{array}
\end{split}
\end{align} 
Since the static vacuum operator is highly nonlinear, as the first fundamental step toward Conjecture~\ref{conjecture}, it is natural to make the following conjecture.
\begin{conjecture}[Local well-posedness]\label{co:well-posed}
The geometric boundary value problem~\eqref{eq:bdv} is locally well-posed at every background solution in the sense that if  $(M, \bar g, \bar u)$ is an asymptotically flat, static vacuum triple, 
then for $(\tau, \phi)$ sufficiently close to $(\bar g^{\intercal}, H_{\bar g})$ on $\partial M$, there exists a solution $(g, u)$ to $T(g, u) = (0, 0, \tau, \phi)$ that is geometrically unique near $(\bar g, \bar u)$ and can depend continuously on the boundary data $(\tau, \phi)$. 
\end{conjecture}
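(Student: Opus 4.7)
\textbf{The plan} is to convert the boundary value problem~\eqref{eq:bdv} into an elliptic boundary value problem via a DeTurck-type gauge fixing and then apply the implicit function theorem in weighted Sobolev spaces adapted to asymptotic flatness. First I would augment $T$ by a gauge-breaking interior term $\cG(g,\bar g)$ constructed from the Christoffel-difference vector field $V(g,\bar g)$, and supplement the Bartnik boundary data with a suitable boundary gauge condition on $V$, producing a modified operator $\tilde T$ whose linearization is elliptic in the sense of Agmon--Douglis--Nirenberg. This is the standard device for breaking the diffeomorphism invariance of the static vacuum system, and the complementing condition at $\partial M$ must be verified once the boundary gauge is chosen so that it is compatible with the prescription of $(g^\intercal, H_g)$.

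\textbf{Next}, working in weighted Sobolev spaces $W^{k,p}_{-q}(M)\times W^{k,p}_{-q}(M)$ with $q\in(0,n-2)$ and $k$ large, I would verify that the linearization $L:=D\tilde T|_{(\bar g,\bar u)}$ is Fredholm of index zero between the natural domain and target, the latter consisting of interior residuals in $W^{k-2,p}_{-q-2}$ together with the boundary traces in $W^{k-1/2,p}(\partial M)\times W^{k-3/2,p}(\partial M)$. The interior principal symbol analysis reduces to standard asymptotically flat elliptic theory once $\cG$ is in place, and index zero follows from the observation that $L$ can be deformed through Fredholm operators to the flat Euclidean model on the exterior of a ball, whose index is computable.

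\textbf{The main obstacle} is injectivity of $L$. A pair $(h,v)\in\ker L$ satisfies the linearized static vacuum equations, the gauge condition, the homogeneous Bartnik boundary conditions $h^\intercal=0$ and $H'_{\bar g}[h]=0$ on $\partial M$, and appropriate decay at infinity. There is no a~priori reason for such a pair to vanish; indeed, nontrivial infinitesimal deformations of $(\bar g,\bar u)$ fixing Bartnik data would obstruct the conjecture. This is the geometric heart of the problem, and it is precisely the role of the \emph{static regular of type~(I) and type~(II)} conditions advertised in the abstract: they are structural hypotheses on $\partial M\subset(M,\bar g,\bar u)$ designed to force $(h,v)\equiv 0$ modulo infinitesimal boundary-preserving diffeomorphisms. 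In full generality the injectivity is false (consistent with the non-outer-embedded counterexamples of Anderson--Khuri and Anderson--Jauregui), so the conjecture as stated must ultimately be proved under such an additional condition.

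\textbf{Finally}, once $L$ is an isomorphism, the implicit function theorem yields a continuously varying family of solutions $(g_{\tau,\phi}, u_{\tau,\phi})$ to $\tilde T=(0,0,\tau,\phi)$ for $(\tau,\phi)$ near $(\bar g^\intercal, H_{\bar g})$. To recover a genuine solution of the original system $T=(0,0,\tau,\phi)$, one verifies that the gauge field $V(g_{\tau,\phi},\bar g)$ vanishes identically: the twice-contracted Bianchi identity applied to the modified Ricci equation forces $V$ to satisfy a homogeneous second-order elliptic equation with homogeneous boundary data, hence $V\equiv 0$ by unique continuation up to the boundary. Geometric uniqueness is then obtained by pulling back any competing nearby solution into the DeTurck gauge via a boundary-fixing diffeomorphism and invoking the uniqueness clause of the implicit function theorem. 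Continuous dependence on $(\tau,\phi)$ is automatic from the same theorem.
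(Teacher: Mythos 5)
First, note that the statement you are proving is stated in the paper as a \emph{conjecture}: the paper itself does not prove local well-posedness at every background solution (and cites candidate counterexamples), but only the conditional result Theorem~\ref{existence}, under the hypothesis \eqref{eq:no-kernel} that $\Ker L$ consists of the trivial deformations $(L_X\bar g, X(\bar u))$, $X\in\mathcal X(M\setminus\Omega)$. You correctly recognize that unconditional injectivity of the linearization is out of reach and that some static-regularity hypothesis is needed, so your proposal is at best an outline of the conditional theorem, and it is broadly parallel to the paper's strategy (gauge-fix, Fredholm theory in weighted spaces, inverse/implicit function theorem, then gauge recovery and geometric uniqueness).

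However, there is a concrete gap at the step ``once $L$ is an isomorphism, the implicit function theorem yields\dots''. Even under the trivial-kernel hypothesis, the gauged linearization is \emph{never} an isomorphism in this setting: vector fields that vanish on $\Sigma$ but are asymptotic to Euclidean translations and rotations at infinity survive any DeTurck-type gauge fixing, producing an $N$-dimensional kernel ($N=n+n(n-1)/2$; see Corollary~\ref{cor:dimension} and Lemma~\ref{le:Fred}) and, since the index is zero, an $N$-dimensional cokernel. The paper must therefore (i) identify the cokernel explicitly via the Regge--Teitelboim first-variation/orthogonality identities (Proposition~\ref{proposition:cokernel}, Lemma~\ref{le:range}), (ii) impose an additional \emph{orthogonal gauge} cutting the domain to a complement $\mathcal E$ of the kernel, and (iii) enlarge the system to the modified operator $\overline{T^\Gamma}$ with an auxiliary vector field $W$, proving afterwards by an integral identity that $W\equiv 0$ for solutions (Proposition~\ref{pr:modified}); your proposal contains none of this, and without it the IFT cannot be applied. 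Two further points would also need repair: your gauge-recovery argument via the ``twice-contracted Bianchi identity applied to the modified Ricci equation'' does not directly apply to the coupled system $(-u\Ric_g+\nabla^2_g u,\ \Delta_g u)$ — the paper instead uses the diffeomorphism-invariance identity \eqref{equation:cokernel1} together with the isomorphism property of the operator $\hat\Gamma$ (a maximum-principle argument, Lemma~\ref{rsv-to-sv}); and your geometric-uniqueness step (pulling a competing solution into the gauge by a boundary-fixing diffeomorphism) is not well-posed with the DeTurck gauge alone, since the gauging diffeomorphism is only unique after also imposing the orthogonal gauge (Lemma~\ref{le:diffeo}).
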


In particular, positive results toward Conjecture~\ref{co:well-posed} confirms Conjecture~\ref{conjecture} for large classes of boundary data.

There are partial results toward Conjecture~\ref{co:well-posed} when the background solution $(\bar g, \bar u) = (g_{\mathbb E}, 1)$, the Euclidean pair.   For boundary data $(\tau, \phi)$ sufficiently close to $(g_{S^2}, 2)$, the Bartnik boundary data of $g_{\mathbb E}$ on the unit round sphere in $\mathbb R^3$,  P.~Miao~\cite{Miao:2003} confirmed the local well-posedness under a reflectional symmetric condition. Anderson~ \cite{Anderson:2015} removed the symmetry assumption, based on his work with Khuri~\cite{Anderson-Khuri:2013}.\footnote{Our work is inspired by interesting ideas in \cite{Anderson-Khuri:2013, Anderson:2015}. On the other hand, we are unable to fully verify Theorem 1.1 in \cite{Anderson:2015}; specifically, the claim on p. 3094 asserts that if a kernel element $k=\mathcal L_Z g$ in $M$ with $Z=0$ on $\partial M$, then $Ker (D\Pi) =0$. Note that $Z$ need not be asymptotic to zero, as it can be asymptotic to any Euclidean Killing vector field. Thus, even when $k$ satisfies an elliptic gauge, implying that $Z$ satisfies an elliptic equation, $Z$ need not be zero, which leads to a nontrivial  finite-dimensional kernel. This phenomenon arises from the structure at infinity of asymptotically flat manifolds. In~\cite{An-Huang:2021} and this paper, we handle that extra finite-dimensional kernel. In addition, we provide a rigorous description of the diffeomorphism group acting on asymptotically flat metrics, involving an additional orthogonal gauge (see Lemma~\ref{le:diffeo}), a feature absent in \cite[Theorem 1.1]{Anderson-Khuri:2013}.
 
}
There is a proposed flow approach to axial-symmetric solutions with numerical results by C.~Cederbaum, O.~Rinne, and M. Strehlau \cite{Cederbaum-Rinne-Strehlau:2019}. In our recent work~\cite{An-Huang:2021}, we confirmed Conjecture~\ref{co:well-posed} for $(\tau, \phi)$  sufficiently close to the Bartnik data of $g_{\mathbb E}$ on  wide classes of connected, embedded hypersurfaces $\Sigma$ in Euclidean $\mathbb{R}^n$ for any $n \ge 3$, including
\begin{enumerate}
\item \label{it:hypersurfaces}Hypersurfaces in an open dense subfamily of any foliation of hypersurfaces.
\item Any star-shaped hypersurfaces. 
\end{enumerate}
We also show that the solution $(g,u)$ is geometrically unique in an open neighborhood of $(g_{\mathbb{E}}, 1)$ in the weighted H\"older space. Note that we recently extended Item~\eqref{it:hypersurfaces} to more general families of hypersurfaces in Euclidean space \cite{An-Huang:2022-JMP}.


Those results can be viewed as confirming Conjecture~\ref{co:well-posed} for the Euclidean background. In particular, those static metrics obtained have small ADM masses.  In this paper, we implement several new arguments  and extend our prior results to a general background metric (including any Schwarzschild metrics). Consequently, the new static vacuum metrics obtained can have large ADM masses. To our knowledge, prior to this work, the only known examples of asymptotically flat, static vacuum metrics of large ADM masses are either Schwarzschild metrics (rotationally symmetric) or some Weyl's solutions (axial-symmetric). In addition, this paper presents first results toward Conjecture~\ref{conjecture} for ``large'' boundary data, i.e. those can be far from Euclidean.

Before discussing the main results, we introduce some notations and definitions. 

\noindent {\bf Notation.} Let $(M, \bar g, \bar u)$ be an asymptotically flat, static vacuum triple with $\bar u>0$, possibly with a compact boundary $\partial M$. We will use $\Omega$ to denote a non-empty, bounded,  open subset in $M$ such that $M\setminus \Omega$ is connected and $\partial M\subset \Omega$ if $\partial M\neq \emptyset$. We use $\Sigma:=\partial \big(M\setminus \Omega\big)$ to denote the boundary, and we  always assume that $\Sigma$ is  a connected, embedded, smooth hypersurface in $M$. We say that $(g, u)$ is an \emph{asymptotically flat pair} if the metric $g$ is asymptotically flat and the scalar function $u\to 1$ at infinity. 

Let $L$ denote the linearized operator of $T$ at an asymptotically flat, static vacuum pair $(\bar g, \bar u)$ on $M\setminus\Omega$. That is, for any smooth family of asymptotically flat pairs $(g(s), u(s))$ on $M\setminus\Omega$ with $\big( g(0), u(0) \big) = (\bar g, \bar u)$ and $\big( g'(0), u'(0) \big) = (h, v)$, we define 
\[
L(h, v) := \left. \ds\right|_{s=0} T(g(s), u(s)).
\]
By diffeomorphism invariance of $T$, we trivially have $T(\psi_s^* \bar g, \psi_s^* \bar u) = T(\bar g, \bar u)$ on $M\setminus\Omega$ for any family of diffeomorphisms $\psi_s$ that fix the structure at infinity and boundary, i.e. $\psi_s \in \mathcal D(M\setminus \Omega)$. See Definition~\ref{de:diffeo} for the precise definitions of $\mathcal D(M\setminus \Omega)$ and its tangent space $\mathcal X(M\setminus \Omega)$ at the identity map. Therefore, by differentiating in $s$, $\Ker L $ contains all the ``trivial'' deformations $(L_X \bar g, X(\bar u))$ with vector fields $X\in \mathcal X(M\setminus \Omega)$.

Our first main result says that Conjecture~\ref{co:well-posed} holds, provided that $\Ker L$ is ``trivial''. 
\begin{Theorem}\label{existence}
Let $(M, \bar{g}, \bar{u})$ be an asymptotically flat, static vacuum triple with $\bar{u} > 0$, and let $\Omega$ be as defined in the notation above. Suppose that 
\begin{align} \label{eq:no-kernel}
	\Ker L = \big\{ (L_X \bar g, X(\bar u)): X\in \mathcal X(M\setminus \Omega)\big\}. 
\end{align}
Then there exist positive constants $\epsilon_0,C$ such that for each $\epsilon\in (0, \epsilon_0)$, if $(\tau,\phi)$ satisfies $\|(\tau,\phi)-(\bar g^\intercal,H_{\bar g})\|_{\C^{2,\a}(\Si)\times \C^{1,\alpha}(\Sigma)}<\epsilon$, then there exists an asymptotically flat,  static vacuum pair $(g,u)$ with $\|(g,u)-(\bar g,\bar u)\|_{\C^{2,\a}_{-q}(M\setminus \Omega)}<C\epsilon$ solving the boundary condition $(g^\intercal, H_g)=(\tau, \phi)$ on $\Sigma$. The solution $(g, u)$ is geometrically unique in a neighborhood of $(\bar g, \bar u)$ in $\C^{2,\alpha}_{-q}(M\setminus \Omega)$ and can depend smoothly  on $(\tau, \phi)$.  
\end{Theorem}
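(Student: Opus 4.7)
Because $T$ is diffeomorphism invariant, $L$ automatically contains the infinite-dimensional family of trivial deformations $(\mathcal{L}_X\bar g,X(\bar u))$ in its kernel, so the implicit function theorem cannot be applied to $T$ directly. My plan is to introduce a gauge-broken operator $\widetilde T$ whose bulk part is elliptic, obtained by adding a DeTurck-type correction such as $\tfrac{1}{2}\mathcal{L}_{X(g,\bar g)}g$ to the Ricci component, where $X(g,\bar g)$ is a one-form built from the Bianchi operator $\beta_{\bar g}(g-\bar g)$ of the background metric, together with a matching correction in the scalar equation. After this modification the interior system becomes a quasilinear elliptic system in $(g,u)$, and paired with the Bartnik boundary operator $(g^\intercal, H_g)$ it defines an elliptic boundary value problem in the Agmon--Douglis--Nirenberg sense. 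The linearization $\widetilde L$ at $(\bar g,\bar u)$ is then Fredholm of index zero between the weighted H\"older spaces $\C^{2,\alpha}_{-q}(M\setminus\Omega)$ and the natural target $\C^{0,\alpha}_{-q-2}(M\setminus\Omega)\times\C^{2,\alpha}(\Sigma)\times\C^{1,\alpha}(\Sigma)$, by the standard elliptic theory on asymptotically flat manifolds with compact inner boundary.

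\textbf{Linear isomorphism via the kernel hypothesis.} The analytic heart of the argument is to show that $\widetilde L$ is an isomorphism. For kernel triviality, starting from $(h,v)\in\Ker\widetilde L$, applying the twice-contracted Bianchi identity to the linearized modified bulk equation yields a homogeneous second-order linear elliptic equation for the linearized gauge one-form, with vanishing tangential boundary trace (forced by $h^\intercal=0$) and weighted decay at infinity; a standard uniqueness/maximum-principle argument then forces the linearized gauge term to vanish, so $(h,v)\in\Ker L$. Hypothesis~\eqref{eq:no-kernel} then writes $(h,v)=(\mathcal{L}_Y\bar g,Y(\bar u))$ for some $Y\in\mathcal X(M\setminus\Omega)$, and substituting this form back into the linearized gauge constraint together with the boundary vanishing of $Y$ inherited from the definition of $\mathcal X(M\setminus\Omega)$ forces $Y\equiv 0$. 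Cokernel triviality then follows from the index-zero property, or symmetrically from a formal adjoint analysis using the boundary pairing structure of the Bartnik data.

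\textbf{Implicit function theorem and un-gauging.} With $\widetilde L$ an isomorphism, the Banach-space implicit function theorem gives constants $\epsilon_0,C>0$ such that for each $(\tau,\phi)$ within $\epsilon<\epsilon_0$ of $(\bar g^\intercal,H_{\bar g})$ there exists a unique small solution $(g,u)$ to $\widetilde T(g,u)=(0,0,\tau,\phi)$ with the quantitative estimate and smooth dependence on $(\tau,\phi)$. To conclude, one must un-gauge: applying the Bianchi identity to the full nonlinear modified bulk equation shows that the gauge one-form $X(g,\bar g)$ itself satisfies a linear homogeneous elliptic equation whose boundary trace is controlled by the matching of $(g^\intercal,H_g)$ with $(\tau,\phi)$ on $\Sigma$, with asymptotic decay at infinity. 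Uniqueness for this equation then forces $X(g,\bar g)\equiv 0$, so $(g,u)$ solves the original system $T(g,u)=(0,0,\tau,\phi)$. Geometric uniqueness in an open neighborhood follows because any two nearby solutions can be diffeomorphed into the same gauge slice, where the IFT uniqueness applies.

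\textbf{Main obstacle.} The principal difficulty lies in the kernel step for $\widetilde L$: translating the purely geometric assumption~\eqref{eq:no-kernel} into triviality of the gauge-constrained kernel requires carefully interweaving (i) the Bianchi-type identity in the background-dependent gauge, (ii) the precise boundary/asymptotic behavior of diffeomorphisms in $\mathcal X(M\setminus\Omega)$, and (iii) boundary regularity for a mixed-order elliptic system with the Bartnik conditions. Compared with the Euclidean background treated in \cite{An-Huang:2021}, the nonvanishing $\Ric_{\bar g}$ and the possibility of large ADM mass make the Bianchi-reduction computation and the weighted boundary/asymptotic estimates substantially more delicate, and this is where the assumption $\bar u>0$ enters crucially to keep the weighted coefficients nondegenerate.
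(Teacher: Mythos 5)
Your overall strategy (gauge-fix, show the linearization is invertible, apply the inverse/implicit function theorem, then un-gauge via a Bianchi-type argument) is the right general framework, but there is a genuine gap at the central step: the claim that hypothesis~\eqref{eq:no-kernel} makes the gauge-fixed linearization $\widetilde L$ injective is false. When you write $(h,v)=(L_Y\bar g, Y(\bar u))$ with $Y\in\mathcal X(M\setminus\Omega)$ and assert that the linearized gauge constraint plus $Y|_\Sigma=0$ ``forces $Y\equiv 0$'', you are ignoring that elements of $\mathcal X(M\setminus\Omega)$ need not decay: they are allowed to approach Euclidean translations and rotations at infinity. The equation $\Gamma(Y)=0$ with $Y=0$ on $\Sigma$ and $Y-Z=O(|x|^{1-q})$ for some $Z\in\mathcal Z$ has exactly an $N$-dimensional space of solutions, $N=n+\tfrac{n(n-1)}{2}$ (this is the space $\mathcal X^\Gamma(M\setminus\Omega)$ of Corollary~\ref{cor:dimension}; only for decay weights $\delta\in(2-n,0)$ does Lemma~\ref{PDE} give uniqueness). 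Hence under~\eqref{eq:no-kernel} the kernel of the gauged linearization is $N$-dimensional, and since the operator has index zero the cokernel is $N$-dimensional as well, so the implicit function theorem cannot be applied to $\widetilde T$ directly. The paper deals with exactly this obstruction: it identifies the cokernel explicitly as the space $\mathcal Q$ built from $\kappa(X)$, $X\in\mathcal X^\Gamma$, using the Regge--Teitelboim orthogonality identities (Proposition~\ref{proposition:cokernel}, Lemma~\ref{le:range}); it restricts the domain to the orthogonal-gauge slice $\mathcal E$ to kill the residual kernel; and it enlarges the problem to a modified operator $\overline{T^\Gamma}(g,u,W)$ with an auxiliary vector field $W$ absorbing the cokernel, proving afterwards by an integration-by-parts argument (Proposition~\ref{pr:modified}) that $W\equiv 0$ for any solution, and only then un-gauging via Lemma~\ref{rsv-to-sv}. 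None of this finite-dimensional bookkeeping appears in your proposal, and without it the linear isomorphism step collapses.

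A secondary but related gap: your boundary operator consists only of the Bartnik data $(g^\intercal,H_g)$, which gives $\tfrac{n(n-1)}{2}+1$ conditions for a second-order system in $\tfrac{n(n+1)}{2}+1$ unknowns; this is $n$ conditions short of a well-posed elliptic boundary value problem. The paper imposes the gauge $\Gamma(g,u)=0$ on $\Sigma$ as an additional boundary condition in $T^\Gamma$, which both restores ellipticity and supplies the boundary data needed in the un-gauging step (your claim that the gauge one-form's boundary trace is ``controlled by the matching of $(g^\intercal,H_g)$'' has no justification; in the paper the gauge vanishes on $\Sigma$ because it is prescribed there, and then an interior maximum-principle/isomorphism argument kills it). Finally, your geometric uniqueness sketch needs the orthogonal gauge as well: with only the harmonic-type gauge the slice is not locally unique, again because of the same $N$-dimensional residual diffeomorphism freedom (cf.\ Lemma~\ref{le:diffeo}).
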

We refer the precise meanings of ``geometric uniqueness'' and  ``smooth dependence'' to Theorem~\ref{exist} and the dependence of the constants  $\epsilon_0, C$ to Remark~\ref{re:constant}.

We formulate two \emph{static regular} conditions that imply \eqref{eq:no-kernel}. Along the boundary~$\Sigma$,
we denote  the second fundamental form $A_{g} :=  (\nabla \nu)^\intercal$ and the $\ell$-th $\nu$-covariant derivative 
of the Ricci tensor, $\Ric_g$,  by 
\[
	\big(\nabla^\ell_\nu \, \Ric_g\big)(Y, Z):= (\nabla^\ell \Ric) (Y, Z; {\underbrace{{\nu, \dots, \nu}}_{\text{\tiny$\ell$ times}}}\,) \quad \mbox{ for any vectors } Y, Z. 
\]
We linearize $\nu$, $A_g$, $H_g$ $\Ric_g$, $\nabla^\ell_\nu \, \Ric_g$  and denote their linearizations at a static vacuum pair $(\bar g, \bar u)$ by $\nu'(h)$, $A'(h)$, $H'(h)$, $\Ric'(h)$, $\big(\nabla_\nu^\ell\Ric\big)'(h)$ respectively. See the formulas in Appendix~\ref{se:formula}.

\begin{Definition}[Static regular]\label{de:static}
\begin{itemize}
\item The boundary $\Sigma$ is said to be \emph{static regular of type (I) in $(M\setminus \Omega , \bar g, \bar u)$} if for any $(h, v)\in \C^{2,\alpha}_{-q}(M\setminus \Omega)$  solving $L(h, v)=0$, there exists a nonempty, connected, open subset $\hat \Sigma\subset \Sigma$ with $\pi_1(M\setminus \Omega, \hat \Sigma)=0$ such that $(h, v)$ satisfies 
\begin{align} \label{eq:static1}
v=0, \quad \big(\nu'(h)\big) (\bar u) + \nu (v)=0,  \quad A'(h)=0\qquad \mbox{ on } \hat \Sigma.
\end{align}

\item The boundary $\Sigma$ is said to be \emph{static regular of type (II) in $(M\setminus \Omega , \bar g, \bar u)$} if for any $(h, v)\in \C^{2,\alpha}_{-q}(M\setminus \Omega)$  solving $L(h, v)=0$, there exists a nonempty, connected, open subset $\hat \Sigma\subset \Sigma$ with $\pi_1(M\setminus \Omega, \hat \Sigma)=0$  satisfying that 
\begin{subequations}
\begin{align} 
&\hat \Sigma \mbox{ is an analytic hypersurface, and} \label{eq:static2}\\
& A'(h)=0,  \quad \big(\Ric'(h)\big)^\intercal =0, \quad \big((\nabla_\nu^k\, \Ric)'(h)\big)^\intercal=0 \quad \mbox{ on } \hat \Sigma\label{eq:static3}
\end{align} 
\end{subequations}
for all positive integers $k$. 
\end{itemize}
\end{Definition}
We may refer to the boundary conditions in type (I) as the \emph{Cauchy boundary condition} and the boundary conditions in type (II) as the \emph{infinite-order boundary condition} (noting it involves only conditions on $h$).

\begin{Theorem}\label{th:trivial}
Let $(M, \bar{g}, \bar{u})$ be an asymptotically flat, static vacuum triple with $\bar{u} > 0$, and let $\Omega$ be as defined in the notation above. Let $\hat \Sigma$ be a nonempty, connected, open subset of $\Sigma$ with $\pi_1(M\setminus \Omega, \hat \Sigma)=0$. Let $(h, v)\in \C^{2,\alpha}_{-q}(M\setminus \Omega)$ satisfy $L (h, v)=0$ in $M\setminus \Omega$ such that either \eqref{eq:static1}  holds or both \eqref{eq:static2},\eqref{eq:static3} hold on $\hat \Sigma$.  Then $(h, v) = \big(L_X \bar g, X(\bar u) \big)$ for $X\in \mathcal X(M\setminus \Omega)$. 

As a consequence, if the boundary $\Sigma$ is static regular in $(M\setminus \Omega, \bar g, \bar u)$ of either type (I) or type (II), then \eqref{eq:no-kernel} holds. Conversely, if \eqref{eq:no-kernel} holds, then both \eqref{eq:static1} and \eqref{eq:static3} hold for $\hat \Sigma = \Sigma$. 
\end{Theorem}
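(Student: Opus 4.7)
The plan is to reduce $L(h,v)=0$ to an elliptic boundary value problem by fixing a gauge through a trivial deformation, and then to apply unique continuation. The strategy splits into three stages: (i) adjust $(h,v)$ by a trivial deformation $(L_X\bar g, X(\bar u))$ so that the modified pair $(\tilde h,\tilde v)$ satisfies an auxiliary gauge condition turning $L=0$ into an elliptic system; (ii) show that the boundary hypotheses \eqref{eq:static1}, or \eqref{eq:static2}--\eqref{eq:static3}, translate into either vanishing Cauchy data or vanishing infinite-order normal Taylor data for $(\tilde h,\tilde v)$ on $\hat\Sigma$; (iii) invoke the appropriate unique continuation principle and propagate the vanishing to all of $M\setminus\Omega$.

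For (i), I would introduce a Bianchi-type gauge operator of the form $\bar B(k,w)=\mathrm{div}_{\bar g}k-\tfrac{1}{2}d(\tr_{\bar g}k)+(\text{lower-order terms weighted by }\bar u,w)$, chosen so that the principal symbol of $L$ restricted to the slice $\bar B(\tilde h,\tilde v)=0$ becomes invertible. The vector field $X\in\mathcal X(M\setminus\Omega)$ would be constructed as the solution of a linear elliptic boundary value problem in weighted Hölder spaces, so that $\bar B(h-L_X\bar g,v-X(\bar u))=0$ in $M\setminus\Omega$ and, moreover, $X$ is adapted to kill prescribed boundary components of $(\tilde h,\tilde v)$ on $\hat\Sigma$. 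After this reduction, $L(\tilde h,\tilde v)=0$ becomes a determined second-order elliptic system with real-analytic coefficients (since static vacuum triples are real-analytic in harmonic coordinates by standard elliptic regularity).

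In the Type (I) case, the hypotheses $v=0$, $(\nu'(h))(\bar u)+\nu(v)=0$, and $A'(h)=0$ translate, after absorbing the gauge, to vanishing of both $(\tilde h,\tilde v)$ and their normal derivatives on $\hat\Sigma$ — i.e., vanishing Cauchy data. A unique continuation theorem of Aronszajn type, applied to the elliptic system with diagonal principal symbol, yields $(\tilde h,\tilde v)\equiv 0$ on a one-sided neighborhood of $\hat\Sigma$. In the Type (II) case, I would show inductively, combining $A'(h)=0$ and $((\nabla_\nu^k \Ric)'(h))^\intercal=0$ for all $k\ge 0$ with the linearized static system and the gauge, that every normal derivative of $(\tilde h,\tilde v)$ vanishes on $\hat\Sigma$. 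With $\hat\Sigma$ analytic and the coefficients of the gauge-reduced system real-analytic, Holmgren's uniqueness theorem gives $(\tilde h,\tilde v)\equiv 0$ near $\hat\Sigma$. Standard interior unique continuation for the elliptic system, combined with $\pi_1(M\setminus\Omega,\hat\Sigma)=0$, then propagates the vanishing to all of $M\setminus\Omega$, whence $(h,v)=(L_X\bar g,X(\bar u))$.

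The main obstacle is stage (i): constructing $X$ so that simultaneously the interior gauge $\bar B(\tilde h,\tilde v)=0$ is achieved and enough boundary components of $(\tilde h,\tilde v)$ vanish on $\hat\Sigma$ to drive the unique continuation. This requires a solvability theorem for a linear elliptic boundary value problem on the unbounded domain $M\setminus\Omega$, with weighted estimates compatible with asymptotic flatness. A secondary delicate step in Type (II) is converting the geometric identities $((\nabla_\nu^k \Ric)'(h))^\intercal=0$ into vanishing of the appropriate Taylor coefficients of $(\tilde h,\tilde v)$ — this should follow from induction combined with the linearized static equations, but the bookkeeping is intricate. For the converse, given \eqref{eq:no-kernel}, any $(h,v)\in\Ker L$ has the form $(L_X\bar g,X(\bar u))$ with $X\in\mathcal X(M\setminus\Omega)$. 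Since $X$ vanishes on $\Sigma$ by definition of $\mathcal X$, the flow of $X$ fixes $\Sigma$ pointwise and therefore preserves all intrinsic-to-$\Sigma$ geometric invariants — namely $A_g$, $(\Ric_g)^\intercal$, $((\nabla_\nu^k\Ric_g))^\intercal$, and $u|_\Sigma$; differentiating in the flow parameter yields both \eqref{eq:static1} and \eqref{eq:static3} on all of $\Sigma$.
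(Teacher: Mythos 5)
Your overall architecture (gauge--fix to an elliptic system, translate the boundary hypotheses into vanishing Cauchy data, then apply unique continuation) runs into a genuine obstruction at exactly the step you flag as ``the main obstacle,'' and that obstruction is not resolvable by a cleverer choice of $X$. The hypotheses \eqref{eq:static1} (together with $h^\intercal=0$, $H'(h)=0$ from $L(h,v)=0$) yield vanishing Cauchy data for $(h,v)$ only in the \emph{geodesic} gauge $h(\nu,\cdot)=0$, in which the system $L(h,v)=0$ is \emph{not} elliptic; in the static-harmonic gauge, where the system is elliptic and analytic, the components $h(\nu,\cdot)$ and $\nabla_\nu h(\nu,\cdot)$ on $\hat\Sigma$ are unconstrained by the hypotheses, so the Cauchy data of the gauge-fixed pair does not vanish and Aronszajn/Holmgren cannot be invoked. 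A dimension count makes this precise: the available boundary conditions fall short of full Cauchy data by exactly $n$ functions, which is the residual gauge freedom. Relatedly, your target conclusion $(\tilde h,\tilde v)\equiv 0$ is too strong: the gauged kernel contains the $N$-dimensional space $\{(L_W\bar g,W(\bar u)):W\in\mathcal X^\Gamma(M\setminus\Omega)\}$ whose elements satisfy all the stated boundary conditions but are nonzero (their normal components $L_W\bar g(\nu,\cdot)$ do not vanish on $\Sigma$), so no unique continuation argument can close the loop without a further idea. Finally, in the type (II) case your induction must also produce infinite-order vanishing of $v$, but the hypotheses \eqref{eq:static3} constrain only $h$; your proposal does not say how $v$ is controlled.

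The paper circumvents all of this by a different mechanism that your proposal lacks. For type (I): put $(h,v)$ in the geodesic gauge so that the Cauchy data vanishes on $\hat\Sigma$, extend $(h,v)$ by zero across $\hat\Sigma$ into a small set $U\subset\Omega$, and verify via the Green-type identity \eqref{equation:Green} that the extension is a weak solution of $S'=0$ on the enlarged manifold $\hat M$; only then pass to the static-harmonic gauge on $\hat M$ by adding $(L_Y\bar g, Y(\bar u))$, obtaining an analytic pair $(\tilde h,\tilde v)$ which equals $(L_Y\bar g,Y(\bar u))$ in $U$, i.e.\ is \emph{locally} pure gauge. The decisive ingredient is then Theorem~\ref{th:extension} (the $h$-Killing analogue of Nomizu's extension theorem), which, using $\pi_1(M\setminus\Omega,\hat\Sigma)=0$, globally extends the local $\tilde h$-Killing field $Y|_U$ to $\tilde Y$ on $\hat M$; setting $X=\tilde Y-Y$ gives $h=L_X\bar g$ in $M\setminus\Omega$, and $v=X(\bar u)$ follows separately from the overdetermined equation $-f\Ric+\nabla^2 f=0$ for $f=v-X(\bar u)$ with decay at infinity. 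Type (II) is handled analogously: the geodesic-gauge representative of $h$ vanishes to infinite order on the analytic $\hat\Sigma$ (Proposition~\ref{pr:infinite}), hence vanishes in a collar by analyticity, so $h$ is locally pure gauge and Theorem~\ref{th:extension} again globalizes. To repair your proposal you would need both the extension-by-zero/weak-solution step and the $h$-Killing extension theorem, neither of which appears in your outline.
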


We make some general remarks on the conditions appearing above. 
\begin{remark}\label{re:intro}
\begin{enumerate}
\item For any subset $U\subset M$, the condition on the relative fundamental group $\pi_1(M, U)=0$ says that $U$ is connected and the inclusion map $U \hookrightarrow M$ induces a surjection $\pi_1(U)\to \pi_1(M)$. Thus, in the special case that  $M\setminus \Omega$ is simply connected, the condition $\pi_1(M\setminus \Omega, \hat \Sigma)=0$ always holds. 


\item \label{it:nu} The condition  $\big(\nu'(h)\big)(\bar u) + \nu(v)=0$ on $\hat \Sigma$ in \eqref{eq:static1} can be dropped in the following two general situations: (i)  $\hat \Sigma = \Sigma$, or (ii) the mean curvature of $\hat \Sigma$ in $(M, \bar g)$ is not identically zero. See Section~\ref{sec:relaxed}.
\item Both conditions $v=0, ((\nu')(h)) (\bar u) + \nu (v)=0$ in \eqref{eq:static1} can be dropped when  $(\bar g, \bar u) = (g_{\mathbb E}, 1)$ and $\hat \Sigma=\Sigma$. Thus, the definition of static regular of type~(I) here recovers the definition of \emph{static regular} in \cite[Definition 2]{An-Huang:2021} (see \cite[Lemma 4.8]{An-Huang:2021}).

\item The analyticity of $\hat \Sigma$ in \eqref{eq:static2} will be used together with a fundamental fact that a static vacuum pair $(\bar g, \bar u)$ is analytic in $\Int M$ under a suitable choice of coordinate charts by the result of M\"uller zum Hagen~\cite{Muller-zum-Hagen:1970}. See Appendix~\ref{se:analytic}. 
\end{enumerate}
\end{remark}

We verify that a large, open and dense subfamily of {analytic} hypersurfaces in a general background $(M,\bar g,\bar u)$ are static regular of type (II). 
\begin{Definition}\label{def:one-sided}
Let $\delta$ be a positive number and $\{ \Sigma_t \}, t\in [-\delta, \delta]$, be a parametrized family of  hypersurfaces in $M$ so that $\Sigma_t = \partial  (M\setminus \Omega_t)$ and $\Sigma_t, \Omega_t$ satisfy the same requirements for $\Sigma, \Omega$ as specified in Notation above. 
We say that  $\{\Sigma_t\}$ is a {\it smooth one-sided family of hypersurfaces (generated by $Y$) foliating along $\hat\Sigma_t\subset\Sigma_t$  with $M\setminus \Omega_t$  simply connected relative to $\hat \Sigma_t$} if 
\begin{enumerate}
\item   $\hat{\Sigma}_t$ is an open subset of $\Sigma_t$ satisfying $\pi_1(M\setminus \Omega_t,\hat \Sigma_t)=0$.
\item The smooth deformation vector field $Y = \frac{\partial}{\partial t} \Sigma_t$ satisfies $\bar g(Y,\nu)=\zeta\geq 0$ on each $\Sigma_t$, and $\zeta>0$ in $\hat\Sigma_t$.
\end{enumerate}
\end{Definition}


\begin{Theorem}\label{generic}
Let $(M,\bar g,\bar u)$ be an asymptotically flat, static vacuum triple with $\bar u>0$.  Let $\{\Sigma_t\}$ be a smooth one-sided family of hypersurfaces foliating along $\hat{\Sigma}_t$ with $M\setminus \Omega_t$ simply connected relative to $\hat\Sigma_t$. Furthermore, suppose $\hat{\Sigma}_t$ is an analytic hypersurface for every $t$. Then there is an open dense subset $J\subset [-\delta, \delta]$ such that $\Sigma_t$ is static regular of type~(II) in $(M\setminus \Omega_t, \bar g, \bar u)$ for all $t\in J$. 
\end{Theorem}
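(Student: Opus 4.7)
The strategy is to combine Fredholm theory for the linearized operator $L_t$ on the varying domains $M\setminus\Omega_t$ with the one-sided foliation structure, reducing to an application of Theorem~\ref{th:trivial}. By that theorem applied to the analytic $\hat\Sigma_t$ (so \eqref{eq:static2} is automatic), showing $\Sigma_t$ is static regular of type~(II) is equivalent to verifying the kernel condition \eqref{eq:no-kernel} on $(M\setminus\Omega_t,\bar g,\bar u)$. Set
\[
d(t):=\dim\ker L_t - \dim\big\{(L_X\bar g,X(\bar u)):X\in\mathcal{X}(M\setminus\Omega_t)\big\},
\]
so that the desired open dense set is $J:=\{t:d(t)=0\}$. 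After fixing a suitable gauge-breaking condition to turn $L_t$ into an elliptic Fredholm operator on $\C^{2,\alpha}_{-q}(M\setminus\Omega_t)$, and after pulling back everything by the flow of $X$ to a common reference manifold, standard perturbation theory gives that $d(t)$ is finite and upper semi-continuous. In particular $J$ is open, and the remaining task is to prove it is dense.

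Suppose for contradiction that $d(t)\geq 1$ on some open subinterval $I\subset[-\delta,\delta]$. By upper semi-continuity and integer values, $d$ attains its minimum $d_0\geq 1$ on a nonempty open sub-interval $I'\subset I$ where $d$ is locally constant. The implicit function theorem applied to the smooth Fredholm family then produces a smooth $t$-parametrized family $(h_t,v_t)\in\ker L_t$, $t\in I'$, that is nontrivial modulo gauge. Each $(h_t,v_t)$ satisfies the linearized boundary conditions $h_t^\intercal=0$ and $H'(h_t)=0$ on $\Sigma_t$. Pulling back via the flow of $X$ to $\hat\Sigma_{t_*}$ at a fixed $t_*\in I'$ and differentiating these identities in $t$, the one-sided positivity $\zeta=\bar g(X,\nu)>0$ on $\hat\Sigma_{t_*}$ lets us solve for the normal-derivative pieces of $h_{t_*}$, yielding $A'(h_{t_*})=0$ on $\hat\Sigma_{t_*}$. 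Iterating the $t$-differentiation, and combining with the interior linearized static vacuum system which expresses higher normal derivatives of $h$ in terms of tangential quantities and Ricci linearizations, we inductively obtain $\big(\Ric'(h_{t_*})\big)^\intercal=0$ and $\big((\nabla_\nu^k\,\Ric)'(h_{t_*})\big)^\intercal=0$ on $\hat\Sigma_{t_*}$ for every $k\geq 1$. These are exactly the conditions in \eqref{eq:static3}, so Theorem~\ref{th:trivial} forces $(h_{t_*},v_{t_*})$ to be trivial, contradicting its construction. Hence $J$ is dense.

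The most delicate step will be the propagation of boundary vanishing to all orders in Step~4. The one-sided condition $\zeta\geq 0$ with $\zeta>0$ on $\hat\Sigma_t$ is indispensable: it ensures that $t$-differentiation of the linearized boundary conditions genuinely extracts normal-direction information about $h_t$ over $\hat\Sigma_t$, which the tangential conditions $h_t^\intercal=0$ and $H'(h_t)=0$ alone cannot access. The inductive derivation of the higher-order vanishing $\big((\nabla_\nu^k\,\Ric)'(h_{t_*})\big)^\intercal=0$ requires careful bookkeeping of how the linearized operator $L_t$ couples normal derivatives of $(h,v)$ to tangential derivatives of $\Ric'(h)$ on the slices, adapting to a general background $(\bar g,\bar u)$ the corresponding computation carried out in \cite{An-Huang:2021} for the Euclidean background.
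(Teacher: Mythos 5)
Your overall skeleton matches the paper's: restrict to an open dense set $J$ where the (gauged) kernel dimension is locally constant so that kernel elements admit differentiable one-parameter families, differentiate the linearized Bartnik boundary conditions in $t$, use $\zeta=\bar g(X,\nu)>0$ on $\hat\Sigma_t$ to extract normal-direction information, iterate to get the infinite-order conditions \eqref{eq:static3}, and conclude via Theorem~\ref{th:trivial}. But the central step is asserted rather than proved, and the assertion as stated is not correct. Differentiating $h_t^\intercal=0$ and $H'|_{g_t}(h_t)=0$ in $t$ does \emph{not} let you ``solve for the normal-derivative pieces of $h_{t_*}$'': writing $(p,z)=\frac{d}{dt}(h_t,v_t)|_{t_*}$, what you obtain is that the pair $(k,w)=(p-L_Xh,\,z-X(v))$ is a static vacuum deformation whose Bartnik boundary data is $\big(k^\intercal,H'(k)\big)=\big(-2\zeta A'(h),\,\zeta A\cdot A'(h)\big)$. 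This is an identity \emph{involving} $A'(h)$, not an equation forcing it to vanish. The paper closes this gap with the Green-type identity (Corollary~\ref{co:Green}, i.e.\ the second variation of the Regge--Teitelboim functional): pairing $Q(h,v)$ against $(k^\intercal,H'(k))$ for the two static vacuum deformations $(h,v)$ and $(k,w)$ yields $0=-\int_\Sigma 2\bar u\,\zeta\,|A'(h)|^2\,d\sigma$, and only the positivity of $\bar u\zeta$ on $\Sigma^+$ then gives $A'(h)=0$ there. Without this integral/sign argument your deduction of $A'(h_{t_*})=0$ has no mechanism, and everything downstream collapses.

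The inductive step is likewise misdescribed. You cannot ``express higher normal derivatives of $h$ in terms of tangential quantities'' from the interior system --- the paper explicitly notes that $L(h,v)$ is too strongly coupled for this. The actual mechanism is: once $A'(h)=0$ is known, the boundary data of the test pair $(k,w)$ displayed above vanishes, so $(k,w)$ is itself an element of $\Ker L_a$; the inductive hypothesis (quantified over \emph{all} kernel elements at \emph{all} $a\in J$, not just your single family) then applies to $(k,w)$, and the limit computation $\big((\nabla^\ell_\nu\Ric)'(k)\big)^\intercal=-\zeta\big(\nabla_\nu\big((\nabla^\ell_\nu\Ric)'(h)\big)\big)^\intercal$ together with the commutation identity \eqref{eq:commutative} upgrades the order by one. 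Your contradiction framing (a single nontrivial family on a subinterval) does not obviously support this all-kernel-elements induction. The Fredholm/semicontinuity setup and the final appeal to Theorem~\ref{th:trivial} are fine, but as written the proof is missing its essential engine.
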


Together with Theorem~\ref{existence} and Theorem~\ref{th:trivial}, the above theorem confirms Conjecture~\ref{co:well-posed} on local well-posedness at an open and dense subfamily of background solutions $(M\setminus\Omega_t,\bar g,\bar u)$ {whose boundaries are analytic}.
We expect the local well-posedness to hold  without the assumption that $\hat \Sigma_t$ is analytic, which is indeed the case if the background static vacuum pair is Euclidean, see \cite[Theorem 7]{An-Huang:2022-JMP}. For a general background static vacuum pair considered here, the main difficulty is that the system $L(h, v)$ is highly coupled (see the explicit formula in \eqref{eq:L} below) and we cannot derive $v$ is trivial without the analyticity assumption.

 We also note that if $\pi_1(M \setminus \Omega) =0$ (for example, $\Omega$ is a large ball), one can choose to foliate along any open subset $\hat \Sigma_t$ of an analytic hypersurface $\Sigma_t$ in Theorem~\ref{generic}. The above theorems also lead to other applications as we shall discuss below.




The well-known Uniqueness Theorem of Static Black Holes says that an asymptotically flat, static vacuum triple $(M, \bar g, \bar u)$ with $\bar u=0$ on the boundary $\partial M$ must belong to the Schwarzschild family~\cite{ Israel:1968, Robinson:1977, Bunting-Masood-ul-Alam:1987}. Note that a Schwarzschild manifold  has the following special properties:
\begin{enumerate}
\item \label{it:CMC}$\bar u =0$ on the boundary $\partial M$, which implies that $\partial M$ has  zero mean curvature.
\item \label{it:increasing} $(M, \bar g)$ is foliated by stable, constant mean curvature $(n-1)$-dimensional spheres. 
\end{enumerate}

It is tempting to ask whether the Schwarzschild metrics can be characterized by replacing Item~\eqref{it:CMC} with other geometric conditions. For example, it is natural to ask whether one may replace {the minimal surface in Item~\eqref{it:CMC} with a nearby CMC surface:}
\begin{enumerate}
\item[(1$^\prime$)] \label{it:CMC'}$\bar u>0$ is very small on $\partial M $, and $\partial M$ has constant  mean curvature $H_{\partial M}>0$ that is very small. 
\end{enumerate}
We make a general remark regarding  Item~(\ref{it:CMC'}$^\prime$). Any asymptotically flat metric (under mild asymptotic assumptions) has a foliation of stable CMC surfaces in an asymptotically flat end with the mean curvature going to zero at infinity by~\cite{Huisken-Yau:1996} (also, for example, \cite{Metzger:2007, Huang:2010}), but when the metric is static $\bar g$, the static potential $\bar u$ is very close to $1$ there and hence cannot be arbitrarily small.  

Our next corollary says that  under both the assumptions Item~(\ref{it:CMC'}$^\prime$) and Item~\eqref{it:increasing}, the static uniqueness fails, as there are many, geometrically distinct static vacuum metrics  satisfying both assumptions.

\begin{Corollary}\label{co2}
Let $B\subset \mathbb R^n$ be an open unit ball and $M:=\mathbb R^n\setminus B$. Given any constant $c>0$, there exists  many, asymptotically flat,  static vacuum triple $(M, g, u)$  such that
\begin{enumerate}
\item $ 0<u< c$ on the boundary $\partial M$, and the mean curvature $\partial M$ is equal to a positive constant $H \in (0, c)$. 
\item $(M, g)$ is foliated by stable, constant mean curvature $(n-1)$-dimensional spheres. 
\item $(M, g)$ is not isometric to a Schwarzschild manifold. 
\end{enumerate}
\end{Corollary}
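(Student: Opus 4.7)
The plan is to start from a Schwarzschild background and apply Theorems~\ref{existence}, \ref{th:trivial}, and \ref{generic} to produce perturbed solutions whose induced boundary metric is non-round; such extensions will automatically fail to be isometric to Schwarzschild.

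Fix a small mass $m > 0$ and let $(\bar M_m, \bar g_m, \bar u_m)$ be the corresponding Schwarzschild triple, whose exterior is foliated by the analytic coordinate spheres $\{\bar \Sigma_r\}$. On each $\bar \Sigma_r$ both the static potential $\bar u_m$ and the mean curvature $H(r)$ are positive constants that vanish as $r \to r_0^+$, where $r_0 = (2m)^{1/(n-2)}$ is the horizon radius. Choose $r_1 > r_0$ so that $\bar u_m|_{\bar \Sigma_{r_1}} < c/2$ and $H(r_1) \in (0, c/2)$. The family $\{\bar \Sigma_r\}$ is a smooth one-sided foliation in the sense of Definition~\ref{def:one-sided}, with $\hat \Sigma_r = \Sigma_r$ (the Schwarzschild exterior is simply connected for $n \ge 3$) and each leaf analytic. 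Theorem~\ref{generic} then yields $r^*$ arbitrarily close to $r_1$ such that $\bar \Sigma_{r^*}$ is static regular of type~(II) in $M_0 := \bar M_m \setminus \Omega_{r^*}$, which is diffeomorphic to $\bR^n \setminus B$ by radial rescaling. By Theorem~\ref{th:trivial}, the kernel condition \eqref{eq:no-kernel} holds at $(\bar g_m, \bar u_m)$ on $M_0$, so Theorem~\ref{existence} applies.

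I would then exploit this to produce the infinite family. For each constant $H$ near $H(r^*)$ with $H \in (0, c)$, and each small $\C^{2,\a}$-perturbation $\tau$ of $\bar g_m^\intercal|_{\bar \Sigma_{r^*}}$ that is not isometric to the round metric on $S^{n-1}$, Theorem~\ref{existence} produces a unique asymptotically flat static vacuum pair $(g, u)$ on $M_0$ realizing $(g^\intercal, H_g) = (\tau, H)$ on $\partial M_0$. Continuity of $u$ in the boundary data, together with $\bar u_m|_{\bar \Sigma_{r^*}} < c/2$, gives $0 < u < c$ on $\partial M_0$, and $\partial M_0$ has constant mean curvature $H \in (0, c)$ by construction. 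For the CMC foliation, the coordinate spheres $\{\bar \Sigma_r\}_{r \ge r^*}$ are strictly stable CMC hypersurfaces in $\bar g_m$; a standard implicit-function argument along $r$ (in the spirit of Huisken--Yau and Metzger) shows that, for $g$ sufficiently $\C^{2,\a}_{-q}$-close to $\bar g_m$, each $\bar \Sigma_r$ deforms smoothly to a stable CMC sphere $\tilde \Sigma_r$ in $(M_0, g)$, with $\tilde \Sigma_{r^*} = \partial M_0$, and together the $\tilde \Sigma_r$ foliate $M_0$. Finally, if $(M_0, g)$ were isometric to a Schwarzschild manifold, then by Brendle's rigidity for closed CMC hypersurfaces in Schwarzschild, $\partial M_0$ would correspond under the isometry to a coordinate sphere, forcing $\tau$ to be round and contradicting our choice.

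The main obstacle I foresee is extending the CMC foliation all the way down to $\partial M_0$ for the perturbed $g$: while CMC foliations near spatial infinity are classical, one must propagate leaves uniformly through the finite region $\{r \ge r^*\}$, exploiting strict stability of the Schwarzschild coordinate spheres and matching with the prescribed constant boundary mean curvature $H$. The quantitative perturbation control supplied by Theorem~\ref{existence} should make this tractable, and Brendle's rigidity then yields the final non-Schwarzschild conclusion.
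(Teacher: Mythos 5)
Your proposal is correct and follows essentially the same route as the paper: fix a Schwarzschild background, choose a coordinate sphere with small potential and small positive mean curvature, invoke Theorem~\ref{generic} (via the analytic CMC foliation) to get static regularity of type~(II), apply Theorems~\ref{th:trivial} and~\ref{existence} with a non-round prescribed boundary metric, and conclude non-rigidity via stability of the CMC foliation and Brendle's uniqueness theorem. The only differences are cosmetic (you allow the prescribed constant mean curvature to vary near $H(r^*)$ where the paper fixes $\phi = H_{g_m}$, and you flag the propagation of the CMC foliation more explicitly than the paper does).
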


We include a fundamental result used in Theorem~\ref{th:trivial} that  is  of independent interest.  Let $h$ be a symmetric $(0,2)$-tensor on a Riemannian manifold $(M, g)$. We say a vector field $X$ is  \emph{$h$-Killing} if  $L_X g =h$. The next theorem shows that if $h$ is analytic, then a local $h$-Killing vector can globally extend on an analytic manifold. It generalizes the classic result of Nomizu for local Killing vector fields when $h=0$.  
\begin{Theorem}[Cf. { \cite[Lemma 2.6]{Anderson:2008}}]\label{th:extension}
Let $(U, g)$ be a connected, analytic Riemannian manifold. Let $h$ be an analytic, symmetric $(0,2)$-tensor on $U$. Let $\Omega \subset U$ be a connected open subset satisfying  $\pi_1(U, \Omega)=0$. Then an $h$-Killing vector field $X$  in~$\Omega$ can be extended to a unique $h$-Killing vector field on the whole manifold~$U$. 
\end{Theorem}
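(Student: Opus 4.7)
The plan is to generalize Nomizu's classical extension theorem for Killing fields by showing that the pair $(X, \nabla X)$ satisfies a \emph{linear} ODE with analytic coefficients along every geodesic, and then propagating $X$ across $U$ by analytic continuation, using $\pi_1(U,\Omega)=0$ precisely to rule out monodromy. To derive the ODE, set $B:=\nabla X$ and $P(W,Y,Z):=g(\nabla^2_{W,Y}X,Z)$. The $h$-Killing condition $L_X g = h$, differentiated along $W$, gives the symmetry
\begin{equation*}
P(W,Y,Z)+P(W,Z,Y) = (\nabla_W h)(Y,Z),
\end{equation*}
while the Ricci identity for the commutator of covariant derivatives gives
\begin{equation*}
P(W,Y,Z)-P(Y,W,Z) = g(R(W,Y)X,Z).
\end{equation*}
Combining these via the standard Koszul-type cyclic trick (sum the three cyclic permutations of the first identity and eliminate the antisymmetric parts using the second) expresses $P(W,Y,Z)$ explicitly and linearly in $X$, $h$, $\nabla h$, and $R$. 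Hence along any geodesic $\gamma$, after choosing a parallel frame, the components of $X$ and $B$ satisfy a first-order linear ODE system whose coefficients involve only the analytic data $g, h, \nabla h, R$. Given initial data $(X(p), B(p))$ at $p=\gamma(0)$, this system has a unique analytic solution along $\gamma$.

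\textbf{Local analytic extension.} Starting from the given $X$ on $\Omega$, extend it into a normal neighborhood $\mathcal N$ of any point $p\in\Omega$ by solving the ODE along radial geodesics from $p$. The resulting vector field is analytic on $\mathcal N$, because in normal coordinates $g$ (and hence $h, \nabla h, R$) is analytic by M\"uller zum Hagen's theorem invoked elsewhere in the paper. The analytic $(0,2)$-tensor $L_X g - h$ vanishes on the open set $\mathcal N\cap\Omega$, so it vanishes throughout $\mathcal N$ by analytic continuation; the extension is therefore $h$-Killing on $\mathcal N$. Iterating along chains of normal neighborhoods covering any continuous path $\gamma:[0,1]\to U$ from a basepoint $p_0\in\Omega$ gives an analytic $h$-Killing extension $X_\gamma$ in a neighborhood of $\gamma$. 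Standard path-homotopy subdivision (covering a homotopy of paths by small squares lying in single normal neighborhoods, where ODE uniqueness applies) shows that $X_\gamma(q)$ depends only on the relative homotopy class of $\gamma$. For any loop $\gamma\subset\Omega$ based at $p_0$, $X_\gamma$ coincides with the original $X$ on $\Omega$ (by ODE uniqueness: both satisfy the same ODE with the same initial data at $p_0$), so the monodromy is trivial on the image of $\pi_1(\Omega,p_0)\to\pi_1(U,p_0)$. The hypothesis $\pi_1(U,\Omega)=0$ says exactly that this map is surjective; hence the monodromy is trivial on all of $\pi_1(U,p_0)$, and $X$ extends to a single-valued analytic $h$-Killing vector field on $U$.

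\textbf{Uniqueness and main obstacle.} If $X_1,X_2$ are two $h$-Killing extensions, then $Y:=X_1-X_2$ satisfies $L_Y g =0$ and $Y\equiv 0$ on $\Omega$, hence also $\nabla Y\equiv 0$ on $\Omega$, so the same ODE argument (now with $h\equiv 0$ and zero initial data) gives $Y\equiv 0$ on $U$. The hard part is the bookkeeping in the ODE derivation: one must verify that the cyclic combination really produces a \emph{closed} formula for $\nabla^2 X$ in terms of $X$, $h$, $\nabla h$, and $R$ alone, with no leftover second-derivative terms---the new ingredient beyond Nomizu being the inhomogeneous $\nabla h$ contribution. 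The analyticity hypothesis on $g$ and $h$, rather than mere smoothness, is indispensable, both for the identity-theorem step that promotes the local extension to an $h$-Killing extension and for guaranteeing that the ODE-generated extension along any chain of normal neighborhoods stays real-analytic.
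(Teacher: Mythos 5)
Your proposal is correct and its skeleton is essentially the paper's: the cyclic (Koszul-type) computation that closes the system is exactly the identity \eqref{eq:derivativeX} in Lemma~\ref{le:X}, the extension is produced by solving the resulting first-order linear ODE for the pair $(X,\nabla X)$ along radial geodesics in normal neighborhoods (the paper's system \eqref{eq:ODE} and Lemma~\ref{le:Xhat}), and the globalization uses precisely the reading of $\pi_1(U,\Omega)=0$ as surjectivity of $\pi_1(\Omega)\to\pi_1(U)$ to rule out monodromy, as in the paper's concluding argument; your uniqueness argument is also the standard one. The one genuine divergence is the local step showing the extended field $\hat X$ is $h$-Killing on the normal neighborhood. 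You argue via joint analyticity of $\hat X$ in all variables plus the identity theorem applied to $L_{\hat X}g-h$; the paper mentions this route (via a Cauchy--Kovalevskaya argument as in Lemma~\ref{le:V}) but deliberately gives a more elementary alternative in Proposition~\ref{pr:extension}: it first propagates the algebraic relation $g(\omega(Y),Z)+g(\omega(Z),Y)=h(Y,Z)$ using analyticity in the geodesic parameter only, and then shows that $\nabla_Y\hat X-\omega(Y)$, coupled with $\nabla_Y\omega-R(Y,\hat X)$, satisfies a linear ODE with analytic data along each geodesic, hence vanishes identically; this needs only one-variable analyticity along geodesics. Two cautions about your version: joint analyticity of $\hat X$ does not follow merely from analyticity of the coefficients, since the field is assembled geodesic-by-geodesic---you must invoke analytic dependence of ODE solutions on the initial direction (or the Cauchy--Kovalevskaya argument), which is exactly the point the paper's computation is designed to avoid; and the appeal to M\"uller zum Hagen is out of place, since in this theorem the analyticity of $(g,h)$ is a hypothesis and that theorem concerns static vacuum metrics, playing no role here. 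With the parameter-dependence fact supplied, your argument is complete; what the paper's Nomizu-style computation buys is independence from it.
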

We remark that a similar statement already appeared in the work of Anderson~ \cite[Lemma 2.6]{Anderson:2008} with an outline of proof. We give an alternative proof similar to Nomizu's proof. The proof is included in Section~\ref{se:h}.

\medskip 
\noindent{\bf Structure of the paper:} 
 In Section~\ref{se:prelim}, we provide preliminary results that will be used in later sections. In Section~\ref{se:gauge}, we introduce the gauge conditions essential to study the boundary value problem and discuss the kernel of the linearized problem. In Section~\ref{se:static-regular}, we prove Theorem~\ref{th:trivial} and, along the way, discuss the assumptions of static regular assumptions. Then we complete the proof of Theorem~\ref{existence} in Section~\ref{se:solution}. Theorem~\ref{generic} and Corollary~\ref{co2} are proven in Section~\ref{se:perturbation}. We prove Theorem~\ref{th:extension} in Section~\ref{se:h}, which can be read independently of all other sections.

\section{Preliminaries}\label{se:prelim}

In this section, we collect basic definitions, notations,  facts, and some fundamental results that will be used in the later sections.

\subsection{The structure at infinity}

Let $n\ge 3$ and $M$ denote an $n$-dimensional smooth complete manifold (possibly with nonempty compact boundary) such that there are compact subsets $B\subset M$, $B_1\subset \mathbb{R}^n$, and a diffeomorphism $\Phi: M\setminus B\longrightarrow \mathbb R^n\setminus B_1$. We use the chart $\{ x \}$ on $M\setminus B$ from $\Phi$ and a fixed atlas of $B$ to define the weighted H\"older space $\C^{k,\alpha}_{-q}(M)$ for $k=0, 1, \dots$, $\alpha \in (0, 1)$, and $\frac{n-2}{2} < q < n-2$. See the definition in, for example, \cite[Section 2.1]{An-Huang:2021}. For $f\in  \C^{k,\alpha}_{-q} (M)$, we may also write $f= O^{k,\alpha}(|x|^{-q})$ to emphasize its fall-off rate.

Let  $g$ be a Riemannian metric on $M$. We say that  $(M, g)$ is asymptotically flat (at the rate $q$ and with the \emph{structure at infinity} $(\Phi, x)$) if $g- g_{\mathbb E} \in \C^{2,\alpha}_{-q}(M)$ where $(g_{\mathbb E})_{ij} = \delta_{ij}$ in the chart~$\{x\}$ of $M$ via $\Phi$ from a Cartesian chart of $\mathbb R^n$ and $g_{\mathbb E}$ is smoothly extended to the entire $M$.  The ADM mass of $g$ is defined by 
\begin{align}\label{eq:mass}
	m_{\mathrm ADM}(g) = \frac{1}{2(n-1)\omega_{n-1}} \lim_{r\to \infty} \int_{|x|=r} \sum_{i,j=1}^n \left( \frac{\partial g_{ij}}{\partial x_i} - \frac{\partial g_{ij}}{\partial x_j} \right) \frac{x_j}{|x|} \, d\sigma.
\end{align}
where $d\sigma$ is the $(n-1)$-volume form on $|x|=r$ induced from the Euclidean metric and $\omega_{n-1}$ is the volume of the standard unit sphere $S^{n-1}$.

It is well-known that two structures of infinity of an asymptotically flat manifold $(M, g)$ differ by a rigid motion of $\{x \}$, see~\cite[Corollary 3.2]{Bartnik:1986}. In particular,  translations and rotations are continuous symmetries, generated by the vector fields  
\begin{align*} 
	Z^{(i)} =\partial_i\quad  \mbox{ and }\quad  Z^{(i,j)}= x_i \partial_j - x_j \partial_i \qquad \mbox{ for }i, j=1,\dots, n.
\end{align*}
We can smoothly extend $Z^{(i)}, Z^{(i,j)}$ to the entire $M$ and  denote the vector space of ``Euclidean Killing vectors'' by  
\begin{align} \label{de:Z}
\mathcal Z  = \Span\{ Z^{(i)}\mbox{ and } Z^{(i,j)}\mbox{ for } i, j = 1,\dots, n\}. 
\end{align}

\begin{definition}\label{de:diffeo}
Let  $(M, g)$ be  asymptotically flat.  
\begin{enumerate}
\item Define the subgroup of $\C^{3,\alpha}_{\mathrm{loc}}$ diffeomorphisms of $M\setminus \Omega$ that fix the boundary $\Sigma$ and the structure at infinity as 
\begin{align*}
	\mathscr{D} (M\setminus \Omega)= &\Big\{\,\psi\in\C^{3,\alpha}_{\mathrm{loc}}(M): \psi|_\Sigma = \mathrm{Id}_\Sigma \mbox{ and } \psi(x) - (Ox+a) = O^{3,\alpha}(|x|^{1-q} ) \\
	 &\mbox{ for some constant matrix $O\in SO(n)$ and constant vector $a\in\mathbb R^n$ }\Big \}.
\end{align*}
\item Let $\cX(M\setminus\Omega)$ be the tangent space of $\mathscr D (M\setminus \Omega)$ at the identity map. In other words,  $\cX(M\setminus \Omega)$ consists of vector fields so that\footnote{When $n=3$, the fall-off rate of $X$ is $1-q>0$, and hence $\cX(M\setminus\Omega)$ can be equivalently defined without including $Z=Z^{(i)}$ the translation vectors.}  
\begin{align*}
\cX(M\setminus\Omega)=\Big\{& X\in \C^{3,\alpha}_{\mathrm{loc}}(M\setminus\Omega): X=0 \mbox{ on } \Sigma \mbox{ and }X-Z= O^{3,\a}(|x|^{1-q}) \mbox{ for some $Z\in \mathcal Z$} \Big\}.
\end{align*}
\end{enumerate}
\end{definition}


\subsection{Regge-Teitelboim functional and integral identities}

Recall the static vacuum operator\footnote{In \cite{An-Huang:2021} the notation $S(g,u)$ was used to denote a different (but related) operator, which is $(R'|_g)^*(u)$ below.}
\[
	S(g, u) = (- u \Ric_{g} + \nabla^2_{ g}  u, \Delta_g u).
\]
Let $(\bar{g}, \bar{u})$  be a static vacuum pair; namely $S(\bar g, \bar u) = 0$. The linearization of $S$ at $(\bar g, \bar u)$ is given by 
\begin{align*}
	S'(h, v) =\Big(-\bar u \Ric'(h) + (\nabla^2)'(h) \bar u- v\Ric + \nabla^2 v, \Delta v + (\Delta'(h)) \bar u\Big).
\end{align*}
 We say that $(h,v)$ is a \emph{static vacuum deformation} (at $(\bar g, \bar u)$) if $S'(h, v)=0$. 
 
 Throughout the paper,  we use $S'|_{(g,u)}$ to denote the linearization of S at $(g,u)$ and similar for other differential operators; and we often omit the subscript $|_{(\bar g, \bar u)}$ when linearizing at a static vacuum pair $(\bar g, \bar u)$, as well as the subscript $|_{\bar g}$ in linearizations of geometric operators such as $\Ric_g$, when the context is clear. We refer to Appendix~\ref{se:formula} for explicit formulas of those linearized operators.

Let $\cM (M\setminus \Omega)$ denote the set of \emph{asymptotically flat pairs} in $M\setminus \Omega$, consisting of pairs $(g, u)$ of Riemannian metrics and scalar functions on $M\setminus \Omega$ satisfying 
\begin{align}\label{eq:af-pair}
	\cM (M\setminus \Omega) = \Big\{ (g, u):  (g-g_{\mathbb E}, u-1)\in \C^{2,\alpha}_{-q}(M\setminus \Omega)\Big\}.
\end{align}
The Regge-Teitelboim functional for $(g, u)\in \cM (M\setminus \Omega)$ is defined as 
\[
	\mathscr{F}(g, u) = -2(n-1)\omega_{n-1} m_{\mathrm ADM }(g) + \int_{M\setminus \Omega} u R_g \, \dvol_g.
\]
Although two terms in the definition of $\mathscr F$ are not   individually well-defined for arbitrary $(g, u)\in \cM (M\setminus \Omega)$ (because $R_g$ is not assumed integrable), it is well-known that the functional described above extends to all $(g, u)\in \cM (M\setminus \Omega)$  in a natural way (see e.g. \cite[Theorem 4.1]{Bartnik:2005} and \cite[p. 1660]{Anderson-Jauregui:2016}) which we also explain below: Use the following alternative expression by rewriting the ADM mass surface integral as a volume integral via divergence theorem and rearranging terms:
\begin{align*}
	\mathscr{F}(g, u)&= \int_{M\setminus \Omega}\Big( R_g - \Div_g (\Div_{g_{\mathbb E}} g - d(\tr_{g_{\mathbb E}} g) ) \Big) u \, \dvol_g \\
	&\quad - \int_{M\setminus \Omega} (\Div_{g_{\mathbb E}} g - d(\tr_{g_{\mathbb E}} g)) (\nabla_g u)  \, \dvol_g\\
	&\quad + \int_\Sigma (\Div_{g_{\mathbb E}} g - d (\tr_{g_{\mathbb E}} g))(\nu_g) u\, d\sigma_g
\end{align*}	
where recall that $g_{\mathbb E}$ is a background metric equal to the Euclidean metric on the exterior coordinate chart and $\nu_g$ is the unit normal on $\Sigma$ pointing to infinity.   For integrability of the first volume integral, it is a standard fact that $\Div_g (\Div_{g_{\mathbb E}} g - d(\tr_{g_{\mathbb E}} g) )$ matches the top-order part of $R_g$ and the other terms decay fast enough. The second volume integral is finite because of the assumed decay rates.

We recall the first variation formula. 
\begin{lemma}[{\cite[Proposition 3.7]{Anderson-Khuri:2013} \cite[Lemma 3.1]{Miao:2007} \cite[Proposition 2.2]{Anderson-Jauregui:2016}}]\label{le:first}
Let $(g(s), u(s))$ be a one-parameter  differentiable family of asymptotically flat pairs such that  $(g(0), u(0) ) = (g, u)$ and $(h, v) = (g'(0), u'(0))\in \C^{2,\alpha}_{-q}(M\setminus \Omega)$. Then,
\begin{align*}
	&\left. \ds\right|_{s=0} \mathscr{F}(g(s), u(s)) \\
	&= \int_{M\setminus \Omega} \Big\langle \big(-u \Ric_g+ \nabla^2_g u - (\Delta_g u) g+ \tfrac{1}{2} u R_g g, R_g\big), \big(h, v\big)\Big\rangle_g \, \dvol_g\\
	&\quad +\int_\Sigma \Big\langle \big(uA_g - \nu_g(u) g^\intercal, 2u\big), \big(h^\intercal, H'|_g(h)\big) \Big\rangle_g \, \da_g.
\end{align*}
\end{lemma}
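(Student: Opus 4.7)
The plan is to differentiate each of the two summands in $\mathscr{F}(g, u)$ separately, keeping careful track of boundary contributions at $\Sigma$ and at infinity, and then to observe that the infinity contribution from the bulk term exactly cancels the variation of the ADM mass.

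First, I would apply the standard linearization identities
\[
R'(h) = -\Delta_g (\tr_g h) + \Div_g \Div_g h - \langle h, \Ric_g\rangle_g, \qquad \bigl(\dvol_g\bigr)' = \tfrac{1}{2}(\tr_g h)\dvol_g,
\]
so that
\[
\ds\Big|_{s=0}\int_{M\setminus\Omega} u R_g \,\dvol_g = \int_{M\setminus\Omega}\Bigl[\, v R_g + u R'(h) + \tfrac{1}{2} u R_g \tr_g h\,\Bigr]\dvol_g.
\]
The non-obvious piece is $\int u R'(h)\,\dvol_g$, which I would handle by integrating by parts twice. Writing $\nu$ for the unit normal on $\Sigma$ pointing to infinity (so the outward normal to $M\setminus\Omega$ at $\Sigma$ is $-\nu$), a pair of applications of the divergence theorem on an exhaustion $(M\setminus\Omega)\cap\{|x|\le r\}$ yields, in the limit $r\to\infty$,
\[
\int_{M\setminus\Omega} u R'(h)\,\dvol_g = \int_{M\setminus\Omega}\bigl\langle -u\Ric_g + \nabla^2_g u - (\Delta_g u)\,g, h\bigr\rangle_g \,\dvol_g + \mathcal{B}_\Sigma + \mathcal{B}_\infty,
\]
where the $(\Delta_g u) g$ term arises from grouping a $-(\Delta u)\tr h$ contribution with the $\tfrac{1}{2} u R_g g$ already present, and $\mathcal{B}_\Sigma$, $\mathcal{B}_\infty$ collect the local boundary integrals on $\Sigma$ and on the large coordinate spheres, respectively.

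Next, I would examine $\mathcal{B}_\infty$ using the asymptotics $u\to 1$, $\nabla u = O(|x|^{-q-1})$, $h = O^{2,\alpha}(|x|^{-q})$. All terms involving $\nabla u$ or a factor $(u-1)$ decay sufficiently fast that they do not contribute in the limit; the surviving integrand is $\bigl((\Div_g h)(\nu) - \nabla_\nu(\tr_g h)\bigr)$, which to leading order is $(\partial_i h_{ij} - \partial_j h_{ii})\nu^j$. By the Bartnik mass formula \eqref{eq:mass}, this gives exactly $\mathcal{B}_\infty = 2(n-1)\omega_{n-1}\,\ds|_{s=0} m_{\mathrm{ADM}}(g(s))$, canceling the explicit $-2(n-1)\omega_{n-1}$ factor in front of $m_{\mathrm{ADM}}$ in $\mathscr{F}$.

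The remaining task, which I expect to be the main technical step, is to rewrite the boundary integrand $\mathcal{B}_\Sigma$ in the form
\[
\Bigl\langle\bigl(uA_g - \nu_g(u)\,g^\intercal,\;2u\bigr),\;\bigl(h^\intercal,\;H'|_g(h)\bigr)\Bigr\rangle_g.
\]
For this, I would decompose $h$ into its tangential, mixed, and normal-normal parts along $\Sigma$, and insert the standard first variation formula for mean curvature,
\[
2H'|_g(h) = -2\Div_\Sigma\bigl(h(\nu,\cdot)^\intercal\bigr) + \nabla_\nu(\tr_g h - h(\nu,\nu)) - \langle A_g, h^\intercal\rangle_{g^\intercal} - H_g\,h(\nu,\nu),
\]
(with signs to be checked against my conventions). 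After substitution, all terms involving $h(\nu,\cdot)^\intercal$ and $h(\nu,\nu)$ rearrange into the asserted form, with the only extra contribution being a tangential divergence $\Div_\Sigma(\,\cdot\,)$ that integrates to zero on the closed hypersurface $\Sigma$. The $v R_g$ interior term together with the $R_g$ component of the dual pairing account for the $v$-slot on the right-hand side, completing the identity.

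The main obstacle is the bookkeeping in the last step: identifying the correct combination of tangential and normal components of $h$ that collapses $\mathcal{B}_\Sigma$ into the compact pairing with $(h^\intercal, H'|_g(h))$ without introducing spurious boundary-of-boundary terms. The other ingredients—linearization of $R_g$, double integration by parts, and the Bartnik mass identity at infinity—are standard and go through once the asymptotic rates $q\in\bigl(\tfrac{n-2}{2}, n-2\bigr)$ are invoked to justify exchanging limits and integrals.
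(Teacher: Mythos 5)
The paper does not actually prove this lemma: it is imported by citation from \cite[Proposition 3.7]{Anderson-Khuri:2013} and \cite[Lemma 3.1]{Miao:2007}, so there is no in-paper argument to compare against. Your sketch reconstructs the standard computation used in those references — linearize $R_g$ and $\dvol_g$, integrate by parts twice, recognize the surviving flux at infinity as $2(n-1)\omega_{n-1}\,\ds|_{s=0}m_{\mathrm{ADM}}(g(s))$ (which cancels the explicit mass term in $\mathscr F$, using $q>\tfrac{n-2}{2}$ to kill the quadratically small contributions), and repackage the $\Sigma$-boundary term via the first variation of $H$ — and the structure is sound, including the orientation convention that the outward normal of $M\setminus\Omega$ along $\Sigma$ is $-\nu$ and the fact that $\int_\Sigma \Div_\Sigma(\cdot)\,\da=0$ on the closed hypersurface $\Sigma$.

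Two points to tighten. First, the remark that the $(\Delta_g u)g$ term ``arises from grouping a $-(\Delta u)\tr h$ contribution with the $\tfrac12 uR_g g$ already present'' conflates two independent summands: $-(\Delta_g u)\tr_g h=\langle -(\Delta_g u)g,h\rangle_g$ falls out of the second integration by parts, while $\tfrac12 uR_g\tr_g h$ comes separately from the variation of the volume form; both simply sit side by side in the final bulk pairing. Second, the mean-curvature variation you quote carries an extra $-\langle A_g,h^\intercal\rangle_{g^\intercal}$ relative to the paper's formula $H'|_g(h)=\tfrac12\nu(\tr h^\intercal)-\Div_\Sigma\omega-\tfrac12 h(\nu,\nu)H_g$ from Appendix~\ref{se:formula}; whether that term is present depends on whether $\nabla_\nu$ acts on the full trace or on the tangential trace along the equidistant foliation, and the exact cancellation of $\mathcal B_\Sigma$ against $\int_\Sigma\langle(uA_g-\nu_g(u)g^\intercal,\,2u),(h^\intercal,H'|_g(h))\rangle_g\,\da_g$ only closes with the version consistent with the paper's conventions. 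Neither issue affects the validity of the method, but the second is exactly the bookkeeping step you flag as the main obstacle, so it should be carried out against \eqref{eq:sff} rather than left to a sign check.
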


The first variation formula holds for any asymptotically flat pair $(g, u)$. {A static vacuum pair $(\bar g, \bar u)$ is a critical point of the functional~$\mathscr{F}$  among deformations $(h, v)$ where $h$ satisfies $(h^\intercal, H'(h) )=0$.  In the next lemma, we show that if} $R'(h)=0$ in $M\setminus \Omega$, then the deformation $h$ ``preserves the mass'' in the sense that $(\bar g,\bar u)$ is also a critical point of the ADM mass functional among such $h$.

\begin{lemma}\label{le:mass}
Let $(\bar g, \bar u)$ be a static vacuum pair in $M\setminus \Omega$. Suppose $h\in \C^{2,\alpha}_{-q}(M\setminus \Omega)$ satisfies $R'(h)=0$ in $M\setminus \Omega$ and $(h^\intercal, H'(h))=0$ on $\Sigma$. Then 
\[
	\lim_{r\to \infty} \int_{|x|=r} \sum_{i,j=1}^n \left( \frac{\partial h_{ij}}{\partial x_i} - \frac{\partial h_{ij}}{\partial x_j}\right) \frac{x_j}{|x|} \, d\sigma_{\bar g}=0. 
\]
\end{lemma}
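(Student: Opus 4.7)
The strategy is to apply the first variation formula for the Regge--Teitelboim functional $\mathscr{F}$ (Lemma~\ref{le:first}) at the static vacuum background $(\bar g,\bar u)$ and observe that, under the given hypotheses, both the bulk and boundary integrands on the right-hand side vanish identically, leaving the linearized ADM mass as the only surviving piece.

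Concretely, I would fix any smooth one-parameter family $(g(s),u(s))\in\cM(M\setminus\Omega)$ with $(g(0),u(0))=(\bar g,\bar u)$ and $(g'(0),u'(0))=(h,0)$ (e.g.\ $g(s)=\bar g+sh$ and $u(s)=\bar u$; the choice of $v$ is immaterial). Since $(\bar g,\bar u)$ is static vacuum with $\bar u>0$, taking the trace of $-\bar u\Ric_{\bar g}+\nabla^2_{\bar g}\bar u=0$ and combining with $\Delta_{\bar g}\bar u=0$ yields $R_{\bar g}=0$. Consequently each factor appearing in the bulk integrand of Lemma~\ref{le:first} vanishes at $(\bar g,\bar u)$, so the bulk integral is zero. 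The boundary integral is also zero, because our hypotheses $h^\intercal=0$ and $H'(h)=0$ on $\Sigma$ kill the second slot in the boundary pairing. Therefore
\[
\left.\ds\right|_{s=0}\mathscr{F}(g(s),u(s))=0.
\]

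Next I would compute the same derivative directly from the definition
$\mathscr{F}(g,u)=-2(n-1)\omega_{n-1}\,m_{\mathrm{ADM}}(g)+\int_{M\setminus\Omega}uR_g\,\dvol_g$. Using the standard identity $\left.\ds\right|_{s=0}\dvol_{g(s)}=\tfrac12\tr_{\bar g}(h)\,\dvol_{\bar g}$ together with $R_{\bar g}=0$, the variation of the bulk integral at $(\bar g,\bar u)$ collapses to $\int_{M\setminus\Omega}\bar u\,R'(h)\,\dvol_{\bar g}$, which vanishes by the hypothesis $R'(h)=0$. Hence $-2(n-1)\omega_{n-1}\,m'_{\mathrm{ADM}}(h)=0$, where $m'_{\mathrm{ADM}}(h)$ denotes the linearization of the ADM mass along $h$.

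Finally, I would reconcile $m'_{\mathrm{ADM}}(h)$ with the boundary limit in the statement. Linearizing the expression in \eqref{eq:mass} gives exactly $\frac{1}{2(n-1)\omega_{n-1}}$ times the claimed flux integral against the Euclidean surface form $d\sigma$; since $\bar g-g_{\mathbb E}\in\C^{2,\alpha}_{-q}$ with $q>\tfrac{n-2}{2}$, the discrepancy between $d\sigma$ and $d\sigma_{\bar g}$ on $|x|=r$ is $O(|x|^{-q})\,d\sigma$, which contributes an error of size $O(r^{n-2-2q})\to0$ and is therefore invisible in the limit. No substantive obstacle is anticipated: the entire argument is a clean unpacking of the first variation formula at a static vacuum background with vanishing linearized Bartnik data.
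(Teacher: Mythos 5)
Your proposal is correct and follows essentially the same route as the paper: the paper's proof also takes the path $(g(s),u(s))=(\bar g,\bar u)+s(h,v)$, applies the first variation formula of $\mathscr{F}$ (Lemma~\ref{le:first}) at the static vacuum background together with the hypotheses $R'(h)=0$ and $(h^\intercal,H'(h))=0$ to conclude the linearized ADM mass vanishes, and then reads off the flux identity from \eqref{eq:mass}. Your extra remarks (choosing $v=0$, and the $O(r^{n-2-2q})$ comparison between $d\sigma$ and $d\sigma_{\bar g}$) are fine elaborations of steps the paper leaves implicit.
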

\begin{proof}
For any function $v\in \C^{2,\alpha}_{-q}(M\setminus\Omega)$, define $(g(s), u(s) ) =( \bar g, \bar u)+s (h, v)$. Notice that $\left.\ps\right|_{s=0} R_{g(s)} = R'(h)=0$. Thus, 
\[
	 -2(n-1)\omega_{n-1} \left. \ds\right|_{s=0} m_{\mathrm ADM }(g(s))= \left. \ds\right|_{s=0}\mathscr{F}(g(s), u(s)) =0.
\]
In the last equality, we use the first variation formula,  Lemma~\ref{le:first},  and the assumptions on $h$. The integral identity follows \eqref{eq:mass}. 
\end{proof}

{Suppose in addition $(h, v)$ is a static vacuum deformation, i.e. $S'(h, v)=0$, which implies $R'(h)=0$.}  The Laplace equation  for $v$ in $S'(h, v)=0$ gives $\Delta v =O^{0,\alpha} (|x|^{-2-2q})$ and thus by harmonic asymptotics $v(x)= c|x|^{2-n} + O^{2,\alpha}(|x|^{\max\{ -2q, 1-n\}})$ for some real number $c$. We will show  that $v$ also ``preserves the mass'' in that  $c=0$ in Lemma~\ref{le:v-mass} below.

In \cite[Section 3]{An-Huang:2021}, we used the first and second variations of the functional $\mathscr{F}$ to derive several fundamental properties for the static vacuum operators. We list the properties that will be used later in this paper whose proofs in directly extend to our current setting. First, we recall the Green-type identity \cite[Proposition 3.3]{An-Huang:2021}.
\begin{proposition}[Green-type identity]
Let $(g, u)$ be an asymptotically flat pair in $M\setminus \Omega$. For any $(h, v), (k,w)\in \C^{2}_{-q}(M\setminus \Omega)$, we have 
\begin{align}\label{equation:Green}
\begin{split}
	&\int_{M\setminus \Omega}\Big\langle P(h, v),(k,w)\Big\rangle_g  \dvol_g-\int_{M\setminus \Omega}\Big\langle  P(k, w), (h, v)\Big\rangle_g  \dvol_g\\
	&=- \int_\Sigma  \Big\langle  Q(h,v) , \big(k^\intercal, H'|_g(k)\big)\Big\rangle_g  \da_g+\int_\Sigma  \Big\langle Q(k, w), \big(h^\intercal, H'|_g(h)\big)\Big \rangle_g  \da_g.
\end{split}
\end{align}
 The linear differential operators $P, Q$ at $(g, u)$ are defined by\footnote{Technically speaking, the notations $P(h, v), Q(h, v)$ should have the subscript $(g, u)$ to specify their dependence on $(g, u)$, but for the rest of the paper we will only consider the case that $(g, u)$ is an arbitrary but fixed static vacuum pair $(\bar g, \bar u)$, and thus we omit the subscript.}
\begin{align*}
	P(h, v) &= \Big(\big((R'|_g)^*(u) + \tfrac{1}{2} u R_g g\big)', \, R'|_g(h)\Big) - \Big(2\big((R'|_g)^*(u)+ \tfrac{1}{2} u R_g g\big)\circ h, 0\Big) \\
	&\quad + \tfrac{1}{2} (\tr_g h)  \Big((R'|_g)^*(u) + \tfrac{1}{2} u R_g g,\, R_g \Big) \quad \quad  \mbox{ in } M\setminus \Omega
	\\
	Q(h, v)&= \Big( \big(uA_g-\nu_g(u) g^\intercal\big)', \, 2v\Big)  - \Big(2\big(uA_g-\nu_g(u) g^\intercal\big)\circ h^\intercal, 0\Big)\\
	&\quad +\tfrac{1}{2} (\tr_g h^\intercal )\Big(uA_g - \nu_g(u) g^\intercal , \,  2u \Big) \quad \quad \mbox{ on } \Sigma,
\end{align*}
where the prime denotes the linearization at $(g, u)$ and recall the formal $\mathcal L^2$-adjoint operator $(R'|_g)^*(u) := - u\Ric_g + \nabla^2_g u - (\Delta_g u) g$.
\end{proposition}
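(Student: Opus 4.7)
The plan is to derive this identity from the symmetry of the second variation of the Regge-Teitelboim functional $\mathscr F$. Writing the first variation formula of Lemma~\ref{le:first} compactly as
\[
\left.\ds\right|_{s=0}\mathscr F(g(s), u(s)) = \int_{M\setminus\Omega} \langle \mathcal E(g,u), (h,v)\rangle_g \dvol_g + \int_\Sigma \langle \mathcal B(g,u), (h^\intercal, H'|_g(h))\rangle_g \da_g,
\]
where $\mathcal E(g,u) = \big((R'|_g)^*(u) + \tfrac12 u R_g g,\, R_g\big)$ and $\mathcal B(g,u) = \big(u A_g - \nu_g(u) g^\intercal,\, 2u\big)$, the desired identity essentially becomes the statement that the Hessian of $\mathscr F$, viewed as a symmetric bilinear form on deformations $(h,v)$, interchanges its two arguments only up to the displayed boundary contribution.

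To implement this, I would consider the two-parameter family $(g + sh + tk,\, u + sv + tw)$ and compute the mixed partial $\partial_t \partial_s \mathscr F|_{(0,0)}$. Applying the first variation formula at the pair $(g+tk, u+tw)$ and then differentiating in $t$, three sources of $t$-dependence contribute: (i) the linearizations of $\mathcal E$ and $\mathcal B$ themselves, (ii) the metric-dependence of the $g$-inner products, which produces the $-2(\cdot)\circ k$ and $-2(\cdot)\circ k^\intercal$ terms, and (iii) the variation of the volume forms $\dvol_g$ and $\da_g$, producing the $\tfrac12(\tr_g k)$ and $\tfrac12(\tr_g k^\intercal)$ factors. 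Assembling (i)--(iii) reproduces exactly the three terms in the definitions of $P(k,w)$ and $Q(k,w)$, yielding an interior contribution $\int \langle P(k,w), (h,v)\rangle_g \dvol_g$ and a boundary contribution $\int_\Sigma \langle Q(k,w), (h^\intercal, H'|_g(h))\rangle_g \da_g$. One additional boundary term arises, namely $\int_\Sigma 2u \cdot H''|_g(k, h)\, \da_g$, coming from differentiating the $t$-dependent operator $H'|_{g+tk}(h)$, where $H''|_g$ denotes the second variation of the mean curvature.

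Reversing the order of differentiation to compute $\partial_s \partial_t \mathscr F|_{(0,0)}$ produces the analogous expression with $(h,v)$ and $(k,w)$ interchanged. Equality of mixed partials, combined with the symmetry $H''|_g(k, h) = H''|_g(h, k)$, cancels the extra $H''$ contributions on both sides. Rearranging the resulting identity then yields exactly \eqref{equation:Green}. The only nontrivial bookkeeping I expect is the explicit verification that sources (i)--(iii) reassemble into the precise three terms in the formulas for $P$ and $Q$, which reduces to standard linearization identities for $\dvol_g$, $\da_g$, and the inner products $\langle\cdot,\cdot\rangle_g$. Finally, the decay assumption $(h,v), (k,w) \in \C^{2}_{-q}(M\setminus\Omega)$ with $q > \tfrac{n-2}{2}$ makes the interior integrand of order $|x|^{-2-2q}$ absolutely integrable and kills any putative boundary contribution at infinity, so the variational manipulations are justified without any additional decay assumption.
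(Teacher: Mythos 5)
Your proposal is correct and takes essentially the same route as the paper's source for this result: the proof the paper cites (An--Huang 2021, Prop.\ 3.3, referenced just before the statement) likewise obtains \eqref{equation:Green} by differentiating the first variation formula of Lemma~\ref{le:first} along a two-parameter family and invoking equality of mixed partials, with the symmetric $H''$ boundary contributions cancelling. Your identification of the three sources of terms (linearized integrands, metric dependence of the inner products, and variation of the volume and area forms) reproduces exactly the structure of $P$ and $Q$, so no further comparison is needed.
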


In the special case that $(g, u)$ is a static vacuum pair $(\bar g, \bar u)$, it is direct to verify that  $P(h, v)=0$ if and only if $S'(h, v)=0$. Thus the Green-type identity leads to the following direct consequence. 
\begin{corollary}[{\cite[Corollary 3.5]{An-Huang:2021}}]\label{co:Green}
Let $(\bar g, \bar u)$ be an asymptotically flat, static vacuum pair in $M\setminus \Omega$.  Suppose that  $(h, v), (k,w)\in \C^{2}_{-q}(M\setminus \Omega)$ are static vacuum deformations and that $(h, v)$ satisfies $h^\intercal=0, H' (h)=0$ on $\Sigma$. Then 
\[
	\int_\Sigma  \Big\langle  Q(h,v) , \big(k^\intercal, H'(k)\big)\Big\rangle_{\bar g}  \da_{\bar g}=0
\]
where $Q(h,v) =  \Big( vA_{\bar g} + \bar u A' (h) - \big(\dnu \bar u + \nu (v)\big) g^\intercal, 2v \Big)$. 
\end{corollary}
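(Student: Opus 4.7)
The plan is to deduce this corollary directly from the Green-type identity \eqref{equation:Green} by using the static vacuum hypotheses on both pairs and the vanishing boundary data on $(h,v)$.

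First, I would apply \eqref{equation:Green} with $(g,u) = (\bar g, \bar u)$ to the two deformations $(h,v)$ and $(k,w)$. At a static vacuum pair, the facts $R_{\bar g} = 0$ and $(R'|_{\bar g})^*(\bar u) = -\bar u\Ric_{\bar g} + \nabla^2_{\bar g}\bar u - (\Delta_{\bar g}\bar u)\bar g = 0$ (the latter because $S(\bar g,\bar u)=0$) make the second and third summands in the formula for $P$ drop out. A short unwinding of the remaining first summand shows that $P(h,v)=0$ reduces exactly to $S'(h,v)=0$, and likewise for $(k,w)$ — this is the equivalence $P=0 \Leftrightarrow S'=0$ at a static vacuum pair, which the paper already singles out. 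Consequently both bulk integrals on the left-hand side of \eqref{equation:Green} vanish.

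Next, on the boundary side, the hypotheses $h^\intercal = 0$ and $H'(h) = 0$ on $\Sigma$ kill the second boundary integral entirely, since its integrand pairs $Q(k,w)$ against $(h^\intercal, H'(h)) = (0,0)$. Thus \eqref{equation:Green} collapses to
\[
0 = -\int_\Sigma \bigl\langle Q(h,v),\, (k^\intercal, H'(k))\bigr\rangle_{\bar g}\, \da_{\bar g},
\]
which, up to the sign, is the asserted identity.

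Finally, I would simplify the expression $Q(h,v)$ under the assumption $h^\intercal = 0$. In the general formula for $Q$, the second term $\bigl(2(uA_g - \nu_g(u)g^\intercal)\circ h^\intercal, 0\bigr)$ and the third term $\tfrac12 (\tr_g h^\intercal)(uA_g - \nu_g(u)g^\intercal,\, 2u)$ both vanish, leaving $Q(h,v) = \bigl((uA_g - \nu_g(u) g^\intercal)',\, 2v\bigr)$ evaluated at $(\bar g, \bar u)$. Linearizing: the $uA_g$ piece gives $v A_{\bar g} + \bar u A'(h)$; the $\nu_g(u) g^\intercal$ piece gives $\bigl(\nu'(h)(\bar u) + \nu(v)\bigr)g^\intercal + \nu(\bar u)(g^\intercal)'$, and the last term drops since $(g^\intercal)' = h^\intercal = 0$. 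This yields exactly $Q(h,v) = \bigl(vA_{\bar g} + \bar u A'(h) - (\nu'(h)(\bar u) + \nu(v))g^\intercal,\, 2v\bigr)$, completing the proof. There is no real obstacle here; the only care needed is the bookkeeping in the linearization of $\nu_g(u) g^\intercal$ to confirm that no stray terms survive once $h^\intercal = 0$ is imposed.
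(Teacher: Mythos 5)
Your proposal is correct and follows essentially the same route as the paper, which presents this corollary as a direct consequence of the Green-type identity \eqref{equation:Green}: at a static vacuum pair $P(h,v)=0$ is equivalent to $S'(h,v)=0$, so both bulk integrals vanish, the boundary term pairing $Q(k,w)$ with $(h^\intercal,H'(h))=(0,0)$ drops, and the stated simplified form of $Q(h,v)$ follows from $h^\intercal=0$ exactly as you computed.
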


Let  $\psi_s$ be a smooth family of diffeomorphisms in  $\mathscr{D} (M\setminus \Omega)$ from Definition~\ref{de:diffeo} (also recall its tangent space $\mathcal{X}(M\setminus \Omega)$ defined there). Computing the first variation of the functional $\mathscr F$ along the family of pull-back pairs $(g(s), u(s))=\psi_s^*(g, u)$ as in \cite[Proposition 3.2]{An-Huang:2021} gives the following identities.

\begin{proposition}[Orthogonality]\label{proposition:cokernel}
Let $(M, g, u)$ be an asymptotically flat triple with $u>0$. For any $X\in \mathcal{X}(M\setminus \Omega)$ and any $(h, v) \in \C^{2}_{-q}(M \setminus \Omega)$,  we have
\begin{align}
	\int_{M\setminus \Omega}  \bigg\langle S(g, u),\, \kappa_0(g, u, X) \bigg \rangle_g\, d\mathrm{vol}_g &=0 \label{equation:cokernel1},
\end{align}
where 
\begin{align}\label{eq:kappa}
	\kappa_0 (g, u, X)&:= \Big( L_X g - \big(\Div_g X+ u^{-1} X(u) \big)g, \, - \Div_g X + u^{-1} X(u)\Big).
\end{align}
Linearizing the identity at a static vacuum pair $(g, u) = (\bar g, \bar u)$ yields
\begin{align}
	\int_{M\setminus \Omega} \bigg\langle S' (h, v),\, \kappa_0 (\bar{g}, \bar{u}, X)\bigg\rangle_{\bar g} \, d\mathrm{vol}_{\bar g} = 0.  \label{equation:cokernel2} 
\end{align}
\end{proposition}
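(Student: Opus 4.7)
The plan is to derive the nonlinear identity \eqref{equation:cokernel1} from the diffeomorphism invariance of the Regge-Teitelboim functional $\mathscr F$, and then obtain \eqref{equation:cokernel2} by linearizing. Given $X\in \mathcal X(M\setminus \Omega)$, I would first produce a smooth family $\psi_s\in \mathscr D(M\setminus \Omega)$ with $\psi_0=\mathrm{Id}$ and $\tfrac{d}{ds}\big|_{s=0}\psi_s=X$ (for example, a cut-off modification of the flow of $X$). Because $\psi_s$ fixes $\Sigma$ pointwise and is asymptotic to a Euclidean rigid motion, the ADM mass is invariant under the change of structure at infinity, and the bulk integral $\int_{M\setminus \Omega} u R_g\,\dvol_g$ is invariant by the change-of-variables formula. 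Hence $\mathscr F(\psi_s^*g,\psi_s^*u)=\mathscr F(g,u)$ for all $s$.

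Differentiating this identity at $s=0$ and applying Lemma~\ref{le:first} to $(h,v)=(L_Xg,X(u))$ gives
\begin{align*}
0 &= \int_{M\setminus \Omega} \big\langle \big(T_1 - T_2\, g + \tfrac{1}{2} u R_g g,\, R_g\big),\, (L_Xg,\, X(u))\big\rangle_g\, \dvol_g \\
&\quad + \int_\Sigma \big\langle \big(u A_g - \nu_g(u) g^\intercal,\, 2u\big),\, \big((L_Xg)^\intercal,\, H'|_g(L_Xg)\big)\big\rangle_g\, \da_g,
\end{align*}
where I abbreviate $T_1=-u\Ric_g+\nabla^2_g u$ and $T_2=\Delta_g u$, so that $S(g,u)=(T_1,T_2)$. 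The boundary integral vanishes: since $X=0$ on $\Sigma$ we get $\nabla_Y X|_\Sigma=0$ for every $Y$ tangent to $\Sigma$, hence $(L_Xg)^\intercal=0$ on $\Sigma$; and since $\psi_s$ fixes $\Sigma$ pointwise, $H_{\psi_s^*g}|_\Sigma=H_g|_\Sigma$, so $H'|_g(L_Xg)=0$ on $\Sigma$.

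It then remains to recognize the bulk integrand as $\langle S(g,u),\kappa_0(g,u,X)\rangle_g$. Using $\tr_g(L_Xg)=2\Div_g X$, the variation-of-$\mathscr F$ integrand equals $\langle T_1,L_Xg\rangle_g+(uR_g-2T_2)\Div_g X + R_g X(u)$. On the other hand, expanding $\langle S(g,u),\kappa_0(g,u,X)\rangle_g$ directly from the definition of $\kappa_0$ and substituting $\tr_g T_1=-uR_g+T_2$ yields the same expression: the two $u^{-1}X(u)$ factors in $\kappa_0$ contribute canceling $T_2u^{-1}X(u)$ terms together with a surviving $R_gX(u)$ term coming from $-(u^{-1}X(u))(-uR_g)$. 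This establishes \eqref{equation:cokernel1}.

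For \eqref{equation:cokernel2}, observe that \eqref{equation:cokernel1} is a functional identity in $(g,u)$ for each fixed $X$. Linearizing both sides at the static vacuum $(\bar g,\bar u)$, every term arising from varying $\kappa_0(g,u,X)$, the metric in the inner product, or the volume form is multiplied by $S(\bar g,\bar u)=0$ and drops out, leaving exactly $\int_{M\setminus \Omega}\langle S'(h,v),\kappa_0(\bar g,\bar u,X)\rangle_{\bar g}\,\dvol_{\bar g}=0$. The main obstacle is the algebraic reorganization in the third step: one has to spot the precise combination $\kappa_0$, including its particular $u^{-1}X(u)$ ``conformal'' factors, so that the first variation of $\mathscr F$ pairs $S(g,u)$ naturally against $\kappa_0$ in $\mathcal L^2$; once that combination is in hand, the rest of the argument is standard.
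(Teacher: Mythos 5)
Your argument is correct and is essentially the paper's own proof: the paper derives \eqref{equation:cokernel1} and \eqref{equation:cokernel2} precisely by computing the first variation of the Regge--Teitelboim functional $\mathscr F$ along the pull-back family $(\psi_s^*g,\psi_s^*u)$ (citing \cite[Proposition 3.2]{An-Huang:2021}), using that $\psi_s$ fixes $\Sigma$ and the structure at infinity so the boundary terms drop and the bulk integrand reorganizes into $\langle S(g,u),\kappa_0(g,u,X)\rangle_g$, and then linearizing at the static vacuum pair where $S(\bar g,\bar u)=0$. Your identification of the $\kappa_0$ combination via $\tr_g(L_Xg)=2\Div_gX$ and $\tr_g(-u\Ric_g+\nabla^2_gu)=-uR_g+\Delta_gu$ checks out, so the proposal matches the intended proof.
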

The identities above are important because they will be used to find ``spaces'' complementing to the ranges of the nonlinear operator $S$ and its linearization $S'$.

\subsection{$h$-Killing vectors and the geodesic gauge}\label{se:h1}

In this section we study the situation when a symmetric $(0,2)$-tensor $h$ takes the form $L_X g$. The results here hold in a general Riemannian manifold $(U, g)$, and Corollary~\ref{co:geodesic} below is  used to prove Theorem~\ref{th:trivial}.

Given a symmetric $(0,2)$-tensor $h$, we define the $(1,2)$-tensor $T_h$ by, in local coordinates,  
\[
	(T_h)^i_{jk}= \tfrac{1}{2} (h^i_{j;k} + h^i_{k;j} - h_{jk;}^{\;\;\;\;\; i} )
\]
where the upper indices are all raised by $g$, e.g. $h^i_j=g^{i\ell} h_{\ell j}$. Note that $(T_h)^i_{jk}$ is symmetric in $(j, k)$, and thus we may use  $T_h(V, \cdot)$ to unambiguously denote its contraction with a vector $V$ in the index either $j$ or $k$. We say $X$ is an \emph{$h$-Killing vector} if $L_X g = h$.

\begin{lemma}\label{le:X}
Let $X$ be an $h$-Killing vector field.  Then for any vector $V$, 
\begin{align*}
	\nabla_V (\nabla X) = - R(X, V) + T_h (V,\cdot) 
\end{align*}
where the curvature tensor $R(X, V) := \nabla_X \nabla_V - \nabla_V \nabla_X - \nabla_{[X, V]}$. 
\end{lemma}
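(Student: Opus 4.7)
The plan is to generalize the classical identity for Killing vector fields, namely $\nabla_V(\nabla X) = -R(X,V)$ when $L_X g=0$, by tracking the correction coming from $L_X g = h \neq 0$. The correction will be exactly $T_h(V,\cdot)$. The computation is a three-step commutation argument carried out in local coordinates on the components $\nabla_k \nabla_j X_i$, mirroring the proof of the Killing case but with the extra source term $h$ present after each application of the antisymmetry relation.

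Concretely, I rewrite the $h$-Killing equation as $\nabla_a X_b = -\nabla_b X_a + h_{ab}$. Starting from $\nabla_k \nabla_j X_i$, I apply this identity to the inner derivative, then commute covariant derivatives via the Ricci formula $[\nabla_a,\nabla_b]X_c = R_{abc}{}^m X_m$, and repeat this swap-and-commute step two more times. After three iterations the left-hand side reappears with the opposite sign, producing a closed expression of the form
\[
2\,\nabla_k\nabla_j X_i \;=\; \bigl(\nabla_j h_{ik} + \nabla_k h_{ij} - \nabla_i h_{jk}\bigr) \;+\; \bigl(R_{ikj}{}^m - R_{jik}{}^m + R_{kji}{}^m\bigr)X_m .
\]

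To finish, I invoke the first Bianchi identity $R_{ikj}{}^m + R_{kji}{}^m + R_{jik}{}^m = 0$, which collapses the three curvature terms into $-2\,R_{jik}{}^m X_m$; after raising indices this is precisely the component expression for $-R(X,V)$ acting as a $(1,1)$-tensor. The remaining three derivatives of $h$ match (after raising the $i$-index) twice the definition of $(T_h)^i{}_{jk}$, so dividing by two gives the stated identity. The only real obstacle is bookkeeping: one must fix and adhere to a single convention for the curvature tensor and the Bianchi identity throughout, so that the sign in front of $R(X,V)$ comes out correctly; the rest is a routine tensor computation.
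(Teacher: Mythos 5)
Your proposal is correct and is essentially the same argument as the paper's: both derive the componentwise identity $\nabla_k\nabla_j X_i = -R_{\ell kji}X^\ell + \tfrac12(h_{ij;k}+h_{ik;j}-h_{jk;i})$ (the paper's \eqref{eq:derivativeX}) by combining the Ricci commutation identity, the $h$-Killing relation $\nabla_i X_j + \nabla_j X_i = h_{ij}$, and the first Bianchi identity, your iterated swap-and-commute being just a reorganization of the paper's step of adding and subtracting the three commutator identities. Your intermediate display and the identification of the $\nabla h$ terms with $2\,(T_h)$ match the paper's computation up to curvature sign conventions, which you correctly flag as the only bookkeeping issue.
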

\begin{proof}
We prove the identity with respect to a local orthonormal frame $\{ e_1, \dots, e_n \}$ and shall not distinguish upper or lower indices in the following computations.  Write  $V= V_k e_k$ and 
\[
 (\nabla_V (\nabla X) )^i_j = V_k\, g(\nabla_{e_k} \nabla_{e_j} X - \nabla_{\nabla_{e_k} e_j} X, e_i ):= V_k X_{i;jk}.
\]
The desired identity follows from the following identity multiplied by $V_k$:
\begin{align}\label{eq:derivativeX}
	X_{i;jk}=- R_{\ell kj i} X^\ell + \tfrac{1}{2} (h_{ij;k} + h_{ik;j} - h_{jk;i}).
\end{align}
The previous identity is a well-known fact, we include the proof for completeness. By commuting the derivatives, we get 
\begin{align*}
	X_{i;jk} - X_{i;kj} &= R_{kj\ell i} X^\ell\\
	X_{j;ki} - X_{j;ik} &= R_{ik\ell j} X^\ell\\
	X_{k;ij} - X_{k;ji} &= R_{ji\ell k} X^\ell. 
\end{align*}
Adding the first two identities and then subtracting the third one, we get
\begin{align*}
	&X_{i;jk} - X_{i;kj} + X_{j;ki} - X_{j;ik} - X_{k;ij} + X_{k;ji} \\
	&=( R_{kj\ell i} + R_{ik\ell j} - R_{ji\ell k} )X^\ell\\
	& = 2R_{ij\ell k} X^\ell =-2 R_{\ell kj i} X^\ell
\end{align*}
where we use the Bianchi identity to the curvature terms. Rearranging the terms in the left hand side above gives
\begin{align*}
	&2 X_{i;jk} - (X_{i;jk} + X_{j;ik} ) - (X_{i;kj} + X_{k;ij} )+ (X_{j;ki} + X_{k;ji}) \\
	&=2X_{i;jk} - h_{ij;k} - h_{ik;j} + h_{jk;i}.
\end{align*}
This gives \eqref{eq:derivativeX}.
\end{proof}

For a given $h$, in general there does not necessarily exist a corresponding $h$-Killing vector field. (For example, when $h=0$, an $h$-Killing vector field is just a Killing vector field, and a generic Riemannian manifold does not admit any Killing vector field.)  Nevertheless, the next lemma says that it is still possible to find an $X$ such that $h$ and $L_X \bar g$ are equal when they are both contracted with a parallel vector~$V$. It uses an ODE argument motivated by the work of Nomizu~\cite{Nomizu:1960}.

\begin{lemma}\label{le:V}
Let $(U, g)$ be a Riemannian manifold whose boundary $\partial U$ is an embedded hypersurface. Let {$3\le k \le \infty$}, $\Sigma$ be an open subset of $\partial U$, and $h$ be a symmetric $(0, 2$)-tensor in~$U$.  Suppose $g, \Sigma\in {\C^{k}}, h\in {\C^{k-1}}$ (or analytic) in coordinate charts containing $\Sigma$. Let $V\in \C^{k}$ (or analytic) be a complete vector field transverse to $\Sigma$ satisfying $\nabla_V V=0$ in~$U$. Then there is a vector field $X\in  {\C^{k-2}}$ (or analytic) in a collar neighborhood of $\Sigma$ such that  $X=0$ on $\Sigma$ and 
\begin{align}\label{eq:V}
	L_X g (V, \cdot) = h(V, \cdot) \quad \mbox{ in the collar neighborhood of } \Sigma. 
\end{align}
\end{lemma}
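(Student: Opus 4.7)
My approach is to construct $X$ by setting up coordinates adapted to the flow of $V$ emanating from $\Sigma$, and then solving a triangular first-order linear ODE system along the $V$-integral curves. First, I would use the flow $\Phi^V_s$ of $V$ to define the local map $\Phi : \Sigma \times (-\delta, \delta) \to U$, $\Phi(y, s) = \Phi^V_s(y)$. Since $V$ is transverse to $\Sigma$ and complete, $\Phi$ is a diffeomorphism onto a collar neighborhood of $\Sigma$ for $\delta$ small, and in the pulled-back coordinates $(s, y)$ one has $V = \partial_s$ and $\Sigma = \{s = 0\}$. The geodesic hypothesis $\nabla_V V = 0$ then becomes $\Gamma^k_{ss} = 0$ for all $k$.

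In these coordinates the target identity $L_X g(V, \cdot) = h(V, \cdot)$ is equivalent to $(L_X g)_{sj} = h_{sj}$ for $j = s, 1, \dots, n-1$. Using $(L_X g)_{ij} = X_{i;j} + X_{j;i}$ together with $\Gamma^k_{ss} = 0$, this unpacks into the triangular system
\begin{align*}
\partial_s X_s &= \tfrac{1}{2} h_{ss}, \\
\partial_s X_a &= h_{sa} - \partial_a X_s + 2 \Gamma^k_{as} X_k \qquad (a = 1, \dots, n-1).
\end{align*}
The first equation is a pure ODE in $s$ for $X_s$, independent of the remaining components; the second is a linear first-order ODE in $s$ for $X_a$ whose forcing depends only on the already-determined $X_s$, its tangential derivatives, and $X_a$ itself. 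I would integrate both starting from $X_s(0, y) = X_a(0, y) = 0$, which enforces $X|_\Sigma = 0$. Existence on a uniform collar neighborhood of $\Sigma$ then follows from standard linear ODE theory, and in the analytic case the same construction gives an analytic $X$ since the coefficients and initial data are analytic.

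Finally, I would verify the regularity claim. Since $g \in \C^{k,\alpha}$ gives $\Gamma \in \C^{k-1,\alpha}$, and $h \in \C^{k-1,\alpha}$ by hypothesis, integrating the first equation in $s$ yields $X_s \in \C^{k-1,\alpha}$. The tangential derivative $\partial_a X_s$ then lies in $\C^{k-2,\alpha}$, which is the lowest-regularity term appearing in the forcing of the $X_a$-equation; integrating produces $X_a \in \C^{k-2,\alpha}$, so $X \in \C^{k-2,\alpha}$ as claimed. The only real obstacle is bookkeeping the regularity drop through this process; the substance of the argument is the observation that $\nabla_V V = 0$ makes the defining system triangular in the unknowns $(X_s, X_a)$, so no compatibility beyond $X|_\Sigma = 0$ is required and the construction is entirely elementary.
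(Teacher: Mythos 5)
Your proof is correct, and it takes a genuinely different route from the paper. The paper works coordinate-free: it sets up a Nomizu-type first-order system along the integral curves of $V$ for the pair $(X,\omega)$ (with $\omega$ playing the role of $\nabla X$), forced by the tensor $T_h$ and the curvature, chooses initial data for $\omega$ carefully on $\Sigma$, and then shows by a separate propagation argument (à la Ionescu--Klainerman) that $(L_Xg-h)(V,\cdot)$ satisfies a homogeneous linear ODE along each curve and hence vanishes. Your argument instead passes to flow coordinates $(s,y)$ in which $V=\partial_s$, observes that $\nabla_VV=0$ means $\Gamma^k_{ss}=0$, and integrates the resulting triangular linear system directly; this is more elementary, avoids curvature entirely, and makes the role of the geodesic hypothesis completely transparent. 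What the paper's formulation buys is that the same system $(X,\omega)$ with the tensor $T_h$ is exactly the machinery reused in Section~7 to prove the global $h$-Killing extension (Theorem~\ref{th:extension}), and it never changes coordinates, so no regularity is spent on the chart. In your setup the flow map of a $\C^{k,\alpha}$ vector field is only $\C^{k,\alpha}$, so in the adapted coordinates the metric components are $\C^{k-1,\alpha}$ and the Christoffel symbols $\C^{k-2,\alpha}$, not $\C^{k-1,\alpha}$ as you state; this is a bookkeeping slip only, since the limiting term in your second equation is in any case $\partial_a X_s\in\C^{k-2,\alpha}$, so the conclusion $X\in\C^{k-2,\alpha}$ (and analyticity in the analytic case, by analytic dependence of the flow and of linear ODE solutions on parameters) stands. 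Also note the equations for the $X_a$ are coupled among themselves through $\Gamma^b_{as}X_b$, so the system is triangular only in the sense that $X_s$ decouples; it is still a linear system, so nothing is lost.
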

\begin{proof}
Since $V$ is complete, given $p\in \Sigma$ we let $\gamma(t)$ be the integral curve of $V$ emitting from $p$, i.e. $\gamma(0)=p$ and $\gamma'(t)=V$.  Let  $\{ e_1, \dots, e_n \}$  be a local orthonormal frame such that $e_n$ is a unit normal to $\Sigma$. 

Consider the first-order linear ODE system for the pair $(X,\omega)$ consisting of a vector field $X$ and a $(1,1)$-tensor $\omega$ along $\gamma(t)$:
\begin{align*}
	\nabla_V X&=\omega (V)\\
	\nabla_V \omega &=  -R(X, V)  + T_h (V, \cdot).
\end{align*}
(We remark that  Lemma~\ref{le:X} implies an $h$-Killing vector $X$  satisfies the ODE system with $\omega(e_i) = \nabla_{e_i} X$.)  We rewrite  the above ODE system in the local orthonormal frame:
\begin{align} \label{eq:system}
\begin{split}
	X_{i;j} V_j &= \omega_{ij} V_j\\
	\omega_{ij;\ell} V_\ell &= - R_{k\ell ji} X_k V_\ell + \tfrac{1}{2} \left( h_{ij,\ell} + h_{i\ell, j} - h_{j\ell, i} \right) V_\ell
\end{split}
\end{align}
where $\omega_{ij} =\omega^i_j$ (the first index is lowered by $g$).

We choose the initial conditions for $X_i$ and  $\omega_{ij}$ at $p\in \Sigma$:
\begin{align}\label{eq:initial}
X_i= 0, \quad 
\omega_{ij}V_i V_j = \tfrac{1}{2} h_{ij}V_i V_j, \quad \omega_{ia}V_i=\omega_{ai}V_i   = h_{ai}V_i, \quad \omega_{ab} = 0 
\end{align}   
where the indices $i,j=1,\dots,n,~a, b=1, \dots, n-1$. Since the coefficients of the ODE are ${\C^{k-2}}$ (or analytic) in $p$,  the vector field $X$ and $\omega$ are defined everywhere in the collar neighborhood of $\Sigma$ by varying $p$ and is ${\C^{k-2}}$ (or analytic) by smooth dependence of the ODE (or Cauchy-Kovalevskaya Theorem).

We first show that in a collar neighborhood of $\Sigma$:
\begin{align}\label{eq:omegaV}
	\omega_{ij}  V_i V_j = \tfrac{1}{2} h_{ij} V_i V_j.
\end{align}
Note that $\omega_{ij}+\omega_{ji}-h_{ij}$ is constant along $\gamma(t)$ because $(\omega_{ij}  + \omega_{ji} - h_{ij} )_{;\ell}V^\ell =0$ by \eqref{eq:system} and symmetry of the curvature tensor. Since $(\omega_{ij}+ \omega_{ji} - h_{ij})V_i V_j = (2\omega_{ij} - h_{ij} )V_i V_j=0$  at $p$ by the initial conditions at $p$, it proves \eqref{eq:omegaV}.

To prove \eqref{eq:V},  observe that $( X_{i;j} + X_{j;i} - h_{ij} )V_j $ satisfies a first-order linear ODE along~$\gamma(t)$, motivated by the result of~\cite[Lemma 2.6]{Ionescu-Klainerman:2013}:
\begin{align*}
	&\big(( X_{i;j} + X_{j;i} -h_{ij} )V_j\big)_{;\ell} V_\ell \\
	&= (\omega_{ij;\ell}+ X_{j;i\ell} - h_{ij;\ell})V_j V_\ell\\
	&=\big (- R_{k\ell ji } X_k +  h_{ij;\ell} -\tfrac{1}{2}  h_{j\ell;i}\big) V_j V_\ell  + X_{j;\ell i } V_j V_\ell +  R_{\ell i k j} X_k V_j V_\ell  - h_{ij;\ell} V_j V_\ell \\
	&= -\tfrac{1}{2}  h_{j\ell;i} V_j V_\ell +  X_{j;\ell i} V_j V_\ell  \\
	&= -\tfrac{1}{2}  h_{j\ell;i} V_j V_\ell +  (X_{j;\ell} V_j V_\ell )_{;i}-X_{j;\ell}(V_{j;i}V_{\ell}+V_jV_{\ell;i})\\
	&= -\tfrac{1}{2}  h_{j\ell;i} V_j V_\ell +  (\omega_{j\ell} V_j V_\ell )_{;i}-(X_{j;\ell}+X_{\ell;j})V_{j;i}V_{\ell}\\
	&= -\tfrac{1}{2}  h_{j\ell;i} V_j V_\ell +  \tfrac{1}{2}(h_{j\ell} V_j V_\ell )_{;i}-(X_{j;\ell}+X_{\ell;j})V_{j;i}V_{\ell}\\
	&=-(X_{j;\ell}+X_{\ell;j}-h_{j\ell})V_{\ell}V_{j;i}
\end{align*}
where in  the third equality we use the Bianchi identity 
\[
	( - R_{k\ell ji } + R_{\ell i k j} ) V_j V_\ell =( R_{k\ell ij} + R_{\ell i kj} ) V_j V_\ell= -R_{ik\ell j} V_j V_\ell =0
\]
and in the second-to-the-last equality we use \eqref{eq:omegaV}. Thus, we have shown that $(X_{i;j} + X_{j;i} -h_{ij} )V_j$ satisfies the first-order linear ODE. Our initial conditions \eqref{eq:initial} imply that at $p$:
\begin{align*}
	(X_{i;j} + X_{j;i} -h_{ij} )V_iV_j &= (2\omega_{ij}- h_{ij} )V_iV_j = 0 \\
	(X_{a; j} + X_{j;a} - h_{aj} )V_j &=  (\omega_{aj} -h_{aj})V_j=0 \qquad \mbox{ for  $a = 1, \dots, n-1$}
\end{align*}
where we use that $X_{j; a}=0$ at $p$ because $X$ is identically zero on $\Sigma$. Since $(X_{i;j} + X_{j;i} -h_{ij} )V_j=0$ at $p$, it is identically zero along the curve $\gamma(t)$, and thus it is identically zero in a collar neighborhood of $\Sigma$. 

\end{proof}

Let $\nu$ be a unit normal vector field to $\Sigma$. We can extend $\nu$ parallelly in a collar neighborhood of $\Sigma$. We say that a symmetric $(0,2)$-tensor $h$ in $U$ satisfies  the \emph{geodesic gauge} in a collar neighborhood of $\Sigma$ if, in the collar neighborhood, $h(\nu, \cdot)= 0$.

By Lemma~\ref{le:V} and letting $V= \nu$ there, we can give an alternative proof to the existence of geodesic gauge in \cite[Lemma 2.5]{An-Huang:2021}, and in particular, we obtain an \emph{analytic} vector field $X$ if the metric is analytic. 

\begin{corollary}[Geodesic gauge]\label{co:geodesic}
Let $(U, g)$ be a Riemannian manifold whose $\partial U$ is an embedded hypersurface. Let $\Sigma$ be an open subset of $\partial U$ and $h$ be a symmetric $(0, 2$)-tensor in $U$.
\begin{enumerate}

\item  Let {$3\le k \le \infty$}. Suppose $g, \Sigma \in {\C^{k}}, h \in {\C^{k-1}}$ in some coordinate charts containing $\Sigma$.  Then there is a vector field $X\in {\C^{k-2}}$ in a collar neighborhood of $\Sigma$ such that  $X=0$ on $\Sigma$ and 
\begin{align*}
	L_X g (\nu, \cdot) = h(\nu, \cdot) \quad \mbox{ in the collar neighborhood of } \Sigma. 
\end{align*}

\item Suppose $\Sigma$ is an analytic hypersurface and $g, h$ are analytic in some coordinate charts containing $\Sigma$.  Then there is an analytic vector field $X$ in a collar neighborhood of $\Sigma$ such that  $X=0$ on $\Sigma$ and 
\begin{align*}
	L_X g (\nu, \cdot) = h(\nu, \cdot) \quad \mbox{ in the collar neighborhood of } \Sigma. 
\end{align*}
\end{enumerate}
\end{corollary}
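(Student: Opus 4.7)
The plan is to reduce Corollary~\ref{co:geodesic} directly to Lemma~\ref{le:V} by choosing $V$ to be the parallel extension of the unit normal of $\Sigma$. Since $L_X g$ and $h$ are symmetric, the identity $L_X g(\nu,\cdot)=h(\nu,\cdot)$ is exactly what Lemma~\ref{le:V} produces once the transverse complete vector field $V$ is identified with $\nu$.

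First I would use the normal exponential map on $\Sigma$ to extend $\nu$ to a collar neighborhood: for $p\in\Sigma$ and $t$ in a small interval, assign to the point $\exp_p(t\nu_p)$ the parallel transport of $\nu_p$ along the geodesic $s\mapsto \exp_p(s\nu_p)$. Parallel transport preserves the norm, so the extension is unit; its integral curves are precisely those normal geodesics, which immediately gives $\nabla_V V=0$ throughout the collar. Transversality at $\Sigma$ is built in since $V|_\Sigma=\nu$, and by shrinking the collar (or harmlessly modifying $V$ outside a smaller tubular neighborhood) we may arrange that $V$ is complete in the sense required by Lemma~\ref{le:V}.

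Second I would invoke Lemma~\ref{le:V} with this choice of $V$ to produce a vector field $X$ in a collar neighborhood of $\Sigma$ with $X=0$ on $\Sigma$ and $L_X g(V,\cdot)=h(V,\cdot)$. Since $V=\nu$ in the collar, this is exactly the assertion of the corollary. The regularity of $V$ is inherited from the metric through the geodesic equation and the linear parallel-transport equation, both ODEs whose coefficients are built from Christoffel symbols of $g$; the analytic case follows from the Cauchy--Kovalevskaya argument already present in the proof of Lemma~\ref{le:V}.

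I do not anticipate a substantive obstacle — the work has already been done in Lemma~\ref{le:V}. The only routine point requiring attention is bookkeeping of regularity: the parallel extension of $\nu$ loses one derivative relative to $g$, and the ODE in Lemma~\ref{le:V} loses two derivatives relative to $h$ (via the tensor $T_h$), and tracking these losses through yields exactly the claimed $\C^{k-2,\alpha}$ regularity of $X$ in part~(1), while both steps clearly preserve analyticity in part~(2).
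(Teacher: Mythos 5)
Your proposal is correct and follows exactly the route the paper itself takes: the corollary is obtained by applying Lemma~\ref{le:V} with $V$ taken to be the parallel (normal-geodesic) extension of $\nu$, which is geodesic ($\nabla_V V=0$) and transverse to $\Sigma$, with the analytic case handled by the Cauchy--Kovalevskaya part of that lemma. Your remarks on shrinking the collar for completeness and on the regularity bookkeeping are the only routine details, and they are handled just as you describe.
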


\section{Static-harmonic gauge and orthogonal gauge}
\label{se:gauge}

Recall the operator $T$ defined in \eqref{eq:bdv}. As already mentioned in Section~\ref{se:intro},  if $(g, u)$ solves $T(g, u) = (0, 0, \tau, \phi)$, then any $\psi$ in the diffeomorphism group $\mathscr{D}(M\setminus \Omega)$ defined in Definition~\ref{de:diffeo} gives rise to another solution $(\psi^* g, \psi^* u)$. To overcome the infinite-dimensional ``kernel'' of $T$, one would like to introduce suitable ``gauges''.

\subsection{The gauges}

Fix a static vacuum pair $(\bar g, \bar u)$ with $\bar u>0$. For any pair $(g, u)$ of a Riemannian metric and a scalar function, we use the Bianchi operator $\b_{\bar g}$ (see \eqref{eq:Bianchi}) to define the covector 
\begin{align*}
	\Ga(g, u) =\b_{\bar g} g+\bar u^{-2}udu-{\bar u}^{-1}g( \nabla_{\bar g}\bar u, \cdot ).
\end{align*}
 Note that of course $\Ga(\bar{g}, \bar{u})=0$. We use  $\Ga'(h, v)$ to denote the linearization of $\Ga(g, u)$ at $(\bar{g}, \bar{u})$ and thus
\[
	\Ga'(h,v) = \b h+ \bar u^{-1} dv + \bar{u}^{-2} v d\bar{u} - \bar{u}^{-1} h(\nabla\bar{u}, \cdot )
\]
where the Bianchi operator and covariant derivatives are with respect to $\bar g$. 

For the rest of this section, we omit the subscript $\bar g$ when computing differential operators with respect to $\bar g$, as well as the subscript $(\bar g, \bar u)$ when linearizing at $(\bar g, \bar u)$. 
\begin{lemma}
Let $(\bar g, \bar u)$ be a static vacuum pair with $\bar u>0$. Then for any vector field $X$, 
\begin{align} \label{eq:gauge}
	\Ga'(L_X \bar g, X(\bar u))=-\D X-\bar u^{-1}\nabla X(\nabla \bar u,\cdot)+\bar u^{-2}X(\bar u)d \bar u =: \Gamma(X).
\end{align}
(Here and the in rest of the paper, we slightly abuse the notation and blur the distinction of a vector and its  dual covector with respect to $\bar g$ when the context is clear.) 
\end{lemma}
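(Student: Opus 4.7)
The plan is to compute $\Gamma'(L_X\bar g, X(\bar u))$ by substituting $h = L_X\bar g$ and $v = X(\bar u)$ into the given formula for $\Gamma'(h,v)$ and then simplifying each of the four resulting terms. Explicitly, we need
\begin{equation*}
\Gamma'(L_X\bar g, X(\bar u)) = \beta(L_X \bar g) + \bar u^{-1} d(X(\bar u)) + \bar u^{-2} X(\bar u) \, d\bar u - \bar u^{-1} (L_X \bar g)(\nabla \bar u, \cdot).
\end{equation*}
The term $\bar u^{-2} X(\bar u)\, d\bar u$ already appears in the desired expression, so it may be set aside. The task is to show that the remaining three terms collapse to $-\Delta X - \bar u^{-1}\nabla X(\nabla \bar u,\cdot)$.

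First I would compute $\beta(L_X\bar g)$. Using the Weitzenb\"ock-type identity obtained by commuting covariant derivatives, $\Div(L_X \bar g) = \Delta X + d(\Div X) + \Ric(X,\cdot)$, together with $\tr(L_X\bar g) = 2\Div X$ and the definition $\beta h = -\Div h + \tfrac{1}{2} d(\tr h)$, I get
\begin{equation*}
\beta(L_X \bar g) = -\Delta X - \Ric(X,\cdot).
\end{equation*}
At this point the static vacuum equation $\bar u\, \Ric = \nabla^2 \bar u$ enters crucially: it converts $\Ric(X,\cdot)$ into $\bar u^{-1}(\nabla^2 \bar u)(X,\cdot)$, i.e.\ into a Hessian of $\bar u$ contracted with $X$.

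Next I would expand $d(X(\bar u))$ and $(L_X\bar g)(\nabla \bar u,\cdot)$ in index notation. From $X(\bar u) = X^i \nabla_i \bar u$ one obtains
\begin{equation*}
\nabla_j(X(\bar u)) = (\nabla_j X^i)\nabla_i \bar u + X^i\, \nabla_j\nabla_i \bar u,
\end{equation*}
while $(L_X\bar g)(\nabla \bar u,\cdot)_j = (\nabla_i X_j)\nabla^i \bar u + (\nabla_j X_i)\nabla^i \bar u$. Subtracting these and using the symmetry $\nabla_i\nabla_j \bar u = \nabla_j\nabla_i \bar u$, the symmetric pieces cancel, leaving
\begin{equation*}
\bar u^{-1}\bigl[d(X(\bar u)) - (L_X\bar g)(\nabla \bar u,\cdot)\bigr]_j = \bar u^{-1}\bigl[X^i\nabla_i\nabla_j \bar u - (\nabla_{\nabla\bar u}X)_j\bigr].
\end{equation*}
The first term on the right is precisely $\bar u^{-1}(\nabla^2 \bar u)(X,\cdot)_j$, which cancels the Ricci contribution from $\beta(L_X\bar g)$ by the static vacuum equation. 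The second term is exactly $-\bar u^{-1}\nabla X(\nabla \bar u,\cdot)_j$.

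Putting everything together yields $-\Delta X - \bar u^{-1}\nabla X(\nabla \bar u,\cdot) + \bar u^{-2} X(\bar u)\, d\bar u$, which is $\Gamma(X)$ as claimed. There is no real obstacle here; the proof is a careful index calculation whose decisive ingredient is the static vacuum equation $\nabla^2 \bar u = \bar u\, \Ric$, which engineers the cancellation between the Ricci term coming from $\beta(L_X\bar g)$ and the Hessian term coming from $d(X(\bar u))$.
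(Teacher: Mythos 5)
Your proof is correct and follows essentially the same route as the paper: the paper likewise expands $\Ga'(L_X\bar g, X(\bar u))$, uses $\beta(L_X\bar g) = -\Delta X - \Ric(X,\cdot)$ (its identity \eqref{eq:laplace-beta}), the commutation identity $d(X(\bar u)) - L_X\bar g(\nabla\bar u,\cdot) = \nabla^2\bar u(X,\cdot) - \nabla X(\nabla\bar u,\cdot)$, and then drops the term $(-\Ric + \bar u^{-1}\nabla^2\bar u)(X,\cdot)$ by the static vacuum equation. The only cosmetic difference is that you derive the Bochner-type formula for $\beta(L_X\bar g)$ by hand rather than citing it.
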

\begin{proof}

By the linearization formula,
\begin{align}\label{eq:G}
\begin{split}
\Ga' (L_X \bar g, X(\bar u)) &=\beta (L_X  \bar g) + \bar u^{-1} d(X( \bar u))  - \bar u^{-1} L_X \bar g (\nabla  \bar u, \cdot)+ \bar u^{-2} X( \bar u) d \bar u \\
&=-\D X-\bar u^{-1}\nabla X (\nabla \bar u,\cdot)+\bar u^{-2}X(\bar u)d \bar u + ( - \Ric + \bar u^{-1} \nabla^2 \bar u) (X, \cdot )
\end{split}
\end{align}
where we use \eqref{eq:laplace-beta} for $\beta (L_X  \bar g) = -\Delta X - \Ric(X,\cdot)$ and 
\begin{align}\label{eq:com}
 d(X( \bar u))  - L_X g (\nabla \bar u ,\cdot )&= \nabla^2  \bar u (X, \cdot ) -  \nabla X (\nabla \bar u,\cdot).
\end{align}
Since $(\bar g, \bar u)$ is static vacuum, we can drop the term $ - \Ric + \bar u^{-1} \nabla^2\bar u =0$. 
\end{proof}

Using the operator $\Gamma$ defined in \eqref{eq:gauge}, we define the ``gauged'' subspace of $\mathcal X(M\setminus \Omega)$ from Definition~\ref{de:diffeo}. 
\begin{definition} 
 $(M, \bar g, \bar u)$ be an asymptotically flat, static vacuum triple with $\bar u>0$.  Define $\cX^\Ga(M\setminus\Omega)$ to be the subspace of $\cX(M\setminus\Omega)$ as 
\[
	\cX^\Ga(M\setminus\Omega)=\big\{ X \in \cX(M\setminus\Omega): \Gamma (X)=0 \mbox{ in } M\setminus\Omega \big\}.
\]
\end{definition}

We show below that $\cX^\Ga(M\setminus\Omega)$ is finite-dimensional with the dimension
\[
	N = n+\frac{n(n-1)}{2},
\]
which is the same as the dimension of the space of Euclidean Killing vectors $\mathcal Z$ defined in~\eqref{de:Z}. We begin with a fundamental PDE lemma on the special structure of the operator $\Gamma$. For a given asymptotically flat pair $(g, u)$ with $u>0$, we define the operator on vectors by 
\begin{align} \label{eq:Gamma}
	\Gamma_{(g, u)} (X)= -\Delta_g X - u^{-1}\nabla_g X(\nabla_g u,\cdot) + u^{-2} X(u) du.
\end{align}
When $(g, u)$ is a static vacuum pair $(\bar g, \bar u)$,   $\Gamma_{(\bar g, \bar u)} (X)$ is exactly $\Gamma(X)$ as defined in~\eqref{eq:gauge}.

Recall \eqref{eq:af-pair} that $\cM(M\setminus \Omega)$ consists of asymptotically flat pairs of fall-off rate $q$. 

\begin{lemma}\label{PDE}
Let  $(g, u)$ be a pair of a Riemannian metric and a scalar function satisfying $(g-g_{\mathbb E}, u-1)\in \C^{k,\alpha}_{-q}(M\setminus \Omega)$ with $u>0$ and $\delta$ be a real number and $k\ge 1$ be an integer. Consider 
\[
	\Gamma_{(g, u)} : \big\{ X\in \C^{k,\a}_{\delta}(M\setminus\Omega): X=0 \mbox{ on } \Sigma \big\} \longrightarrow \C^{k-2,\a}_{\delta-2}(M\setminus\Omega).
\] 
Then the following holds:
\begin{enumerate}
\item \label{item:0} For $2-n<\delta < 0$, the map is an isomorphism. Therefore, for any fixed boundary value $Z\in \C^{k,\a}(\Sigma)$, the following map is bijective
\[
	\Gamma_{(g, u)} : \big\{ X\in \C^{k,\a}_{\delta}(M\setminus\Omega): X=Z \mbox{ on } \Sigma \big\} \longrightarrow \C^{k-2,\a}_{\delta-2}(M\setminus\Omega).
\] 
\item \label{item:n} For $0<\delta <1$, the map is surjective and the kernel space is $n$-dimensional, spanned by $\{ V^{(1)}, \dots, V^{(n)}\}$ where $V^{(i)} = \pa_i + O^{k,\alpha}(|x|^{-q})$. 
\end{enumerate}
\end{lemma}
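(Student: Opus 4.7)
The plan is to treat $\Gamma_{(g,u)}$ as a linear elliptic second-order operator on vector fields with Dirichlet boundary condition on $\Sigma$, and to combine the standard weighted Fredholm theory on asymptotically flat manifolds with a self-adjointness identity. The principal symbol of $\Gamma_{(g,u)}$ is $|\xi|^2_g\,\mathrm{Id}$, and because $g - g_{\mathbb E}$ and $u-1$ lie in $\C^{k,\a}_{-q}$, the lower-order coefficients decay at infinity and the asymptotic model is the componentwise Euclidean vector Laplacian $-\Delta_{g_{\mathbb E}}$ on $\bR^n$. Its indicial roots at infinity form the set $\bZ_{\leq 2-n}\cup \bZ_{\geq 0}$, so for $\delta$ in either $(2-n,0)$ or $(0,1)$ the map in the statement is Fredholm by Lockhart--McOwen / Bartnik theory, and its index is constant under continuous deformations of $(g,u)$ within asymptotically flat pairs with $u>0$; this lets me compute the index in the Euclidean model $(g_{\mathbb E}, 1)$.

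The algebraic core of the argument is the identity
\[
\int_{M\setminus\Omega} \langle \Gamma_{(g,u)}(X), Y\rangle_g\, u\,\dvol_g = \int_{M\setminus\Omega}\Big( u\langle \nabla X, \nabla Y\rangle_g + u^{-1} X(u)\,Y(u)\Big)\,\dvol_g - B_\Sigma - B_\infty,
\]
obtained by one integration by parts, in which the two copies of the cross term $u^{-1}X_{i;j}u^{;j}Y^i$ produced by the middle term of $\Gamma$ and by the differentiation of the weight $u$ cancel exactly. Thus $\Gamma_{(g,u)}$ is formally self-adjoint with respect to $u\,\dvol_g$ with a positive semi-definite quadratic form. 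For injectivity in part~(\ref{item:0}), let $X\in\C^{k,\a}_\delta$ with $\delta\in(2-n,0)$ lie in the kernel with $X|_\Sigma=0$. A bootstrap of the equation $-\Delta_{g_{\mathbb E}} X = O(|x|^{\delta -2-q})$ against the exterior Euclidean Laplacian upgrades the decay to $X = O^{k,\a}(|x|^{2-n})$, which renders the boundary integral at infinity of order $O(R^{2-n}) \to 0$. Setting $Y = X$ in the identity then forces $\nabla X\equiv 0$ and hence $X\equiv 0$ by connectedness of $M\setminus\Omega$. Surjectivity in part~(\ref{item:0}) follows from Fredholm index zero, computed in the Euclidean model where the exterior Dirichlet problem for $-\Delta_{g_{\mathbb E}}$ at weight $\delta\in(2-n,0)$ is a classical isomorphism. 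The inhomogeneous Dirichlet version is then routine: subtract any $\C^{k,\a}_\delta$-extension of $Z$ to reduce to the homogeneous problem.

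For part~(\ref{item:n}) I produce the kernel basis explicitly. Choose a cutoff $\chi$ with $\chi=0$ near $\Sigma$ and $\chi=1$ outside a large ball, and set $\widetilde V^{(i)} = \chi\,\pa_i$. Because $\pa_i$ is Killing for $g_{\mathbb E}$, the decay of $g-g_{\mathbb E}$ and $u-1$ gives $\Gamma_{(g,u)}(\widetilde V^{(i)})\in \C^{k-2,\a}_{-q-2}$, and since $-q\in(2-n,0)$, part~(\ref{item:0}) supplies a correction $W^{(i)}\in\C^{k,\a}_{-q}$ vanishing on $\Sigma$ and solving $\Gamma_{(g,u)}(W^{(i)}) = -\Gamma_{(g,u)}(\widetilde V^{(i)})$; then $V^{(i)} := \widetilde V^{(i)} + W^{(i)}$ has the stated asymptotic form $\pa_i + O^{k,\a}(|x|^{-q})$. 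The $V^{(i)}$ are linearly independent (their leading terms are) and exhaust the kernel: any $X$ in the kernel at weight $\delta\in(0,1)$ admits an asymptotic expansion $X = \sum c_i\,\pa_i + O^{k,\a}(|x|^{2-n})$ from the componentwise harmonic expansion at infinity, so $X - \sum c_i V^{(i)}$ lies in the kernel at a weight in $(2-n,0)$ and vanishes by part~(\ref{item:0}). Surjectivity at $\delta\in(0,1)$ follows again from the Fredholm index: in the Euclidean model the cokernel remains trivial because the dual weight range $(1-n,2-n)$ contains no indicial root and the maximum principle kills exterior harmonic vector fields vanishing on $\Sigma$ with that decay.

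The main technical obstacle is the asymptotic decay upgrade $X = O^{k,\a}(|x|^\delta)\Rightarrow X = O^{k,\a}(|x|^{2-n})$ used in the injectivity step, since without it the boundary integral at infinity need not vanish at the borderline end of the weight range. This is handled by the standard bootstrap: the equation $\Gamma_{(g,u)}(X) = 0$ combined with the $O(|x|^{-q})$ decay of $g-g_{\mathbb E}$ and $u-1$ gives $-\Delta_{g_{\mathbb E}} X$ an improved decay, so each application of the classical mapping properties of $-\Delta_{g_{\mathbb E}}$ on the exterior gains $q$ powers of $|x|^{-1}$, the iteration terminating at the indicial root $2-n$. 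Once the upgrade is in place, the rest is routine weighted Fredholm theory plus the explicit construction of $V^{(i)}$.
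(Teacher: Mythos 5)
Your proposal is correct in substance, but it reaches the key conclusions by a somewhat different mechanism than the paper. The shared skeleton is the same: ellipticity with principal symbol $|\xi|_g^2\,\mathrm{Id}$, the Fredholm index read off from the Euclidean vector Laplacian (the paper simply notes that $\Gamma_{(g,u)}$ differs from $\Delta_g$ by decaying lower-order terms, hence has the same index, rather than running a homotopy), kernel triviality for $\delta\in(2-n,0)$, and for $\delta\in(0,1)$ the harmonic expansion $X=c^i\partial_i+\dots$ combined with part (1) to cap the kernel dimension at $n$, so that index $n$ forces surjectivity. Where you diverge is the injectivity step: you use formal self-adjointness of $\Gamma_{(g,u)}$ with respect to $u\,\dvol_g$ (your integration-by-parts identity and the cancellation of the cross terms are correct, and the quadratic form $\int u|\nabla X|^2+u^{-1}(X(u))^2$ is indeed nonnegative), which forces you into a decay-upgrade bootstrap to kill the boundary term at infinity. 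The paper instead uses the pointwise identity $\langle X,\Gamma_{(g,u)}(X)\rangle=-\tfrac12\Delta|X|^2-\tfrac12 u^{-1}\nabla u\cdot\nabla|X|^2+|\nabla X|^2+u^{-2}(X(u))^2$, so that a kernel element makes $|X|^2$ a subsolution and the strong maximum principle (with $X=0$ on $\Sigma$ and $X\to0$ at any rate) finishes immediately — no asymptotic refinement or boundary-term analysis needed, which is the cleaner route. Your explicit construction of $V^{(i)}=\chi\,\partial_i+W^{(i)}$ is not in the lemma's proof but is exactly how the paper later builds the gauged vector fields in Corollary~\ref{cor:dimension}, so it buys you a concrete basis at the cost of a little extra work. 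One small inaccuracy: for a kernel element at weight $\delta\in(0,1)$ you claim $X-\sum c_i\partial_i=O^{k,\alpha}(|x|^{2-n})$; the metric perturbation only allows $O^{k,\alpha}(|x|^{-q})$ in general (one cannot beat the decay rate of $g-g_{\mathbb E}$), which is what the lemma asserts. This is harmless for your argument, since $-q\in(2-n,0)$ still places $X-\sum c_iV^{(i)}$ in the range where part (1) applies.
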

\begin{proof} 

 Note that $\Gamma_{( g, u)}$ has the same Fredholm index as the Laplace-Beltrami operator $\Delta_{g}$. It is a standard fact that the Fredholm index for $X\mapsto \Delta_{g}X$ is $0$ in the first case that $2-n<\delta < 0$ and $n$ in the second case that $0<\delta <1$, at least for asymptotically flat manifolds without boundary (see e.g. \cite[p. 16]{CSCB:1979}, \cite[p. 673]{Bartnik:1986}). For the boundary value problem, one can  find general results in \cite{Lockhart-McOwen:1985}, and we also include a proof below for our setting.  In the following proof, the covariant derivative, inner product, volume form are taken with with respect to $g$, and we often omit the subscripts.

We discuss the case $2-n<\delta < 0$, and the other case of decay rate follows a similar argument.  Suppose $\Delta X=0$. By harmonic expansion $X = O^{k,\alpha}(|x|^{2-n})$ (see, e.g. \cite[Theorem 1.17]{Bartnik:1986}). Then $0=-\int_{M\setminus \Omega} X\Delta X \, \dvol  = \int_{M\setminus \Omega} |\nabla X|^2\, \dvol $ because the boundary term on $\Sigma$ vanishes from the Dirichlet boundary condition and the boundary term at infinity vanishes from the decay rate. It implies that  $\Delta$ has trivial kernel. We show that $\Delta$ is surjective: Using\footnote{Let $\xi(x)$ be a positive smooth weight function such that $\xi(x) = |x|^{2-\delta'-\tfrac{n}{2}}$ outside a compact subset of $M$. The $\cL^2_{\delta'-2}(M\setminus \Omega)$-norm is defined as the sum of the usual $\cL^2$-norm on a compact subset of $B\subset M$ and the weighted norm in the asymptotically flat end $M\setminus B$:
\[
	{\| u \|_{\cL^2_{\delta'-2}(M\setminus \Omega)}= \left(\int_{M\setminus \Omega} \big(|u(x)| \xi(x) \big)^2 \, \dvol\right)^{\frac{1}{2}}.}
\]
} that $\C^{k-2,\alpha}_{\delta-2} (M\setminus \Omega) \subset \cL^2_{\delta'-2}(M\setminus \Omega)$ for any $\delta'$ slightly larger than $\delta$, we write $\C^{k-2,\alpha}_{\delta-2} = \Range \Delta \oplus \mathcal K$ where $\mathcal K$ is $\cL^2_{\delta'-2}$-orthogonal to  $ \Range \Delta$. That is,  $Z\in \mathcal K$ if for all $X\in \C^{k,\alpha}_{\delta}$ with $X=0$ on $\Sigma$, 
\begin{align}\label{eq:cokernel}
	0= \int_{M\setminus \Omega} \xi^2 Z\Delta X \, \dvol.
\end{align}
 Considering compactly supported $X$ yields that $\xi^2 Z$ weakly solves $\Delta (\xi^2  Z)=0$ and thus $ \xi^2 Z= O^{k,\alpha}(|x|^{2-n})$ by elliptic regularity. Integrating \eqref{eq:cokernel} by part, invoking $X=0$ on $\Sigma$, and letting $\nabla_\nu X$ be arbitrary on $\Sigma$ implies that $ \xi^2 Z=0$ on $\Sigma$. Then we can as above conclude that  $Z\equiv 0$ in $M\setminus \Omega$, and thus $\Delta$ is surjective.

For Item~\eqref{item:0}, $\Gamma_{( g, u)}$ has Fredholm index $0$. It suffices to show that the kernel is trivial. Observe the $g$-inner product:
\begin{align*}
	\left\langle X,  \Gamma_{( g, u)}(X) \right\rangle= -\tfrac{1}{2} \Delta |X|^2 - \tfrac{1}{2} u^{-1} {\nabla u} \cdot \nabla |X|_g^2 + |\nabla X|^2 +  u^{-2} (X ( u))^2.
\end{align*}	
Thus, if  $\Gamma_{(g,u)}(X)=0$, then 
\[
 \tfrac{1}{2} \Delta |X|^2 + \tfrac{1}{2} u^{-1} {\nabla u} \cdot \nabla |X|^2 \ge 0.
\]
By strong maximum principal and using $X=0$ on $\Sigma$ and $X\to 0$ at infinity,  $X$ is identically zero. The statement about the general boundary value $Z$ is standard.

For Item~\eqref{item:n}, $\Gamma_{(g,u)}$ has the Fredholm index $n$. By harmonic expansion (e.g. \cite[Theorem 1.17]{Bartnik:1986}), if $\Gamma_{(g,u)}(X)=0$, then 
\[
	X = c^i\pa_i + O(|x|^{-q})
\]
 for some constants $c_i$. It implies  that the kernel is at most $n$-dimensional because  by Item~\eqref{item:0} if $c_1=\dots=c_n=0$, then $X$ is identically zero. Since the Fredholm index is $n$, the dimension of the kernel must be exactly $n$.

\end{proof}

\begin{corollary}\label{cor:dimension} $\dim \cX^\Ga(M\setminus\Omega)=N$. 
\end{corollary}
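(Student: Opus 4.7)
The plan is to identify $\mathcal{X}^\Gamma(M\setminus\Omega)$ with the $N$-dimensional space $\mathcal{Z}$ of Euclidean Killing vector fields, via an ``asymptotic record plus interior correction'' scheme built on Lemma~\ref{PDE}.

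First, for each $Z\in\mathcal{Z}$, I will fix a smooth cutoff $\chi$ supported outside a large coordinate ball and vanishing near $\Sigma$, so that $\chi Z$ is a smooth vector field on $M\setminus\Omega$ vanishing on $\Sigma$ and equal to $Z$ at infinity. A direct coordinate computation, using that $Z$ is Killing for $g_{\mathbb{E}}$ (so the leading Euclidean contribution to $\Gamma(Z)$ vanishes) together with $(\bar{g},\bar{u})-(g_{\mathbb{E}},1)\in\mathcal{C}^{2,\alpha}_{-q}$, yields $\Gamma(\chi Z)\in\mathcal{C}^{1,\alpha}_{-1-q}(M\setminus\Omega)$; for translations $Z=\partial_i$ only coupling terms with Christoffel symbols of $\bar{g}$ and $\nabla\bar{u}$ survive, while for rotations $Z=x_i\partial_j-x_j\partial_i$ the Euclidean Laplacian vanishes and the leading coupling terms decay like $|x|^{-1-q}$.

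Next, I will look for a correction $Y\in\mathcal{C}^{3,\alpha}_{1-q}(M\setminus\Omega)$ with $Y|_\Sigma=0$ and $\Gamma(Y)=-\Gamma(\chi Z)$, so that $X:=\chi Z+Y$ lies in $\mathcal{X}^\Gamma(M\setminus\Omega)$ and records $Z$ as its asymptotic Euclidean Killing datum. Invoking Lemma~\ref{PDE} with $\delta=1-q$: for $n\ge 4$ we have $\delta\in(2-n,0)$ and Item~\eqref{item:0} produces a unique such $Y$; for $n=3$ we have $\delta\in(0,1)$ and Item~\eqref{item:n} produces $Y$ modulo the $n$-dimensional kernel spanned by the $V^{(i)}$. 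In either case the construction yields an element of $\mathcal{X}^\Gamma$ with prescribed asymptotic.

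Finally, I will consider the asymptotic projection $\pi:\mathcal{X}^\Gamma\to\mathcal{Z}/\mathcal{Z}_0$, where $\mathcal{Z}_0:=\mathcal{Z}\cap\mathcal{C}^{3,\alpha}_{1-q}(M\setminus\Omega)$; a simple weighted H\"older decay check gives $\mathcal{Z}_0=\{0\}$ when $n\ge 4$ and $\mathcal{Z}_0$ equal to the translation subspace $\mathcal{T}$ when $n=3$. The previous step shows $\pi$ is surjective. The kernel of $\pi$ consists of $X\in\mathcal{X}^\Gamma$ that already lie in $\mathcal{C}^{3,\alpha}_{1-q}$ with $X|_\Sigma=0$, which by Lemma~\ref{PDE} has dimension $0$ for $n\ge 4$ and dimension $n$ for $n=3$. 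Summing dimensions yields $\dim\mathcal{X}^\Gamma=\dim(\mathcal{Z}/\mathcal{Z}_0)+\dim\ker\pi=N$ uniformly. The most delicate point will be the bookkeeping for $n=3$, where the translations $\partial_i$ themselves lie inside $\mathcal{C}^{3,\alpha}_{1-q}$ (because $1-q>0$ in that range) and so are invisible to $\pi$; their contribution to $\dim\mathcal{X}^\Gamma$ must be recovered precisely from the $n$-dimensional kernel supplied by Item~\eqref{item:n} of Lemma~\ref{PDE}, while for $n\ge 4$ the map $\pi$ alone accounts for all $N$ dimensions.
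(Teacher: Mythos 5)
Your argument is correct and is essentially the paper's own proof: both construct, for each Euclidean Killing field $Z$, an element of $\mathcal X^\Gamma(M\setminus\Omega)$ asymptotic to $Z$ by solving $\Gamma(Y)=-\Gamma(Z)$ with Lemma~\ref{PDE}, and both count the remaining freedom through the same dichotomy $1-q<0$ (isomorphism, Item (1)) versus $0<1-q<1$ when $n=3$ (the $n$-dimensional kernel, Item (2)). Your cutoff-plus-rank-nullity bookkeeping via the asymptotic projection $\pi$ is only a cosmetic repackaging of the paper's direct spanning argument with the explicit basis $W^{(i)},W^{(i,j)}$.
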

\begin{proof}
Recall the basis $Z^{(i)}, Z^{(i, j)}$ of the space $\mathcal Z$ defined in \eqref{de:Z}, and, outside a compact set of $M$,
\[
	Z^{(i)} =\partial_i\quad  \mbox{ or }\quad  Z^{(i,j)}= x_i \partial_j - x_j \partial_i \qquad \mbox{ for }i, j=1,\dots, n. 
\]
We compute $\Gamma (Z^{(i)})= O^{1,\alpha}(|x|^{-2-q})$. 
By  Item~\eqref{item:0} of Lemma~\ref{PDE}, there is a unique   $Y= O^{3,\alpha}(|x|^{-q})$  such that 
\begin{align*}
	\Gamma(Y) &=-\Gamma (Z^{(i)}) \quad \mbox{ in } M\setminus\Omega\\
	Y &= -Z^{(i)} \quad \mbox{ on } \Sigma.
\end{align*}
Then $W^{(i)}:= Y+Z^{(i)} \in \cX^\Ga(M\setminus\Omega)$. Similarly, we compute $\Gamma(Z^{(i, j)}) = O^{1,\alpha}(|x|^{-q-1})$ and can solve $W^{(i, j)}\in \cX^\Ga(M\setminus \Omega)$ such that $W^{(i, j)} - Z^{(i, j)}\in O^{3,\alpha}(|x|^{1-q})$. 

It is clear that $W^{(i)}$ and $W^{(i, j)}$ are linearly independent. It remains to show that they span $\cX^\Ga(M\setminus\Omega)$, and thus $\Dim \cX^\Ga(M\setminus\Omega)=N$. Let $X\in  \cX^\Ga(M\setminus\Omega)$. Then $X-W\in O^{3,\alpha}(|x|^{1-q})$ where $W$ is a linear combination of $W^{(i)}$ and $W^{(i,j)}$. We separate the discussions into the case $1-q<0$ and the case $0<1-q<1$ (the latter case  occurs only when $n=3$). 

\begin{itemize}
\item If $1-q<0$, using $\Gamma(X-W)=0$ and Item~\eqref{item:0} of Lemma~\ref{PDE}, we obtain $X=W$.  

\item If $0<1-q <1$ (when $n=3$),  by  Item~\eqref{item:n} of Lemma~\ref{PDE}, we have
\[
	X - W =  c_i W^{(i)} + O^{3,\alpha}(|x|^{-q}). 
\]		
Because $\Gamma (X-W-c_iW^{(i)})=0$, as in the first case we conclude that $X= W+c_i W^{(i)}$. 
\end{itemize}
\end{proof}

After introducing $\Ga$ and the gauged space of vectors $\cX^\Ga$, we now define the gauge conditions in solving the boundary value problem for~$T$. 

\begin{definition}\label{de:gauge}
Let  $(M, \bar{g}, \bar{u})$ be an asymptotically flat, static vacuum triple with $\bar u>0$. 
\begin{enumerate}
\item We say that $(g,u)$ satisfies the {\it static-harmonic gauge} (with respect to $(\bar g,\bar u)$) in  $M\setminus\Omega$ if $\Ga(g,u)=0$ in $M\setminus\Omega$.
\item Fix a positive scalar function $\rho$ in $M$ with $\rho=|x|^{-2}$ on the end $M\setminus K$. We say that $(g, u)$ satisfies an {\it orthogonal gauge} (with respect to $(\bar g, \bar u)$ and $\rho$) in $M\setminus\Omega$ if, for all  $X\in \cX^\Ga(M\setminus \Omega)$, 
\[
	\int_{M\setminus\Omega} \Big\langle \big( (g, u) - (\bar g, \bar u)\big) , \big(L_X \bar g, X(\bar u) \big)\Big \rangle \rho \, \dvol = 0,
\] 
where the inner product and volume form are of $\bar g$. 

\end{enumerate}

\end{definition}

\begin{remark}
\begin{enumerate}
\item  When $(\bar{g}, \bar{u}) = (g_{\mathbb{E}}, 1)$ is the Euclidean pair, $\Ga(g,u)=\b_{g_{\mathbb{E}}} g+ udu$. While the above definition of static-harmonic gauge does not recover our prior definition $\b_{g_{\mathbb{E}}} g+ du=0$ in \cite[Definition 4.2]{An-Huang:2021}, both conditions give  the same \emph{linearized} condition which is sufficient. See also  \cite[Remark 4.3]{An-Huang:2021}. 
\item  If $\bar u>0$, we can define the warped product metrics ${\bf \bar{g}} = \pm \bar{u}^2 dt^2 +\bar{g}$ and ${\bf g} = \pm u^2 dt^2 + g$ on $\mathbb{R}  \times M$. The condition $\Ga(g,u)=0$ is equivalent to requiring that $\bf g$ satisfies the harmonic gauge $\b_{\bf \bar{g}} {\bf g}=0$. See Proposition~\ref{pr:harmonic}. 
\end{enumerate}
 \end{remark}
 
We will soon see in Section~\ref{se:operator} below the static-harmonic gauge will be used to obtain ellipticity for the boundary value problem. The following lemma gives a justification why an orthogonal gauge is needed.

\begin{lemma}\label{le:diffeo}
Let $(M, \bar g, \bar u)$ be an asymptotically flat, static vacuum triple with $\bar u>0$. There is an open neighborhood $\mathcal{U}\subset \cM(M\setminus\Omega)$ of $(\bar g, \bar u)$ and an open neighborhood $\mathscr{D}_0 \subset \mathscr D (M\setminus\Omega)$ of $\mathrm{Id}_{M\setminus \Omega}$ such that for any $(g, u) \in \mathcal U$, there is a unique $\psi \in \mathscr{D}_0$ such that $(\psi^* g, \psi^* u)$ satisfies both the static-harmonic and orthogonal gauge. 
\end{lemma}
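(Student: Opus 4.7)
The plan is to reduce this to an application of the implicit function theorem. First I would parametrize a neighborhood of $\mathrm{Id}$ in $\mathscr D(M\setminus \Omega)$ smoothly by vectors $X\in \cX(M\setminus\Omega)$ via $\psi_X(p):=\exp^{\bar g}_p(X(p))$; this is smooth in $X$, satisfies $\psi_X|_\Sigma=\mathrm{Id}_\Sigma$ because $X|_\Sigma=0$, and maps a neighborhood of $0$ in $\cX(M\setminus\Omega)$ onto a neighborhood of $\mathrm{Id}$ in $\mathscr D(M\setminus\Omega)$. Fixing a basis $\{Y_1,\dots,Y_N\}$ of $\cX^\Ga(M\setminus\Omega)$, of dimension $N$ by Corollary~\ref{cor:dimension}, I would then consider
\[
\Phi(g,u,X) := \Big(\Ga(\psi_X^* g,\psi_X^* u),\;\Big\{\textstyle\int_{M\setminus \Omega}\big\langle (\psi_X^* g-\bar g,\psi_X^* u-\bar u),(L_{Y_j}\bar g,Y_j(\bar u))\big\rangle \rho\,\dvol\Big\}_{j=1}^N\Big)
\]
taking values in $\C^{1,\alpha}_{-1-q}(M\setminus \Omega)\times\mathbb R^N$, so that $\Phi(\bar g,\bar u,0)=0$ and the conclusion of the lemma amounts to uniquely solving $\Phi(g,u,X)=0$ for $X$ near $0$.

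The crux is to verify that $L:=D_X\Phi\big|_{(\bar g,\bar u,0)}$ is an isomorphism. Since $\left.\ds\right|_{s=0}\psi_{sX}^*\bar g=L_X\bar g$ and $\left.\ds\right|_{s=0}\psi_{sX}^*\bar u=X(\bar u)$, formula~\eqref{eq:gauge} identifies the first component of $L(X)$ with $\Gamma(X)$, and the remaining $N$ components with the $\rho\,\dvol$-pairings of $(L_X\bar g,X(\bar u))$ against $(L_{Y_j}\bar g,Y_j(\bar u))$. For injectivity, $L(X)=0$ forces $X\in \cX^\Ga(M\setminus\Omega)$; expanding $X$ in the basis $\{Y_j\}$ and testing the orthogonality component against $X$ itself yields
\[
\int_{M\setminus\Omega}\big(|L_X\bar g|^2+(X(\bar u))^2\big)\rho\,\dvol=0,
\]
so $X$ is a Killing vector for $\bar g$ with $X(\bar u)=0$ and $X|_\Sigma=0$. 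The standard Killing-ODE argument (using $L_X\bar g=0$ together with $X|_\Sigma=0$ to deduce $\nabla X|_\Sigma=0$, as in the computation carried out in the proof of Lemma~\ref{le:V}) then forces $X\equiv 0$ on the connected manifold $M\setminus\Omega$.

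For surjectivity, given a target $(f,c)\in \C^{1,\alpha}_{-1-q}(M\setminus \Omega)\times \mathbb R^N$, Lemma~\ref{PDE} applied to $(\bar g,\bar u)$ produces $X_0$ vanishing on $\Sigma$ and decaying at infinity with $\Gamma(X_0)=f$. The explicit construction of $\{W^{(i)},W^{(i,j)}\}$ in the proof of Corollary~\ref{cor:dimension} gives the decomposition $\cX(M\setminus\Omega)=\cX_0+\cX^\Ga(M\setminus\Omega)$, where $\cX_0:=\{X\in \cX(M\setminus\Omega):X=O^{3,\alpha}(|x|^{1-q})\}$, so any solution $X_0\in\cX_0$ may be adjusted by an element $Y\in \cX^\Ga$ without changing $\Gamma(X_0)$. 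The linear map $\cX^\Ga\to \mathbb R^N$ sending $Y$ to its tuple of orthogonality pairings is between two $N$-dimensional spaces and is injective by the same Killing argument as above, hence an isomorphism, and the required correcting $Y$ exists. With $L$ a bounded isomorphism between Banach spaces, the implicit function theorem furnishes a smooth assignment $(g,u)\mapsto X(g,u)$ on a small neighborhood $\mathcal U$ of $(\bar g,\bar u)$ with $\Phi(g,u,X(g,u))=0$, and setting $\psi:=\psi_{X(g,u)}$ together with $\mathscr D_0$ being the image of the parametrization completes the argument.

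The main obstacle I anticipate is the careful bookkeeping of the weighted Hölder spaces in the surjectivity step: one must match the decay rates of $\Gamma$ applied to vectors with asymptotics $O(|x|^{1-q})$ against the target $\C^{1,\alpha}_{-1-q}$, invoking Lemma~\ref{PDE}~\eqref{item:0} when $n\ge 4$ (where $1-q<0$ lies in the isomorphism range) versus \eqref{item:n} when $n=3$ (where $0<1-q<1$ and one has to quotient by a finite-dimensional kernel before adding the correcting element of $\cX^\Ga$). Precisely this dimension count is what pins down the dimension of $\cX^\Ga$ in Corollary~\ref{cor:dimension} and motivates the orthogonal gauge in Definition~\ref{de:gauge} as consuming exactly the residual finite-dimensional gauge freedom.
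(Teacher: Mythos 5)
Your proposal is correct and follows essentially the same route as the paper: the same map $(\Ga(\psi^*g,\psi^*u),\,\cL^2_\rho\text{-pairings})$, the same linearization $X\mapsto(\Gamma(X),c_1(X),\dots,c_N(X))$, and the same isomorphism argument via Lemma~\ref{PDE} and Corollary~\ref{cor:dimension} before invoking the implicit function theorem. The only (harmless) differences are that you make the exponential-map chart on $\mathscr D(M\setminus\Omega)$ explicit and spell out injectivity by pairing against $X$ itself and using the Killing-vector rigidity, where the paper simply appeals to an $\cL^2_\rho$-orthonormal basis of $\cX^\Ga(M\setminus\Omega)$.
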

\begin{proof}

Denote the weighted $\cL^2$-inner product:
\[
\big\langle (h, v), (k, w)\big\rangle_{\cL^2_\rho} = \int_{M\setminus \Omega} (h, v)\cdot (k, w)\rho \, \dvol
\] 
where the inner product and volume form are with respect to $\bar g$ and  $\rho$ is the weight function in Definition~\ref{de:gauge}. By Corollary~\ref{cor:dimension}, let $\{ X^{(1)}, \dots, X^{(N)} \}$ be an orthonormal basis of $\cX^\Ga(M\setminus \Omega)$ with respect to the $\cL^2_\rho$-inner product in the sense that 
\[
	\Big\langle \big(L_{X^{(i)}} \bar g, X^{(i)} (\bar u) \big ) ,\big( L_{X^{(j)}} \bar g, X^{(j)} (\bar u)\big) \Big\rangle_{\cL^2_\rho} =\delta_{ij}.
\] 

Recall $\mathscr D (M\setminus\Omega)$ and $\cM(M\setminus\Omega)$ defined in Definition~\ref{de:diffeo} and \eqref{eq:af-pair}, respectively. Consider the differentiable map 
\begin{align*}
&F:\mathscr D (M\setminus\Omega)\times \cM(M\setminus\Omega) \longrightarrow \C^{1,\a}_{-q-1}(M\setminus \Omega)\times\bR^N\\
&\qquad \qquad F(\psi,(g,u))=\big({\Ga}(\psi^*g,\psi^*u),(b_1,...,b_N)\big)
\end{align*}
where each number $b_i=\Big\langle \big(\psi^*g-\bar g,\psi^*u-\bar u\big),\big(L_{X^{(i)}}\bar g,X^{(i)}(\bar u)\big)\Big\rangle_{\cL^2_\rho}$. 
Linearizing $F$ in the first argument at $(\mathrm{Id}_{M\setminus\Omega},(\bar g,\bar u))$ gives
\bes
\begin{split}
&D_1F:\cX(M\setminus \Omega)\longrightarrow \C^{1,\a}_{-q-1}(M\setminus \Omega)\times\bR^N\\
&D_1F(X)=\Big({\Ga}'(L_X\bar g,X(\bar u)),\big(c_1(X),...,c_N(X)\big)\Big)
\end{split}
\ees
with $c_i(X)=\Big\langle \big(L_X\bar g,X(\bar u)\big), \big(L_{X^{(i)}}\bar g,X^{(i)}(\bar u)\big)\Big\rangle_{\cL^2_\rho}$.  

Note that $F\big(\mathrm{Id}_{M\setminus \Omega}, (\bar g, \bar u)\big)= (0, 0)$. Once we show that $D_1F$ is an isomorphism, the lemma follows from  implicit function theorem for Banach manifolds (see e.g. \cite[3.3.13 Proposition]{Abraham-Marsden-Ratiu:1983}).  If $D_1 F(X)=0$, then $\Gamma(X)=\Ga' (L_X\bar g,X(\bar u))=0$ and $c_i(X)=0$ for all $i$. It implies that $X\in \cX^\Ga(M\setminus \Omega)$,  and thus $X\equiv 0$ in $M\setminus\Omega$. To see that $D_1 F$ is surjective, for any covector $Z\in \C^{1,\alpha}_{-q-1} (M\setminus \Omega)$ and constants $a_1, \dots, a_N$, there exists $Y\in \cX(M\setminus \Omega)$ solving $\Ga'(L_Y \bar g, Y(\bar u))= \Gamma (Y) = Z$ by Lemma~\ref{PDE}. Let $X= Y+ \big(a_1-c_1(Y)\big) X^{(1)}+\dots + \big(a_N-c_N(Y)\big) X^{(N)}$. Then we have $D_1 F(X) =\big (Z, (a_1,\dots, a_N)\big)$.
\end{proof}

\subsection{The gauged operator}\label{se:operator}

Consider the operator $T$ defined in \eqref{eq:bdv} on the manifold $M\setminus\Omega$:
\begin{align*}
&T: \cM(M\setminus \Omega) \longrightarrow \C^{0,\alpha}_{-q-2}(M\setminus\Omega)\times  \mathcal{B}(\Sigma) \\
	&T(g, u) := \begin{array}{l}  
 \left\{ \begin{array}{l} -u \Ric_g  + \nabla^2_g u \\ \Delta_g u 
	\end{array} \right. \quad \mbox{ in } M\setminus\Omega\\
\left\{ \begin{array}{l} g^\intercal \\ H_g 
	\end{array} \right. \quad \mbox{ on } \Sigma.
	\end{array}
\end{align*}
where $\mathcal B(\Sigma)$ denotes the space of pairs $(\tau, \phi)$ where $\tau \in \C^{2,\alpha}(\Sigma)$ is a symmetric $(0, 2)$-tensor on $\Sigma$ and $\phi\in \C^{1,\alpha}(\Sigma)$ is a scalar-valued function on $\Sigma$. 

Define the \emph{gauged} operator 
\[
	T^\Ga: \cM (M\setminus\Omega) \longrightarrow \C^{0,\alpha}_{-q-2}(M\setminus\Omega)\times \C^{1,\alpha}(\Sigma) \times \mathcal{B}(\Sigma)
\]
and 
\begin{align}\label{rsv}
 T^{\Ga}(g, u)=  \begin{array}{l} \left\{ \begin{array}{l}
-u \Ric_{ g}+\nabla^2_{ g} { u}-u\DD_g {\Ga}(g,u)\\
\D_{g} { u}-{\Ga}(g,u) (\nabla_g u)
\end{array}\right. \quad  {\rm in }~M\setminus \Omega \\
\left\{ \begin{array}{l}
	\Ga(g, u) \\
	g^\intercal\\
	H_g
\end{array}
\right.\quad \mbox{ on } \Sigma
\end{array}.
\end{align}
Recall the notation $\mathcal D_g X = \frac{1}{2} L_X g$ from \eqref{eq:Lie}. 

Obviously the gauged operator $T^\Ga$ becomes the operator $T$  when the ``gauge'' term $\Ga(g, u)$ vanishes in $M\setminus\Omega$, i.e. $(g, u)$ satisfies the static-harmonic gauge. The following lemma relates our desired boundary value problem for $T$  to solving $T^\Ga (g, u)$.  
\begin{lemma}\label{rsv-to-sv}
Let $(M, \bar{g}, \bar{u})$ be an asymptotically flat, static vacuum triple  with $\bar{u}>0$. There is an open neighborhood $\cU$ of $(\bar g,\bar u)$ in $\cM(M\setminus \Omega)$ such that if $(g,u)\in\cU$ and $T^\Ga(g, u) = (0, 0, 0, \tau, \phi)$, then $\Ga(g, u)=0$ in $M\setminus \Omega$ and thus $T(g, u)= (0, 0, \tau, \phi)$. 

\end{lemma}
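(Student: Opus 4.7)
Set $V := \Gamma(g,u)$, identified with its dual vector via $g$ (or $\bar g$) when convenient. The bulk equations contained in $T^\Gamma(g,u)=(0,0,0,\tau,\phi)$ read
\[
\Ric_g \;=\; u^{-1}\nabla^2_g u - \mathcal{D}_g V, \qquad \Delta_g u \;=\; V(\nabla_g u) \qquad \text{in } M\setminus\Omega,
\]
while the boundary component $\Gamma(g,u)=0$ on $\Sigma$ gives $V|_\Sigma=0$. Asymptotic flatness of $(g,u)\in \mathcal{U}\subset \mathcal{M}(M\setminus\Omega)$ forces $V\in\mathcal{C}^{1,\alpha}_{-q-1}(M\setminus\Omega)$, decaying at infinity. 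My strategy is the standard one for de Turck--type gauge fixing: derive from these relations a linear second-order elliptic equation for $V$ whose coefficients are small perturbations (in $(g,u)$) of those of the operator $\Gamma_{(\bar g,\bar u)}$ of~\eqref{eq:Gamma}, and then use the isomorphism result for $\Gamma_{(\bar g,\bar u)}$ from Lemma~\ref{PDE}(1) together with the Dirichlet data and asymptotic decay of $V$ to conclude $V\equiv 0$.

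The elliptic equation for $V$ is produced by applying the contracted second Bianchi identity $\beta_g\Ric_g=0$ to the first bulk equation, which yields
\[
\beta_g\mathcal{D}_g V \;=\; \beta_g\bigl(u^{-1}\nabla^2_g u\bigr).
\]
On the right, I expand via the commutator identity $\nabla^j\nabla_j\nabla_i u = \nabla_i\Delta_g u + \Ric_g(\nabla u,\cdot)_i$; the second bulk equation then replaces $\Delta_g u$ with $V(\nabla u)$, and the first bulk equation replaces $\Ric_g$ by $u^{-1}\nabla^2 u - \mathcal{D}_g V$. After cancellation of the $\nabla^2 u$--$\nabla u$ quadratic terms, the right-hand side becomes linear in $V$ and $\nabla V$ with coefficients in $u$, $\nabla u$ and $g$. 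Substituting the identity $\beta_g\mathcal{D}_g V = -\tfrac12\Delta_g V - \tfrac12\Ric_g(V,\cdot)$ (the covector form of \eqref{eq:laplace-beta} applied to $L_V g$) on the left and eliminating $\Ric_g$ once more yields an equation
\[
\mathcal{L}_{(g,u)}(V) \;=\; 0 \qquad \text{in } M\setminus\Omega
\]
of the form $-\Delta_g V + (\text{first-order and zeroth-order terms in }V) = 0$, where $\mathcal{L}_{(g,u)}$ depends continuously on $(g,u)$ in $\mathcal{M}(M\setminus\Omega)$ and reduces, at $(g,u)=(\bar g,\bar u)$, to $\Gamma_{(\bar g,\bar u)}$ on account of the static vacuum relation $\bar u^{-1}\nabla^2\bar u = \Ric_{\bar g}$.

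To finish, view $\mathcal{L}_{(g,u)}$ as a bounded linear map on $\{X\in\mathcal{C}^{3,\alpha}_{-q-1}(M\setminus\Omega) : X|_\Sigma=0\}$ into the corresponding weighted codomain. At $(\bar g,\bar u)$ this map is $\Gamma_{(\bar g,\bar u)}$, which is injective by the maximum-principle argument in the proof of Lemma~\ref{PDE}(1) (that argument works at any negative weight, not only $\delta\in(2-n,0)$). Shrinking the neighborhood $\mathcal{U}$ further if necessary, $\mathcal{L}_{(g,u)}$ remains an isomorphism for $(g,u)\in\mathcal{U}$ by perturbation and so has trivial kernel. Hence $V\equiv 0$ in $M\setminus\Omega$, i.e.\ $\Gamma(g,u)=0$ there, and consulting \eqref{rsv} we conclude $T(g,u)=T^\Gamma(g,u)=(0,0,\tau,\phi)$. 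The main obstacle in carrying this plan out is the middle-paragraph Bianchi calculation: one must carefully track the $u^{-1}$ and $u^{-2}$ prefactors, correctly commute covariant derivatives on $\nabla u$, and verify that after eliminating $\Ric_g$ and $\Delta_g u$ through the bulk equations the resulting operator $\mathcal{L}_{(g,u)}$ matches $\Gamma_{(g,u)}$ at the principal level and agrees with $\Gamma_{(\bar g,\bar u)}$ at the background. The remaining perturbation/uniqueness step is then standard.
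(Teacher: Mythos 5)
Your plan is correct and, after the dust settles, it produces exactly the paper's equation: carrying out your Bianchi computation (using \eqref{eq:laplace-beta}, \eqref{eq:product} and the commutation $\Div\nabla^2 u = d\Delta_g u + \Ric_g(\nabla u,\cdot)$) yields, after the cancellation you anticipate, precisely $u\,\Gamma_{(g,u)}(V) + \big({-u}\Ric_g+\nabla^2_g u\big)(V,\cdot)=0$, which is the operator $\hat\Gamma$ in the paper's proof; the endgame (perturbation of the background isomorphism plus $V|_\Sigma=0$ and decay) is then identical to the paper's. The difference is purely in how the elliptic equation for $V$ is derived: the paper does not apply $\beta_g$ to the Ricci equation directly, but instead pairs $S(g,u)=\big(u\,\mathcal D_g\Gamma,\ \Gamma(\nabla_g u)\big)$ against $\kappa_0(g,u,X)$ in the diffeomorphism-invariance identity \eqref{equation:cokernel1} and integrates by parts over compactly supported $X$, obtaining the equation for $V$ in weak form. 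That route has the advantage of never differentiating $\Ric_g$, which for $(g,u)\in\C^{2,\alpha}_{-q}$ only exists distributionally; your contracted-Bianchi step should therefore be phrased distributionally (or by approximation), after which the two derivations coincide. Two smaller points to tighten: (i) $V=\Gamma(g,u)$ is a priori only $\C^{1,\alpha}_{-q-1}$, not $\C^{3,\alpha}_{-q-1}$, so the equation holds weakly and regularity of $V$ is recovered afterwards; (ii) your parenthetical claim that injectivity of $\Gamma_{(\bar g,\bar u)}$ at "any negative weight" suffices is shaky for the perturbation step, since $-q-1$ generally lies outside $(2-n,0)$ where Lemma~\ref{PDE}(1) gives an isomorphism, and injectivity alone is not obviously stable under perturbation at such weights; the clean fix --- and what the paper implicitly does --- is to note $V\in\C_{-q}$ as well (stronger decay embeds in the weaker-weight space), run the isomorphism-plus-small-zeroth-order-perturbation argument at the weight $-q\in(2-n,0)$, and conclude $V\equiv 0$ there.
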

\begin{proof}

In the following computations,  the volume measure,  geometric operators, as well as $\beta_g, \mathcal D_g$,  are all computed with respect to $g$, and we omit the subscripts $g$ for better readability. Recall the integral identity  \eqref{equation:cokernel1} says the following two terms are $\cL^2$-orthogonal, for any $X\in \mathcal X(M\setminus\Omega)$,
\begin{align*}
S(g, u) &= (-u \Ric+\nabla^2 { u}, \Delta u)\\
\kappa_0(g, u, X) &= \big(2\beta^* X - u^{-1} X(u) g, \, -\Div X + u^{-1} X(u)\big)
\end{align*}
where we re-expressed  $\kappa_0(g, u, X)$ from \eqref{eq:kappa} using the operator $\beta^* = \beta_g^*$, defined in \eqref{eq:Bianchi*}.

Below we write the covector $\Ga =\Ga(g, u)$ for short. Since $(g, u)$ solves $T^\Ga(g, u)=0$, we can substitute $S(g, u)= \big( u \DD \Ga ,  \Ga (\nabla u)\big)$ to get 
\begin{align*}
	0&=\int_{M\setminus\Omega} \Big\langle  S(g, u), \kappa_0 (g, u, X) \Big\rangle \, \dvol \\
	 &=\int_{M\setminus\Omega} \Big\langle\big(u \DD \Ga ,  \Ga  (\nabla u)\big), \big( 2 \beta^* X - u^{-1} X(u) g, \, -\Div X + u^{-1} X(u) \big) \Big \rangle \, \dvol.
\end{align*}	
 Applying integration by parts and varying among compactly supported  $X$, we obtain that $\Ga$ weakly solves
\begin{align*}
	0&=2 \b \big( u\DD \Ga ) -( \Div \Ga) du + d (\Ga(\nabla u) ) + u^{-1} \Ga(\nabla u) du\\
	&=2u \b \DD \Ga- L_{\Ga} g(\nabla u, \cdot )+d(\Ga(\nabla u) ) + u^{-1} \Ga(\nabla u) du\\
	&= -u\Delta \Ga - u\, \Ric (\Ga, \cdot) + \nabla^2 u (\Ga, \cdot)- \nabla\Ga(\nabla u,\cdot) + u^{-1} \Ga (\nabla u) \, du\\
	&= u \Gamma_{(g,u)}(\Ga)- u\Ric (\Ga, \cdot)+ \nabla^2 u(\Ga, \cdot ) =:\hat \Gamma(\Ga),
\end{align*}
where we use $2 \b \big( u\DD \Ga ) = 2u \b (\DD \Ga) - L_{\Ga} g (\nabla u, \cdot) + (\Div \Ga )du $ by \eqref{eq:product} in the second line, $2u \b \DD \Ga = -u\Delta \Ga - u\, \Ric (\Ga)$ by \eqref{eq:laplace-beta} and a similar computation as \eqref{eq:com} in the third line, and the definition of the operator $\Gamma_{(g,u)}$ in \eqref{eq:Gamma} in the last line. 

Recall that  $\Gamma_{(g, u)}$ is an isomorphism by Item~\eqref{item:0} in Lemma~\ref{PDE}. For $(g, u)$ sufficiently close to the static vacuum pair $(\bar{g}, \bar{u})$, we have $u>0$ and $-u\Ric_g +\nabla^2_g u$ small, so  we conclude that the operator $\hat \Gamma$ is also an isomorphism. Together with the boundary condition $\Ga = 0$ on $\Sigma$, we conclude that $\Ga$ is identically zero in $M\setminus \Omega$. 

\end{proof}

In Section~\ref{se:solution} below, we shall solve the gauged boundary value problem \eqref{rsv} near a static vacuum pair via Inverse Function Theorem. As preparation, let us state some basic properties of the linearized operator.

Denote by $L$ and $L^\Ga$ the linearizations of $T$ and $T^{\Ga}$ at a static vacuum pair $(\bar g, \bar u)$, respectively. Explicitly, 
\begin{align}\label{eq:L}
 L(h, v)=  \begin{array}{l} \left\{ \begin{array}{l}
-\bar u \Ric'(h) + (\nabla^2)'(h)  \bar u - v \Ric + \nabla^2 v\\
\Delta v + \big( \Delta'(h) \big)\bar u
\end{array}\right. \quad \mbox{ in } M\setminus \Omega \\
\left\{ \begin{array}{l}
	h^\intercal\\
	H'(h)
\end{array}\right. \quad \mbox{ on } \Sigma
\end{array}
\end{align}
\begin{align}\label{lsv}
 L^{\Ga}(h, v)=  \begin{array}{l} \left\{ \begin{array}{l}
-\bar u \Ric'(h) + (\nabla^2)'(h)  \bar u - v \Ric+ \nabla^2 v-\bar u\DD {\Ga}'(h,v)\\
\Delta v + \big( \Delta'(h) \big)\bar u-{\Ga}'(h,v) (\nabla \bar u)
\end{array}\right.\quad \mbox{in } M\setminus \Omega\\
\left\{ \begin{array}{l}
	\Ga'(h,v) \\
	h^\intercal\\
	H'(h)
\end{array}\right. \quad \mbox{ on } \Sigma
\end{array}
\end{align}
where 
\begin{align*}
&L:\C^{2,\alpha}_{-q}(M\setminus \Omega) \longrightarrow \C^{0,\alpha}_{-q-2}(M\setminus\Omega)\times  \mathcal{B}(\Sigma)\\
&L^\Ga: \C^{2,\alpha}_{-q}(M\setminus \Omega) \longrightarrow \C^{0,\alpha}_{-q-2}(M\setminus\Omega)\times \C^{1,\alpha}(\Sigma) \times  \mathcal{B}(\Sigma). 
\end{align*}
Here and for the rest of this section, the geometric operators are all computed with respect to  $\bar g$ and the linearizations are all taken at $(\bar g, \bar u)$.

From the formulas in Section~\ref{se:formula}, it is direct to check that the first two equations of $L^\Ga(h, v)$ can be expressed as 

\begin{align}\label{eq:S'}
\begin{split}
-\bar u \Ric'(h) + (\nabla^2)'(h)  \bar u - v \Ric+ \nabla^2 v-\bar u\DD {\Ga}'(h,v)&=\tfrac{1}{2} \bar u \Delta h + E_1\\
\Delta v + \big( \Delta'(h) \big)\bar u-{\Ga}'(h,v) (\nabla \bar u)&=\Delta v+ E_2
\end{split}
\end{align}
where lower order terms $E_1, E_2$ are linear functions in $h, v, \nabla h, \nabla v$ and of the order $O^{0,\alpha}(|x|^{-2-2q})$ at infinity.

The linear operators $L$ and $L^\Ga$ also share similar relations as Lemmas~\ref{le:diffeo} and \ref{rsv-to-sv}  for the nonlinear operators $T$ and $T^\Ga$, which we summarize in the next  lemma.

\begin{lemma}\label{le:linear}
Let $(M, \bar{g}, \bar{u})$ be a static vacuum triple with $\bar{u}>0$. Then the following holds:
\begin{enumerate}
\item Let $(h,v)\in \C^{2,\a}_{-q}(M\setminus \Omega)$ solve $L^\Ga(h, v) = (0, 0, 0, \tau ,\phi)$. Then  $\Ga'(h,v)=0$ in $M\setminus \Omega$ and thus $L(h, v)=(0, 0, \tau, \phi)$.\label{it:gauge}
\item For any $(h,v)\in \C^{2,\a}_{-q}(M\setminus \Omega)$, there is a vector field $X\in \mathcal X(M\setminus \Omega)\cap \C^{3,\a}_{1-q}(M\setminus \Omega)$ such that $\Ga' \big(h+L_X\bar g, v+X(\bar u)\big )=0$.\label{it:SH}

\end{enumerate}

\end{lemma}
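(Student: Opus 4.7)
Both items are the linearized analogues of Lemma~\ref{rsv-to-sv} and Lemma~\ref{le:diffeo} respectively, and the plan is to adapt those arguments at the level of the linearization at $(\bar g,\bar u)$.

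For Item~\ref{it:gauge}, I would start from the linearized orthogonality identity~\eqref{equation:cokernel2}, which asserts $\int_{M\setminus\Omega}\langle S'(h,v),\kappa_0(\bar g,\bar u,X)\rangle_{\bar g}\,d\mathrm{vol}_{\bar g}=0$ for every $X\in\mathcal X(M\setminus\Omega)$. Reading off the interior equations of $L^\Gamma$ in \eqref{lsv}, the hypothesis $L^\Gamma(h,v)=(0,0,0,\tau,\phi)$ gives
\[
S'(h,v)=\bigl(\bar u\,\DD\,\Gamma'(h,v),\;\Gamma'(h,v)(\nabla\bar u)\bigr)\quad\text{in }M\setminus\Omega,
\]
while the boundary equations give $\Gamma'(h,v)=0$ on $\Sigma$. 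Substituting this expression into \eqref{equation:cokernel2} and integrating by parts over compactly supported $X$, as in the proof of Lemma~\ref{rsv-to-sv}, I expect $\Gamma'(h,v)$ to weakly satisfy the linearized equation
\[
\bar u\,\Gamma_{(\bar g,\bar u)}\bigl(\Gamma'(h,v)\bigr)-\bar u\,\Ric\bigl(\Gamma'(h,v),\cdot\bigr)+\nabla^2\bar u\bigl(\Gamma'(h,v),\cdot\bigr)=0.
\]
Because $(\bar g,\bar u)$ is static vacuum, $-\bar u\,\Ric+\nabla^2\bar u=0$, so this collapses to $\Gamma_{(\bar g,\bar u)}(\Gamma'(h,v))=0$. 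Together with the Dirichlet condition on $\Sigma$ and the decay $\Gamma'(h,v)\in\C^{1,\alpha}_{-q-1}$, the isomorphism statement in Item~\eqref{item:0} of Lemma~\ref{PDE} forces $\Gamma'(h,v)\equiv0$ in $M\setminus\Omega$. Plugging this back into \eqref{lsv} recovers \eqref{eq:L}, hence $L(h,v)=(0,0,\tau,\phi)$.

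For Item~\ref{it:SH}, I would use the identity \eqref{eq:gauge}, which gives the key linearity relation
\[
\Gamma'\bigl(h+L_X\bar g,\;v+X(\bar u)\bigr)=\Gamma'(h,v)+\Gamma'(L_X\bar g,X(\bar u))=\Gamma'(h,v)+\Gamma(X).
\]
Thus the task reduces to solving the linear boundary value problem
\[
\Gamma(X)=-\Gamma'(h,v)\quad\text{in }M\setminus\Omega,\qquad X=0\quad\text{on }\Sigma,
\]
whose right-hand side lies in $\C^{1,\alpha}_{-q-1}(M\setminus\Omega)$. I would invoke Lemma~\ref{PDE} at the background pair $(\bar g,\bar u)$ with weight $\delta=1-q$: when $n\ge 4$ one has $1-q\in(2-n,0)$, so Item~\eqref{item:0} yields a unique $X\in\C^{3,\alpha}_{1-q}$ vanishing on $\Sigma$; when $n=3$ one has $1-q\in(0,1)$, and Item~\eqref{item:n} still supplies surjectivity (with the solution determined modulo the $n$-dimensional kernel, any representative being acceptable). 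In either case the resulting $X$ has $X=0$ on $\Sigma$ and $X=O^{3,\alpha}(|x|^{1-q})$, so taking $Z=0$ in the definition places $X$ inside $\mathcal X(M\setminus\Omega)\cap\C^{3,\alpha}_{1-q}(M\setminus\Omega)$, as required.

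Neither step presents a serious obstacle since each is essentially the linearized shadow of an already-established nonlinear argument; the only mild point to watch is that the operator $\Gamma_{(\bar g,\bar u)}$ acquires no zeroth-order correction at the static vacuum background (that is where $-\bar u\,\Ric+\nabla^2\bar u=0$ is used), which is precisely what allows a direct appeal to Lemma~\ref{PDE} rather than a perturbative argument.
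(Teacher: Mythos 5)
Your proposal is correct and follows essentially the same route as the paper: Item (1) is exactly the linearization of the proof of Lemma~\ref{rsv-to-sv} (substituting the gauged interior equations into \eqref{equation:cokernel2}, integrating by parts against compactly supported $X$, and using Lemma~\ref{PDE} together with $\Ga'(h,v)=0$ on $\Sigma$ to conclude $\Ga'(h,v)\equiv 0$), while Item (2) solves $\Gamma(X)=-\Ga'(h,v)$ with $X=0$ on $\Sigma$ via Lemma~\ref{PDE} and the identity \eqref{eq:gauge}, just as the paper does. Your dimension-dependent case split on the weight $1-q$ is if anything slightly more careful than the paper's single citation of Item~\eqref{item:n}; the only cosmetic point in Item (1) is that the weight $-q-1$ of $\Ga'(h,v)$ may lie below $2-n$, so one should appeal to the maximum-principle injectivity in Lemma~\ref{PDE} (which only needs decay to zero) rather than the isomorphism statement verbatim, matching the level of detail in the paper's own nonlinear argument.
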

\begin{proof}
The proof of Item~\eqref{it:gauge} proceeds similarly as the proof of Lemma~\ref{rsv-to-sv} by linearizing those identities   at $(\bar{g}, \bar{u})$.

For Item~\eqref{it:SH},  given $(h, v)$, note ${\Ga}'{(h,v)}= O^{1,\a}(|x|^{-q-1})$. By Item~\eqref{item:n} in Lemma \ref{PDE}, we let $X\in \C^{3,\a}_{1-q}(M\setminus \Omega)$ solving $\Gamma(X)=-\Ga'(h,v)$ in $M\setminus \Omega$ and $X=0$ on $\Sigma$. Recall \eqref{eq:gauge} that $\Gamma(X) = \Ga'(L_X \bar g, X(\bar u))$, and thus we get the desired $X$. 
\end{proof}

 One can proceed as in \cite[Lemma 4.6]{An-Huang:2021} (see also  \cite[Proposition 3.1]{Anderson-Khuri:2013}) to show that the operator $L^\Ga$ is elliptic and Fredholm of index zero. In fact, the operator $L^\Ga$ here has exactly the same leading order terms as the special case considered in  \cite{An-Huang:2021}, and thus the same proof applies verbatim.  Also, recall that  
\begin{align*}
	\Ker L \supseteq \Big\{ \big(L_X \bar g, X(\bar u) \big):X\in \mathcal X (M\setminus \Omega) \Big\}
\end{align*}
where $(L_X \bar g, X(\bar u))$ arises from diffeomorphisms. Therefore, $\Ker L^\Ga$ always contains an $N$-dimensional subspace arising from the ``gauged'' space $\mathcal X^\Ga (M\setminus \Omega)$. We summarize those properties in the following lemma.

\begin{lemma}\label{le:Fred}
The operator 
\[
L^\Ga: \C^{2,\alpha}_{-q}(M\setminus \Omega)\longrightarrow \C^{0,\alpha}_{-q-2}(M\setminus \Omega)\times \C^{1,\alpha}(\Sigma) \times \mathcal B(\Sigma) 
\] 
is elliptic of Fredholm index zero whose kernel $\Ker L^\Ga$ contains an $N$-dimensional subspace: 
\begin{align} \label{eq:set}
	\Ker L^\Ga \supseteq \Big\{ \big(L_X \bar g, X(\bar u) \big):X\in \mathcal X^\Ga (M\setminus \Omega) \Big\}.
\end{align}
\end{lemma}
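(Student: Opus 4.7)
The plan is to verify the three claims in order of increasing depth: the kernel containment \eqref{eq:set}, ellipticity of the boundary value problem, and the Fredholm-index-zero property. The main point is that, after the gauge modification in \eqref{lsv}, the principal parts of $L^\Ga$ and of its boundary operators agree with those of the corresponding operator around a Euclidean background handled in \cite[Lemma 4.6]{An-Huang:2021}, so only a routine verification is needed.

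For the kernel containment, fix $X\in\cX^\Ga(M\setminus\Omega)$ and set $(h,v)=(L_X\bar g,X(\bar u))$. Differentiating the diffeomorphism invariance identity $S(\psi_s^*\bar g,\psi_s^*\bar u)=0$ in $s$ gives $S'(h,v)=0$, which is exactly the first two interior components of $L(h,v)$. Since $X=0$ on $\Sigma$ by the definition of $\cX(M\setminus\Omega)$, the flow of $X$ fixes $\Sigma$ pointwise, so $\psi_s^*\bar g|_\Sigma^{\intercal}=\bar g|_\Sigma^\intercal$ and $H_{\psi_s^*\bar g}=H_{\bar g}$ on $\Sigma$; differentiating yields $h^\intercal=0$ and $H'(h)=0$ on $\Sigma$. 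Finally, the defining property $\Gamma(X)=\Ga'(h,v)=0$ in $M\setminus\Omega$ kills both the gauge-correction terms $-\bar u\DD\Ga'(h,v)$ and $-\Ga'(h,v)(\nabla\bar u)$ in \eqref{lsv} and supplies the boundary output $\Ga'(h,v)|_\Sigma=0$. Hence $L^\Ga(h,v)=0$, establishing \eqref{eq:set}.

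For ellipticity, the reduced form \eqref{eq:S'} shows that at a covector $\xi$ the interior principal symbol is the isomorphism $(h,v)\mapsto(\tfrac{1}{2}\bar u|\xi|^2 h,|\xi|^2 v)$, where $\bar u>0$, so the interior part is elliptic in the Agmon--Douglis--Nirenberg sense. The delicate point is verifying the Lopatinski--Shapiro complementing condition for the boundary triple $(h^\intercal,H'(h),\Ga'(h,v))$, which amounts to showing that the corresponding constant-coefficient model problem on the half-space admits only the trivial decaying solution with zero boundary data. Since the principal parts of $L^\Ga$ and of its boundary operators coincide exactly with those of the Euclidean-background operator considered in \cite[Lemma 4.6]{An-Huang:2021} (the factor $\bar u>0$ only rescales the leading symbol, while $\bar u^{-1}\nabla\bar u$ contributes lower order), the half-space symbol computation there applies verbatim.

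For the Fredholm property, the weighted elliptic theory on asymptotically flat manifolds of Lockhart--McOwen and Bartnik type, as formulated in \cite{An-Huang:2021}, provides bounded parametrices for elliptic boundary value problems between $\C^{2,\alpha}_{-q}(M\setminus\Omega)$ and $\C^{0,\alpha}_{-q-2}(M\setminus\Omega)\times\C^{1,\alpha}(\Sigma)\times\cB(\Sigma)$ whenever $-q$ avoids the discrete set of exceptional indicial weights of the asymptotic model; the hypothesis $\tfrac{n-2}{2}<q<n-2$ places $-q$ strictly in $(-(n-2),0)$ and so avoids them. The index is then computed by a continuous homotopy through elliptic boundary value problems to a decoupled Laplacian model on $h$ and $v$ with the corresponding boundary conditions, which has index zero. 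I expect the only part of the argument that could offer any resistance is the reverification of the Lopatinski--Shapiro condition, but because the leading terms in \eqref{lsv} match those of the Euclidean model, the entire verification and index computation of \cite[Lemma 4.6]{An-Huang:2021} transfers without modification.
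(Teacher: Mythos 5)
Your proposal is correct and follows essentially the same route as the paper: the kernel containment is obtained from diffeomorphism invariance of $S$ together with $X=0$ on $\Sigma$ and the gauge condition $\Gamma(X)=\Ga'(L_X\bar g, X(\bar u))=0$, while ellipticity and the index-zero property are reduced to \cite[Lemma 4.6]{An-Huang:2021} (cf. \cite[Proposition 3.1]{Anderson-Khuri:2013}) by observing that the leading terms of $L^\Ga$, as displayed in \eqref{eq:S'}, agree with the Euclidean-background case up to the positive factor $\bar u$. Your additional remarks on the interior symbol, the Lopatinski–Shapiro check, and the admissible weight range $\tfrac{n-2}{2}<q<n-2$ are consistent with, and merely flesh out, the paper's citation-based argument.
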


To close this section, we include a basic fact about analyticity of the kernel elements. It will be used in the proof of Theorem~\ref{th:trivial} below.

\begin{corollary}\label{co:analytic}
Let  $(M, \bar g, \bar u)$ be a static vacuum triple with $\bar{u}>0$. Let $(h, v)$ solve $L(h, v)=0$ in $M\setminus \Omega$. Then the following holds:
\begin{enumerate}
\item There exists $X\in \mathcal X(M\setminus \Omega)\cap \C^{3,\alpha}_{1-q}(M\setminus \Omega)$ such that $(\hat h, \hat v) = (h, v)+ (L_X \bar g, X(\bar u))$ satisfies $L^\Ga(\hat h, \hat v)=0$ in $M\setminus \Omega$ and $(\hat h, \hat v)$ is analytic in $\Int (M\setminus \Omega)$. 
\item Furthermore, if an open subset $\hat \Sigma\subset \Sigma $ is an analytic hypersurface, then $(\hat h, \hat v)$ is analytic up to $\hat \Sigma$.
\end{enumerate}
\end{corollary}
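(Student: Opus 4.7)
The plan is to gauge-fix $(h,v)$ into the static-harmonic gauge so that the system $L^\Ga(\hat h,\hat v)=0$ becomes an elliptic problem with analytic coefficients, and then invoke classical analytic regularity for elliptic systems.

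For Item~(1), I would first apply Lemma~\ref{le:linear}(\ref{it:SH}) to produce a vector field $X\in\mathcal X(M\setminus\Omega)$ with $X=0$ on $\Sigma$ such that $(\hat h,\hat v):=(h,v)+(L_X\bar g,X(\bar u))$ satisfies $\Ga'(\hat h,\hat v)=0$ in $M\setminus\Omega$. Diffeomorphism invariance of $T$, together with the fact that the flow of $X$ lies in $\mathscr D(M\setminus\Omega)$ (since $X|_\Sigma=0$, which also forces $(L_X\bar g)^\intercal=0$ and $H'(L_X\bar g)=0$ on $\Sigma$), yields $L(L_X\bar g,X(\bar u))=0$; combining this with the hypothesis $L(h,v)=0$ gives $L(\hat h,\hat v)=0$. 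Since $\Ga'(\hat h,\hat v)$ vanishes identically in $M\setminus\Omega$, the extra terms $-\bar u\,\DD\Ga'$ and $-\Ga'(\nabla\bar u)$ that distinguish $L^\Ga$ from $L$ (see \eqref{lsv}) also vanish, and likewise the boundary gauge condition in $L^\Ga$ is met. Hence $L^\Ga(\hat h,\hat v)=0$. The decay claim $X\in\C^{3,\alpha}_{-q}$ requires only a small adjustment: Lemma~\ref{le:linear}(\ref{it:SH}) produces $X\in\C^{3,\alpha}_{1-q}$, and if $1-q>-q$ makes the rate non-sharp one can subtract off an appropriate element of the kernel of $\Gamma_{(\bar g,\bar u)}$ from Lemma~\ref{PDE}(\ref{item:n}) to improve the decay, while $(\hat h,\hat v)\in\C^{2,\alpha}_{-q}$ is already automatic from $L_X\bar g\in\C^{2,\alpha}_{-q}$.

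Next, I would invoke interior analyticity. By the identities in \eqref{eq:S'}, the interior part of the system $L^\Ga(\hat h,\hat v)=0$ is a second-order diagonally elliptic system whose principal part is the Laplacian acting on each component and whose lower-order coefficients are built algebraically from $(\bar g,\bar u)$ and its derivatives. By M\"uller zum Hagen's theorem (cf.\ Appendix~\ref{se:analytic}), $(\bar g,\bar u)$ is real-analytic in suitable coordinates, so these coefficients are real-analytic. The classical Morrey interior analyticity theorem for elliptic systems with real-analytic coefficients then gives that $(\hat h,\hat v)$ is analytic throughout $\Int(M\setminus\Omega)$.

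For Item~(2), assume $\hat\Sigma\subset\Sigma$ is analytic. I would pass to analytic coordinates near $\hat\Sigma$ flattening the boundary, in which $(\bar g,\bar u)$ remains real-analytic (again by M\"uller zum Hagen, together with analyticity of $\hat\Sigma$). The boundary operators appearing in $L^\Ga$, namely $\Ga'(\hat h,\hat v)$, $\hat h^\intercal$, and $H'(\hat h)$, then all have real-analytic coefficients. Applying Morrey's up-to-the-boundary analytic regularity for elliptic boundary value problems on analytic domains with analytic coefficients, I conclude that $(\hat h,\hat v)$ is analytic up to $\hat\Sigma$. The main obstacle is checking that the boundary value problem for $L^\Ga$ satisfies the Lopatinski--Shapiro (complementing) condition along $\hat\Sigma$, which is needed to apply analytic boundary regularity; however, this is already implicit in the fact that $L^\Ga$ is elliptic and Fredholm (Lemma~\ref{le:Fred}), and the relevant principal-symbol computation was carried out in essentially the same form in the authors' earlier work \cite{An-Huang:2021}, to which one can reduce here since the leading-order structure of $L^\Ga$ is unchanged.
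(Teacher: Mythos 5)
Your proposal follows essentially the same route as the paper: gauge-fix via Lemma~\ref{le:linear}(2) so that $\Ga'(\hat h,\hat v)=0$ and hence $L^\Ga(\hat h,\hat v)=0$ (using that $(L_X\bar g, X(\bar u))\in\Ker L$), then use analyticity of $(\bar g,\bar u)$ from M\"uller zum Hagen together with Morrey's interior and up-to-the-boundary analytic regularity for the elliptic system $L^\Ga$, the complementing condition being part of the ellipticity recorded in Lemma~\ref{le:Fred}. The one shaky point is your claimed upgrade of $X$ from $\C^{3,\alpha}_{1-q}$ to $\C^{3,\alpha}_{-q}$ by subtracting kernel elements of $\Gamma$ (these are asymptotic to translations and cannot cancel a genuine $O(|x|^{1-q})$ term), but this decay discrepancy already exists between the corollary's statement and Lemma~\ref{le:linear} in the paper itself and plays no role in how the corollary is applied.
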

\begin{proof}
The existence of $X$ is by Item~\eqref{it:SH} in Lemma~\ref{le:linear}. It remains to argue that $(\hat h, \hat v)$ is analytic. By \cite{Muller-zum-Hagen:1970} (or Theorem~\ref{th:analytic} below), there is an analytic atlas of $\Int M$ such that $(\bar g, \bar u)$ is analytic. Since $L^\Ga(h, v)$ is elliptic by Lemma~\ref{le:Fred}, by elliptic regularity \cite[Theorem 6.6.1]{Morrey:1966}, we see that $(h, v)$ is analytic in $\Int (M\setminus \Omega)$. If the portion $\hat \Sigma$ of the boundary is analytic, then $(h,v)$ is analytic up to $\hat \Sigma$ by \cite[Theorem 6.7.6$^\prime$]{Morrey:1966}. 

\end{proof}


\section{Static regular and ``trivial'' kernel}\label{se:static-regular}

fd

Throughout this section, we fix a background asymptotically flat, static vacuum triple $(M, \bar g, \bar u)$ with $\bar u>0$.  Recall the assumptions on $\Omega, \Sigma$ in Notation in Section~\ref{se:intro}. In particular, $\overline{M\setminus \Omega} = M\setminus \Omega$ is a proper subset of $\Int M$.
All geometric quantities (e.g. covariant derivatives, curvatures) are computed with respect to $\bar g$ and linearization are taken at $(\bar g, \bar u)$. We often skip labelling the subscripts in $\bar g$ or $(\bar g, \bar u)$ when the context is clear.  

We recall the linearized operator $L$ defined by \eqref{eq:L}. The goal of this section is to prove Theorem~\ref{th:trivial}. It follows directly from Theorem~\ref{th:trivial'} and Corollary~\ref{co:static} below.

\begin{manualtheorem}{\ref{th:trivial}$^\prime$}\label{th:trivial'}
Let $(M,\bar g,\bar u)$ be an asymptotically flat, static vacuum triple with $\bar u>0$. Let $\hat \Sigma$ be a nonempty, connected,  open subset of $\Sigma$ with  $\pi_1(M\setminus \Omega, \hat \Sigma)=0$.  Let $(h, v)\in \C^{2,\alpha}_{-q}(M\setminus \Omega)$ satisfy $L (h, v)=0$ with either one of the following conditions:
\begin{enumerate}
\item \label{co:1} 
\begin{align}\label{eq:static1'}
v= 0, \quad \big( \nu'(h) \big)(\bar u) + \nu(v)=0,  \quad  A'(h)=0 \qquad \mbox{ on }\hat \Sigma.
\end{align}
\item \label{co:2} $\hat \Sigma$ is an analytic hypersurface  and
\begin{align}\label{eq:static3'}
\begin{split}
	 &A'(h)=0,\quad \big( \Ric'(h)\big)^\intercal =0,\quad \big(( \nabla^k_\nu \, \Ric)'(h)\big)^\intercal =0 \qquad \mbox { on } \hat \Sigma\\
	 &\mbox{for all  positive integers $k$}. 
\end{split}
\end{align}

\end{enumerate}
 Then $(h, v) = \big(L_X \bar g, X(\bar u) \big)$ for some $X\in \mathcal X(M\setminus \Omega)$.

\end{manualtheorem}

Recall Definition~\ref{de:static} for static regular of type (I) and type (II). We get the following consequence from the above theorem, which completes the proof of Theorem~\ref{th:trivial}. 
\begin{corollary}\label{co:static}
Let $(M\setminus \Omega, \bar g, \bar u)$ be static vacuum with $\bar u>0$. 
\begin{enumerate}
    \item If $\Sigma$ is static regular in $(M\setminus \Omega, \bar g, \bar u)$ of either type (I) or type (II), then 
\begin{align}\label{eq:kernel2}
		\Ker L = \Big\{ \big(L_X \bar g, X(\bar u) \big):X\in \mathcal X (M\setminus \Omega) \Big\}.
\end{align}
\item \label{it:gauge-kernel}If \eqref{eq:kernel2} holds, then 
\[
	\Ker L^\Ga = \Big\{ \big(L_X \bar g, X(\bar u) \big):X\in \mathcal X^\Ga(M\setminus \Omega) \Big\},
\]
and thus $\Dim \Ker L^\Ga= N$. 

\item \label{it:X} If $(h, v) = \big(L_X \bar g, X(\bar u)\big)$ for some vector $X$  in $M\setminus \Omega$ with $X=0$ on $\hat \Sigma$, then both \eqref{eq:static1'} and \eqref{eq:static3'} hold everywhere on $\hat \Sigma$, i.e. those boundary conditions \eqref{eq:static1'} and \eqref{eq:static3'} are ``gauge invariant''.
\end{enumerate}
\end{corollary}
\begin{proof}

We prove Item~\eqref{it:gauge-kernel}.  By Item~\eqref{it:gauge} in Lemma~\ref{le:linear}, we have $\Ker L^\Ga \subseteq \Ker L$ and thus together with  \eqref{eq:kernel2}
\[
\Ker L^\Ga \subseteq \Big\{ \big(L_X \bar g, X(\bar u) \big):X\in \mathcal X^\Ga (M\setminus \Omega) \Big\}.
\]
The opposite inclusion follows \eqref{eq:set}.

We verify in Item~\eqref{it:X}. Clearly, $X(\bar u)=0$ on $\hat \Sigma$. 
We then verify  $\big(( \nabla^k_\nu \, \Ric)'(L_X \bar g)\big)^\intercal =0$, and the rest equalities can be verified similarly. For $e_a, e_b$ tangential to $\hat \Sigma$,  
\begin{align*}
&\big((\nabla^k_\nu \,\Ric)'(L_X  \bar g) \big) (e_a, e_b)\\
&= \big( L_X (\nabla^k_\nu \, \Ric )\big)(e_a, e_b)\\
& = \big ( \nabla_X (\nabla^k_\nu \, \Ric)\big)(e_a, e_b) +  \big(\nabla^k_\nu \, \Ric\big)(\nabla_{e_a} X, e_b)  +  \big(\nabla^k_\nu \, \Ric\big)({e_a} , \nabla_{e_b} X)\\
&=0.
\end{align*}
\end{proof}

We shall refer to Condition~\eqref{co:1} in Theorem~\ref{th:trivial'} as the \emph{Cauchy boundary condition} and to Condition~\eqref{co:2} as the \emph{infinite-order boundary condition}. We prove Theorem~\ref{th:trivial'} for the Cauchy boundary condition and the infinite-order boundary condition respectively in Section~\ref{se:uniqueness} and Section~\ref{se:uniqueness2} below.  In both sections, we observe that to show $(h, v) = (L_X \bar g, X(\bar u))$, it suffices to show $h$ by itself is $L_X\bar g$.

\begin{lemma}\label{le:trivial-h}
Let $(M\setminus \Omega, \bar g, \bar u)$ be static vacuum with $\bar u>0$ and $(h,v) \in \C^{2,\alpha}_{-q}(M\setminus \Omega)$. Suppose $h = L_X \bar g$ for some locally $\C^3$ vector field $X$ in $M\setminus \Omega$. Then the following holds.
\begin{enumerate}
\item $X-Z\in \C^{3,\alpha}_{1-q}(M\setminus \Omega)$ where $Z\in \mathcal Z$. \label{it:asymptotics}
\item If $h^\intercal=0, H'(h)=0$ on $\Sigma$ and $X=0$ in an open subset of  $\Sigma$, then $X\equiv 0$ on $\Sigma$ and thus $X\in \mathcal X(M\setminus \Omega)$. \label{it:boundary}
\item If $S'(h, v)=0$ in  $M\setminus \Omega$, then $v = X(\bar u)$ in $M\setminus \Omega$. \label{it:v}
\end{enumerate}
Consequently, if $L(h, v)=0$ and $h=L_X\bar g$ for some vector field $X\in \C^{3,\alpha}_{\mathrm{loc}}(M\setminus \Omega)$ with $X=0$ in an open subset of $\Sigma$, then $X\in \mathcal X(M\setminus\Omega)$ and $(h, v) = (L_X \bar g, X(\bar u))$. 
\end{lemma}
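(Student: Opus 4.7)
I would treat the three items in the order (1) $\to$ (3) $\to$ (2), since the asymptotic information from (1) feeds (3) and the boundary propagation is cleanest last; the final consequent is then immediate, because $L(h,v)=0$ unpacks into $S'(h,v)=0$ plus $h^\intercal = H'(h) = 0$ on $\Sigma$, after which (2) supplies $X \in \mathcal X(M\setminus\Omega)$ and (3) supplies $v=X(\bar u)$.

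For (1), the starting point is the Bianchi identity $\beta_{\bar g}(L_X\bar g) = -\Delta_{\bar g} X - \Ric_{\bar g}(X,\cdot)$ from the preliminaries, which rewrites $L_X \bar g = h \in \C^{2,\alpha}_{-q}$ as $\Delta_{\bar g} X = -\beta_{\bar g} h - \Ric_{\bar g}(X,\cdot)$ on the asymptotically Euclidean end. The $L^\infty$ bound on $L_X \bar g$ already forces $\nabla X$ bounded, so $X$ has at most linear growth; matching its linear part to the rate $-q$ of $L_X \bar g$ pins it down to a Euclidean Killing $Z \in \mathcal Z$ (since among linearly growing vector fields $L_Z g_{\mathbb E}=0$ cuts out exactly $\mathcal Z$), and a standard weighted elliptic expansion for the Laplacian on an asymptotically flat end then gives $X - Z \in \C^{3,\alpha}_{1-q}(M\setminus\Omega)$.

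For (3), the key is diffeomorphism invariance of $S$: pulling back $(\bar g, \bar u)$ along the flow of $X$ keeps $S$ zero, so linearizing gives $S'(L_X \bar g, X(\bar u)) = 0$. Combined with the hypothesis $S'(L_X \bar g, v) = 0$, linearity of $S'$ yields $S'(0, w) = 0$ for $w := v - X(\bar u)$, i.e., the overdetermined static-potential system $\nabla^2 w = w\, \Ric$ together with $\Delta w = 0$. Item~(1) yields $X(\bar u) \in \C^{2,\alpha}_{-q}$ (using $\bar u \to 1$ and $\nabla \bar u = O(|x|^{-q-1})$), so $w \in \C^{2,\alpha}_{-q}$ with $w\to 0$ at infinity; rigidity of decaying static potentials of an asymptotically flat static vacuum---equivalently, the only linearized Ricci-flat deformation of $\pm\bar u^2 dt^2 + \bar g$ of the pure form $2\bar u w\, dt^2$ that decays is the trivial one---then forces $w \equiv 0$.

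For (2), at any $p \in U$ the tangential derivatives of $X$ vanish since $X\equiv 0$ on the open subset $U \subset \Sigma$, and $h = L_X \bar g$ at $p$ then determines $\nabla_\nu X(p)$ algebraically from $h(\nu,\cdot)(p)$. The hypothesis $h^\intercal = 0$ on $\Sigma$ becomes the tangential first-order equation $L_{X^\top}\bar g^\intercal + 2 X^\nu A_{\bar g} = 0$, and $H'(h) = 0$ expands, via the Jacobi-type formula $H'(L_X\bar g) = X(H) - \Delta_\Sigma X^\nu - (|A|^2 + \Ric(\nu,\nu)) X^\nu$, into a second-order equation for $X^\nu$ coupled to $X^\top$. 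I would propagate the trivial Cauchy data on $U$ across the connected $\Sigma$ by combining Aronszajn-type unique continuation for the Laplace-like equation in $X^\nu$ with a Nomizu-type Killing extension for $X^\top$ driven by $X^\nu$, yielding $X|_\Sigma \equiv 0$ and, with (1), $X \in \mathcal X(M\setminus\Omega)$. The main obstacle is (2): the mixed first/second-order coupled system is genuinely of mixed order and requires a careful symbol-level argument to propagate zeros from $U$ to $\Sigma \setminus U$; a secondary difficulty is the rigidity input to (3), where potential boundary contributions at $\Sigma$ must be controlled, cleanest either via the spacetime Ricci-flat interpretation or via the elliptic structure of $(R')^*$.
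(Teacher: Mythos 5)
Your Item~(3) and the final assembly coincide with the paper's proof (diffeomorphism invariance gives $S'(L_X\bar g, X(\bar u))=0$, subtract, and apply rigidity of a decaying solution of $\nabla^2 f=f\,\Ric$; this rigidity is a pointwise/ODE statement, so your worry about boundary contributions at $\Sigma$ is moot). However, Items~(1) and (2) each contain a genuine gap. In (1), the step ``the $L^\infty$ bound on $L_X\bar g$ already forces $\nabla X$ bounded'' is false: the deformation tensor only controls the symmetric part of $\nabla X$, and e.g.\ $X=\log r\,\partial_\theta$ in the Euclidean plane has $|L_Xg_{\mathbb E}|$ bounded while $|\nabla X|\sim\log r$ is unbounded. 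To control the growth of $X$ one must use the decay of $\nabla h$ (not just $h$): the paper does this via the second\,-order identity $X_{i;jk}=-R_{\ell kji}X^{\ell}+\tfrac12(h_{ij;k}+h_{ik;j}-h_{jk;i})$, which along radial geodesics gives $X''=B\,X+D$ with $B=O(t^{-q-2})$, $D=O(t^{-q-1})$, hence $|X|=O(|x|)$ by an ODE comparison; only after this a priori bound do the elliptic equation $\Delta X+\Ric(X,\cdot)=-\beta h$ and the harmonic expansion give $X_i-c_{ij}x_j\in\C^{3,\alpha}_{1-q}$, and $h=O(|x|^{-q})$ forces the linear part to be a rotation, i.e.\ $X-Z\in\C^{3,\alpha}_{1-q}$ with $Z\in\mathcal Z$.

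In (2) you correctly write the two boundary identities but then leave the propagation of zeros unresolved, and the device you suggest does not work: a Nomizu-type extension argument needs analyticity or a closed prolonged ODE system, whereas here $\Sigma$ is only smooth and the inhomogeneity $2\eta A$ in the tangential equation is itself an unknown coupled to $X^\intercal$. The missing idea in the paper is simple: apply the tangential Bianchi operator $\beta_\Sigma$ to $L_{X^\intercal}\bar g^\intercal+2\eta A=0$, which yields $\Delta_\Sigma X^\intercal+\Ric_\Sigma(X^\intercal,\cdot)+2\beta_\Sigma(\eta A)=0$; together with $\Delta_\Sigma\eta+\big(|A|^2+\Ric(\nu,\nu)\big)\eta-X^\intercal(H)=0$ from $H'(h)=0$, the pair $(\eta,X^\intercal)$ satisfies a second-order linear elliptic system on $\Sigma$ with Laplacian principal part, to which standard (Aronszajn-type) unique continuation from the open set $U$ applies, giving $X\equiv0$ on $\Sigma$ and hence $X\in\mathcal X(M\setminus\Omega)$ once combined with (1). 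No symbol-level analysis of a mixed-order system is needed after this reduction.
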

\begin{remark}
It is clear in the following proof that the assumption $X=0$ in an open subset of $\Sigma$ in Item~\eqref{it:boundary} can be replaced by a weaker assumption that  $X$ along $\Sigma$ vanishes at infinite order at some point $p\in \Sigma$.
\end{remark}
\begin{proof}
To prove Item~\eqref{it:asymptotics}, we first recall \eqref{eq:derivativeX} that 
\begin{align*}
	X_{i;jk}=- R_{\ell kj i} X^\ell + \tfrac{1}{2} (h_{ij;k} + h_{ik;j} - h_{jk;i}).
\end{align*}
In an asymptotically flat end, let $\gamma(t)$, $t\in [1,\infty)$,  be a radial geodesic ray,  parametrized by arc length, that goes to infinity. Note that by asymptotical flatness, the parameter $t$ and $|x|$ are comparable, i.e. $C^{-1} |x| \le t \le C|x|$ for some positive constant $C$.  Then the previous identity implies that, along $\gamma(t)$,
\[
	\frac{d^2}{dt^2} X(\gamma(t)) = B(t) X(\gamma(t)) + D(t)
\]
where the coefficient matrix $B(t)$ is the restriction of the curvature tensor $\mathrm{Rm}(\gamma(t))$ and $D(t)$ is the restriction  $\nabla h(\gamma(t))$. Consequently, $B(t) = O(t^{-q-2}), D(t)=O(t^{-q-1})$. By elementary ODE estimates, we have $|X| + t|X'|\le Ct$ for some constant $C$. (See, for example, \cite[Lemma B.3]{Huang-Martin-Miao:2018}, where the homogeneous case $B=0$ was proved, but a similar argument extends to our inhomogeneous case here.) By varying  geodesic rays, we obtain $X= O^{3,\alpha}(|x|)$. Together with the elliptic equation $\Delta X + \Ric (X, \cdot) = - \beta h \in \C^{1,\alpha}_{-q-1} (M\setminus \Omega)$ and harmonic expansion, it implies that 
\[
	X_i -c_{ij} x_j \in  \C^{3,\alpha}_{1-q}(M\setminus \Omega)
\]
for some constants $c_{ij}$. Using that $L_X \bar g = h= O(|x|^{-q})$, the leading term of $X$  must correspond to a rotation vector (or zero). This gives the desired asymptotics. 

To prove Item~\eqref{it:boundary}, we decompose $X=\eta\nu+X^\intercal$ on $\Sigma$ where $X^\intercal$ is tangent to~$\Sigma$.  Recall 
\begin{align}\label{eq:vector}
\begin{split}
	(L_X \bar g)^\intercal &= 2\eta A + L_{X^\intercal} \bar g^\intercal\\
	H'(L_X\bar g) &=-\Delta_\Sigma \eta- (|A|^2 + \Ric(\nu, \nu) )\eta + X^\intercal (H).
\end{split}
\end{align}
The assumptions $(L_X g)^\intercal =0$ and $H'(L_X \bar g)=0$ imply that $\eta, X^\intercal$ satisfy a linear  $2$nd-order  elliptic system on $\Sigma$:
\begin{align*}
	\Delta_\Sigma X^\intercal + \Ric_\Sigma(X^\intercal,\cdot) +2 \beta_\Sigma (\eta A)&=0\\
	\Delta_{\Sigma}\eta+\big(|A|^2+\Ric(\nu,\nu)\big)\eta-X^\intercal(H)&=0.
\end{align*}
The first equation is exactly  $\beta_{\Sigma}( L_{X^\intercal}\bar  g^\intercal +2\eta A  )=0$, which follows the assumption $(L_X g)^\intercal =0$ and  the first equation in \eqref{eq:vector}, where  $\beta_{\Sigma}$ denotes the \emph{tangential} Bianchi operator, i.e., $\beta_{\Sigma} \tau = - \Div_{\Sigma}\tau + \tfrac{1}{2} d_\Sigma (\tr_{\Sigma} \tau )$. By unique continuation, both $\eta, X^\intercal$ are identically zero on $\Sigma$.

To prove Item~\eqref{it:v}, using $h=L_X \bar g$ and noting $S'\big (L_X \bar g, X(\bar u)\big ) =0 $, we obtain 
\[
	0 = S'( h ,  v) - S'\big (L_X \bar g, X(\bar u)\big ) = S'(0, v - X(\bar u))=0 \quad \mbox{ in } M\setminus \Omega.
\]
Let  $f := v-X(\bar u)$. The previous identity implies  $-f \Ric + \nabla^2 f=0$ in $M\setminus \Omega$. Noting $f(x)\to 0$ at infinity because both $v, X(\bar u)\to 0$ (by Item~\eqref{it:asymptotics}), we can conclude $f\equiv 0$ (see, for example, \cite[Proposition B.4]{Huang-Martin-Miao:2018}).


\end{proof}

\subsection{Cauchy boundary condition}\label{se:uniqueness}


In this section, we assume $(h, v)\in \C^{2,\alpha}_{-q}(M\setminus \Omega)$ satisfies $L(h, v)=0$ and 
\begin{align} \label{eq:Cauchy-all}
v= 0,\quad  \big(\nu'(h)\big) (\bar u) + \nu(v)=0, \quad  A'(h)=0 \quad \mbox{ on } \hat \Sigma,
\end{align}
where $\hat \Sigma$ is a connected open subset of $\Sigma$ satisfying $\pi_1(M\setminus\Omega, \hat \Sigma)=0$. Since $L(h, v)=0$ implies  $h^\intercal = 0, H'(h)=0$ on~$\Sigma$,  all those  boundary conditions together are referred to as the \emph{Cauchy boundary condition}.  The reason is that if $h$ satisfies the geodesic gauge, then $h^\intercal=0, A'(h)=0$ imply   
\begin{align}\label{eq:Cauchy1}
h=0, \quad \nabla h=0 \quad \mbox{ on } \hat \Sigma
\end{align}
 while $v= 0, \big(\nu'(h)\big) (\bar u) + \nu(v)=0$ imply 
 \begin{align}\label{eq:Cauchy2}
 v=0, \quad \nu(v)=0\quad  \mbox{ on }\hat \Sigma.
 \end{align} 
 Since $(h, v)$ satisfies  a system of $2$nd-order partial differential equations, the conditions \eqref{eq:Cauchy1} and \eqref{eq:Cauchy2} are precisely the classical Cauchy boundary condition. However, Theorem~\ref{th:trivial'} does not follow from the classical theorem on uniqueness because the system $L(h, v)$ is not elliptic under the geodesic gauge.

We outline the proof below. We first show that $h$ is ``locally trivial'' in the sense that $h=L_X \bar g$ in some open subset of $M$,  achieved by suitably extending $(h, v)$ across $\hat\Sigma$. Then we would like to show that $h$ must be globally trivial. In general, a locally trivial $h$ need not be globally trivial, but here we know $h$ is analytic (after changing to the static-harmonic gauge), so we can use Theorem~\ref{th:extension}.  To carry out the argument, it is important to work with the appropriate gauge at each step of the proof.

\begin{proof}[Proof of Theorem~\ref{th:trivial'} (under Cauchy boundary condition)]

We without loss of generality assume that $(h, v)$ satisfies the geodesic gauge, and thus we have 
\begin{align}\label{eq:Cauchy}
    h=0, \quad  \nabla h=0,\quad  v=0, \quad \nu(v)=0\quad \mbox{ on }\hat \Sigma
\end{align}
as discussed earlier. We then extend $(h, v)$ by $(0, 0)$ across $\hat \Sigma$ into some small open subset $U\subset \Omega \cap \Int M$ and $\overline U\cap (M\setminus\Omega)$ is exactly the closure of $\hat\Sigma$. Denote the $\C^1$ extension  pair by $(\hat h, \hat v)$:
\begin{align}\label{eq:hat-h}
	(\hat h, \hat v) = \left\{ \begin{array}{ll}  (h, v) & \mbox{ in } M\setminus \Omega \\ 
	(0,0) & \mbox{ in } \overline U \end{array} \right..
\end{align}
Here, the open subset $U$ is chosen sufficiently small such that the ``extended'' manifold 
\[
\hat M = (M\setminus \Omega) \cup \overline U
\]
 has a smooth embedded hypersurface boundary $\partial \hat M$ and  $\pi_1(\hat M, U)=0$ based on the assumption $\pi_1( M\setminus \Omega, \hat \Sigma) = 0$. Note that $\hat M$ is a closed, proper subset of $\Int M$.

Using the Green-type identity~\eqref{equation:Green}, we verify that  $(\hat h, \hat v)$ solves $S'(\hat h, \hat v)=0$ in $\hat M$ weakly, which is equivalent to showing that $P(\hat h, \hat v)=0$ weakly: For $(k, w) \in \C^{\infty}_c(\hat M)$, we have 
\begin{align*}
    \int_{\hat M} (\hat h, \hat v) \cdot P(k, w)  \dvol &= \int_{ M\setminus \Omega } (\hat h, \hat v) \cdot P(k, w)  \dvol\\
    &=\int_{M\setminus \Omega} P(\hat h, \hat v) \cdot (k, w)  \dvol \\
    &\quad +\int_{\hat \Sigma} Q(h, v)\cdot \big(k^\intercal, H'(k)\big) \da- \int_{\hat \Sigma} Q(k,w)\cdot \big(h^\intercal, H'(h)\big)\, \da\\
    &=\int_{M\setminus \Omega} P(\hat h, \hat v) \cdot (k, w)  \dvol
\end{align*}
where the boundary terms vanish because \eqref{eq:Cauchy} (also note that $(k,w)$ on $\Sigma$ is supported in $\hat \Sigma$). Then we solve for $Y\in \C^{1,\alpha}_{1-q}(\hat M)$ with $Y=0$ on $\partial \hat M$ such that 
\begin{align}\label{eq:tilde-h}
	(\tilde h, \tilde v):= (\hat h, \hat v) + \big(L_Y \bar g, Y(\bar u) \big)
\end{align}
weakly solves the static-harmonic gauge $\Ga(\tilde h, \tilde v)=0$ in $\hat M$. Then $(\tilde h,\tilde v)$ is a solution to the elliptic system $L^\Ga(\tilde h,\tilde v)=0$, and hence $(\tilde h, \tilde v)\in \C^{2,\alpha}_{-q}(\hat M)$ by elliptic regularity. By Corollary~\ref{co:analytic}, $(\tilde h, \tilde v)$ is analytic in $\Int \hat M$, and note $(\tilde h, \tilde v)= (L_Y \bar g, Y ( \bar u))$ in $U$. Applying Theorem~\ref{th:extension} to the analytic manifold $(\hat M, \bar g)$ and analytic $\tilde h$,  the  vector field $Y|_{U}$ restricted in $U$ extends to a vector field $\tilde Y$ in $\hat M$ that is $\tilde h$-Killing in $\hat M$ with $\tilde Y = Y$ in $U$.

Denote $X= \tilde Y - Y$ in $\hat M$. Then $X$ is $\hat h$-Killing in $\hat M$ by \eqref{eq:tilde-h}. Thus, $X$ is $h$-Killing in $M\setminus \Omega$ by \eqref{eq:hat-h} with $X=0$ on an open subset of $\hat \Sigma\subset \Sigma$. By Lemma~\ref{le:trivial-h}, we conclude that $X\in \mathcal X(M\setminus \Omega)$ and $(h, v) = (L_X \bar g, X(\bar u))$.  
\end{proof}

\subsubsection{Relaxed Cauchy boundary condition}\label{sec:relaxed}
In this subsection, we note that the condition $\big(\nu'(h)\big) (\bar u) + \nu(v)=0$ in \eqref{eq:Cauchy-all} can be replaced or removed under mild assumptions as noted in Item~\eqref{it:nu} in Remark~\ref{re:intro}. For the rest of this section, we denote 
\[
	(\nu(u))'=\big(\nu'(h)\big) (\bar u) + \nu(v)
\]
since the right hand side is exactly the linearization of $\nu(u)$ on $\Sigma$.

We begin with general properties. It is well-known that a static vacuum metric $\bar g$ has a Schwarzschild expansion in the harmonic coordinates~\cite{Bunting-Masood-ul-Alam:1987}. We show that a similar expansion holds for the static vacuum deformation. 

\begin{lemma}\label{le:v-mass}
Let $(M, \bar g, \bar u)$ be a static vacuum triple with $\bar u>0$. Let $(h, v)\in \C^{2,\alpha}_{-q}(M\setminus \Omega)$ be a static vacuum deformation.
\begin{enumerate}
    \item \label{it:harmonic} Suppose $\Ga'(h, v)=0$. Then $(h, v)$ has the following expansions near infinity:
\begin{align*}
	h_{ij} (x)&= \tfrac{2}{n-2} c \delta_{ij} |x|^{2-n} + O^{2,\alpha}(|x|^{\max\{-2q, 1-n \}})\\
	v(x) &= -c |x|^{2-n} + O^{2,\alpha}(|x|^{\max\{-2q, 1-n \}})
\end{align*}
for some real number $c$. 
\item \label{it:v-mass}Suppose $h^\intercal =0$ and $H'(h)=0$ on $\Sigma$. Then $v(x)= O^{2,\alpha}(|x|^{\max\{-2q, 1-n \}})$. (Note that $\Ga'(h, v)=0$ is not assumed here.)
\end{enumerate}
\end{lemma}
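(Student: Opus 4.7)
My plan for part~\eqref{it:harmonic} is to combine the static-harmonic gauge with $S'(h,v)=0$ to obtain elliptic equations $\Delta h,\,\Delta v = O(|x|^{-2-2q})$ at infinity, apply harmonic expansion, and then read off the coefficients of the leading $|x|^{2-n}$ term using the linearized gauge. For part~\eqref{it:v-mass}, I will first establish $R'(h)=0$, then adjust $(h,v)$ into the static-harmonic gauge while preserving the boundary conditions, and invoke the mass-preservation Lemma~\ref{le:mass} to rule out the $|x|^{2-n}$-coefficient of~$v$.

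For part~\eqref{it:harmonic}, the identities \eqref{eq:S'} combined with $\Gamma'(h,v)=0$ and $S'(h,v)=0$ immediately give $\Delta h,\,\Delta v = O^{0,\alpha}(|x|^{-2-2q})$. Standard harmonic expansion on an asymptotically flat end (using $\tfrac{n-2}{2}<q<n-2$) yields
\[
h_{ij}=A_{ij}|x|^{2-n} + O^{2,\alpha}(|x|^{\max\{-2q,1-n\}}), \quad v=-c|x|^{2-n} + O^{2,\alpha}(|x|^{\max\{-2q,1-n\}})
\]
for constants $A_{ij},\,c$. To pin down the $A_{ij}$'s, I would use the Schwarzschild-type asymptotics $\bar u = 1+O^{2,\alpha}(|x|^{2-n})$ from~\cite{Bunting-Masood-ul-Alam:1987}, which makes the terms of $\Gamma'(h,v)$ involving $\nabla\bar u$ decay like $O(|x|^{3-2n})$. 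At leading order the gauge identity therefore reduces to $\beta h + dv = 0$ in the Cartesian chart, and a direct computation forces $A_{ij}=\tfrac{2c}{n-2}\delta_{ij}$.

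For part~\eqref{it:v-mass}, I first derive $R'(h)=0$ by taking the $\bar g$-trace of the first component of $S'(h,v)=0$. Using $R_{\bar g}=0$, the identity $\bar g^{ij}((\nabla^2)'(h)\bar u)_{ij} = (\Delta'(h))\bar u + \langle h,\nabla^2\bar u\rangle$, and the static vacuum equation $\nabla^2\bar u=\bar u\,\Ric$ (so that $\langle h,\nabla^2\bar u\rangle$ and $\bar u\langle h,\Ric\rangle$ cancel), and then substituting the second component $\Delta v=-(\Delta'(h))\bar u$, the trace collapses to $-\bar u\,R'(h)=0$. Next, by Item~\eqref{it:SH} in Lemma~\ref{le:linear}, I choose $X\in\mathcal X(M\setminus\Omega)$ with $(\tilde h,\tilde v):=(h+L_X\bar g,\ v+X(\bar u))$ satisfying $\Gamma'(\tilde h,\tilde v)=0$. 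Since $X=0$ on $\Sigma$, \eqref{eq:vector} gives $\tilde h^\intercal=0$ and $H'(\tilde h)=0$, and clearly $R'(\tilde h)=R'(h)=0$. Applying part~\eqref{it:harmonic} to $(\tilde h,\tilde v)$ produces a Schwarzschild-type expansion for $\tilde h$ with some coefficient $c$, and a direct computation of the ADM mass integrand yields
\[
\int_{|x|=R}\sum_{i,j}\Big(\tfrac{\partial \tilde h_{ij}}{\partial x_i}-\tfrac{\partial \tilde h_{ii}}{\partial x_j}\Big)\tfrac{x_j}{|x|}\,d\sigma \longrightarrow 2c(n-1)\omega_{n-1}.
\]
Lemma~\ref{le:mass} then forces this limit to vanish, so $c=0$.

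The main subtlety I anticipate is ensuring that the gauge adjustment does not alter the $|x|^{2-n}$-coefficient of $v$, so that the $c$ computed for $\tilde v$ coincides with that for $v$. This reduces to checking that $X(\bar u)$ contains no $|x|^{2-n}$ contribution: writing $X=Z+O^{3,\alpha}(|x|^{1-q})$ with $Z\in\mathcal Z$ and using $\nabla\bar u=O(|x|^{1-n})$, the translational parts of $Z(\bar u)$ are $O(|x|^{1-n})$, while the rotational contributions $Z^{(i,j)}(\bar u)$ vanish at leading order because the principal radial Schwarzschild term of $\bar u$ is annihilated; both contributions are strictly more decaying than $|x|^{2-n}$.
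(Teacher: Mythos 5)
Your proof is correct and follows essentially the same route as the paper: for part~(1), harmonic expansion of $\Delta h,\Delta v=O(|x|^{-2-2q})$ plus the leading-order ($o(|x|^{1-n})$) form of the gauge condition to force $A_{ij}=\tfrac{2c}{n-2}\delta_{ij}$; for part~(2), a gauge adjustment via Item~(2) of Lemma~\ref{le:linear}, applying part~(1) to $(h+L_X\bar g,\,v+X(\bar u))$, and invoking Lemma~\ref{le:mass} to conclude $c=0$. Your added verifications — the trace computation showing $S'(h,v)=0\Rightarrow R'(h)=0$, the explicit evaluation $2c(n-1)\omega_{n-1}$ of the (standard ADM) surface integrand, and the check on $(L_X\bar g)^\intercal,\ H'(L_X\bar g)$ — are details the paper leaves implicit, and your caveat about a possible $Z\in\mathcal Z$ part of $X$ is unnecessary since Lemma~\ref{le:linear}(2) already yields $X=O^{3,\alpha}(|x|^{1-q})$, whence $X(\bar u)=O(|x|^{-2q})$ as the paper uses.
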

\begin{proof}
We prove Item~\eqref{it:harmonic}. By \eqref{eq:S'}, the assumption  $S'(h, v)=0$ and $\Ga'(h, v)=0$ imply that $(h, v)$ satisfies $\Delta h , \Delta v\in O^{2,\alpha}(|x|^{-2-2q})$. Thus, $h$ and $v$ have the harmonic expansions (see \cite[Theorem 1.17]{Bartnik:1986}) for some real numbers $c_{ij}$ and $c$:
\begin{align*}
h_{ij} &= \tfrac{2}{n-2} c_{ij}  |x|^{2-n} + O^{2,\alpha}(|x|^{\max\{-2q, 1-n \}})\\
	v &= -c |x|^{2-n} + O^{2,\alpha}(|x|^{\max\{-2q, 1-n \}}).
\end{align*}
To see that $c_{ij}=c\delta_{ij}$, we compute
\begin{align*}
    0&=(\Ga'(h, v))_j=\sum_i \left( -h_{ij,i} + \tfrac{1}{2} h_{ii,j}\right) + v_{,j} + o(|x|^{1-n})\\
    &=\Big(2\sum_i c_{ij}x_i - \sum_i c_{ii} x_j + (n-2) cx_j\Big)|x|^{-n}+ o(|x|^{1-n}).
\end{align*}
A direct computation gives the desired identity. 

For Item~\eqref{it:v-mass}, we use Lemma~\ref{le:linear} to find $X\in \mathcal X(M\setminus \Omega)\cap \C^{3,\alpha}_{1-q}(M\setminus \Omega)$ such that $\Ga'\big(h+L_X\bar g, v + X(\bar u) \big)=0$ and apply Item~\eqref{it:harmonic} to obtain  
\begin{align*}
(h + L_X\bar g)_{ij} &= \tfrac{2}{n-2} c\delta _{ij}  |x|^{2-n} + O^{2,\alpha}(|x|^{\max\{-2q, 1-n \}})\\
	v + X(\bar u) &= -c |x|^{2-n} + O^{2,\alpha}(|x|^{\max\{-2q, 1-n \}})
\end{align*}
for some real number $c$. Use the boundary condition and apply Lemma~\ref{le:mass} to $\big(h+L_X\bar g, v + X(\bar u) \big)$, we compute $c=0$. Since $X(\bar u)$ is of the order $O(|x|^{-2q})$, the result follows. 

\end{proof}

In the next lemma, we derive the ``hidden'' boundary conditions for $(h,v)$ satisfying $L(h, v)=0$.

\begin{lemma}[Hidden boundary conditions]
Let $\hat \Sigma$ be an open subset of $\Sigma$. Suppose $(h, v)\in \C^{2,\alpha}_{-q}(M\setminus \Omega)$ satisfy
\begin{align*}
	&\quad S' (h, v)=0 \quad \mbox { in a collar neighborhood $U$ of $\hat\Sigma$ in } M\setminus \Omega\\
	&\left\{ \begin{array}{l} h^\intercal =0\\ 
	H'(h)=0\end{array} \right.	\quad \mbox { on } \hat \Sigma.
\end{align*}
 Then the following equations hold on $\hat \Sigma$:
\begin{align}
	\Delta_\Sigma v + (  \nu (u))'H + v\Ric (\nu, \nu) - \bar u A'(h) \cdot A&=0\label{eq:hidden1}\\
	A(\nabla^\Sigma v, \cdot )  + A' (h) (\nabla^\Sigma \bar u, \cdot) + \bar u \Div_\Sigma A'(h)- d_\Sigma \big( (  \nu (u))'\big)+ v \Ric (\nu, \cdot) &=0.	\label{eq:hidden2}
\end{align}
Consequently,
\begin{enumerate}
\item \label{item:subset}  Suppose $v=0$ and $A'(h)=0$ on $ \hat \Sigma$. Then $(\nu(u))'H\equiv 0$ on $\hat \Sigma$. 
\item \label{item:v} Suppose $\hat \Sigma=\Sigma$,  $v=0, A'(h)=0$ on $\Sigma$, and $S'(h, v)=0$ everywhere in $M\setminus \Omega$. Then $(\nu(u))'\equiv 0$ on  $\Sigma$. 
\end{enumerate}
\end{lemma}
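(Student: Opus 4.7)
The plan is to read off \eqref{eq:hidden1} and \eqref{eq:hidden2} as distinct components of the bulk equation $S'(h,v)=0$ restricted to $\hat\Sigma$. More precisely, I would linearize two scalar identities that hold on $\Sigma$ as consequences of the background static vacuum equations: the trace of the tangential--tangential part of $\nabla^2\bar u=\bar u\Ric$,
\[
\Delta_\Sigma \bar u + H\nu(\bar u) -\bar u R_{\bar g} + \bar u\Ric(\nu,\nu)=0 \quad\text{on }\Sigma,
\]
and its tangential--normal counterpart
\[
d_\Sigma(\nu(\bar u))(e_a)-A(e_a,\nabla^\Sigma \bar u)=\bar u\,\Ric(e_a,\nu) \quad\text{on }\Sigma.
\]
Each holds for any $(g,u)$ satisfying $S(g,u)=0$ in a neighborhood of $\Sigma$, hence to first order for $(\bar g+sh,\bar u+sv)$ whenever $S'(h,v)=0$ near $\Sigma$.

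For \eqref{eq:hidden1}, I would linearize the first identity using $h^\intercal=0$, $H'(h)=0$: the first two terms contribute $\Delta_\Sigma v$ and $H(\nu(u))'$; the fourth contributes $vR(\nu,\nu)+\bar u(R(\nu,\nu))'$, and I compute $(R(\nu,\nu))'$ via the Gauss formula $2R(\nu,\nu)=R_{\bar g}-R_\Sigma+H^2-|A|^2$, whose right-hand side linearizes to $R'(h)-2\,A\cdot A'(h)$ under the boundary hypotheses. The critical point is that $R'(h)\equiv 0$ wherever $S'(h,v)=0$: taking the full trace of the first equation of $S'(h,v)$ and using $\operatorname{tr}\Ric'(h)=R'(h)+\langle h,\Ric\rangle$ together with $\operatorname{tr}(\nabla^2)'(h)\bar u=\langle\beta h,d\bar u\rangle$ and $(\Delta'(h))\bar u=\langle\beta h,d\bar u\rangle-\bar u\langle h,\Ric\rangle$ from the scalar equation $\Delta v+(\Delta'(h))\bar u=0$, the $\langle h,\Ric\rangle$ and $\langle\beta h,d\bar u\rangle$ contributions cancel cleanly to leave $\bar u R'(h)=0$. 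The cumulative linearization then collapses exactly to \eqref{eq:hidden1}.

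For \eqref{eq:hidden2}, linearize the second identity: the left side becomes $d_\Sigma((\nu(u))')(e_a)-A'(h)(e_a,\nabla^\Sigma\bar u)-A(e_a,\nabla^\Sigma v)$ (using $(\nabla^\Sigma u)'=\nabla^\Sigma v$ since $h^\intercal=0$), and the right side becomes $vR(e_a,\nu)+\bar u\bigl[\Ric'(h)(e_a,\nu)+\Ric(e_a,\nu'(h))\bigr]$. The bracket is identified as $\Div_\Sigma A'(h)(e_a)$ by linearizing the Codazzi identity $\Div_\Sigma A-d_\Sigma H=\Ric(\nu,\cdot)^\intercal$ at $(\bar g,\bar u)$: $H'(h)=0$ kills $d_\Sigma H'(h)$, and $h^\intercal=0$ ensures that the intrinsic divergence is unaltered at first order. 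Rearranging gives \eqref{eq:hidden2}.

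The two consequences then follow quickly: in \eqref{item:subset}, substituting $v=0$ and $A'(h)=0$ into \eqref{eq:hidden1} wipes out $\Delta_\Sigma v$, $vR(\nu,\nu)$, and the $\bar u A'(h)\cdot A$ term, yielding $(\nu(u))'H\equiv 0$. In \eqref{item:v}, the additional global hypothesis $S'(h,v)=0$ on $M\setminus\Omega$ lets us use \eqref{eq:hidden2}, which under $v=0,\,A'(h)=0$ reduces to $d_\Sigma((\nu(u))')=0$; hence $(\nu(u))'$ is a constant on the connected $\Sigma$, and integrating $(\nu(u))'H=0$ gives $(\nu(u))'\int_\Sigma H\,d\sigma=0$. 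The main obstacle here is excluding the degenerate configuration $H\equiv 0$, which I would handle by invoking the Green-type identity from Corollary~\ref{co:Green} with an appropriately chosen static vacuum test deformation, forcing the constant $(\nu(u))'$ to vanish in that residual case as well.
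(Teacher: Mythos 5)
Your derivation of \eqref{eq:hidden1} and \eqref{eq:hidden2} is correct, but it takes a genuinely different route from the paper. You linearize the tangential trace and the tangential--normal component of $-u\Ric_g+\nabla^2_g u=0$ restricted to $\Sigma$, feed in the linearized (twice-traced) Gauss and contracted Codazzi identities, and use the pointwise fact that tracing the first equation of $S'(h,v)=0$ against the linearized Laplace equation yields $\bar u R'(h)=0$ (a fact the paper records but uses elsewhere); with $h^\intercal=0$, $H'(h)=0$ this collapses exactly to \eqref{eq:hidden1}--\eqref{eq:hidden2}, and your bookkeeping of the variations ($\Delta_\Sigma$, $(|A|^2)'$, $\Div_\Sigma$, $(\nu(u))'$) checks out. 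The paper instead obtains both identities in one stroke from the Green-type identity of Corollary~\ref{co:Green} with test deformations $(k,w)=(L_X\bar g, X(\bar u))$, $X=\eta\nu+X^\intercal$, varying $\eta$ and $X^\intercal$ separately; that is an integral, integration-by-parts argument which avoids linearizing Gauss/Codazzi and reuses machinery ($Q$, Corollary~\ref{co:Green}) needed later, while your computation is purely local and somewhat more self-contained. Item~\eqref{item:subset} is then immediate in both approaches.

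The genuine gap is in Item~\eqref{item:v}. From \eqref{eq:hidden2} you get $(\nu(u))'\equiv c$ on the connected $\Sigma$, but your argument only yields $cH\equiv 0$, and your plan for the residual case $H\equiv 0$ (``invoke Corollary~\ref{co:Green} with an appropriately chosen static vacuum test deformation'') does not go through as stated: with $v=0$, $A'(h)=0$, $(\nu(u))'=c$ one has $Q(h,v)=(-c\,\bar g^\intercal,0)$, so Corollary~\ref{co:Green} only gives $c\int_\Sigma \tr_\Sigma k^\intercal\,\da=0$ for static vacuum deformations $(k,w)$, and for the only test deformations available at this stage, $(L_Y\bar g, Y(\bar u))$ with $Y=\eta\nu+Y^\intercal$, one has $\tr_\Sigma (L_Y\bar g)^\intercal=2\eta H+2\Div_\Sigma Y^\intercal$, whose integral vanishes identically when $H\equiv 0$; producing a nontrivial deformation with $\int_\Sigma\tr_\Sigma k^\intercal\neq 0$ is essentially the solvability question the paper is trying to settle, so this is circular or at best unsubstantiated. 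The paper's proof of Item~\eqref{item:v} is instead global and does not use $cH=0$ at all: it integrates the linearized scalar equation $\Delta v+(\Delta'(h))\bar u=0$ over all of $M\setminus\Omega$, rewrites $(\Delta'(h))\bar u$ in divergence form via Lemma~\ref{le:W}, and kills the flux at infinity using the decay $v=O^{2,\alpha}(|x|^{\max\{-2q,1-n\}})$ from Lemma~\ref{le:v-mass}, which in turn rests on the mass-preservation Lemma~\ref{le:mass} and the hypotheses $h^\intercal=0$, $H'(h)=0$ on all of $\Sigma$; this gives $\int_\Sigma(\nu(u))'\,\da=0$ and hence $c=0$. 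Some argument of this global type is needed; indeed Remark~\ref{re:intro} keeps the hypothesis $(\nu(u))'=0$ precisely except when $\hat\Sigma=\Sigma$ or $H\not\equiv 0$, which signals that the $H\equiv 0$ case cannot be handled by the boundary identities alone.
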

\begin{proof}
Let $X$ be a vector field with support in $U$ and with support in $\hat \Sigma$ when restricted on $\Sigma$.  Let $(k, w) = (L_X \bar g, X(\bar u))$  in~\eqref{equation:Green}. Since both $(h, v)$ and $(k, w)$ are static vacuum deformations in $U$ and $h$ satisfies $(h^\intercal, H'(h))=0$ on $\hat \Sigma$, we get 
\[
	\int_{ \Sigma} \Big\langle Q(h, v) , \big((L_X\bar g)^\intercal, H'(L_X\bar g)\big) \Big\rangle\, \da =0.
\]
Using again the boundary condition of $h$, we get 
\begin{align*}
	Q(h, v) = 	 \Big( v A + \bar u A' (h) - (\nu(u))' \bar g^\intercal, \, 2v\Big)\quad \mbox{ on } \hat \Sigma. 
\end{align*}
Write  $X = \eta \nu + X^\intercal$ for a scalar function $\eta$ and $X^\intercal$ tangent to $\Sigma$ and recall~\eqref{eq:vector}: 
\begin{align*}
	(L_X \bar g)^\intercal &= 2 \eta A + L_{X^\intercal} \bar g^\intercal\\
	H'(L_X \bar g) &= - \Delta_\Sigma\eta  - \big( |A|^2 + \Ric(\nu, \nu) \big ) \eta + X^\intercal (H).
\end{align*}
By choosing 
$X^\intercal\equiv 0$, we get 
\begin{align*}
0&=\int_{\Sigma} \bigg\langle \Big ( vA + \bar u A' (h) -(\nu(u))' \bar g^\intercal, \, 2v\Big), \Big(2 \eta A , - \Delta_\Sigma\eta -\big( |A|^2 + \Ric(\nu, \nu) \big) \eta  \Big) \bigg \rangle \, \da\\ 
&=\int_{\Sigma} 2\eta\Big (v|A|^2 + \bar u A'(h) \cdot A- (\nu(u))' H - \Delta_{\Sigma} v - (|A|^2 + \Ric(\nu, \nu) )v \Big)\, \da.
\end{align*}
Simplifying the integrand and choosing arbitrary $\eta$ supported in $\hat \Sigma$, we prove \eqref{eq:hidden1}. For \eqref{eq:hidden2}, we let $\eta\equiv 0$
\begin{align*}
	0&= \int_{\Sigma} \bigg\langle \Big ( vA  + \bar u A' (h) -(\nu(u))' g^\intercal, \, 2v\Big), \Big( L_{X^\intercal} \bar g^\intercal,  X^\intercal (H) \Big) \bigg \rangle \, \da.
\end{align*}
Applying integration by part and letting $X^\intercal$ vary, we get, on $\hat \Sigma$, 
\begin{align*}
	0&=A(\nabla^\Sigma v, \cdot) + v \Div_\Sigma A  + A' (h) (\nabla^\Sigma \bar u, \cdot) + \bar u \Div_\Sigma A'(h)- d_\Sigma (\nu(u))'- vdH\\
	&=A(\nabla^\Sigma v, \cdot)  + A' (h) (\nabla^\Sigma \bar u, \cdot) + \bar u \Div_\Sigma A'(h)- d_\Sigma  (\nu(u))'+ v \Ric(\nu, \cdot)
\end{align*}
where we use the Codazzi equation.

Item~\eqref{item:subset} follows directly from \eqref{eq:hidden1} by letting $v=0$ and $A'(h)=0$.

For Item~\eqref{item:v}, we have $d_\Sigma (\nu (u))'=0$ and hence $(\nu(u))'= c$ on $\Sigma$ for some real number $c$ from \eqref{eq:hidden2}. The assumption $S'(h, v)=0$ in $M\setminus \Omega$ says that 
\[
	\Delta v + \big( \Delta'(h) \big)\bar u=0 \quad \mbox{ in } M\setminus \Omega.
\]
We integrate and apply divergence theorem (recall $\nu$ on $\Sigma$ points to infinity) and Lemma~\ref{le:W} below to get 
\begin{align*}
	0&=\int_{M\setminus \Omega}\Big( \Delta v + \big( \Delta'(h) \big)\bar u\Big) \, \dvol\\
	&= -\int_{\Sigma}\big( \nu (v) + \dnu \bar u\big) \, \da = -\int_\Sigma (\nu(u))' \, \da,
\end{align*}
which implies $(\nu(u))'= 0$ on $\Sigma$, where we use  $\int_{S_\infty} \nu(v)=0$ by Item~\eqref{it:v-mass} in Lemma~\ref{le:v-mass} and the integral formula~\eqref{eq:it}
\[
    \int_{M\setminus \Omega} (\Delta'(h))\bar u \, \dvol = -\int_\Sigma \dnu \bar u\, \da
\]
proven in the next lemma. 
\end{proof}

Recall the formulas  \eqref{eq:laplace} and  \eqref{equation:normal}: for a scalar function $f$, 
\begin{align*}
	\big( \Delta'(h) \big)f &= - h^{ij}f_{;ij} +\bar g^{ij}  \left(-(\Div h)_i + \tfrac{1}{2} (\tr h)_{;i}  \right) f_{,j}\\
	\nu'(h) &= - \tfrac{1}{2} h(\nu, \nu) \nu - \bar g^{ab} \omega(e_a) e_b
\end{align*}
where $\omega(\cdot) = h(\nu, \cdot)$ and the indices are raised by $\bar g$.  
\begin{lemma}\label{le:W}
Let $h\in \C^{2,\alpha}_{-q}(M\setminus \Omega)$ be a symmetric $(0,2)$-tensor. Given a scalar function $f$, define the covector $W_f = -h(\nabla f, \cdot) + \frac{1}{2} (df) \tr h$. Then 
\begin{align*}
	\big( \Delta'(h) \big)f + \tfrac{1}{2} (\Delta f) \tr h&= \Div W_f \quad \mbox{ in } M\setminus \Omega,
\end{align*}
and if we further assume that $\tr_\Sigma h=0$ on $\Sigma$, then 
\begin{align*}
	\dnu f &=W_f(\nu) \quad \mbox{ on } \Sigma.
\end{align*}
Consequently, if $f$ is harmonic and is of the order $O^{1}(|x|^{-q})$, then 
\begin{align}\label{eq:it}
	\int_{M\setminus \Omega} \big( \Delta'(h) \big) f \, \dvol = - \int_\Sigma \dnu f \, d\sigma.
\end{align}
\end{lemma}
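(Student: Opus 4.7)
The plan is to establish the lemma in three short pieces: a pointwise divergence identity (the first equation), a pointwise boundary identity under the extra hypothesis $\tr_\Sigma h=0$, and then a divergence-theorem application with a decay estimate at infinity.

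For the first identity I would argue by direct computation in a local frame. Using the explicit formula \eqref{eq:laplace} for $(\Delta'(h))f$, I compute $\Div W_f = (W_f)^{;j}_{\ j}$ with $(W_f)^j = -h^{kj} f_{,k} + \tfrac{1}{2} f^{,j} \tr h$. Expanding by the product rule yields four terms; using $h^{kj}{}_{;j} = (\Div h)^k$, the symmetry $h^{kj} f_{,k;j} = h^{ij} f_{;ij}$, and $f^{,j}{}_{;j} = \Delta f$, these terms regroup exactly as $(\Delta'(h)) f + \tfrac{1}{2} (\Delta f) \tr h$.

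For the boundary identity, the hypothesis $\tr_\Sigma h = 0$ gives $\tr h = h(\nu,\nu)$ on $\Sigma$. Decomposing $\nabla f = \nu(f)\,\nu + \nabla^\Sigma f$ and using symmetry of $h$, one gets
\[
W_f(\nu) = -\nu(f)\,h(\nu,\nu) - h(\nu,\nabla^\Sigma f) + \tfrac{1}{2}\nu(f)\,h(\nu,\nu) = -\tfrac{1}{2}\nu(f)\,h(\nu,\nu) - h(\nu,\nabla^\Sigma f).
\]
On the other hand, applying formula \eqref{equation:normal} for $\nu'(h)$ to $f$ gives $\dnu f = -\tfrac{1}{2}h(\nu,\nu)\nu(f) - g^{ab}h(\nu,e_a)e_b(f)$; taking $\{e_a\}$ orthonormal and tangent to $\Sigma$ identifies the last term with $-h(\nu,\nabla^\Sigma f)$, matching $W_f(\nu)$.

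For the consequence, I apply the divergence theorem to $W_f$ on the region $\{x\in M\setminus\Omega : |x|\le R\}$ and send $R\to\infty$. Since $f$ is harmonic, the first identity collapses to $(\Delta'(h))f = \Div W_f$. The key orientation point is that on $\Sigma = \partial(M\setminus\Omega)$ the vector $\nu$ points toward infinity, hence is the \emph{inward} normal of $M\setminus\Omega$, producing $-\int_\Sigma W_f(\nu)\,d\sigma = -\int_\Sigma \dnu f\,d\sigma$ via the boundary identity. The flux at $|x|=R$ is controlled by $W_f = O(|x|^{-2q-1})$, which follows from $h = O(|x|^{-q})$ and $\nabla f = O(|x|^{-q-1})$; hence that flux is $O(R^{n-2-2q})$ and vanishes in the limit because $q > (n-2)/2$.

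There is no real obstacle here; the only points requiring care are (i) bookkeeping indices and the symmetry of $h$ when deriving the divergence identity, and (ii) correctly accounting for the sign coming from $\nu$ pointing toward infinity, so that the boundary integrand in the final identity appears with the correct sign.
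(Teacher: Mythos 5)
Your proposal is correct and follows essentially the same route as the paper: a direct frame computation for the divergence identity, the same use of $\tr h = h(\nu,\nu)$ together with \eqref{equation:normal} to match $W_f(\nu)$ with $\dnu f$, and the divergence theorem with the inward-pointing $\nu$ and the decay $W_f = O(|x|^{-2q-1})$ (valid since $q>\tfrac{n-2}{2}$) to kill the flux at infinity.
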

\begin{proof}
Compute  at a point with respect to normal coordinates of~$\bar g$:
\begin{align*}
	 \Div W_f = (W_f)_{i;i} &= - f_{;ji} h_{ij} -f_{j} h_{ij;i} + \tfrac{1}{2} (\Delta f) \tr h + \tfrac{1}{2} f_{;i} (\tr h)_{;i}.
\end{align*}	
It gives the first identity. For the second identity, 
\begin{align*}
	W_f( \nu)  &= -h(\nabla f, \nu) + \tfrac{1}{2} \nu (f) \tr h\\
	&=-h(\nu, \nu) \nu(f) - h(\nabla^\Sigma f, \nu) + \tfrac{1}{2} \nu (f)\, h (\nu, \nu)\\
	&=-\tfrac{1}{2}  h(\nu, \nu) \nu(f)- h(\nabla^\Sigma f, \nu).
\end{align*}

If $f$ is harmonic, we have 
\begin{align*}
	\int_{M\setminus \Omega} \big( \Delta'(h) \big)f \, \dvol &= \int_{M\setminus \Omega} \Div W_{f} \, \dvol\\
	&=- \int_\Sigma W_{f}(\nu)\, \da\\
	&=-\int_\Sigma \dnu f\, \da
	\end{align*}
where the boundary term at infinity vanishes because  $\nabla f = O(|x|^{-q-1})$ and thus $W_{f}\in O(|x|^{-2q-1})$.
\end{proof}

\subsection{Infinite-order boundary condition} \label{se:uniqueness2}
In this section, we assume $(h, v)\in \C^{2,\alpha}_{-q}(M\setminus\Omega)$ and $(h, v)$ satisfies
\[
	 A'(h)=0,\quad \big( \Ric'(h)\big)^\intercal =0,\quad \big(( \nabla^k_\nu \, \Ric )'(h)\big)^\intercal =0 \quad \mbox { on }\quad \hat \Sigma
\]
for all  positive integers $k$, where $\hat\Sigma$ satisfies $\pi_1(M\setminus \Omega, \hat \Sigma)=0$ and $\hat \Sigma$ is an analytic hypersurface.  It is worth noting that above boundary conditions are imposed only on $h$ (not on $v$ at all).

The following lemma justifies the term ``infinite-order'' boundary condition. Recall that a symmetric $(0,2)$-tensor $h$ is said to satisfy the geodesic gauge in a collar neighborhood of $\hat \Sigma$ if $h(\nu, \cdot) = 0$ in the collar neighborhood, where $\nu$ is the parallel extension of the unit normal to $\hat \Sigma$. 
\begin{proposition}\label{pr:infinite}
Fix an integer $\ell\ge 0$. Suppose $h \in \C^{\ell+2,\alpha}(M\setminus \Omega)$ satisfies the geodesic gauge in a collar neighborhood of $\hat \Sigma$. Then the following two boundary conditions are equivalent
\begin{enumerate}
\item \label{it:infinite1} For all $k=0, 1,\dots, \ell$,
\[
h^\intercal =0, \quad A'(h)=0,
\quad \big( (\nabla^k_\nu \, \Ric )'(h)\big)^\intercal=0\qquad\mbox{ on } \hat \Sigma.
\]
Here, we interpret  the $0$-th covariant derivative as  $ (\nabla^0_\nu \, \Ric )'(h)=\Ric'(h)$.
\item \label{it:infinite2} For  all $k=0, 1, \dots, \ell$, 
\[
	h=0, \quad \nabla h=0, \quad \nabla^{k+2} h=0 \qquad \mbox{ on }\hat \Sigma. 
\]

\end{enumerate}
\end{proposition}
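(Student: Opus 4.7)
Plan. The implication \eqref{it:infinite2} $\Rightarrow$ \eqref{it:infinite1} is direct: each of $A'(h)$, $\Ric'(h)$, and $(\nabla^k_\nu \Ric)'(h)$ for $0 \le k \le \ell$ is a $\bar g$-linear differential operator in $h$ of total order at most $k + 2 \le \ell + 2$, so if $h$ and its covariant derivatives up to order $\ell + 2$ vanish on $\hat\Sigma$, these operators applied to $h$ vanish identically on $\hat\Sigma$.

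The nontrivial direction is \eqref{it:infinite1} $\Rightarrow$ \eqref{it:infinite2}. I will first use $h^\intercal = 0$ and $A'(h) = 0$ together with the geodesic gauge to establish $h|_{\hat\Sigma} = 0$ and $\nabla h|_{\hat\Sigma} = 0$, and then induct on $k = 0, 1, \dots, \ell$ using $(\nabla^k_\nu \Ric)'(h)^\intercal = 0$ to conclude $\nabla^{k+2} h|_{\hat\Sigma} = 0$. Two elementary observations will be used throughout. First, the geodesic gauge $h(\nu,\cdot) = 0$ together with the parallel extension $\nabla_\nu \nu = 0$ in the collar gives, by iterated differentiation along $\nu$, the identity $(\nabla^j_\nu h)(\nu,\cdot) = 0$ throughout the collar for every $j \ge 0$. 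Second, if a tensor field $T$ vanishes identically on $\hat\Sigma$ and $X$ is tangent to $\hat\Sigma$, then $\nabla_X T|_{\hat\Sigma} = 0$; iterating this and commuting tangential past normal derivatives via curvature commutators shows that once $\nabla^j h|_{\hat\Sigma} = 0$ for $j \le m$, every component of $\nabla^{m+1} h$ with at least one tangential direction vanishes on $\hat\Sigma$. In particular, proving $\nabla^{k+2} h|_{\hat\Sigma} = 0$ reduces to proving the purely normal, tangential--tangential component $(\nabla_\nu^{k+2} h)^\intercal = 0$ on $\hat\Sigma$.

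For the pre-induction step, $h(\nu,\cdot) = 0$ in the collar and $h^\intercal = 0$ on $\hat\Sigma$ immediately give $h|_{\hat\Sigma} = 0$. In Gaussian normal coordinates around $\hat\Sigma$ with $\nu = \partial_t$, the geodesic gauge reduces $A'(h)_{ab}$ on $\hat\Sigma$ to $-\tfrac{1}{2}(\nabla_\nu h)_{ab}$ modulo terms that vanish when $h|_{\hat\Sigma} = 0$; hence $A'(h) = 0$ yields $(\nabla_\nu h)^\intercal|_{\hat\Sigma} = 0$, which combined with $(\nabla_\nu h)(\nu,\cdot) = 0$ gives $\nabla_\nu h|_{\hat\Sigma} = 0$, and together with vanishing tangential first derivatives produces $\nabla h|_{\hat\Sigma} = 0$. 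For the inductive step, assuming $\nabla^j h|_{\hat\Sigma} = 0$ for all $j \le k + 1$, I apply $\nabla^k_\nu$ to the standard formula
\[
2\Ric'(h)_{ij} = \nabla_i (\Div_{\bar g} h)_j + \nabla_j (\Div_{\bar g} h)_i - \Delta_{\bar g} h_{ij} - \nabla_i \nabla_j (\tr_{\bar g} h) + (\mathrm{Rm}_{\bar g} \ast h)_{ij}
\]
and restrict to tangential indices $i = e_a$, $j = e_b$ on $\hat\Sigma$. After commuting normal past tangential derivatives, every term on the right-hand side other than $-\tfrac{1}{2}\nabla_\nu^{k+2} h_{ab}$ contains a tangential derivative of $\nabla^{\le k+1} h$, which by the induction hypothesis and the reduction vanishes on $\hat\Sigma$. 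Thus $(\nabla^k_\nu \Ric)'(h)^\intercal = 0$ on $\hat\Sigma$ forces $(\nabla_\nu^{k+2} h)^\intercal|_{\hat\Sigma} = 0$, completing the induction.

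The principal obstacle is the bookkeeping in the inductive step: I must systematically verify that after applying $\nabla^k_\nu$ to the Ricci formula and commuting normal and tangential derivatives via $[\nabla_\nu, \nabla_{e_a}] = R(\nu, e_a) + \text{(second fundamental form contribution)}$, every surviving term other than the leading $-\tfrac{1}{2}\nabla_\nu^{k+2} h^\intercal$ reduces to a tangential derivative of $\nabla^j h$ for some $j \le k + 1$, which vanishes on $\hat\Sigma$ by the induction hypothesis. This is conceptually clear but requires care to ensure the leading coefficient is genuinely nonzero and that no hidden contributions are overlooked.
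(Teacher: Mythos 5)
Your overall strategy mirrors the paper's own proof: the implication from Item~\eqref{it:infinite2} to Item~\eqref{it:infinite1} is immediate; for the converse you use the geodesic gauge together with $h^\intercal=0$, $A'(h)=0$ to get $h=0$, $\nabla h=0$ on $\hat\Sigma$, then induct, reducing $\nabla^{k+2}h=0$ to the purely normal, tangential--tangential component and extracting it as the leading term $-\tfrac12 h_{ab;0\cdots0}$ of the linearized Ricci formula. However, there is a genuine gap at the decisive step of the induction. The hypothesis in Item~\eqref{it:infinite1} is $\big((\nabla^k_\nu\,\Ric)'(h)\big)^\intercal=0$, where $(\nabla^k_\nu\,\Ric)'(h)$ is the linearization at $\bar g$ of the nonlinear map $g\mapsto \nabla^k_{\nu_g}\Ric_g$; both the connection and the unit normal $\nu_g$ vary with $g$. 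What your computation controls is the different object $\nabla^k_\nu\big(\Ric'(h)\big)$, the $k$-th $\bar g$-covariant normal derivative of the linearized Ricci tensor. You pass from one to the other without comment (``I apply $\nabla^k_\nu$ to the standard formula \dots\ Thus $(\nabla^k_\nu\Ric)'(h)^\intercal=0$ forces \dots''), so the conclusion $(\nabla^{k+2}_\nu h)^\intercal=0$ on $\hat\Sigma$ does not follow from the computation you actually perform.

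The discrepancy between the two objects consists of the terms in which the $s$-derivative of $g(s)=\bar g+sh$ falls on one of the covariant derivatives $\nabla_{\nu_{g(s)}}$ (or on $\nu_{g(s)}$ itself) rather than on $\nabla^i_{\nu}\Ric_{g(s)}$. The paper isolates exactly this point as a separate claim, the identity \eqref{eq:commutative}: under the inductive vanishing hypothesis for $h$ and its derivatives on $\hat\Sigma$, linearizing and taking $\nu$-covariant derivatives commute on $\hat\Sigma$, because $\tfrac{d}{ds}\big|_{s=0}\nabla_{\nu_{g(s)}}$ is linear in $h$ and $\nabla h$, hence the variation of $\nabla^{j}_{\nu_{g(s)}}$ is linear in $h$ and its derivatives up to order $j$, all of which vanish on $\hat\Sigma$ by induction. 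Your argument becomes correct once this commutation statement is formulated and proved, but as written the inductive step has a hole precisely where the proposition is most delicate; the remaining bookkeeping you describe (normal components via the geodesic gauge, tangential derivatives of vanishing tensors, curvature commutators, the nonzero leading coefficient $-\tfrac12$) matches the paper's proof and is fine.
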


\begin{proof} 
It is clear that Item~\eqref{it:infinite2} implies Item~\eqref{it:infinite1} by the formulas of $A'(h)$ and $\Ric'(h)$ in \eqref{eq:sff} and \eqref{equation:Ricci}. The rest of proof is devoted to prove that Item~\eqref{it:infinite1} implies Item~\eqref{it:infinite2}.

We compute with respect to an orthonormal frame $\{ e_0, e_1,\dots, e_{n-1}\}$ where $e_0 =\nu$ parallel along itself. In the following computations, we let $i, j, k_1,\dots, k_{\ell}, k_{\ell +1} \in \{ 0, 1, \dots, n-1\}$ denote all directions and let $a, b, c\in \{ 1,\dots, n-1\}$ denote only the tangential directions. Since $h$ satisfies the geodesic gauge in a collar neighborhood of $\Sigma$, $h_{0i}$ and any covariant derivatives of $h_{0i}$ vanish in the neighborhood. 

Already noted in \eqref{eq:Cauchy1},  from $h^\intercal=0,A'(h)=0$ and geodesic gauge, we have $h=0,\nabla h=0$ on $\Sigma$. We will prove inductively in $\ell$. 

First, for $\ell=0$, we have $h=0$, $\nabla h=0$,  $\big( (\Ric')(h)\big)^\intercal =0$ on $\hat \Sigma$. We would like to show that $\nabla^2 h=0$ on $\hat \Sigma$.  By geodesic gauge, we automatically have $\nabla^2 h (\nu, e_i)=0$. Since $\nabla h=0$, we have $\nabla_{e_a}\nabla h =0$ and  $\nabla_\nu\nabla_{e_a}h=\nabla_{e_a}\nabla_\nu h +\mathrm{Rm}\circ h =0$. To summarize, when expressed in the local frame, we already have
\[
	h_{0j; k_1k_2}=0,\quad  h_{ij; k_1 a}=0, \quad h_{ij; a k_1 }=0\quad \mbox{ on } \hat \Sigma.
\]
Therefore, it remains to show that $(\nabla_\nu \nabla_\nu h)^\intercal =0$; namely $h_{ab;00}=0$. Recall the formula of $\Ric'(h)$ from \eqref{equation:Ricci}, restricted on the tangential vectors $e_a, e_b$:
\begin{align}\label{eq:Ricci2}
\begin{split}
	\Ric'(h)_{ab} & = -\tfrac{1}{2} h_{ab;ii} + \tfrac{1}{2} (h_{ai; ib} + h_{bi; ia} ) - \tfrac{1}{2} h_{ii; ab} \\
	&\quad + \tfrac{1}{2} (R_{ai} h_{jb} + R_{bi} h_{ja} ) - R_{aijb} h_{ij}.
\end{split}
\end{align}
Since $0=\Ric'(h)_{ab} = 	-\tfrac{1}{2} h_{ab;00}$, we obtain $\nabla^2 h=0$ on $\hat \Sigma$.

We proceed inductively in $\ell>0$. Suppose that Item~\eqref{it:infinite1} holds for $k=0,1,\dots, \ell$ and that we already have $h=0, \nabla h=0, \nabla^{k+2} h =0$ on $\hat \Sigma$ for $k=0, 1, \dots, \ell -1$. We would like to derive $\nabla^{\ell+2} h =0$ on $\hat \Sigma$. We compute $\nabla^{\ell+2}h$ in the following three cases according to the location of $\nu$:
\begin{align*}
	(\nabla^{\ell+2}_{\tiny\mbox{arbitrary}} h) (\nu, e_i)&=h_{0i; k_1 \cdots k_{\ell+1} } = 0 \quad \quad \mbox{ (by geodesic gauge)}\\
	(\nabla_{\tiny\mbox{tangential}} \nabla^{\ell+1}_{\tiny\mbox{arbitrary}} h) (e_a, e_b) &=h_{ab;k_1k_2\cdots k_{\ell}c }\\
	&= \big(h_{ab;k_1k_2\cdots k_{\ell}}\big)_{,c} +(\mbox{Christoffel symbols}) \circ \nabla^{\ell+1} h \\
	& =0 \quad \quad (\mbox{by the inductive assumption } \nabla^{\ell+1} h=0 \mbox{ on } \hat \Sigma),
\end{align*}
It remains to compute $(\nabla_{\nu} \nabla^{\ell+1}_{\tiny\mbox{arbitrary}} h) (e_a, e_b)=0$. If any of those `arbitrary' covariant derivatives is tangential, we can as before commute the derivatives to express it in the form $(\nabla_{\tiny\mbox{tangential}} \nabla^{\ell+1}_{\tiny\mbox{arbitrary}} h) (e_a, e_b)$ plus the contraction between curvature tensors of $\bar g$ and derivatives of $h$ in the order~$\ell+1$ or less. Therefore, it suffices to show 
\begin{align}\label{eq:h-nu}
(\nabla^{\ell+2}_\nu h)(e_a, e_b)=0, \quad \mbox{ or in the local frame } h_{ab;\tiny\underbrace{0\cdots 0}_{\ell+2-\mbox{times}}} = 0.
\end{align}
To prove \eqref{eq:h-nu}, note that from \eqref{eq:Ricci2}, we exactly have 
\[
		(\nabla^{\ell}_\nu (\Ric'(h)) \big)(e_a, e_b) = - \tfrac{1}{2} h_{ab;\tiny\underbrace{0\cdots 0}_{\ell+2-\mbox{times}}} \quad \mbox{ on } \hat \Sigma
\]
because all other terms vanish there. We \emph{claim} that\footnote{The notations unfortunately become overloaded. Recall that $(\nabla^\ell_\nu \Ric )'(h) $ denotes the linearized $\nabla^\ell_\nu \Ric$, and note $\nabla^{\ell}_\nu (\Ric'(h))$ represents the $\ell$th covariant derivative in $\nu$ of $\Ric'(h)$. The claimed identity says that ``taking the $\nu$-covariant derivative'' and ``linearizing'' are commutative under the assumption of $h$. } for each $i=0, 1, \dots, \ell$: 
\begin{align}\label{eq:commutative}
	  (\nabla^\ell_\nu \Ric )'(h) = \nabla^{\ell-i}_\nu \big((\nabla_\nu^i \Ric)'(h)\big) \qquad \mbox{ on } \hat\Sigma
\end{align}
provided $h=0, \nabla h=0, \dots, \nabla^{\ell-i} h=0$ on $\hat \Sigma$.

Once the claim is proven, together with the assumption $ \big((\nabla^\ell_\nu \Ric )'(h) \big)^\intercal=0$ on $\hat \Sigma$ and the above computations, we conclude $\nabla^{\ell+2} h=0$ on $\hat \Sigma$ and complete the proof. 
\begin{proof}[Proof of Claim]
Let $g(s)$ be a differentiable family of metrics with $g(0) = \bar g$ and $g'(0)=h$. By definition, 
\begin{align*}
	 (\nabla^\ell_\nu \Ric )'(h) =\left. \ds\right|_{s=0}\left( \nabla^\ell_{\nu_{g(s)}} \Ric_{g(s)} \right)
\end{align*}
where $\nabla_{\nu_{g(s)}}$ is the $g(s)$-covariant derivative in the unit normal $\nu_{g(s)}$. When the $s$-derivative falls on $\nabla^i_\nu \Ric_{g(s)}$, we get the desired term $\nabla^{\ell -i }_\nu \big(\nabla_\nu^i \Ric)'(h)\big)$. It suffices to see that those terms involving the $s$-derivative on one of the covariant derivative $\nabla_{\nu_{g(s)}}$ must vanish. It follows that  $\left.\ds\right|_{s=0}\nabla_{\nu_{g(s)}}$ is a linear function in only $h$ and $\nabla h$. So $\left.\ds\right|_{s=0}\nabla^{\ell-i}_{\nu_{g(s)}}$ is a linear function in $h$ and the derivatives of $h$ up to the $(\ell-i)$th order, and thus it vanishes on $\hat \Sigma$.  We prove the claim. 
\end{proof}
\end{proof}

\begin{remark}
We will not use the following fact, but nevertheless it is interesting to note that, using similar computations, both items in Proposition~\ref{pr:infinite} are equivalent to the condition: for $k= 1, \dots, \ell$, 
\[
	h^\intercal=0, \quad A'(h)=0,\quad  (\nabla^k_\nu A)'(h)=0 \mbox{ on } \hat \Sigma
\]
where the second fundamental form $A$ is defined in the neighborhood as the second fundamental form of equidistant hypersurfaces to $\hat \Sigma$.  
\end{remark}

\begin{proof}[Proof of Theorem~\ref{th:trivial'} (under infinite-order boundary condition)]
Without loss of generality, we may assume $(h, v)$ satisfies the static-harmonic gauge. 

Note that $\hat \Sigma$ is analytic and $(\bar g, \bar u)$ is analytic in some local coordinate chart $\{x_1, \dots, x_n\}$ near $\hat \Sigma$ by Theorem~\ref{th:analytic}. By Corollary~\ref{co:analytic}, $(h, v)$ is analytic up to $\hat \Sigma$ in $\{ x_1,\dots, x_n\}$.  Then by Corollary~\ref{co:geodesic}, there is an analytic vector field $Y$ in a collar neighborhood $U$ of $\hat \Sigma$  with $Y=0$ on $\hat \Sigma$ such that $\hat h:= h-L_Y \bar g$ satisfies the geodesic gauge in $U$. In particular, $\hat h$ is analytic. 

Recall that $h$ satisfies the infinite-order boundary condition. Item~\eqref{it:X} in Corollary~\ref{co:static} says that subtracting the ``gauge'' term $L_Y \bar g$ does not affect the infinite-order boundary condition, so $\hat h$ obtained above also satisfies the infinite-order boundary condition.  Applying Proposition~\ref{pr:infinite} to $\hat h$, we see that $\hat h$ vanishes at infinite order toward $\hat \Sigma$,  and thus $\hat h\equiv 0$  by analyticity. We then conclude $h\equiv L_Y \bar g$ in $U$.  Applying Theorem~\ref{th:extension}, there is a global vector field $X$ so that $X=Y$ in $U$, $X=0$ on $\hat \Sigma$,  and $L_X \bar g = h$  in $M\setminus \Omega$. By Lemma~\ref{le:trivial-h}, $X\in \mathcal X(M\setminus \Omega)$ and $v = X(\bar u)$. It completes the proof. 
\end{proof}

\section{Existence and local uniqueness}\label{se:solution}

In this section, we prove Theorem~\ref{existence}. We restate the theorem and also spell out the precise meaning of ``geometric uniqueness'' and ``smooth dependence'' of the theorem. Recall the diffeomorphism group $\mathscr D (M\setminus \Omega)$ in Definition~\ref{de:diffeo}. 
\begin{theorem}\label{exist}
Let  $(M, \bar g, \bar u)$ be an asymptotically flat, static vacuum with $\bar u>0$.  Suppose for $L$ defined on $M\setminus\Omega$
\begin{align} \label{eq:no-kernel2}
	\Ker L = \big\{ (L_X \bar g, X(\bar u)): X\in \mathcal X(M\setminus \Omega)\big\}. 
\end{align}
Then there exist positive constants $\epsilon_0,C$  such that for each $\epsilon\in (0, \epsilon_0)$, if $(\tau, \phi)$ satisfies $\|(\tau,\phi)-(g^\intercal,H)\|_{\C^{2,\a}(\Si)\times \C^{1,\a}(\Si)}<\epsilon$, there exists an asymptotically flat, static vacuum pair $(g,u)$ that satisfies both the static-harmonic and orthogonal gauges with $\|(g,u)-(\bar g,\bar u)\|_{\C^{2,\a}_{-q}(M\setminus \Omega)}<C\epsilon$ solving the boundary condition  $(g^\intercal, H_g) = (\tau, \phi) $ on $\Sigma$, and the solution $(g, u)$ depends smoothly on the Bartnik boundary data $(\tau, \phi)$.

Furthermore, the solution $(g, u)$ is locally, geometrically unique. Namely, there is a neighborhood $\mathcal U\subset \cM(M\setminus\Omega)$ of $(\bar g, \bar u)$  and $\mathscr D_0\subset \mathscr D(M\setminus\Omega)$ of the identity map $\mathrm{Id}_{M\setminus \Omega}$  such that if $( g_1, u_1)\in \mathcal U$ is another static vacuum pair with the  same boundary condition, there is a unique diffeomorphism $\psi\in \mathscr D_0$ such that $(\psi^* g_1, \psi^* u_1) = (g, u)$ in~$M\setminus \Omega$. 
\end{theorem}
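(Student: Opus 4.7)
The plan is to apply the Implicit Function Theorem (IFT) to the gauged operator $T^\Gamma$ from \eqref{rsv}. By Lemma~\ref{rsv-to-sv}, any solution $(g,u)$ near $(\bar g,\bar u)$ of $T^\Gamma(g,u)=(0,0,0,\tau,\phi)$ automatically satisfies $\Gamma(g,u)\equiv 0$ in $M\setminus\Omega$, hence solves the original boundary value problem $T(g,u)=(0,0,\tau,\phi)$. Working with $T^\Gamma$ rather than $T$ is essential because its linearization $L^\Gamma$ is elliptic and Fredholm of index zero (Lemma~\ref{le:Fred}). The natural framework is the orthogonal-gauge slice $\mathcal M^\perp \subset \mathcal M(M\setminus\Omega)$ from Definition~\ref{de:gauge}.

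\smallskip
\emph{Step 1 (Kernel is exactly $N$-dimensional).} By Lemma~\ref{le:Fred}, $\Ker L^\Gamma \supseteq \{(L_X\bar g, X(\bar u)):X\in\mathcal X^\Gamma\}$, of dimension $N$. The hypothesis \eqref{eq:no-kernel2} together with Item~\eqref{it:gauge} of Lemma~\ref{le:linear} forces the reverse inclusion: if $L^\Gamma(h,v)=0$ then $\Gamma'(h,v)\equiv 0$, so $L(h,v)=0$, hence $(h,v)=(L_X\bar g,X(\bar u))$ for some $X\in\mathcal X$; the identity \eqref{eq:gauge} then confines $X$ to $\mathcal X^\Gamma$. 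Thus $\dim\Ker L^\Gamma = N$, and the image of $L^\Gamma$ has codimension $N$ in the target.

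\smallskip
\emph{Step 2 (Injectivity on the orthogonal slice).} The tangent space $T_{(\bar g,\bar u)}\mathcal M^\perp$ is the $\mathcal L^2_\rho$-orthogonal complement of $\Ker L^\Gamma$ inside $\C^{2,\alpha}_{-q}(M\setminus\Omega)$; this is by construction of the orthogonal gauge and precisely the way $\Ker L^\Gamma$ is killed in Lemma~\ref{le:diffeo}. Hence $L^\Gamma|_{T\mathcal M^\perp}$ is injective. Since elements of $\Ker L^\Gamma$ are transverse to the slice, the restricted image coincides with $\Range L^\Gamma$, still of codimension $N$.

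\smallskip
\emph{Step 3 (Image contains pure Bartnik targets).} The core analytic step is to show that every target of the form $(0,0,0,\sigma_1,\sigma_2)$ with $(\sigma_1,\sigma_2)\in\mathcal B(\Sigma)$ lies in $\Range L^\Gamma$. I will identify the $N$-dimensional cokernel using Proposition~\ref{proposition:cokernel} applied to an $\mathcal L^2_\rho$-orthonormal basis $\{X_1,\dots,X_N\}$ of $\mathcal X^\Gamma$. Since \eqref{eq:S'} expresses $L^\Gamma(h,v)_{\mathrm{int}}$ as $S'(h,v)$ minus $(\bar u\,\mathcal D\Gamma'(h,v),\,\Gamma'(h,v)(\nabla\bar u))$, the pairing
\[
 \int_{M\setminus\Omega}\bigl\langle L^\Gamma(h,v)_{\mathrm{int}},\,\kappa_0(\bar g,\bar u,X_i)\bigr\rangle\,\dvol
\]
splits into the $S'$-pairing, which vanishes by \eqref{equation:cokernel2}, plus a correction that, after integration by parts, is supported purely against the boundary component $\Gamma'(h,v)|_\Sigma$ (using Green-type identity \eqref{equation:Green} together with the specific structure $\kappa_0 = (L_{X_i}\bar g - (\cdots)\bar g,\,-\Div X_i+\bar u^{-1}X_i(\bar u))$). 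The Bartnik components $h^\intercal$ and $H'(h)$ never appear. Hence each cokernel functional annihilates pure Bartnik targets automatically.

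\smallskip
\emph{Step 4 (IFT, uniqueness, smoothness).} Steps 2 and 3 give that $L^\Gamma|_{T\mathcal M^\perp}$ is a continuous bijection onto the closed subspace $\{(0,0,0)\}\times\mathcal B(\Sigma)$; its inverse is bounded by elliptic regularity. The smooth IFT applied to $T^\Gamma|_{\mathcal M^\perp}$ yields the desired $(g,u)\in\mathcal M^\perp$ with $\|(g,u)-(\bar g,\bar u)\|_{\C^{2,\alpha}_{-q}}\le C\epsilon$ and smooth dependence on $(\tau,\phi)$. For geometric uniqueness of $(g_1,u_1)\in\mathcal U$ solving the same BVP, apply Lemma~\ref{le:diffeo} to get a unique $\psi\in\mathscr D_0$ with $(\psi^* g_1,\psi^* u_1)\in\mathcal M^\perp$; because $\psi|_\Sigma=\mathrm{Id}_\Sigma$ the pulled-back pair has Bartnik data $(\tau,\phi)$, so the IFT uniqueness on the slice forces it to coincide with $(g,u)$.

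\smallskip
\emph{Main obstacle.} The delicate part is Step 3: verifying that the cokernel of $L^\Gamma$ has zero projection onto the Bartnik directions. The identity \eqref{equation:cokernel2} by itself is an interior statement about $S'$, not $L^\Gamma$; the extra gauge-correction terms $\bar u\mathcal D\Gamma'$ and $\Gamma'(\nabla\bar u)$ in $L^\Gamma$ must be integrated by parts so that every resulting boundary contribution pairs only with $\Gamma'(h,v)|_\Sigma$. The miracle making this work is exactly the product-rule/Bianchi structure exploited in the proof of Lemma~\ref{rsv-to-sv}, so the computation there can be recycled in its dual form to close the argument.
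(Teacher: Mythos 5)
Your Steps 1--3 are essentially the paper's Corollary~\ref{co:static}(2) and Lemma~\ref{le:range}: the kernel of $L^\Ga$ is exactly the $N$-dimensional gauged space, and the $N$-dimensional cokernel is spanned by functionals $\kappa(X)$, $X\in\cX^\Ga$, whose Bartnik components vanish (their nonzero boundary part sits in the $\Ga$-slot), so pure Bartnik targets do lie in $\Range L^\Ga$. The gap is in Step 4. The restriction $L^\Ga|_{\mathcal E}$ is \emph{not} a bijection onto $\{(0,0,0)\}\times\mathcal B(\Sigma)$; it is a bijection onto $\Range L^\Ga$, a codimension-$N$ subspace of the full codomain that strictly contains $\{0\}\times\mathcal B(\Sigma)$. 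Moreover the nonlinear map $T^\Ga|_{\mathcal M^\perp}$ does not take values in $\{0\}\times\mathcal B(\Sigma)$ away from exact solutions, so you cannot simply shrink the codomain and quote the Inverse/Implicit Function Theorem. With an injective but non-surjective linearization, the IFT only tells you that the image of $T^\Ga|_{\mathcal M^\perp}$ is locally a submanifold tangent to $\Range L^\Ga$ at the base point; the fact (Step 3) that $(0,0,0,\tau,\phi)$ lies in the \emph{tangent} space $\Range L^\Ga$ gives no a priori reason that it lies on that curved image. Equivalently, if you project onto $\Range L^\Ga$ along $\eta\cQ$ (using \eqref{eq:decom}) and solve the projected equation, you only get $T^\Ga(g,u)=(0,0,0,\tau,\phi)+\eta\kappa(W)$ for some $W\in\cX^\Ga$, and you still must prove $W=0$.

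That missing nonlinear step is precisely where the paper does its extra work: it enlarges the unknowns to $(g,u,W)$, forms the modified operator $\overline{T^\Ga}$, shows its linearization $\overline{L^\Ga}$ is an isomorphism (via Lemma~\ref{le:range}), applies the inverse function theorem to $\overline{T^\Ga}$, and then proves in Proposition~\ref{pr:modified}, Item (2), that any solution with vanishing interior and gauge-boundary components forces $W\equiv 0$. That argument is genuinely nonlinear: it uses the orthogonality identity \eqref{equation:cokernel1} for the pair $(g,u)$ itself (not just the background), integration by parts against $2\b^*_g W-u^{-1}W(u)g$ and $\Div_g W-u^{-1}W(u)$, the boundary equation $\Ga=-(2u\b^*_gW-W(u)g)(\nu,\cdot)$, and a positivity argument to conclude $W=0$ and $\Ga|_\Sigma=0$. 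Your closing remark that the computation in Lemma~\ref{rsv-to-sv} can be ``recycled in dual form'' only reproduces the linear cokernel identification (Lemma~\ref{le:range}); it does not supply this multiplier-vanishing argument, without which existence with exactly the prescribed Bartnik data is not established. Once the existence machinery is repaired along the paper's lines, your uniqueness and smooth-dependence argument via Lemma~\ref{le:diffeo} and Lemma~\ref{rsv-to-sv} goes through as you describe.
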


Recall the operators $T, T^\Ga, L, L^\Ga$ defined in \eqref{eq:bdv}, \eqref{rsv}, \eqref{eq:L}, and \eqref{lsv} respectively.  The proof of Theorem~\ref{exist}  is along a similar line as the special case for Euclidean $(\bar g, \bar u) = (g_{\mathbb E}, 1)$ considered in \cite{An-Huang:2021}. We outline the proof: Under the ``trivial'' kernel assumption~\eqref{eq:no-kernel2} and by Corollary~\ref{co:static}, $\Ker L^\Ga $ is the $N$-dimensional space 
\[
	\Ker L^\Ga =\big\{ \big(L_X \bar g, X(\bar u) \big):X\in \mathcal X^\Ga (M\setminus \Omega) \big\}.
\]
 Since  the operator $L^\Ga$ is Fredholm of index zero by Lemma~\ref{le:Fred}, the cokernel of $L^\Ga$ must be also $N$-dimensional. In Lemma~\ref{le:range} below, we explicitly identify the cokernel by \eqref{equation:cokernel2} in Proposition~\ref{proposition:cokernel}. From there, we construct a modified operator $\overline{T^\Ga}$ whose linearization $\overline{L^\Ga}$ is an isomorphism in Proposition~\ref{pr:modified}.

\begin{lemma}\label{le:range}
Under the same assumptions as in Theorem~\ref{exist}, we have 
\begin{align*}
(\Range L^\Ga)^\perp&=\cQ
\end{align*}
where 
$\mathcal{Q}$ consists of elements $\kappa(X) \in \big(\C^{0,\alpha}_{-q-2}(M\setminus \Omega)\times \C^{1,\alpha}(\Sigma)\times \mathcal{B}(\Sigma)\big)^*$ (the dual space)  where 
\[
\kappa(X)=\Big(2\b^* X \!-\bar u^{-1} X(\bar u) \bar g, -\Div X +\bar u^{-1} X(\bar u),  \big(\!-2\bar u \b^* X + X(\bar u) \bar g\big) (\nu, \cdot ), \, 0, \, 0\Big)
\]
for  $X\in \mathcal X^\Ga(M\setminus \Omega)$. Recall $\b^* = \b_{\bar g}^*$ defined by \eqref{eq:Bianchi*}.

Furthermore, the codomain of $L^\Ga$ can be decomposed as 
\begin{align}\label{eq:decom}
	\Range L^\Ga \oplus \eta \mathcal Q
\end{align}
 where $\eta \mathcal Q=\big\{ \eta \kappa(X): \kappa(X)\in \mathcal Q\big\}
$ and $\eta$ is a positive smooth function on $M\setminus\Omega$ satisfying $\eta=1$ near $\Si$ and $\eta(x)=|x|^{-2}$ outside a compact set. 
\end{lemma}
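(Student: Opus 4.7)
The plan is to combine Fredholm theory with an explicit cokernel identification via integration by parts. Since $L^\Ga$ is Fredholm of index zero (Lemma~\ref{le:Fred}) and the hypothesis \eqref{eq:no-kernel2} together with Corollary~\ref{co:static} forces $\Ker L^\Ga$ to be exactly the $N$-dimensional space $\{(L_X\bar g, X(\bar u)) : X \in \cX^\Ga(M\setminus\Omega)\}$, the cokernel $(\Range L^\Ga)^\perp$ is also $N$-dimensional. The strategy is to produce $\cQ$ explicitly, verify that it annihilates the range, match dimensions, and then promote to a direct-sum decomposition.

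\emph{Step 1 (Annihilation).} For $(h,v) \in \C^{2,\alpha}_{-q}(M\setminus\Omega)$ and $X \in \cX^\Ga(M\setminus\Omega)$, I would compute $\langle L^\Ga(h,v), \kappa(X)\rangle$ as the interior integral of $L^\Ga_{\text{int}}$ against $\kappa_0(\bar g,\bar u,X)$ plus the boundary integrals. Writing $L^\Ga_{\text{int}} = S'(h,v) + \bigl(-\bar u\DD\Ga'(h,v),\, -\Ga'(h,v)(\nabla\bar u)\bigr)$, the $S'$-piece contributes zero by the orthogonality identity \eqref{equation:cokernel2} from Proposition~\ref{proposition:cokernel}. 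Integration by parts on the gauge piece, using $2\b^* X = L_X\bar g - (\Div X)\bar g$ and the Leibniz identity $\DD(\bar u W^\#) = \bar u\DD W^\# + d\bar u\otimes_{\text{sym}} W$ with $W = \Ga'(h,v)$, produces an interior residue proportional to $\Ga'(h,v)\cdot \Gamma(X)$, which vanishes because $X \in \cX^\Ga$, together with a boundary residue that exactly cancels the pairing of $\Ga'(h,v)$ with the prescribed boundary component $(-2\bar u\b^*X + X(\bar u)\bar g)(\nu,\cdot)$ of $\kappa(X)$. The remaining boundary slots of $\kappa(X)$ are zero, so $\cQ \subset (\Range L^\Ga)^\perp$.

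\emph{Step 2 (Injectivity and dimension match).} The map $\cX^\Ga(M\setminus\Omega) \to \cQ$, $X\mapsto \kappa(X)$, is injective: if $\kappa(X) = 0$, then in $M\setminus\Omega$ we have $L_X\bar g = (\Div X + \bar u^{-1}X(\bar u))\bar g$ and $\Div X = \bar u^{-1}X(\bar u)$. Taking the trace of the first relation and substituting the second yields $(2-2n)\bar u^{-1}X(\bar u) = 0$; since $n \geq 3$, we get $X(\bar u)=0$ and $L_X\bar g = 0$. Combined with $X|_\Sigma = 0$, the standard argument (tangential derivatives of $X$ vanish on $\Sigma$, and antisymmetry of $\nabla X$ for a Killing field then forces $\nabla X = 0$ on $\Sigma$) gives $X \equiv 0$. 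By Corollary~\ref{cor:dimension}, $\dim \cQ = N$, matching the codimension of $\Range L^\Ga$, so the equality $\cQ = (\Range L^\Ga)^\perp$ follows.

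\emph{Step 3 (Direct-sum decomposition).} To establish \eqref{eq:decom}, it suffices to show $\eta\cQ \cap \Range L^\Ga = \{0\}$, as the dimension count then yields the direct sum. Suppose $\eta\kappa(X) \in \Range L^\Ga$. By Step~2, $\langle \eta\kappa(X), \kappa(Y)\rangle = 0$ for all $Y \in \cX^\Ga$. Evaluating at $Y = X$ and using $\eta = 1$ near $\Sigma$, the pairing becomes
\[
\int_{M\setminus\Omega} \eta\,\bigl|\kappa_0(\bar g,\bar u,X)\bigr|_{\bar g}^2\,\dvol_{\bar g} + \int_\Sigma \bigl|(-2\bar u\b^*X + X(\bar u)\bar g)(\nu,\cdot)\bigr|_{\bar g}^2\,\da_{\bar g} = 0,
\]
a sum of nonnegative quantities (since $\eta > 0$). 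Vanishing forces $\kappa(X) \equiv 0$, and injectivity from Step~2 gives $X = 0$. The main technical obstacle is Step~1: the integration-by-parts bookkeeping must produce a boundary residue matching precisely the form $(-2\bar u\b^*X + X(\bar u)\bar g)(\nu,\cdot)$ imposed in the definition of $\kappa(X)$, a matching that reflects the fine-tuning of the gauge $\Ga$ against the formal adjoint relation $\DD^* = -\Div$.
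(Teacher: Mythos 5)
Your proposal is correct and takes essentially the same route as the paper: the annihilation step via the orthogonality identity \eqref{equation:cokernel2}, integration by parts with the boundary cancellation, and $\Gamma(X)=0$ for $X\in\cX^\Ga(M\setminus\Omega)$, followed by the index-zero dimension count and the trivial intersection $\Range L^\Ga\cap\eta\cQ=\{0\}$. Your Steps 2 and 3 merely spell out details the paper asserts without proof (the injectivity of $X\mapsto\kappa(X)$, hence $\dim\cQ=N$, and the positivity argument giving the trivial intersection), and both are correct.
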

\begin{remark}
Recall the definition of $\kappa_0(g, u, X)$ in \eqref{eq:kappa}. The first two components of $\kappa(X)$ above are exactly  $\kappa_0(\bar g, \bar u, X)$. 
\end{remark}
\begin{proof}
All the geometric operators are with respect to $\bar g$ in the proof. By Corollary~\ref{co:static}, $\Dim \Ker L^\Ga = N$.   Since $L^\Ga$ has Fredholm index zero by Lemma~\ref{le:Fred} and $\Dim \mathcal Q=N$, we just need to  show $\cQ\subseteq (\Range L^\Ga)^\perp$. Namely, we will show $\big\langle L^\Ga(h, v), \kappa(X)\big\rangle_{\cL^2}=0$ for any $(h, v)$ and $X\in \mathcal X^\Ga(M\setminus \Omega)$, where  $\langle \cdot , \cdot \rangle_{\cL^2}$ denotes the component-wise (standard) $\cL^2$-inner product.


Since $S'(h, v) $ is $\cL^2$-orthogonal to $\kappa_0(\bar{g}, \bar{u}, X)$ by  \eqref{equation:cokernel2} in Proposition~\ref{proposition:cokernel}, we can reduce the computation as follows, where we denote by $Z= \Ga'(h,v)$: 
\begin{align*}
	&\big\langle L^\Ga(h, v), \kappa(X)\big\rangle_{\cL^2} &\\
	&= \int_{M\setminus \Omega} \Big\langle \big( -\bar u\, \DD Z, -Z(\nabla \bar u) \big),  \big( 2\b^* X - \bar u^{-1} X(\bar u) \bar g,\, -\Div X + \bar u^{-1} X(\bar u)\big) \Big\rangle \, \dvol\\
	&\quad +\int_\Sigma\big( -2\bar u \b^* X + X(\bar u)\bar g\big) \big(\nu, Z\big) \, d\sigma\\
	&=\int_{M\setminus \Omega}  Z \Big(\Div \big(2 \bar u  \beta^* X - X(\bar u) \bar g \big) - \big( -\Div X + \bar u^{-1} X(\bar u) \big) d\bar u \Big) \, \dvol
\end{align*}
where we apply integration by parts in the last identity and note that the boundary terms on $\Sigma$ cancel. The last integral is zero because
\begin{align*}
	&\Div \big(2 \bar u  \beta^* X - X(\bar u) \bar g \big) - \big( -\Div X + \bar u^{-1} X(\bar u) \big) d\bar u \\
	&= 2 \bar u  \Div \beta^* X - d\big (X(\bar u)\big)+ 2\beta^* X (\nabla \bar u, \cdot)  - \big( -\Div X + \bar u^{-1} X(\bar u) \big) d\bar u\\
	&	= 2 \bar u  \Div \beta^* X - d\big (X(\bar u)\big)+L_X \bar g(\nabla \bar u, \cdot)  -   \bar u^{-1} X(\bar u)  d\bar u\\
	&= \bar u \Big( \, 2 \Div \beta^* X -\bar u^{-1} d\big (X(\bar u)\big)+\bar u^{-1} L_X \bar g(\nabla \bar u, \cdot)  -   \bar u^{-2} X(\bar u)  d\bar u\Big)\\
	&= -\bar u \, \Gamma(X)=0
\end{align*}
where in the second equality we use $2\beta^* X(\nabla \bar u, \cdot) = L_X \bar g(\nabla \bar u, \cdot) - \Div X d\bar u$ by definition, and  in the last line we use $2\Div \beta^* X =-\beta L_X \bar g$ and the definition of $\Gamma$ (see either \eqref{eq:gauge} or \eqref{eq:G}).

The previous identity also implies that $\Range L^{\Ga}\cap \eta \cQ=0$. To see that, let $\eta \kappa(X)\in \Range L^\Ga\cap \eta\cQ$. Then we have $\langle \eta \kappa(X), \kappa(X)\rangle_{\cL^2}=0$, and thus $\kappa(X)\equiv 0$. 


We verify \eqref{eq:decom}. With respect to the weighted inner product $\cL^2_\eta$, we denote by $\kappa_1, \dots, \kappa_N$ an orthonormal basis of $\cQ$. For any element $f$ in the codomain of $L^\Ga$, one can verify that
\[
	f - \eta \sum_{\ell=1}^N a_\ell \kappa_\ell \in \mathcal Q^{\perp}=\Range L^{\Ga}
\]
where the numbers $a_\ell = \langle f, \kappa_\ell \rangle_{\cL^2}$. We also use that  $\Range L^\Ga$ is a closed subspace, and hence $Q^{\perp}= ((\Range L^\Ga)^\perp)^\perp =\Range L^{\Ga} $. We get the desired decomposition. 
\end{proof}

Let $\rho$ be the weight function in Definition~\ref{de:gauge}. Define a complement to $\Ker L^\Ga$ in $\C^{2,\alpha}_{-q}(M)$ by
\bes
\cE=\left\{(h,v):\int_{M\setminus\Omega}\Big\langle (h,v), \big(L_X\bar g,X(\bar u)\big)\Big \rangle \rho\dvol=0\mbox{ for all }X\in\cX^\Ga (M\setminus \Omega) \right \}.
\ees
In other words, $(h, v)\in \cE$ if and only if $(\bar g +h, \bar u + v)$ satisfies the orthogonal gauge (recall Definition~\ref{de:gauge}). 

To summarize, we have decomposed the domain and codomain of $L^\Ga$ by
\[ 
L^\Ga :\cE \oplus \Ker L^\Ga\longrightarrow \Range L^\Ga \oplus \eta \cQ.
\]

Similarly as in \cite[Section 4.3]{An-Huang:2021},  we define the \emph{modified operator} $\overline {T^\Ga}$ as
\begin{align*}
\overline {T^\Ga}(g,u,W)=
&
\begin{array}{l}
\left\{ 
\begin{array}{l}
-u \Ric_{ g}+\nabla^2_{ g} { u}-u\DD_g {\Ga}(g,u)-\eta\big(2\b^*_gW-u^{-1} W(u)g\big)\\
\D_{ g} {u}-\Ga(g,u) (\nabla_g u) +\eta\, \big(\Div_g W-u^{-1} W(u)\big)\\
\Gamma_{(g,u)}(W)+\big(\!-\Ric_g+u^{-1}\nabla_g^2u\big)(W, \cdot )
\end{array}
\right. ~\mbox{in }M\setminus \Omega \\
\left\{
\begin{array}{l}
\Ga(g,u)+\big(2u\b^*_gW-W(u)g\big)(\nu_g, \cdot )\\
g^\intercal\\
H_g
\end{array}
\right. \quad\mbox{ on }\Si
\end{array}
\end{align*}
where $\Gamma_{(g,u)}(W)$ is as in \eqref{eq:Gamma}. The operator $\overline {T^\Ga}$ is defined on the function space of $(g, u)\in \mathcal U$ and $W\in \widehat{\cX}$. Here, $\mathcal U= \big((\bar g,\bar u)+\cE\big)\cap \cM(M\setminus \Omega)$ consists of asymptotically flat pairs satisfying the orthogonal gauge. The linear space $\widehat \cX$ is defined similarly as $\cX (M\setminus \Omega)$, with the only difference in regularity.\footnote{The slight technicality arises  to avoid a potential ``loss of derivatives'' issue. Some coefficients in the third equation of $\overline {T^\Ga}$ are only $\C^{0,\alpha}$, e.g. $\Ric_g$. If we use the space $\cX(M\setminus \Omega)$ (of $\C^{3,\alpha}$-regularity) instead of $\widehat \cX$, the codomain of the map may still be only in $\C^{0,\alpha}_{-q}$, and the map cannot be surjective.)} Explicitly, 
\begin{align*}
\widehat \cX=\Big\{&X\in \C^{2,\alpha}_{\mathrm{loc}}(M\setminus \Omega):\,  X=0 \mbox{ on } \Sigma \mbox{ and }X-Z= O^{2,\a}(|x|^{1-q}) \mbox{ for some } Z\in \mathcal Z\Big\}.
\end{align*}
In other words,  $\cX (M\setminus \Omega)$
can be view as a subspace of $\widehat\cX$ with $\C^{3,\alpha}$  regularity. With the above choices of function spaces,  we get  
\[
\overline {T^\Ga}: \mathcal U \times \widehat {\mathcal X} \longrightarrow  \C^{0,\a}_{-q-2}(M\setminus \Omega)\times \C^{0,\a}_{-q-1}(M\setminus \Omega )\times \C^{1,\a}(\Si)\times \mathcal{B}(\Sigma).
\]
(Note that we slightly abuse the notation and blur the distinction of $W$ and its covector with respect $g$.)

\begin{proposition}\label{pr:modified}
Under the same assumptions as in Theorem~\ref{exist}, we have 
\begin{enumerate}
\item $\overline {T^\Ga}$ is  a smooth local diffeomorphism at $(\bar g,\bar u,0)$.\label{it:diffeo}
\item If $\overline {T^\Ga}(g,u,W)=(0,0,0,0,\tau,\phi)$, then $W=0$ and $T^\Ga(g,u)=(0,0,0,\tau,\phi)$. \label{it:solution}
\end{enumerate}
\end{proposition}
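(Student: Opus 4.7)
The plan is to apply the inverse function theorem for part~(1) and an orthogonality plus integration-by-parts argument for part~(2).

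For part~(1), I first compute the linearization $\overline{L^\Ga}$ of $\overline{T^\Ga}$ at $(\bar g,\bar u,0)$. Because $(\bar g,\bar u)$ is static vacuum, the coefficient $-\Ric_{\bar g}+\bar u^{-1}\nabla^2_{\bar g}\bar u$ in the third equation vanishes identically, so at $W=0$ the linearization in $(h,v,W)$ decomposes into (i)~$L^\Ga(h,v)$ in the first two interior slots and the first boundary slot, (ii)~a $W$-contribution in those same slots equal to the corresponding components of $-\eta\,\kappa(W)$ from Lemma~\ref{le:range} (noting $\eta\equiv 1$ on $\Si$), and (iii)~$\Gamma(W)$ in the new third interior slot, with $(h^\intercal,H'(h))$ unchanged in the last boundary slot. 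To show
\[
\overline{L^\Ga}\colon \mathcal E\oplus\widehat{\mathcal X}\longrightarrow \C^{0,\a}_{-q-2}\times\C^{0,\a}_{-q-1}\times \C^{1,\a}(\Si)\times \mathcal B(\Si)
\]
is an isomorphism: for injectivity, a kernel element $(h,v,W)$ has $\Gamma(W)=0$ from the third slot, so $W\in\mathcal X^\Ga$ by elliptic regularity for $\Gamma$ and Corollary~\ref{cor:dimension}; the remaining slots read $L^\Ga(h,v)=\eta\,\kappa(W)\in \Range L^\Ga\cap\eta\,\mathcal Q=\{0\}$ by the direct sum~\eqref{eq:decom}, forcing $(h,v)=0$ (using $(h,v)\in\mathcal E$ and $\mathcal E\cap\Ker L^\Ga=\{0\}$) and $W=0$ (since $X\mapsto\kappa(X)$ is a bijection between the $N$-dimensional spaces $\mathcal X^\Ga$ and $\mathcal Q$). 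For surjectivity, given a target $(F_1,F_2,F_3,F_4)$ I first solve $\Gamma(Y)=F_2$ with $Y=0$ on $\Si$ via Lemma~\ref{PDE}~(\ref{item:0}) at weight $\delta=1-q$, then apply~\eqref{eq:decom} to match the residual data in the other three slots as $L^\Ga(h,v)+\eta\,\kappa(X)$ with $(h,v)\in\mathcal E$ and $X\in\mathcal X^\Ga$, and set $W=Y+X$. The inverse function theorem now gives part~(1).

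For part~(2), suppose $\overline{T^\Ga}(g,u,W)=(0,0,0,0,\tau,\phi)$. The first two interior equations repackage as $S(g,u)=(u\DD_g\Ga(g,u),\,\Ga(g,u)(\nabla_g u))+\eta\,\kappa_0(g,u,W)$. Applying the orthogonality identity~\eqref{equation:cokernel1} with $X=W$ yields
\[
\int_{M\setminus\Omega}\eta\,|\kappa_0(g,u,W)|_g^2\,\dvol_g=-\int_{M\setminus\Omega}\Big\langle(u\DD_g\Ga(g,u),\,\Ga(g,u)(\nabla_g u)),\,\kappa_0(g,u,W)\Big\rangle_g\dvol_g.
\]
A short direct computation shows that the right-hand side bilinear form is symmetric in $(\Ga(g,u),W)$ modulo boundary contributions, so performing the integration-by-parts calculation of Lemma~\ref{rsv-to-sv} but putting all derivatives onto $W$ rather than onto $\Ga(g,u)$ converts the right-hand side into $\int \Ga(g,u)\cdot \hat\Gamma_{(g,u)}(W)$ plus a boundary integral on $\Si$ pairing $\Ga(g,u)|_\Si$ with $W$, where $\hat\Gamma_{(g,u)}(W):=u\,\Gamma_{(g,u)}(W)+u(-\Ric_g+u^{-1}\nabla_g^2 u)(W,\cdot)$. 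The third interior equation of $\overline{T^\Ga}$ is exactly $\hat\Gamma_{(g,u)}(W)=0$ (up to the positive factor $u$), and the first boundary condition $\Ga(g,u)|_\Si=-(2u\b_g^*W-W(u)g)(\nu,\cdot)$ is engineered precisely so that the remaining boundary integrand vanishes. Consequently $\int\eta\,|\kappa_0(g,u,W)|^2=0$, so $\kappa_0(g,u,W)\equiv 0$. Unpacking $\kappa_0=0$ gives $L_W g=2u^{-1}W(u)\,g$ and $\Div_g W=u^{-1}W(u)$; taking the $g$-trace of the first shows $\Div_g W=n\,u^{-1}W(u)$, and combined with the second, $W(u)=0$ (using $n\ge 3$), whence $\Div_g W=0$ and $L_W g=0$, so $W$ is $g$-Killing. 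Since $W|_\Si=0$ and the skew-symmetry of $\nabla W$ for a Killing vector forces $\nabla W|_\Si=0$, Nomizu's extension theorem for Killing vector fields gives $W\equiv 0$ on $M\setminus\Omega$. With $W=0$, the remaining equations of $\overline{T^\Ga}$ reduce to $T^\Ga(g,u)=(0,0,0,\tau,\phi)$.

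The main obstacle is the integration-by-parts step in part~(2): one must verify that the bilinear form really is symmetric modulo boundary terms, and that the third interior equation together with the modified first boundary condition supply \emph{exactly} the algebraic cancellations required to eliminate all interior and boundary cross-terms between $\Ga(g,u)$ and $W$. The three additions that define $\overline{T^\Ga}$---the $\eta\,\kappa_0$ correction in the first two equations, the $\hat\Gamma_{(g,u)}$ operator in the third, and the $W$-dependent correction in the first boundary condition---are designed as formal adjoints of one another so that this cancellation is in principle automatic; nevertheless, careful bookkeeping of the boundary contributions arising from the $\b_g\DD_g$ manipulation (as in the proof of Lemma~\ref{rsv-to-sv}) is the technical crux.
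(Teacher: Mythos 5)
Your overall route is the same as the paper's: part (1) via the observation that, modulo the slot $\Gamma(W)$, the linearization is $L^\Ga(h,v)$ plus a multiple of $\eta\,\kappa(W)$, so injectivity and surjectivity follow from the decomposition \eqref{eq:decom} together with Lemma~\ref{PDE}; part (2) via the orthogonality identity \eqref{equation:cokernel1} and an integration by parts in which the third interior equation kills the interior cross-term and the modified boundary condition handles the boundary term. The unpacking of $\kappa_0(g,u,W)=0$ into $W(u)=0$, $\Div_g W=0$, $L_Wg=0$ and then $W\equiv 0$ is also consistent with (and more explicit than) the paper.

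There is, however, one step in part (2) whose justification as written is wrong. The boundary integrand does \emph{not} vanish: after integrating by parts one is left with $\int_\Si \langle \Ga(g,u),\,(2u\b^*_g W - W(u)g)(\nu)\rangle\,d\sigma_g$, and substituting the boundary equation $\Ga(g,u)=-(2u\b^*_gW-W(u)g)(\nu,\cdot)$ turns this into $-\int_\Si |\Ga(g,u)|^2\,d\sigma_g$, which is nonpositive but not identically zero. The conclusion $\int_{M\setminus\Omega}\eta\,|\kappa_0(g,u,W)|^2\,\dvol_g=0$ therefore does not follow from an ``exact cancellation''; it follows from the sign comparison: the left-hand side is nonnegative while the right-hand side is $-\int_\Si|\Ga(g,u)|^2\le 0$, so both must vanish (this also yields $\Ga(g,u)=0$ on $\Si$). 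You need this positivity argument, not a pointwise vanishing of the boundary term. A second, minor slip: in the surjectivity step you solve $\Gamma(Y)=F_2$ with $Y=0$ on $\Si$ by Lemma~\ref{PDE}\,(\ref{item:0}) at weight $\delta=1-q$, but for $n=3$ one has $1-q\in(0,\tfrac12)$, so item~(\ref{item:0}) does not apply at that weight; one should invoke item~(\ref{item:n}) (surjectivity) there, or argue as the paper does by reducing to $X\in\cX^\Ga(M\setminus\Omega)$ without fixing the weight.
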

\begin{proof}

We prove Item~\eqref{it:diffeo}. Since in local coordinates $\overline {T^\Ga}(g, u, W)$ can be expressed as locally bounded, polynomials in $\partial^i g, \partial^j u, \partial^k W$ for $i, j, k=0, 1, 2$, $\overline {T^\Ga}$ is a smooth map. We show that the linearization of $\overline {T^\Ga}$ at $(\bar g,\bar u,0)$, denoted by $\overline{ L^\Ga}$, is an isomorphism, where 
\[
   \overline{ L^\Ga}: \mathcal E\times \widehat {\mathcal X } \longrightarrow  \C^{0,\alpha}_{-q-2}(M\setminus \Omega) \times \C^{0,\alpha}_{-q-1}(M\setminus \Omega)\times \C^{1,\alpha}(\Sigma) \times \mathcal B(\Sigma).
\]For $(h, v)\in \mathcal E$ and  $X\in  \widehat{\cX}$,  $\overline{ L^\Ga} (h, v, X)$ is given by
\bes
\begin{split}
&
\begin{array}{l}
\left\{
\begin{array}{l}
-\bar u \Ric'(h) + (\nabla^2)'(h) \bar u - v\Ric + \nabla^2 v -\bar u \DD \Ga'(h, v) -\eta\Big(2\b^*X-\bar u^{-1} X(\bar u)\bar g\Big)\\
\D v+\D'(h)\bar u-\Ga'{(h,v)}(\nabla \bar u)+\eta\Big(\Div X-\bar u^{-1}X(\bar u)\Big)\qquad \qquad \qquad \mbox{ in }M \setminus \Omega \\
\Gamma(X)
\end{array} \right. \\
\left\{
\begin{array}{l}
\Ga'{(h,v)}+(2\bar u\b^*X-X(\bar u)\bar g)(\nu)\\
h^\intercal\\
H'(h)
\end{array}\right.\quad\mbox{ on }\Si.
\end{array}
\end{split}
\ees
Note that we use $X$ to denote both the vector field and the dual covector, and in the third component recall $\Gamma(X)$ defined ~\eqref{eq:gauge}. Observe that when dropping the third component, we can understand $\overline{L^\Ga}(h,v,X)$ as the sum $L^\Ga(h,v)+\eta \kappa(X)$. 

We show that $\overline{L^\Ga}$ is an isomorphism. To see that $\overline{L^\Ga}$ is surjective: Since the third component $\Gamma(X)$ is surjective by Lemma~\ref{PDE}, we just need to show that $\overline{ L^\Ga}$ is surjective onto other components for those $X$ satisfying $\Gamma(X)=0$, i.e. $X\in \mathcal X^\Ga (M\setminus \Omega)$. It is equivalent to showing that $L^\Ga(h,v)+\eta\kappa(X)$ is surjective for $(h, v)\in \mathcal E$ and for $X\in \cX^\Ga (M\setminus \Omega)$, which follows from Lemma~\ref{le:range}. To see that  $\overline{L^\Ga}$ is injective: If $(h, v)\in \cE$ and $X\in \widehat {\cX}$ solves $\overline{ L^\Ga}(h,v,X)=0$, then $L^\Ga(h,v)+\eta \kappa(X)=0$ and $\kappa(X)\in \cQ$, which implies $L^\Ga (h, v)=0$ and $\kappa(X)=0$ by the decomposition \eqref{eq:decom} in Lemma~\ref{le:range}. From there we can conclude $(h, v)=0$ and $X=0$.

For Item~\eqref{it:solution}, suppose $\overline {T^\Ga}(g,u,W)=(0,0,0,0,\tau,\phi)$.  Then
\begin{align*}
	-u \Ric_{ g}+\nabla^2_{ g} { u}-u \DD_g {\Ga}(g,u)&=\eta\, \big(2\b^*_gW-u^{-1}W(u)g\big)\\
	\D_{ g} {u}-\Ga(g,u) (\nabla_g u) &=-\eta\, \big(\Div_g W-u^{-1} W(u)\big).
\end{align*}
We denote  $\Ga=\Ga(g, u)$ in the following computations. Pair the first equation with $2\b^*_g W- u^{-1} W(u)g$ and the second equation with $-\big(\Div_gW-u^{-1}W(u)\big)$:
\begin{align*}
&\int_{M\setminus \Omega }\eta\,  \Big|2\b^*_gW-u^{-1}W(u)g\big|^2+\eta \, \Big|\Div_g W-u^{-1}W(u)\big|^2\dvol_g\\
&=\int_{M\setminus \Omega}\bigg\langle \Big(-u\Ric_{g}+\nabla^2_{ g} u-u\DD_g\Ga, \Delta_{ g} {u}-\Ga(\nabla_g u)\Big),\\
&\qquad \qquad \qquad\qquad  \big(2\b^*_gW-u^{-1} W(u)g\,, - \Div_g W +u^{-1}W(u)\big)\bigg\rangle\dvol_g\\
&=\int_{M\setminus \Omega}\bigg\langle \Big(-u\DD_g\Ga,-\Ga(\nabla_g u)\Big),\big(2\b^*_gW-u^{-1} W(u)g\,, - \Div_g W +u^{-1}W(u)\big)\bigg\rangle\dvol_g\\
&=\int_\Sigma \Big\langle \Ga, \big(2u\beta^*_g W - W(u) g \big)(\nu_g) \Big\rangle \, d\sigma_g\\
&= \int_\Sigma - |\Ga|^2 \, d\sigma_g \qquad \Big(\mbox{use $-\Ga = \big(2u\beta^*_g W - W(u) g\big)(\nu_g)$ from the equation for $\overline {T^\Ga}$}\Big).
\end{align*}
In the above identities,  we use that in the second equality the $\cL^2$-pairing involving $(-u \Ric_g + \nabla^2_g u, \Delta_g u)$ is zero by \eqref{equation:cokernel1}. Then in the third equality we apply integration by parts and note that the integral over $M\setminus \Omega$ is zero because 
\begin{align*}
	&\Div_g  \big(u \, 2\b^*_gW-W(u)g\big)-\big(-\Div_g W+u^{-1}W(u)\big)d u\\
	&= 2 u \Div_g \beta^*_g W - d(W(u)) + L_W g (\nabla_g u, \cdot) - u^{-1} W(u) du \\
	&=- u \Big( \beta_g L_W g + u^{-1} d(W(u))- u^{-1} L_W g(\nabla_g u, \cdot) + u^{-2} W(u) du \Big)\\
	&=- u \Big(-\Delta_g W - u^{-1}  \nabla_g W(\nabla_g u, \cdot) + u^{-2} W(u) du + \big( -\Ric_g + u^{-1} \nabla^2_g u\big) (W) \Big)\\
	&=- u \Big(\Gamma_{(g,u)}(W) + \big( -\Ric_g + u^{-1} \nabla^2_g u\big) (W) \Big)\\
	&=0
\end{align*}
where we compute similarly as in  \eqref{eq:G} in the third equality and use $\overline {T^\Ga}(g,u,W)=(0, 0, 0, 0, \tau, \phi)$  in the last equality.

The previous integral identity implies $2\b^*_gW-u^{-1}W(u)g=0$ and $\Div W-u^{-1}W(u)=0$, so we conclude  $W\equiv 0$.
\end{proof}

\begin{proof}[Proof of Theorem~\ref{exist}]
From Item~\eqref{it:diffeo} in Proposition~\ref{pr:modified}, $\overline{T^\Ga}$ is a local diffeomorphism at $\big((\bar g, \bar u), 0\big)$. That is, there are positive constants $\epsilon_0, C$ such that for every $0 < \epsilon <\epsilon_0$, there is an open neighborhood $\mathcal U\times \mathcal V$ of $\big((\bar g, \bar u), 0\big)$ in $\big((\bar g, \bar u) + \mathcal E \big)\times \widehat \cX$ with the diameter less than $C\epsilon$ such that $\overline{T^\Ga}$ is a diffeomorphism from $\mathcal U\times \mathcal V$ onto an open ball of radius $\epsilon$ in the codomain of $\overline{T^\Ga}$. 

Therefore, given any $(\tau, \phi)$ satisfying $\|(\tau , \phi) - (\bar g^\intercal, H_{\bar g})\|_{\C^{2,\alpha}(\Sigma)\times \C^{1,\alpha}(\Sigma) } < \epsilon$,  there exists a unique $(g, u, W)$ with $\|(g, u) - (\bar g, \bar u ) \|_{\C^{2,\alpha}_{-q}(M\setminus \Omega)} <C\epsilon$ and $\| W \|_{\C^{2,\alpha}_{1-q}(M\setminus \Omega)} < C \epsilon$ satisfying
\[
	\overline{T^\Ga}(g, u , W) = (0, 0, 0, 0, \tau, \phi)
\]
and depending smoothly on $(\tau, \phi)$.

By Item~\eqref{it:solution} in Proposition~\ref{pr:modified}, we have $T^\Ga(g, u) = (0, 0, 0, \tau, \phi)$. By Lemma~\ref{rsv-to-sv}, $(g, u)$ satisfies the static-harmonic gauge  and $T(g, u)=(0, 0, 0, \tau, \phi)$. By the definition of the complement space $\mathcal E$ and  $(g, u)\in (\bar g, \bar u) + \mathcal E$, $(g, u)$ also satisfies the orthogonal gauge. 
\end{proof}

\begin{remark}\label{re:constant}
The constants $\epsilon_0, C$ in the above proof depend on the global geometry $\Sigma$ in $M\setminus \Omega$. More precisely, by Inverse Function Theorem, the constants depend on the operator norms of $\overline{L^\Ga}$, $\overline{L^\Ga}^{-1}$, as well as the second Frech\'et derivative $D^2\overline{T^\Ga}|_{(g, u)}$ for $(g, u)$ in a neighborhood of $(\bar g, \bar u)$,  between those function spaces as specified above.
\end{remark}

\section{Perturbed hypersurfaces}\label{se:perturbation}

Through out this section, we let $(M,\bar g,\bar u)$ be an asymptotically flat, static vacuum triple with $\bar u>0$. In this section, we prove Theorem~\ref{generic}, which follows directly from Theorem~\ref{th:zero} below, and then Corollary~\ref{co2}.

Let $\{ \Sigma_t\}$ be a smooth one-sided family of hypersurfaces generated by $Y$ foliating along $\hat \Sigma_t\subset \Sigma_t$  with $M\setminus \Omega_t$ simply connected relative to  $\hat \Sigma_t$, as defined in Definition~\ref{def:one-sided}.
We extend $Y$ to entire $M\setminus \Omega $ that is supported in a bounded open subset containing~$\{\Si_t\}$.
Let $\psi_t: M\setminus\Omega\longrightarrow M\setminus\Omega$ be the flow of $Y$. If we denote by $\Omega = \Omega_0$ and $\Sigma = \Sigma_0$, then $\Omega_t = \psi_t (\Omega)$ and $\Sigma_t = \psi_t (\Sigma)$.
Denote the pull-back static pair defined on $M\setminus \Omega$ by 
 \[
 (g_t, u_t) = \psi_t^* \big( \left.\big(\bar g,  \bar u\big)\right|_{M\setminus \Omega_t}\big).
 \]
In that notation $(g_0,u_0) = (\bar g, \bar u)$. 

Recall the operator $L$ defined in \eqref{eq:L}, which is the linearization of $T$ at $(\bar g,\bar u)$ in $M\setminus\Omega$. We shall consider the corresponding family of operators $L_t$ raised by linearization of $T$ at $(g_t,u_t)$ in $M\setminus\Omega$:
 \begin{align*}
 L_t(h, v)=  \begin{array}{l} \left\{ \begin{array}{l}
-u_t \Ric'|_{g_t} (h) + \left.(\nabla^2)'\right|_{g_t} (h) u_t - v \Ric_{g_t} + \nabla^2_{g_t} v\\
\Delta_{g_t} v + \left.\Delta'\right|_{g_t} (h) u_t
\end{array}\right. \quad  {\rm in }~M\setminus \Omega \\
\left\{ \begin{array}{l}
	h^\intercal\\
	H'|_{g_t}(h)
\end{array}\right.\quad {\rm on }~\Sigma.
\end{array}
\end{align*}
As is for $L$, we need to add gauge terms to modify $L_t$ for the sake of ellipticity. To that end, we generalize the static-harmonic gauge $\Ga(g,u)$ with respect to $(\bar g,\bar u)$ defined in Section 3.1 to a gauge term $\Ga_t(g,u)$ with respect to $(g_t,u_t)$ and consider its linearization $\Ga'_t$ at $(g_t,u_t)$:
\begin{align*}
	\Ga_t(g, u) &:= \beta_{g_t} g + u_t^{-2} u du - u_t^{-1} g(\nabla_{g_t} u_t, \cdot )\\
	\Ga_t'(h, v)&:= \beta_{g_t} h + u_t^{-1} dv + v u_t^{-2} du_t - u_t^{-1} h(\nabla_{g_t} u_t, \cdot ).
\end{align*}
Then we define the  family of operators $L^{\Ga}_t$ that have the same domain and co-domain as $L^\Ga$ in \eqref{lsv}, where  
\begin{align*}
 L^{\Ga}_t(h, v)=\begin{array}{l} \left\{ \begin{array}{l}
-u_t \Ric'|_{g_t} (h) + \left.(\nabla^2)'\right|_{g_t} (h) u_t - v \Ric_{g_t} + \nabla^2_{g_t} v-u_t\DD_{g_t}  {\Ga}'_t(h,v)\\
\Delta_{g_t} v + \left.\Delta'\right|_{g_t} (h) u_t-{\Ga}'_t(h,v) (\nabla_{g_t}u_t)
\end{array}\right. \quad  {\rm in }~M\setminus \Omega \\
\left\{ \begin{array}{l}
	\Ga'_t(g, u) \\
	h^\intercal\\
	H'|_{g_t}(h)
\end{array}\right.\quad {\rm on }~\Sigma.
\end{array}
\end{align*}

Note that in our notations $L_0^\Ga=L^\Ga$ and $L_0 = L$.
It is direct to verify that the results in Lemma~\ref{le:linear}, Lemma~\ref{le:Fred} and Corollary~\ref{co:analytic} are also true for $L_t$ and $L_t^\Ga$ with $(\bar g,\bar u)$ replaced by $(g_t,u_t)$.

\medskip
\noindent {\bf {Motivation for the proof of Theorem~\ref{generic}}:}  Most part of this section, except Theorem~\ref{th:zero}, follows closely the approach in \cite[Section 6]{An-Huang:2021}.

To provide motivation, we begin by explaining our approach in a simplified case. Consider a region $\Omega$ with a smooth boundary $\Sigma$ in the Euclidean space $(\mathbb R^n, \bar g)$. For a real number $C$, we define  the elliptic operator $Su=\Delta_{\bar g} u + Cu$. Consider the boundary value problem:
\[
L u=\left\{ \begin{array}{ll} Su& \mbox{ in } \Omega\\
 u & \mbox{ on } \Sigma \end{array}\right..
\]
Now, let $\Omega_t\subset \Omega$, $t\in [-\delta, 0]$ be smoothly inward deformed subsets of $\Omega$ with $\Omega_0=\Omega$, and we assume the diffeomorphisms $\psi_t:\Omega\to \Omega_t$ generated by a vector field $Y$ satisfying $Y|_\Sigma = \zeta \nu$ for some $\zeta>0$ where $\nu$ is the inward unit normal.

We shall show that the operator $L$, when restricted to $\Omega_t$, has a trivial kernel for generic values of $t$. This is equivalent to showing that, for pull-back operators $S_t u:= \Delta_{\psi_t^*\bar g} u + Cu$, the following map:
\[
L_t u=\left\{ \begin{array}{ll} S_tu& \mbox{ in } \Omega\\
 u & \mbox{ on } \Sigma \end{array}\right.
\]
has a trivial kernel for generic values of $t$.


Assume that for $t=a$, the kernel of $L_a$ is not trivial. Consider a nontrivial solution $u$ such that $L_au=0$.  Without loss of generality, we can set $a=0$ and thus we may denote $L_0 = L$. Now, let's assume that there exists nontrivial $u(t)$ to the equation $L_t u(t) = 0$ as $t\to 0$ with $u(0)=u$ and differentiable in $t$. While technically we may only have the convergence as a sequence $t_j\to 0$ (as described in Proposition~\ref{pr:limit}), but the argument below can still be extended. By the invariance of $S_t$ under diffeomorphisms, we deduce that $\psi_t^* u$ also satisfies the equation $S_t ( \psi_t^* u )= 0$. (Here, $\psi_t^* u$ denotes the pull-back of  $u|_{\Omega_t}$.) When we differentiate the equation $S_t(u(t) - \psi_t^* u) = 0$ in $t$, we obtain $S (u'(0) - Y(u) )=0$. This can also be understood as follows: computing the Taylor expansion of $S_t(u(t) - \psi_t^* u) = t S (u'(0) - Y(u)) + O(t^2)$, the coefficient of $t$ must be zero.  Calculating the boundary value, we find that $u'(0) - Y(u) = -Y(u) = -\zeta \nu(u)$ on $\Sigma$. Using the classical Green identity, we obtain
\begin{align*}
	0= \int_\Sigma (u'(0) - Y(u)) \nu(u) \, \da= \int_\Sigma \zeta  \nu(u)^2 \, \da.
\end{align*}
This leads $\nu(u)\equiv 0$ on $\Sigma$. Consequently, since $u=0$ on $\Sigma$,  by unique continuation, we can infer that $u\equiv 0$ in $\Omega$, which contradicts our initial assumption.

In our current context, the geometric PDE that we are dealing with are significantly more challenging due to several factors. Some challenges arise from gauge issues and the coupled  system involving $(h, v)$. Another challenge arises during the application of the Green-type identity. We can only obtain the boundary condition for $h$ and not for $v$. Specifically, the first application of the approach allows us to show $A'(h) = 0$ only. This step significantly complicates the ``unique continuation'' argument when compared to the corresponding result in \cite{An-Huang:2021}. The novel argument presented in Theorem~\ref{th:zero} shows that, by applying  the above argument inductively,  not only does $h$ vanish, but the higher-order derivatives of $h$  also vanish on the boundary. Then we invoke analyticity to establish that $h$ is trivial.

\qed


\medskip
Just as the operator $L^\Ga$, each $L_t^\Ga$ is elliptic and thus Fredholm as in Lemma \ref{le:Fred}. We can use elliptic estimates to show that $\Ker L_t^\Ga$ varies ``continuously'' for $t$ in an open dense subset  $J\subset [-\delta, \delta]$. The arguments follow verbatim as in \cite[Section 6.1]{An-Huang:2021}, so we omit the proof. (Note the notation discrepancy:  $L_t$ and $L_t^{\Ga}$ here correspond respectively to $S_t$ and $L_t$ in \cite{An-Huang:2021}).

\begin{proposition}[Cf. {\cite[Proposition 6.6]{An-Huang:2021}}]\label{pr:limit}
There is an open dense subset $J\subset (-\d, \d)$ such that for every $a\in J$ and every $(h, v)\in \Ker L_a^\Ga$, there is a sequence $\{ t_j \} $ in $J$ with $t_j \searrow a$, $\big(h(t_j), v(t_j)\big)\in \Ker L_{t_j}^\Ga$, and $(p, z)\in \C^{2,\alpha}_{-q}(M\setminus \Omega)$ such that, as $t_j \searrow a$, 
\begin{align*}
	\big( h(t_j), v(t_j) \big) &\longrightarrow(h,v)\\
	\frac{\big( h(t_j), v(t_j) \big) - (h, v)  }{t_j - a}&\longrightarrow (p, z)
\end{align*} 
where the both convergences are taken in the $\C^{2,\alpha}_{-q}(M\setminus \Omega)$-norm. 
\end{proposition}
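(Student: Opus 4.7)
The plan is to treat $\{L_t^\Ga\}$ as a smooth one-parameter family of Fredholm operators of index zero and construct smooth sections of the kernel bundle on an open dense set where the kernel dimension is locally constant. Since $(g_t, u_t) = \psi_t^*(\bar g, \bar u)|_{M\setminus \Omega_t}$ depends smoothly on $t$, the coefficients of $L_t^\Ga$ depend smoothly on $t$; the ellipticity and index-zero properties established for $L^\Ga$ in Lemma~\ref{le:Fred} persist for each $L_t^\Ga$ and yield uniform weighted a priori estimates for $t$ in any compact subset of $(-\delta, \delta)$.

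Using these uniform estimates together with weighted H\"older compactness, I would prove that $d(t) := \Dim \Ker L_t^\Ga$ is upper semi-continuous in $t$: any bounded sequence $(h_j, v_j) \in \Ker L_{t_j}^\Ga$ with $t_j \to a$ subconverges in $\C^{2,\alpha'}_{-q'}(M\setminus\Omega)$ for some $\alpha' < \alpha$, $q' < q$ to an element of $\Ker L_a^\Ga$, and linear independence passes to the limit. Being integer-valued, bounded below, and upper semi-continuous, $d$ attains its minimum on every open subinterval, and upper semi-continuity forces $d$ to be constant on a neighborhood of each minimizer; hence the set $J \subset (-\delta, \delta)$ of points at which $d$ is locally constant is open and dense.

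Now fix $a \in J$ and let $d$ denote the common kernel dimension on a neighborhood $I$ of $a$ in $J$. Choose a closed complement $V \subset \C^{2,\alpha}_{-q}(M\setminus \Omega)$ of $\Ker L_a^\Ga$, together with a continuous projection $\pi$ of the codomain of $L^\Ga$ onto $\Range L_a^\Ga$. Since $L_a^\Ga|_V : V \to \Range L_a^\Ga$ is an isomorphism, for any $(h, v) \in \Ker L_a^\Ga$ the implicit function theorem applied to $(t, (k, w)) \mapsto \pi\bigl(L_t^\Ga\bigl((h, v) + (k, w)\bigr)\bigr)$ produces a smooth curve $(k(t), w(t)) \in V$ with $(k(a), w(a)) = 0$ such that $(h(t), v(t)) := (h, v) + (k(t), w(t))$ satisfies $\pi\, L_t^\Ga(h(t), v(t)) = 0$. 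The operator $\pi \circ L_t^\Ga$ is itself Fredholm (differing from $L_t^\Ga$ by a finite-rank operator), so upper semi-continuity gives $\Dim \Ker(\pi \circ L_t^\Ga) \leq \Dim \Ker(\pi \circ L_a^\Ga) = d$ near $a$, while $\Ker L_t^\Ga \subseteq \Ker(\pi \circ L_t^\Ga)$ has dimension $d$ for $t \in I$; hence $\Ker(\pi \circ L_t^\Ga) = \Ker L_t^\Ga$, and therefore $(h(t), v(t)) \in \Ker L_t^\Ga$.

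Given this smooth family, choose any sequence $t_j \searrow a$ in $J$ (possible by density of $J$); then smoothness yields $(h(t_j), v(t_j)) \to (h, v)$ in $\C^{2,\alpha}_{-q}(M\setminus\Omega)$ and $(p, z) := \left.\dt\right|_{t = a} (h(t), v(t)) \in \C^{2,\alpha}_{-q}(M\setminus\Omega)$ realizes the claimed difference-quotient limit. The principal obstacle is the dimension-counting step that forces the implicit-function curve into the genuine kernel $\Ker L_t^\Ga$ rather than the a priori larger $\Ker(\pi \circ L_t^\Ga)$; this is exactly where the locally constant kernel dimension on $J$ enters decisively, and it is why the statement must be restricted to $t \in J$ rather than an arbitrary $t$.
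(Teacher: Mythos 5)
Your proposal is correct in substance, but it takes a different (and in fact stronger) route than the argument this paper relies on: the paper omits the proof entirely and invokes \cite[Section 6.1]{An-Huang:2021} verbatim, where the statement is established sequentially — one works directly with kernel elements of $L^\Ga_{t_j}$ chosen via projections, proves uniform bounds on the difference quotients by elliptic estimates with constants uniform in $t$, and extracts $(p,z)$ as a subsequential Arzel\`a--Ascoli limit; that is why the proposition is phrased with a sequence $t_j\searrow a$ rather than a differentiable family. You instead build a genuinely $C^1$ curve $t\mapsto (h(t),v(t))$ by the implicit function theorem applied to the projected operator $\pi\circ L^\Ga_t$, and then use the index computation together with the local constancy of $\dim\Ker L^\Ga_t$ on $J$ to force $\Ker(\pi\circ L^\Ga_t)=\Ker L^\Ga_t$, so the IFT curve lies in the true kernel. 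That dimension-counting step is the right mechanism and is exactly where membership of $a$ in $J$ is used; since $J$ is open, a full interval around $a$ has constant kernel dimension, and your conclusion then yields the sequential statement (with the stronger conclusion that $(p,z)$ is an honest derivative, not merely a subsequential limit). This is a clean functional-analytic alternative; what the original sequential argument buys is that it never needs $t\mapsto L^\Ga_t$ to be differentiable in operator norm, only difference-quotient bounds.

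Two points need tightening. First, your justification of upper semi-continuity of $d(t)=\Dim\Ker L^\Ga_t$ is loose: linear independence does \emph{not} automatically pass to limits. Either invoke the standard Fredholm fact that $\Dim\Ker$ is upper semi-continuous under operator-norm perturbation, or fix the argument by taking bases of $\Ker L^\Ga_{t_j}$ orthonormal with respect to a fixed continuous inner product (e.g.\ a weighted $\cL^2$ pairing, or $\cL^2$ on a fixed compact region combined with the uniform weighted Schauder estimate), so that orthonormality — hence independence — survives the limit. Second, both your IFT step and the operator-norm semi-continuity require that $t\mapsto L^\Ga_t$ be (continuously) differentiable as a map into bounded operators $\C^{2,\alpha}_{-q}(M\setminus\Omega)\to \C^{0,\alpha}_{-q-2}(M\setminus\Omega)\times\C^{1,\alpha}(\Sigma)\times\mathcal B(\Sigma)$; this reduces to smooth dependence of the pulled-back coefficients $(g_t,u_t)=\psi_t^*(\bar g,\bar u)$ in the relevant weighted norms, which should be stated and justified from the structure of the flow $\psi_t$ (the deformation is localized near the compact hypersurfaces), not merely from smoothness of $\psi_t$ in $t$ pointwise. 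With these two repairs your argument is complete.
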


It is more convenient to consider the above convergence for the ``un-gauged'' operators as in the next corollary.

\begin{corollary}\label{co:limit2}
Let $J\subset (-\d, \d)$ be the open dense subset in Proposition~\ref{pr:limit}. Then for every $a\in J$ and every $(h, v)\in \Ker L_a$, there is a sequence $\{ t_j \} $ in $J$ with $t_j \searrow a$, $\big(h(t_j), v(t_j)\big)\in \Ker L_{t_j}$, and $(p, z)\in \C^{2,\alpha}_{-q}(M\setminus \Omega)$ such that, as $t_j \searrow a$, 
\begin{align*}
	\big( h(t_j), v(t_j) \big) &\longrightarrow(h,v)\\
	\frac{\big( h(t_j), v(t_j) \big) - (h, v)  }{t_j - a}&\longrightarrow (p, z)
\end{align*} 
where the both convergences are taken in the $\C^{2,\alpha}_{-q}(M\setminus \Omega)$-norm. 
\end{corollary}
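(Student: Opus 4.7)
The strategy is to transfer the conclusion of Proposition~\ref{pr:limit}, which concerns the gauged operators $L_t^{\Ga}$, to the un-gauged operators $L_t$ by exploiting the diffeomorphism invariance of $L_t$.

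Given $(h, v)\in \Ker L_a$, I would first apply Item~\eqref{it:SH} of Lemma~\ref{le:linear} at the static vacuum pair $(g_a, u_a)$ (which is static vacuum because it is a pull-back of $(\bar g, \bar u)$) to produce a vector field $Y\in \mathcal X(M\setminus \Omega)\cap \C^{3,\alpha}_{1-q}(M\setminus \Omega)$ such that
\[
(\tilde h, \tilde v):=(h+L_Y g_a,\; v+Y(u_a))
\]
satisfies the linearized static-harmonic gauge $\Ga'_a(\tilde h, \tilde v)=0$. Because $(L_Y g_a, Y(u_a))$ is a trivial static deformation lying in $\Ker L_a$ (diffeomorphism invariance), we also have $(\tilde h, \tilde v)\in \Ker L_a$, and therefore $(\tilde h, \tilde v)\in \Ker L_a^{\Ga}$.

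Now apply Proposition~\ref{pr:limit} to $(\tilde h, \tilde v)$ to obtain $t_j\searrow a$ in $J$, kernel elements $(\tilde h(t_j), \tilde v(t_j))\in \Ker L_{t_j}^{\Ga}$, and a limit $(\tilde p, \tilde z)\in \C^{2,\alpha}_{-q}(M\setminus \Omega)$ for the difference quotient, both convergences in $\C^{2,\alpha}_{-q}$. By Item~\eqref{it:gauge} of Lemma~\ref{le:linear}, each $(\tilde h(t_j), \tilde v(t_j))$ actually lies in $\Ker L_{t_j}$. I then define
\[
(h(t_j), v(t_j)):=(\tilde h(t_j), \tilde v(t_j))-(L_Y g_{t_j},\; Y(u_{t_j})),
\]
which lies in $\Ker L_{t_j}$ because $(L_Y g_{t_j}, Y(u_{t_j}))$ does by diffeomorphism invariance of $L_{t_j}$.

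It remains to verify the two convergences. Since $(g_t, u_t)$ is the pull-back of the fixed static vacuum pair by the smooth flow $\psi_t$ of the vector field generating $\{\Sigma_t\}$, the map $t\mapsto (g_t, u_t)\in \C^{2,\alpha}_{-q}(M\setminus \Omega)$ is smooth, and hence so is $t\mapsto (L_Y g_t, Y(u_t))$. Subtracting this smooth family preserves both the limit and the existence of the difference quotient limit, yielding $(h(t_j), v(t_j))\to (h, v)$ and
\[
\frac{(h(t_j), v(t_j))-(h, v)}{t_j-a}\longrightarrow (p, z):=(\tilde p, \tilde z)-\left.\tfrac{d}{dt}\right|_{t=a}\!\!(L_Y g_t,\; Y(u_t))
\]
in $\C^{2,\alpha}_{-q}(M\setminus \Omega)$. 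There is no substantive obstacle; the only point to verify explicitly is that Item~\eqref{it:SH} of Lemma~\ref{le:linear}, though formally stated at $(\bar g, \bar u)$, applies with an identical proof at any asymptotically flat static vacuum pair with positive potential, in particular at each $(g_a, u_a)$.
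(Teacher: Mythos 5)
Your proposal is correct and follows essentially the same route as the paper: gauge $(h,v)$ into $\Ker L_a^{\Ga}$ via Item~(2) of Lemma~\ref{le:linear} (applied at $(g_a,u_a)$, as the paper notes the lemma extends to each $(g_t,u_t)$), apply Proposition~\ref{pr:limit}, and then un-gauge by subtracting the smooth family $\big(L_Y g_{t_j}, Y(u_{t_j})\big)$, with the same identification of the limiting difference quotient.
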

\begin{proof}
Let $(h, v)\in \Ker L_a$. By Item \eqref{it:SH} in Lemma~\ref{le:linear}, there is a vector field $V\in \mathcal X(M\setminus \Omega)$ such that 
\[
(\hat h, \hat v):=\big(h+L_V \bar g, v+ V (\bar u)\big)
\]
 satisfies $\Ga_a'(\hat h, \hat v)=0$, and thus $(\hat h, \hat v)\in \Ker L^\Ga_a$. By Proposition~\ref{pr:limit}, there exists a sequence $t_j\in J$ with $t_j \searrow a$ and $(\hat h(t_j), \hat v(t_j))\in \Ker L^\Ga_{t_j}$, $(\hat p,\hat z)\in \C^{2,\alpha}_{-q}(M\setminus \Omega)$ such that as $t_j \searrow a$, 
\begin{align*}
	\big(\hat  h(t_j),\hat  v(t_j) \big) &\longrightarrow (\hat h, \hat v)\\
	\frac{\big( \hat h(t_j), \hat v(t_j) \big) - (\hat h, \hat v)}{t_j - a}&\longrightarrow (\hat p, \hat z).
\end{align*} 
We now define 
\begin{align*}
	\big( h(t_j), v(t_j) \big) &=\big( \hat h(t_j) , \hat v(t_j) \big) -\big (L_V g_{t_j}, V( u_{t_j} ) \big)\\
	(p, z) &= (\hat p, \hat z) -\Big(L_V L_Y g_a, V \big(Y(u_a)\big) \Big)
\end{align*}
where recall that $Y$ denotes the deformation vector of $\{ \Sigma_t \}$. It is direct to verify the desired convergences. 
\end{proof}

 For each $t$, we let $\Sigma^+_t\subset \Sigma $ be the subset $\psi_t^{-1}\big(\{p\in\Sigma_t: \zeta(p)>0\}\big)$, and write  $\Sigma^+=\Sigma^+_0$. Note that $\Sigma_t^+$ contains $\psi_t^{-1} (\hat \Sigma_t)$.
\begin{theorem}\label{th:zero}
Let $J\subset (-\d, \d)$ be the open dense subset in Proposition~\ref{pr:limit}. Then for every $a\in J$ and every $(h, v)\in \Ker L_a$, we have
\begin{align*}
	A'|_{g_a}(h)=0,\quad \big(\Ric'|_{g_a}(h)\big)^\intercal=0,\quad \Big(\big(\nabla^k_\nu \, \Ric^\intercal \big)'\big|_{g_a}(h)\Big)^\intercal = 0 \quad  \mbox{ on } \Sigma^+_a
	\end{align*}
for all positive integers $k$. 
\end{theorem}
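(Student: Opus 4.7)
The plan is to establish Theorem~\ref{th:zero} through a single iterative mechanism: construct a sequence of test pairs $(h_k, w_k)\in \Ker L_a$ encoding progressively higher $\nu$-derivatives of $h$ along $\Sigma^+_a$, where at each stage the vanishing of $A'|_{g_a}(h_k)$ yields one further boundary condition. The base step follows closely the argument of \cite[Section 6]{An-Huang:2021} adapted to a general background $(\bar g, \bar u)$; the new ingredient, and the heart of the proof, is that the test pair produced at each stage is itself a kernel element of $L_a$, enabling the induction.

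Fix $a\in J$ and $(h, v)\in \Ker L_a$. By Corollary~\ref{co:limit2}, there is a sequence $(h(t_j), v(t_j))\in \Ker L_{t_j}$ with $t_j\searrow a$ converging to $(h, v)$ in $\C^{2,\alpha}_{-q}(M\setminus\Omega)$, whose difference quotient converges to some $(p, z)$. The push-forwards $(\tilde h(t_j), \tilde v(t_j)) = (\psi_{t_j})_*(h(t_j), v(t_j))$ are static vacuum deformations of $(\bar g, \bar u)$ on $M\setminus\Omega_{t_j}$ with vanishing Bartnik data on $\Sigma_{t_j}$. Differentiating in $t$ at $a$ and correcting for the Lie derivative along the flow of $X$ yields a test pair $(k, w)$ on $(M\setminus\Omega, g_a, u_a)$ satisfying $S'|_{(g_a, u_a)}(k, w) = 0$, whose Bartnik boundary data on $\Sigma$ is obtained by differentiating $\tilde h(t_j)^\intercal|_{\Sigma_{t_j}} = 0$ and $H'|_{\bar g}(\tilde h(t_j))|_{\Sigma_{t_j}} = 0$ at $t = a$. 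The resulting data is an explicit expression in $\zeta$ and $A'|_{g_a}(h)$ on $\Sigma^+_a$. Applying the Green-type identity of Corollary~\ref{co:Green} to the pair $(h, v)$ and the test pair $(k, w)$, the bulk integral vanishes since $L_a(h, v) = 0$, and the boundary integral reduces to a pointwise nonnegative quadratic in $A'|_{g_a}(h)$ weighted by $\zeta$. Since $\zeta > 0$ on $\Sigma^+_a$, this forces $A'|_{g_a}(h) = 0$ there.

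The crucial new observation is that once $A'|_{g_a}(h) = 0$ on $\Sigma^+_a$, the explicit Bartnik data of $(k, w)$ collapses to zero there as well, so $(k, w)$ itself becomes a kernel element of $L_a$ (after a small diffeomorphism correction $(L_Y g_a, Y(u_a))$, made possible by the hypothesis $\pi_1(M\setminus\Omega_a, \hat\Sigma_a) = 0$ and the analyticity of the deformations up to $\hat\Sigma_a$ provided by Corollary~\ref{co:analytic}). We therefore iterate: setting $(h_0, w_0) = (h, v)$ and letting $(h_{k+1}, w_{k+1})$ denote the test pair produced from $(h_k, w_k)$ by the same differentiation procedure, each $(h_k, w_k)$ lies in $\Ker L_a$ and each stage produces $A'|_{g_a}(h_k) = 0$ on $\Sigma^+_a$. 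Since the operation $(h_k, w_k)\mapsto(h_{k+1}, w_{k+1})$ is morally an $X$-derivative along the foliation---equivalent to a normal derivative modulo tangential terms because $X = \zeta\nu + X^\intercal$ near $\hat\Sigma_a$---the identities $A'|_{g_a}(h_k) = 0$ translate, via the static vacuum equation $\Ric_{g_a} = u_a^{-1}\nabla^2_{g_a} u_a$ together with the linearized Gauss--Codazzi--Mainardi relations, into $(\Ric'|_{g_a}(h))^\intercal = 0$ when $k = 1$ and into $((\nabla^{k-1}_\nu \Ric)'|_{g_a}(h))^\intercal = 0$ for $k \ge 2$.

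The main obstacle is carrying out the inductive step rigorously: one must verify that each test pair $(h_k, w_k)$ genuinely extends from a neighborhood of $\Sigma^+_a$ to a global element of $\Ker L_a$ (using the relative simple-connectivity hypothesis together with the analyticity input from Corollary~\ref{co:analytic}), and must systematically translate the second-fundamental-form conditions $A'|_{g_a}(h_k) = 0$ into the intended Ricci-derivative conditions. The latter translation hinges on repeated use of the static vacuum equation to convert normal-derivative information into curvature information, and on a careful chain-rule computation relating $X$-derivatives of static vacuum deformations along the foliation to normal covariant differentiation of the Ricci tensor on $\Sigma^+_a$.
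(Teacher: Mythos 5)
Your skeleton coincides with the paper's (difference-quotient test pair $(k,w)=(p-L_Xh,\,z-X(v))$, Green-type identity forcing $A'|_{g_a}(h)=0$ on $\Sigma^+_a$, then the key observation that the test pair is itself a kernel element), but the two steps that carry the theorem are not correctly justified. First, your argument that $(k,w)\in\Ker L_a$ is wrong as stated: a gauge correction $(L_Y g_a, Y(u_a))$ with $Y\in\mathcal X(M\setminus\Omega)$ has vanishing Bartnik boundary data (such pairs are themselves kernel elements), so adding one can never turn a pair whose boundary data vanishes only on $\Sigma^+_a$ into a kernel element; moreover the hypotheses you invoke for this ($\pi_1(M\setminus\Omega_a,\hat\Sigma_a)=0$, analyticity of $\hat\Sigma_a$, Corollary~\ref{co:analytic}) are not assumptions of Theorem~\ref{th:zero} and play no role in it. The actual reason is elementary and must be recorded: the boundary data of the test pair is $\big((p-L_Xh)^\intercal, H'(p-L_Xh)\big)=\big(-2\zeta A'(h),\ \zeta A\cdot A'(h)\big)$, which vanishes on \emph{all} of $\Sigma$ --- on $\Sigma^+_a$ because $A'|_{g_a}(h)=0$ there, and on $\Sigma\setminus\Sigma^+_a$ because $\zeta=0$ there. (There is also no ``extension from a neighborhood of $\Sigma^+_a$'' issue: $(p,z)$ from Corollary~\ref{co:limit2} and $(L_Xh, X(v))$ are globally defined on $M\setminus\Omega$.) Without this observation your iteration never gets started.

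Second, the inductive mechanism is only asserted. You iterate the test-pair construction to get $(h_k,w_k)$ and claim that $A'|_{g_a}(h_k)=0$ ``translates'' into $\big((\nabla^{k-1}_\nu\Ric)'|_{g_a}(h)\big)^\intercal=0$ via the static vacuum equation and Gauss--Codazzi; that translation is precisely the nontrivial content, and the tools you name are not the ones that deliver it. The paper instead runs the induction on the Ricci conditions themselves, quantified over all kernel elements at all $a\in J$: the inductive hypothesis is applied to the single test pair $(k,w)$ (legitimate because it lies in $\Ker L_a$), and the resulting identity $\big((\nabla^\ell_\nu\Ric)'(p-L_Xh)\big)^\intercal=0$ is converted, by the limit computation \eqref{limit}, into $\zeta\big(\nabla_\nu\big((\nabla^\ell_\nu\Ric)'(h)\big)\big)^\intercal=0$; one then needs the commutation identity \eqref{eq:commutative} (valid only because $h=0$, $\nabla h=0$, and lower-order derivatives vanish on the boundary, supplied by the geodesic gauge and the previously established conclusions) together with the gauge invariance of these boundary quantities (Corollary~\ref{co:static}) to pass back from the geodesic-gauge representative to an arbitrary kernel element. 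Even at the first stage, $(\Ric'(h))^\intercal=0$ follows from $\nabla_\nu(A'(h))=0$, $h=0$, $\nabla h=0$ on $\Sigma$, and the explicit formula \eqref{equation:Ricci}, not from Gauss--Codazzi or $\Ric_{g_a}=u_a^{-1}\nabla^2_{g_a}u_a$. As written, your proposal proves $A'|_{g_a}(h)=0$ (modulo the first issue) and leaves the higher-order statements unestablished.
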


\begin{proof}

Let $(h, v)\in \C^{2,\alpha}_{-q}(M\setminus \Omega)$ solve $L_a (h, v)=0$. We first prove  $A'|_{g_a}(h)=0$ on $\Sigma$ for all $a\in J$.

We may without loss of generality assume $a=0$ and that $(h, v)$ satisfies the geodesic gauge $h(\nu,\cdot) = 0$  on $\Sigma$. (See \cite[Lemma 2.5]{An-Huang:2021}.)  Let $\big(h(t_j), v(t_j) \big)\in \Ker L_{t_j}$ and $(p, z)$ be from Corollary~\ref{co:limit2}. We \emph{claim} that $\big(p-L_Y h, z-Y(v)\big)$ is a static vacuum deformation, i.e. $S'\big(p-L_Y h, z-Y(v)\big)=0$ in $M\setminus\Omega$, with the Bartnik boundary data:
\begin{align}\label{eq:perturbed-bdry}
\begin{split}
	(p-L_Y h)^\intercal &= -2\zeta A'(h)\\
	H'(p-L_Y h) &= \zeta A\cdot A'(h). 
\end{split}
\end{align}
We recall that  $\zeta=\bar g(Y,\nu)$ on $\Si$ in Definition~\ref{def:one-sided}.

We compare $\big(h(t_j), v(t_j) \big)$ and the pull-back pair $\psi_{t_j}^* (h, v)$. Since they are equal at $t_j=0$ and both satisfy
\[
	S'|_{g_{t_j}}\big(h(t_j), v(t_j) \big)=0 \quad \mbox{ and }\quad  S'|_{g_{t_j}}\big(\psi_{t_j}^* (h, v)\big)=0\quad \mbox{ in } M\setminus \Omega. 
\]	
{We subtract the previous two identities and take the difference quotient:
\begin{align*}
0&=\lim_{t_j\to 0}\frac{1}{t_j} S'|_{g_{t_j}}\big((h(t_j), v(t_j))-\psi_{t_j}^* (h, v) \big)\\
&= \left(\lim_{t_j\to 0} \frac{1}{t_j} (S'|_{g_{t_j}} - S'|_{\bar g}) \right)\big((h(0), v(0))-\psi_0^* (h, v) \big)\\
&\quad +  S'|_{\bar g} \left(\lim_{t_j\to 0} \frac{1}{t_j} \big((h(t_j), v(t_j))-\psi_{t_j}^* (h, v) \big)\right) \\
 &= S'|_{\bar g} \big(p-L_Y h, z- Y(v) \big)
\end{align*}
where we use that $(h(0), v(0))=\psi_0^* (h, v) =(h,v)$.}
For the boundary data, since  $(h^\intercal, H'(h))=0$ and $\big( h(t_j)^\intercal, H'|_{g_{t_j}} (h(t_j)) \big)=0$ for all $t_j$ on $\Sigma$, we have 
\begin{align*}
	p^\intercal &= \lim_{t_j\to 0}\frac{ \big(h(t_j)-h\big)^\intercal}{t_j} = 0\\
	H'(p-L_Yh)& =\lim_{t_j\to 0} \frac{1}{t_j} H'|_{g_{t_j} } \big( h(t_j)- \psi^*_t(h) \big)\\
	&=-\lim_{t_j\to 0} \frac{1}{t_j} \psi_t^*\big(H'|_{g}(h)\big)=-Y(H'(h)),
\end{align*}
where we recall $H'|_{g_{t_j}}(h(t_j))=0$. By \eqref{eq:sff} and \eqref{eq:Ricatti}, we have 
$(L_Y h)^\intercal = 2\zeta A'(h)$ and $Y\big(H'(h)\big)=- \zeta A\cdot A'(h)$, and it completes the proof of the claim. 

We apply the Green-type identity, Corollary~\ref{co:Green}, for $(h, v)$ and $(k, w):= \big(p-L_Y h, z-Y(v) \big)$ and get 
\begin{align*}
	 0 &= \int_\Sigma \Big \langle Q(h, v), \big( k^\intercal, H'(k) \big) \Big\rangle d\sigma\\
	 &= \int_\Sigma \Big \langle \big( vA + \bar u A'(h) - \nu (v) g^\intercal, 2v \big), \big(  -2\zeta A'(h),\zeta A\cdot A'(h) \big) \Big\rangle\, d\sigma\\
	 &= -\int_\Sigma 2\bar u\zeta |A'(h)|^2 \, d\sigma
\end{align*}	
where in the second equality we use the definition of $Q(h, v)$ (noting $\nu'(h)=0$ in geodesic gauge) and in the last equality $g^\intercal \cdot A'(h) = 0$. This shows that $A'(h)=0$ on $\Sigma^+$. 

It follows that for all $(h, v)\in \Ker L_a~(a\in J)$, we have $A'|_{g_a}(h)=0$ on $\Sigma^+_a$, because $A'(h)$ is gauge invariant by Corollary \ref{co:static} Item (3).  So far, the argument follows closely Theorem~7{$^\prime$} in \cite{An-Huang:2021}.

We further observe that for $(k, w)= (p-L_Y h, z- Y(v))$ as defined above, its Bartnik boundary data is zero from \eqref{eq:perturbed-bdry}, and thus $(k,w)\in \Ker L$. Therefore, we must have $A'(k)=0$ on $\Sigma^+$:
\be
\begin{split}\label{limit}
	 0 &= A'(p-L_Yh)\\
	 &= \lim_{t_j\to 0}\frac{1}{t_j}A'|_{g_{t_j}}\big(h(t_j)-\psi_{t_j}^*(h)\big) = \lim_{t_j\to 0}\frac{1}{t_j}A'|_{\psi_{t_j}^*g}(-\psi_{t_j}^*(h))\\
	 &=-\lim_{t_j\to 0}\frac{1}{t_j}\psi_{t_j}^*\big( A'(h) \big)=-L_Y\big(A'(h)\big)=-\zeta\nabla_\nu \big( A'(h) \big).
\end{split}
\ee
In the second line above, we use that $A'|_{g_{t_j}}(h(t_j))=0$ on $\Sigma^+_{t_j}$ for all $t_j$; and since the deformation vector field $Y$ is smooth, $\Si_{t_j}^+\to\Si^+$ as $t_j\to 0$.   
In the last equality we use that $A'(h)=0$ on $\Sigma^+$. Thus we obtain $\nabla_\nu \big(A'(h)\big)=0$ on $\Sigma^+$. By the formula~\eqref{eq:sff} for $A'(h)$ and noticing $h=0, \nabla h=0$ on $\Sigma$ (because of $h^\intercal =0, A'(h)=0$ and geodesic gauge), we obtain 
\bes
(\nabla_\nu^2 h)^\intercal =0\ \ \mbox{on } \Sigma^+.
\ees
It implies $\big(\Ric'(h)\big)^\intercal=0$ by  \eqref{equation:Ricci}. Again, since $\big(\Ric'(h)\big)^\intercal$ is gauge invariant, this holds for all $(h,v)\in \Ker L_a~(a\in J)$.

We proceed to prove inductively in $k$  that  for all $a\in J$ and for all $(h, v)\in \Ker L_a$ we have $ \big((\nabla_\nu^k \Ric )'\big|_{g_a}(h)\big)^\intercal=0$ on $\Sigma^+_a$. In the previous paragraph, we prove the statement for $k=0$, i.e. $\big(\Ric'|_{g_a}(h)\big)^\intercal=0$ on $\Sigma^+_a$ for all $(h,v)\in \Ker L_a$.
Suppose the inductive assumption holds for $k\le \ell$, i.e. for all $a\in J$ and all $(h,v)\in \Ker L_a$, $\big( (\nabla^k_\nu \Ric)'(h) \big)^\intercal=0$ on $\Sigma^+_a$ for $k=0,1,...,\ell$. We prove the statement holds for  $k=\ell+1$. Let $(h, v)\in  \Ker L_a$. We may without loss of generality assume $a=0$ and that $h$ satisfies the geodesic gauge. Let $(p, z)$ be defined as above. Because of \eqref{eq:perturbed-bdry} and $A'(h)=0$, we see that $ \big(p-L_Y h, z-Y(v) \big)\in \Ker L$. Therefore, we can apply the inductive assumption for  $\big(p-L_Y h, z-Y(v) \big)$ and get 
\[
	\big((\nabla^\ell_\nu \Ric )'(p-L_Y h)\big)^\intercal=0 \ \ \mbox{ on } \Sigma^+. 
\]
Similar computations as in \eqref{limit} yield
\begin{align*}
	 0 &= \big((\nabla^\ell_\nu \Ric )'(p-L_Y h)\big)^\intercal= \lim_{t_j\to 0}\frac{1}{t_j}\Big((\nabla^\ell_\nu\Ric )'|_{g_{t_j}}\big(h(t_j)-\psi_{t_j}^*(h) \big) \Big)^\intercal\\
	 & = -\lim_{t_j\to 0}\frac{1}{t_j}\Big((\nabla^\ell_\nu\Ric \big)'|_{g_{t_j}}(\psi_{t_j}^*(h))\Big)^\intercal =-\lim_{t_j\to 0}\frac{1}{t_j}\Big(\psi_{t_j}^*\big( (\nabla^\ell_\nu\Ric )'(h)\big) \Big)^\intercal\\
	   &=-\Big(L_Y\big( (\nabla^\ell_\nu \Ric^\intercal \big)'(h) \big)\Big)^\intercal=-\zeta \Big(\nabla_\nu \big( (\nabla^\ell_\nu \Ric \big)'(h)\big) \Big)^\intercal.
\end{align*}
It implies that $\Big(\nabla_\nu \big( (\nabla^\ell_\nu \Ric \big)'(h)\big) \Big)^\intercal=0$ on $\Sigma^+$, and thus  $\big((\nabla_\nu^{\ell+1} \Ric)'(h)\big)^\intercal=0$ on $\Sigma^+$ by \eqref{eq:commutative}. 
 \end{proof}

\begin{proof}[Proof of Theorem~\ref{generic}]
So far we have not used the assumption that $\pi_1(M\setminus \Omega_t, \hat \Sigma_t)=0$ nor that $\hat \Sigma_t$ is analytic. Using those assumptions and Theorem~\ref{th:zero}, we see that for $t\in J$, $\Sigma_t$ is static regular of type (II) in $(M\setminus \Omega_t, \bar g, \bar u)$. 
\end{proof}
For the rest of this section, we discuss how  Corollary~\ref{co2} follows directly from Theorem~\ref{existence} and Theorem~\ref{generic}.

\begin{proof}[Proof of Corollary~\ref{co2}] 
Fix the background metric as a Schwarzschild manifold $(\mathbb R^n\setminus B_{r_m},  g_m,  u_m)$, where $m>0$ and 
\[
	r_m = (2m)^{\frac{1}{n-2}}, \quad g_m = \left( 1- \tfrac{2m}{r^{n-2}}\right)^{-1} dr^2 + r^2 g_{S^{n-1}}, \quad u_m = \sqrt{1-\tfrac{2m}{r^{n-2}}}.
\]
Denote the $(n-1)$-dimensional spheres $S_r = \{ x\in \mathbb{R}^n: |x| = r \}$. Then the manifold is foliated by strictly stable CMC hypersurfaces $\{ S_r\}$ and note $H_{g_m}=0$ on $S_{r_m}$. Also note that the family $S_r$ obviously gives a one-sided family of hypersurfaces as in Definition~\ref{def:one-sided}. We also note that each $S_r$ is an analytic hypersurface in the spherical coordinates $\{r, \theta_1,\dots, \theta_{n-1}\}$ in which $g_m$ is analytic.

Given any $c>0$,  we can find some $\delta>0$ such that $S_{r}$ has mean curvature less than $c$, for $r\in (r_m+\delta, r_m+2\delta)$ and $u_m < \frac{c}{2}$. We apply Theorem~\ref{generic} and obtain that for $r$ in an open dense subset $J\subset (r_m+\delta, r_m+2\delta)$, $S_r$ is static regular of type (II) in $(\mathbb R^n\setminus B_r, \bar g_m, \bar u_m)$. Fix $r \in J$ and we denote the boundary $S_r$ by $\Sigma$. By Theorem~\ref{existence}, there exists positive constants $\epsilon_0, C$ such that for any $\epsilon\in (0, \epsilon_0)$ and for any  $(\tau, \phi)$ satisfying $\|(\tau, \phi) - ( g_m^\intercal, H_{g_m})\|_{\C^{2,\alpha}(\Sigma)\times \C^{1,\alpha}(\Sigma)} < \epsilon$, there is a static vacuum pair $(g, u)$ such that $\|(g, u) - (g_m,  u_m)\|_{\C^{2,\alpha}_{-q}(\mathbb R^n \setminus B_{r})}\le C\epsilon$. By choosing $\epsilon$ small, we have $|u-u_m|<\frac{c}{4}$ and thus $u<c$.  

In particular, we pick the prescribed mean curvature $\phi= H_{g_m}< c$ on $\Sigma$ and the prescribed metric $\tau$ not isometric to the standard metric of a round $(n-1)$-dimensional sphere for any radius. Since the background Schwarzschild manifold has a foliation of strictly stable CMC, for $\epsilon$ sufficiently small, the metric $g$ is also foliated by strictly stable CMC $(n-1)$-dimensional spheres. However, such $g$ cannot be rotationally symmetric. To see that, we suppose on the contrary that $g$ is rotationally symmetric, then by uniqueness of CMC hypersurfaces of \cite{Brendle:2013}, the boundary $\Sigma$ must be a round sphere of some radius. It contradicts to our choice of $\tau$.  

We can vary $\tau$ to get many, asymptotically flat, static vacuum  metrics that are not isometric to one another. 
\end{proof}

\section{Extension of local $h$-Killing vector fields}\label{se:h}

Let $h$ be a symmetric $(0,2)$-tensor on a Riemannian manifold $(M, g)$. We say a vector field $X$ is  \emph{$h$-Killing} if 
\[
	L_X g =h. 
\]
The following result extends the classical result of Nomizu \cite{Nomizu:1960} for  $h\equiv 0$. 

\begin{manualtheorem}{\ref{th:extension}}
Let $(M, g)$ be a connected, analytic Riemannian manifold. Let $h$ be an analytic, symmetric $(0,2)$-tensor on $M$. Let $U\subset M$ be a connected open subset satisfying  $\pi_1(M, U)=0$. Then every $h$-Killing vector field $X$  in $U$ can be extended to a unique $h$-Killing vector field on the whole manifold $M$. 
\end{manualtheorem}

Given a symmetric $(0,2)$-tensor $h$, recall in Section~\ref{se:h1} we define the $(1,2)$-tensor $T_h$ by, in local coordinates,  
\[
	(T_h)^i_{jk}= \tfrac{1}{2} (h^i_{j;k} + h^i_{k;j} - h_{jk;}^{\;\;\;\;\; i} )
\]
where the upper index $h^i_j$ is raised by $g$, and note that $(T_h)^i_{jk}$ is symmetric in $(j, k)$. The formula from Lemma~\ref{le:X} is the main motivation for defining the ODE system~\eqref{eq:ODE} below.
\begin{manuallemma}{\ref{le:X}}
Let $X$ be an $h$-Killing vector field. Then for any vector field $V$, we have 
\begin{align*}
	\nabla_V (\nabla X) = - R(X, V) + T_h (V) 
\end{align*}
where $R(X, V) := \nabla_X \nabla_V - \nabla_V \nabla_X - \nabla_{[X, V]}$. 
\end{manuallemma}

Let $p\in U$ and let $\Omega\subset M$ be 
a neighborhood of $p$ covered by the geodesic normal coordinate chart.  We shall extend $X$ to a unique $h$-Killing vector field in $\Omega$. For any point $q\in \Omega$, let  $\gamma(t)$ be the geodesic connecting $p$ and $q$.  We denote  $V= \gamma'(t)$. 

Consider the following inhomogeneous, linear ODE system for  a vector field $\hat X$ and a $(1,1)$-tensor $\omega$  along $\gamma(t)$:
\begin{align}\label{eq:ODE}
\begin{split}
	\nabla_{V} \hat X &= \omega (V)\\
	\nabla_V \omega&= -R(\hat X, V) + T_h (V).
\end{split}
\end{align}
Let $\hat X, \omega$ be the unique solution to \eqref{eq:ODE} with the initial conditions at $\gamma(0)=p$:
\begin{align*}
	\hat X(p) = X(p) \quad \mbox{ and } \quad  \omega(p)= (\nabla X )(p).
\end{align*}
On the other hand, by Lemma~\ref{le:X} $X, \nabla X$ also solve~\eqref{eq:ODE}  with the same initial conditions. By uniqueness of ODE we have $\hat X = X$ and  $\omega = \nabla X$ on the connected segment of $\gamma(t)\cap U$ containing the initial point $\gamma(0)$.

By varying the point $q$, the vector field $\hat{X}$ is defined everywhere in $\Omega$ and is smooth by smooth dependence of ODE. Moreover, since $(M,g)$ is analytic, its geodesics are analytic curves, and hence $X$ is analytic along $\gamma(t)$.  We summarize the above construction as the following lemma. 
\begin{lemma}\label{le:Xhat}
Let $(M, g)$ be a smooth manifold such that $\Int M$ is analytic. Let $h$ be an analytic symmetric $(0,2)$-tensor in $\Int M$ and smooth in $\overline M$, and $X$ be an $h$-Killing vector field in an open subset  $U\subset \Int M$. For $p\in U$, let $\Omega\subset M$ be a geodesic normal neighborhood of $p$. Then there is a smooth vector field $\hat X$ and a smooth $(1,1)$-tensor $\omega$ in $\Omega$ such that $\hat X=X, \omega = \nabla X$ in a neighborhood of~$p$ in $U$, and $(\hat X,  \omega)$ solves  \eqref{eq:ODE} along each $\gamma(t)$ and is analytic in $t$. 
\end{lemma}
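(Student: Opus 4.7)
The plan is to construct $(\hat X,\omega)$ by integrating the linear first-order ODE system \eqref{eq:ODE} out along each geodesic ray from $p$, and then assemble these solutions into a single pair of tensors on $\Omega$ via the exponential chart. Shrinking $\Omega$ if necessary, write $\Omega=\exp_p(V)$ where $V\subset T_pM$ is star-shaped about $0$ and $\exp_p\colon V\to\Omega$ is a diffeomorphism. For each $v\in V$, let $\gamma_v(t)=\exp_p(tv)$, defined for $t\in[0,1]$, and along this geodesic consider \eqref{eq:ODE} with $V$ replaced by $\gamma_v'(t)$, as a linear ODE system in the pair of unknowns $(\hat X_v(t),\omega_v(t))\in T_{\gamma_v(t)}M\oplus \mathrm{End}(T_{\gamma_v(t)}M)$. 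Prescribe the initial data
\[
\hat X_v(0)=X(p),\qquad \omega_v(0)=(\nabla X)(p).
\]
Linear ODE theory produces a unique solution on $[0,1]$, and I then \emph{define}
\[
\hat X(q):=\hat X_v(1),\qquad \omega(q):=\omega_v(1)\qquad\text{for }q=\exp_p(v)\in\Omega.
\]

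Next I verify that $(\hat X,\omega)$ coincides with $(X,\nabla X)$ on a neighborhood of $p$ inside $U$. By Lemma~\ref{le:X}, any $h$-Killing field $X$ satisfies $\nabla_V(\nabla X)=-R(X,V)+T_h(V,\cdot)$ for every $V$; in particular, restricting to $\gamma_v$ and setting $V=\gamma_v'(t)$ shows that $\bigl(X\!\circ\!\gamma_v,(\nabla X)\!\circ\!\gamma_v\bigr)$ is a solution of the very same ODE \eqref{eq:ODE} with the same initial data at $t=0$. Uniqueness for linear ODEs then forces $(\hat X_v,\omega_v)=(X\!\circ\!\gamma_v,(\nabla X)\!\circ\!\gamma_v)$ on the connected segment $\gamma_v([0,1])\cap U$ containing $p$. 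Varying $v$ shows $(\hat X,\omega)=(X,\nabla X)$ on a neighborhood of $p$ in $U$.

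For regularity, smoothness of $(\hat X,\omega)$ as fields on $\Omega$ follows from smooth dependence of ODE solutions on parameters: if we encode the data in the $\exp_p$-chart, the coefficients of the ODE (the curvature tensor $R$ and the tensor $T_h$, pulled back and evaluated at $\gamma_v(t)$) depend smoothly on $(v,t)$, hence so does the solution $(\hat X_v(t),\omega_v(t))$. Composing with the smooth inverse $\exp_p^{-1}$ yields smooth tensor fields on $\Omega$. For analyticity along a fixed $\gamma_v$, recall that $g$ is analytic on $\Int M$, so the geodesic equation is an analytic ODE and $\gamma_v(t)$ is analytic in $t$; likewise $R$ and $T_h$ are analytic functions of the base point since $g$ and $h$ are. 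Consequently the ODE \eqref{eq:ODE}, viewed as a linear system in $t$, has analytic coefficients, and standard analytic ODE theory gives that $(\hat X_v(t),\omega_v(t))$ is analytic in $t$.

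The only genuine subtlety is the smooth dependence step: one must set things up so that the ODE is really a system on a fixed finite-dimensional fiber (e.g. by trivializing $TM$ over $\Omega$ via parallel transport along radial geodesics, or by writing everything in the $\exp_p$-normal chart) before invoking smooth dependence on parameters. Once that bookkeeping is in place, the rest is routine linear ODE theory combined with Lemma~\ref{le:X} and the uniqueness of geodesics in a normal neighborhood.
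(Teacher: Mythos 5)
Your proposal is correct and follows essentially the same route as the paper: integrate the linear ODE system \eqref{eq:ODE} along radial geodesics from $p$ with initial data $(X(p),(\nabla X)(p))$, use Lemma~\ref{le:X} plus ODE uniqueness to identify the solution with $(X,\nabla X)$ near $p$, and invoke smooth dependence on parameters and analyticity of the coefficients for the regularity claims. Your added remark about trivializing the tangent bundle (or working in the $\exp_p$-chart) before applying smooth dependence is a reasonable bookkeeping refinement of the same argument.
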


After extending $X$ to $\hat X$ in $\Omega$ as above, we show that ${\hat X}$ is $h$-Killing in $\Omega$. As in Lemma \ref{le:V}, one may use Cauchy-Kovalevskaya Theorem to say that $\hat X$ also depends analytically on angular variables (not only on $t$) in $\Omega$ and hence $L_{\hat X} g=  h$ in $\Omega$ by analyticity. 
 Alternatively, we give another proof that is similar to Nomizu's original argument in the next proposition.

\begin{proposition}\label{pr:extension}
Let $\hat{X}, \omega$ be from Lemma~\ref{le:Xhat}. Then ${\hat X}$ is $h$-Killing in $\Omega$ and $\hat X = X$ everywhere in $U$. 
\end{proposition}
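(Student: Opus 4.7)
The plan is to follow Nomizu's strategy for Killing vector fields adapted to the inhomogeneous setting: verify the $h$-Killing property of the candidate extension $\hat X$ on $\Omega$ via linear ODEs along the radial geodesics emanating from $p$, and then match $\hat X$ with $X$ wherever both are defined by invoking the classical uniqueness of Killing fields with vanishing 1-jet. The argument splits into three steps.

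\emph{Step 1 (symmetric defect vanishes).} Along a radial geodesic $\gamma$ from $p$ with velocity $V=\gamma'$, consider the symmetric $(0,2)$-tensor
\begin{equation*}
A(Y,Z):=g(\omega Y,Z)+g(Y,\omega Z)-h(Y,Z).
\end{equation*}
Since $\omega(p)=\nabla X(p)$ and $X$ is $h$-Killing at $p$, one has $A(p)=L_Xg(p)-h(p)=0$. For $Y,Z$ parallel-transported along $\gamma$, a direct computation combines the second equation of \eqref{eq:ODE}, the antisymmetry $g(R(\hat X,V)Y,Z)=-g(Y,R(\hat X,V)Z)$, and the identity
\begin{equation*}
g(T_h(V)Y,Z)+g(Y,T_h(V)Z)=(\nabla_V h)(Y,Z),
\end{equation*}
which follows immediately from the definition of $T_h$ and the symmetry of $h$. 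The outcome is $\nabla_V A(Y,Z)=0$, so $A\equiv 0$ along $\gamma$, and varying the initial direction gives $A\equiv 0$ in $\Omega$.

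\emph{Step 2 ($\omega$ matches $\nabla\hat X$).} Introduce the $(1,1)$-tensor $B(Y):=\nabla_Y\hat X-\omega Y$ and the $(1,2)$-tensor $\tilde A(Y)(Z):=\nabla_Y\omega(Z)+R(\hat X,Y)Z-T_h(Y)Z$, which encodes the failure of the Lemma~\ref{le:X} identity away from the radial direction. At $p$ both vanish (the first from the initial conditions, the second from Lemma~\ref{le:X} applied to $X$), and along $\gamma$ one has $B(V)=0$ and $\tilde A(Y)(V)=\nabla_V B(Y)+B(\nabla_Y V)$ by construction. Applying the Ricci identity, the first Bianchi identity
\begin{equation*}
R(V,Y)\hat X+R(Y,\hat X)V+R(\hat X,V)Y=0,
\end{equation*}
and the symmetry $T_h(V)Y=T_h(Y)V$, one derives the covariant derivatives $\nabla_V B$ and $\nabla_V\tilde A$ as a coupled homogeneous linear ODE system for $(B,\tilde A)$ along $\gamma$ with zero initial data. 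Uniqueness of ODE solutions then forces $B\equiv 0$ and $\tilde A\equiv 0$ along $\gamma$, hence in $\Omega$. Combined with Step~1 this gives
\begin{equation*}
L_{\hat X}g(Y,Z)=g(\omega Y,Z)+g(Y,\omega Z)=h(Y,Z),
\end{equation*}
so $\hat X$ is $h$-Killing in $\Omega$.

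\emph{Step 3 (agreement with $X$).} On the open set $U\cap\Omega$ both $\hat X$ and $X$ are $h$-Killing, so their difference $Y:=\hat X-X$ satisfies $L_Yg=0$, i.e.\ is a genuine Killing field, and $Y(p)=0$, $\nabla Y(p)=0$ by the initial conditions defining $(\hat X,\omega)$. By Nomizu's classical uniqueness theorem \cite{Nomizu:1960} for Killing vector fields with vanishing $1$-jet at a point, $Y\equiv 0$ on the connected component of $U\cap\Omega$ containing $p$; a standard propagation argument through overlapping normal neighborhoods in the connected $U$ then yields $\hat X=X$ throughout $U$ (wherever $\hat X$ has been defined).

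\emph{Main obstacle.} The delicate step is Step~2: computing $\nabla_V B$ alone produces the relation $\tilde A(Y)(V)=\nabla_V B(Y)+B(\nabla_Y V)$, which is tautological and does not yield a closed ODE for $B$ by itself. The resolution is to treat $B$ and $\tilde A$ as coupled unknowns and to use the Bianchi identity together with the symmetry of $T_h$ to produce a closed homogeneous system; both quantities then vanish simultaneously by uniqueness of linear ODE solutions with zero initial data.
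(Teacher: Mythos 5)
Your Step 1 is correct (it is essentially the same computation the paper uses in Lemma~\ref{le:V} to show $\omega_{ij}+\omega_{ji}-h_{ij}$ is parallel along $\gamma$), and Step 3 is fine once Steps 1--2 are in place. The genuine gap is in Step 2: the pair $\big(B,\tilde A\big)$ does \emph{not} satisfy a closed homogeneous linear ODE system along $\gamma$. Differentiating $\tilde A$ along $V$ and using the Ricci and first Bianchi identities, one gets (as in the paper's system \eqref{eq:ODE2}, after subtracting $\nabla_V(T_h(Y))$)
\[
\nabla_V\big(\tilde A(Y)\big)= -R\big(B(Y),V\big)\;-\;R(\omega(Y),V)-R(Y,\omega(V))+R(V,Y)\,\omega+(\nabla_{\hat X}R)(V,Y)+\nabla_Y\big(T_h(V)\big)-\nabla_V\big(T_h(Y)\big),
\]
and the terms after $-R(B(Y),V)$ are zeroth-order sources that are not linear in $(B,\tilde A)$ and do not cancel via the Bianchi identity and the symmetry of $T_h$: in the case $h=0$ they are precisely (a contraction of) $L_{\hat X}\mathrm{Rm}$ written out, i.e.\ exactly the quantity whose vanishing is equivalent to $\hat X$ being an infinitesimal isometry, which is what you are trying to prove. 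There is also a structural reason your claimed resolution cannot work: nowhere in Steps 1--2 do you use analyticity of $g$ and $h$, so if a homogeneous closed system with zero data at $p$ existed, the same argument would extend a local Killing field ($h=0$) across any smooth normal neighborhood. That is false -- take $g$ flat near $p$ and without symmetries on an annulus inside $\Omega$; a rotation field near $p$ then admits no Killing extension to $\Omega$, yet $B$ and $\tilde A$ vanish identically near $p$.

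The paper's proof keeps the system inhomogeneous and instead exploits analyticity: along each (analytic) radial geodesic the sources and coefficients are analytic in $t$, so any solution of the linear ODE, in particular $\nabla_Y\hat X-\omega(Y)$, is analytic in $t$; since $\hat X=X$ and $\omega=\nabla X$ on an initial segment of $\gamma$ inside $U$ (this uses that $X$ is $h$-Killing on the open set $U$, not merely that the $1$-jets match at $p$), the defect vanishes on an open $t$-interval and hence identically. So the correct repair of your Step 2 is not to force homogeneity but to replace ``ODE uniqueness with zero initial data'' by ``analyticity in $t$ plus vanishing on an initial segment,'' which is exactly the paper's argument; your Step 1, by contrast, can stay as is (and is cleaner than the analyticity argument the paper gives for \eqref{eq:omega}). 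A minor further point: in Step 3 the conclusion should be $\hat X=X$ on $U\cap\Omega$ (where both are defined), and your propagation remark through overlapping neighborhoods is the right way to handle possible disconnectedness of $U\cap\Omega$.
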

\begin{proof}
We  first show that 
\begin{align}\label{eq:omega}
g(\omega(Y), Z)+ g(\omega(Z),  Y) = h(Y, Z) 
\end{align}
for any vectors $Y, Z$ at an arbitrary point $q\in\Omega$. We can extend $Y, Z$ analytically in $t$ along the geodesic $\gamma(t)$ from $p$ to $q$.  Thus, the left hand side of \eqref{eq:omega}  along $\gamma(t)$ is analytic. We also know that \eqref{eq:omega} holds on $\gamma(t)$ for $t$ sufficiently small because $\omega = \nabla X$ in a neighborhood of $p$ and $X$ is $h$-Killing. Thus, \eqref{eq:omega} holds along the whole path $\gamma(t)$ by analyticity, and in particular at $\gamma(t_0)$. 

Next, 
we \emph{claim} that given an arbitrary vector $Y$ at a point in $\Omega$, say $\gamma(t_0)$ for some geodesic $\gamma$ starting at $p$, if we extend $Y$  such that $[V, Y]=0$ along $\gamma$, then $\nabla_Y \hat X - \omega(Y )$ along~$\gamma(t)$ is analytic in $t$. (We remark that clearly $\nabla_V \hat X$ is already analytic along $\gamma$, so the main point in the following proof is to show that it holds for general $Y$.) Once the claim is proven, we get 
\begin{align}\label{eq:X}
\nabla \hat X = \omega \quad\mbox{ in } \Omega.
\end{align}

\begin{proof}[Proof of Claim]
Note that along $\gamma(t)$,  $[V, Y] = \nabla_V Y - \nabla_Y V=0$ becomes a first-order linear ODE for $Y$ with analytic coefficients along $\gamma(t)$, and thus $Y$ is analytic along $\gamma(t)$. We show that $\nabla_Y \hat X - \omega(Y )$, together with $\nabla_Y \omega - R(Y, \hat X)$,  solves the following ODE system:
\begin{align}\label{eq:ODE2}
\begin{split}
	\nabla_V \big(\nabla_Y \hat X - \omega(Y )\big) &= \big( \nabla_Y \omega - R(Y, \hat X)\big) V- T_h (V, Y)\\
	\nabla_V \left(\nabla_Y \omega - R(Y, \hat X) \right) &= - R\big(\nabla_Y \hat X - \omega (Y), V\big) \\
	&\quad -R(\omega(Y), V)  + \nabla_Y (T_h(V))+ R(V, Y) \omega \\
&\quad + (\nabla_{\hat X} R) (V, Y)- R(Y, \omega(V)).
\end{split}
\end{align}
Note that the inhomogeneous term $T_h (V, Y)$ and those inhomogeneous  terms in in the 3rd and 4th lines above are all analytic along $\gamma(t)$. Since any solutions to above  the linear ODE system~\eqref{eq:ODE2} with analytic coefficients and analytic inhomogeneous terms are analytic along $\gamma(t)$, in particular, $\nabla_Y \hat X - \omega(Y )$ is analytic along $\gamma(t)$. 
 
The computations are similar to \cite[Proof of Theorem 4]{Nomizu:1960}. We include the proof for completeness. For the first identity, 
\begin{align*}
	&\nabla_V \left(\nabla_Y \hat X - \omega(Y )\right)  \\
	&= \nabla_Y \nabla_V \hat X  + R(V, Y)\hat X - (\nabla_V \omega)(Y) - \omega(\nabla_V Y)\\
	&=\nabla_Y (\omega (V))   + R(V, Y) \hat X + R(\hat X, V) Y - T_h (V, Y)  - \omega(\nabla_V Y)\quad \mbox{(by \eqref{eq:ODE})}\\
	&=(\nabla_Y \omega )(V)  - R(Y, \hat X) V  - T_h (V, Y)\quad \mbox{(by Bianchi identity)}
\end{align*}
where we also use $[V, Y]=0$.

To prove the second identity, we compute each term below:
\begin{align*}
	\nabla_V \nabla_Y \omega &= \nabla_Y \nabla_V \omega + R(V, Y) \omega \\
	&=\nabla_Y \big(-R(\hat X, V) + T_h(V)\big)+ R(V, Y) \omega\\
	&= -(\nabla_Y R)(\hat X, V) - R(\nabla_Y \hat X, V) - R(\hat X, \nabla_Y V)  + \nabla_Y (T_h(V))+ R(V, Y) \omega\\
	\nabla_V (R(Y, \hat X)) &= (\nabla_V R)(Y, \hat X) + R(\nabla_V Y, \hat X) + R(Y, \nabla_V \hat X)\\
	&= (\nabla_V R)(Y, \hat X) - R(\hat X, \nabla_V Y) + R(Y, \omega(V)).
\end{align*} 
Subtracting the previous two identities and noting that the terms $ - R(\hat X, \nabla_Y V)$ and  $R(\hat X, \nabla_V Y)$ cancel out by $[V, Y]=0$, we derive  
\begin{align*}
&\nabla_V \nabla_Y \omega -\nabla_V \big(R(Y, \hat X)\big)\\
&=   -(\nabla_Y R)(\hat X, V) - R(\nabla_Y \hat X, V)  + \nabla_Y (T_h(V))+ R(V, Y) \omega\\
&\quad - (\nabla_V R)(Y, \hat X) - R(Y, \omega(V))\\
&= - R(\nabla_Y \hat X -\omega(Y), V)  -R(\omega(Y), V)  + \nabla_Y (T_h(V))+ R(V, Y) \omega \\
&\quad + (\nabla_{\hat X} R) (V, Y)- R(Y, \omega(V))\quad \mbox{(by Bianchi identity)}.
\end{align*}
Rearranging the terms gives the second identity in \eqref{eq:ODE2}.
\end{proof}

Lastly, \eqref{eq:omega} and \eqref{eq:X} together imply that $\hat X$ is $h$-Killing in $\Omega$. Since $U$ is connected and $X-\hat X$ is Killing in $U$ which is identically zero in an open subset, we have $X = \hat X$ in $U$. 
\end{proof}

We have shown how to extend the $h$-Killing vector field $X$ in a geodesic normal neighborhood. Using the assumption that $\pi_1(X, U)=0$, we show how to extend $X$ globally and complete the proof of Theorem~\ref{th:extension}. 

\begin{proof}[Proof of Theorem~\ref{th:extension}]
For any point $q$ in $M$, we let $\gamma$ be a path from  $p\in U$ to $q$. The path $\gamma(t)$ is covered by finitely many geodesic normal neighborhoods. We extend $X$ at $q$  along the path by Proposition~\ref{pr:extension}. For any other path $\tilde{\gamma}$ from $U$ to $q$ that is sufficiently close to $\gamma$, $\tilde \gamma$ is also covered by the same collection of neighborhoods, and thus the extension along $\tilde{\gamma}$ gives the same definition of $X$ at $q$.

To show that the definition of $X$ at $q$ doesn't depend on the paths from $U$ to~$q$, we use the assumption that $\pi_1(M, U)=0$. Since any path $\tilde \gamma $ from $U$ to $q$ is homotopic to $\gamma$  relative to $U$, there are finitely many paths from $U$ to $q$, say $\gamma_1=\gamma, \gamma_2,  \gamma_3, \dots, \gamma_k = \tilde{\gamma}$, such that each pair of consecutive paths, $\gamma_i$ and $\gamma_{i+1}$,  can be covered by the same collections of neighborhoods. Thus the extension of $X$ at $q$ is the same on each pair and thus on all those paths. That completes the proof. 
\end{proof}
\begin{appendix}

\section{Formulas of (linearized) geometric operators}\label{se:formula}

Given a Riemannian manifold  $(U, g)$, the {\it Bianchi operator} $\b_{ g}$  and its {\it adjoint operator} $\b_g ^*$ are defined by, for a symmetric $(0,2)$-tensor $h$ and a vector field $X$, 
\begin{align}
	\b_{g} h&= -\Div_{g} h + \tfrac{1}{2} d \mathrm{tr}_{g} h\label{eq:Bianchi}\\
	\b^*_g X&= \tfrac{1}{2} \big(L_X g - (\Div_g X )g \big).\label{eq:Bianchi*}
\end{align}
We write the Lie derivative of the Riemannian metric $g$ along a vector field $X$ by 
\begin{align}\label{eq:Lie}
	\DD_g X&= \tfrac{1}{2} L_X  g.
\end{align}
We record the following basic identities:
\begin{align}
	\beta_g \DD_g X &= -\tfrac{1}{2} \Delta_g X - \tfrac{1}{2} \Ric(X, \cdot) \label{eq:laplace-beta}\\
	\beta_g (fh)&= f \beta_g h - h(\nabla f, \cdot) + \tfrac{1}{2} (\tr_g h) df\quad \mbox{ for a scalar function $f$}.  \label{eq:product} 
\end{align}

We frequently use the linearization  of geometric quantities or operators. 
In a smooth manifold $U$, let $g(s)$ be a smooth family of Riemannian metrics with $g(0) =  g$ and $ g'(0)  = h$. 
We define the linearization of the Ricci tensor at $g$ by  $\Ric'|_g (h):=\left.\dt\right|_{s=0} \Ric_{g(s)}$. The other linearized quantities  are denoted in the similar fashion. For example, 
\begin{align*}
	(\nabla^2)'|_g(h) := \left.\dt\right|_{t=0} \nabla^2_{g(t)} \quad \mbox{ and } \quad \Delta'|_g(h) := \left.\dt\right|_{t=0} \Delta_{g(t)}.
\end{align*}
We often omit the subscripts $|_{g}$ when the context is clear.

Let $\Sigma\subset U$ be a hypersurface and $\nu$ be the unit normal to $\Sigma$. Consider a local frame $\{ e_0, e_1, \dots, e_{n-1}\}$ such that $e_0$ is the parallel extension of $\nu$ along itself near $\Sigma$. We list those linearized quantities in the local frame (where $f$ is an arbitrary scalar function), see \cite[Section 2.1]{An-Huang:2021}:
\begin{align}
\begin{split}\label{equation:Ricci}
	(\Ric'|_g(h))_{ij} &=-\tfrac{1}{2} g^{k\ell} h_{ij;k\ell} + \tfrac{1}{2} g^{k\ell} (h_{i k; \ell j} + h_{jk; \ell i} ) - \tfrac{1}{2} (\tr h)_{;ij}  \\
	&\quad + \tfrac{1}{2} (R_{i\ell} h^\ell_j + R_{j\ell} h^\ell_i )- R_{ik\ell j} h^{k\ell}
\end{split}\\
R'|_g(h) &=-\Delta (\tr h ) +\Div \Div h - h \cdot \Ric_g\notag\\
	\big((\nabla^2)'|_g(h)  f \big)_{ij}&= \tfrac{1}{2} g^{k\ell}f_{,\ell}  \big( h_{ij;k} - h_{jk;i} - h_{ik;j} \big)\label{eq:derivative}\\
	\big( \Delta'|_g(h) \big)f &= -  g^{ik}  g^{j\ell}f_{;ij}h_{k\ell} + g^{ij} \left(-(\Div_{g} h)_i + \tfrac{1}{2} (\tr_{g} h)_{;i}  \right) f_{,j}. \label{eq:laplace}
\end{align}

We write $\nu = \nu_g$ for short. We let $\omega(e_a) = h(\nu, e_a)$ be the one-form on the tangent bundle of $\Sigma$ and $\{\Sigma_t\}$ be the foliation by $g$-equidistant hypersurfaces to $\Sigma$.
For tangential directions $a, b, c \in \{ 1, \dots, n-1\}$ on $\Sigma$, we have 
\begin{align}
	\nu'|_g(h) &= - \tfrac{1}{2} h(\nu, \nu) \nu - g^{ab} \omega(e_a) e_b\label{equation:normal}\\
	A'|_g(h) & = \tfrac{1}{2} (L_{\nu} h)^\intercal - \tfrac{1}{2} L_\omega g^\intercal - \tfrac{1}{2} h(\nu, \nu) A_g\label{eq:sff}\\
	&= \tfrac{1}{2}( \nabla_{\nu} h)^\intercal+ A_g\circ h - \tfrac{1}{2} L_\omega g^\intercal- \tfrac{1}{2} h(\nu, \nu) A_g \notag \\
	H'|_g(h) &=\tfrac{1}{2} \nu(\tr h^\intercal) - \Div_\Sigma \omega - \tfrac{1}{2} h(\nu, \nu) H_g\notag
\end{align}
where $(A\circ h)_{ab} = \frac{1}{2} (A_{ac}h^c_b + A_{bc} h_a^c)$. 
In the special case that $h = L_X g$ along $\Sigma$ where the vector field $X = \eta \nu + X^\intercal$ for some tangential vector $X^\intercal$ to $\Sigma$, we get
\begin{align*}
	H'|_g(L_X g) &= - \Delta_\Sigma \eta - (|A|^2+\Ric(\nu, \nu))\eta + X^\intercal (H).
\end{align*}

We also have the following ``linearized'' Ricatti equation:
\begin{align}
	\begin{split}\label{eq:Ricatti}
		\nu (H'|_g(h))& =  - \tfrac{1}{2} R'(h) + \tfrac{1}{2} R'^\Sigma|_{g^\intercal} (h^\intercal) + A_g \cdot (A_g \circ h)  \\
		&\quad - A_g \cdot A'(h) -H_g H'(h) \\	
	&\quad + \tfrac{1}{4}\big( - R_g +R^\Sigma_g -  |A_g|^2 - H_{g}^2\big)h(\nu, \nu)\\
	&\quad 	- \tfrac{1}{2}\Delta_\Sigma h(\nu, \nu)  + g^{ab} \omega (e_a) e_b(H_g).
	\end{split}
\end{align}
In the third equation, $\tr h^\intercal$ denotes the tangential trace of $h$ on $\Sigma_t$, defined in a collar neighborhood of $\Sigma$. For the last equation,  the dot $\cdot$ means the $g$-inner product, $(A_g\circ h)_{ab} = \tfrac{1}{2} (A_{ac} h^c_{b}+A_{bc} h^c_{a})$, and   $R^\Sigma_g, \Delta_\Sigma$ are respectively the scalar curvature and the Laplace operator of  $(\Sigma, g^\intercal)$.

\section{Spacetime harmonic gauge and analyticity}

Let $(M, g)$ be a Riemannian manifold and let $u>0$ be a scalar function on $M$. We define the spacetime metric $\mathbf g$ on $\mathbf{N}:=\mathbb{R}\times M$ by (in either Riemannian or Lorentzian signature)
\begin{align}\label{eq:spacetime}
	\mathbf{g} = \pm u^2 dt^2 + g.
\end{align}
Recall that we say the pair $(g, u)$ is static vacuum if $S(g, u)=0$ as defined in \eqref{eq:static}. When $u>0$, the condition is equivalent to that the spacetime metric satisfies $\Ric_{\bf g}=0$. Thus, we may also refer such $\mathbf g$ as a static vacuum (spacetime) metric. 

\subsection{Harmonic gauge}
Fix a general background triple $(M, \bar g, \bar u)$ and thus the background spacetime metric $\bar {\mathbf g} = \pm \bar u^2 dt^2 + \bar g$ on~$\mathbf {N}$. Recall the Bianchi operator $\beta_{\bar {\mathbf{g}}} $ sends any symmetric $(0,2)$-tensor ${\bm k}$  on $\mathbf {N}$ to the covector $\beta_{\bar {\mathbf{g}}}{\bm k}$,  defined by 
\[
	\beta_{\bar {\mathbf{g}}} {\bm k}=  - \Div_{\bar {\mathbf{g}}} {\bm k}+ \tfrac{1}{2} \bm{d}  (\tr_{\bar {\mathbf{g}}}{\bm k}).
\]

The following proposition explains the motivation behind the definition of the static-harmonic gauge in Definition~\ref{de:gauge}. Note that this fact is not used anywhere else in the paper. 

\begin{proposition}\label{pr:harmonic}
For $\mathbf {g} $ taking the form \eqref{eq:spacetime}, 
\begin{align*}
	\beta_{\bar {\mathbf{g}}} \mathbf {g} =  \beta_{\bar g} g + \bar u^{-2} u du -\bar u^{-1} g(\nabla_{\bar g} \bar u, \cdot).
\end{align*}
That is, in the coordinate $t$ of $\mathbb{R}$ and a local coordinate chart  $\{  x_1,\dots, x_n\}$ of $M$
\begin{align*}
	\beta_{\bar {\mathbf{g}}} \mathbf {g} (\partial_t)&=0\\
	\beta_{\bar {\mathbf{g}}} \mathbf {g} (\partial_a ) &=\beta_{\bar g} g(\partial_a )  + \bar u^{-2} u \partial_a u -\bar u^{-1} g(\nabla_{\bar g}\bar  u, \partial_a )\qquad \mbox{ for $a=1,\dots, n$}.
\end{align*}

\end{proposition}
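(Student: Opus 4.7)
The plan is a direct coordinate computation that exploits two structural features of the warped product: $\bar{\mathbf{g}}$ is block-diagonal in the coordinates $(t, x^1,\ldots,x^n)$ on $\mathbb{R}\times M$, and every quantity in the problem is independent of $t$. Starting from $\beta_{\bar{\mathbf{g}}}\mathbf{g} = -\Div_{\bar{\mathbf{g}}}\mathbf{g} + \tfrac{1}{2}\,d(\tr_{\bar{\mathbf{g}}}\mathbf{g})$, I would first record the Christoffel symbols of $\bar{\mathbf{g}}$: in these coordinates the only nonzero components are $\Gamma^t_{ta} = \bar u^{-1}\partial_a\bar u$, $\Gamma^a_{tt} = \mp\bar u\,\bar g^{ab}\partial_b\bar u$, and $\Gamma^a_{bc}$ equal to the Christoffel symbols of $\bar g$. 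The trace collapses to $\tr_{\bar{\mathbf{g}}}\mathbf{g} = \bar u^{-2} u^2 + \tr_{\bar g} g$, since the two $\pm$'s in $(\pm\bar u^{-2})(\pm u^2)$ combine to $+$.

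The $\partial_t$-component of $\beta_{\bar{\mathbf{g}}}\mathbf{g}$ is immediate: $d(\tr_{\bar{\mathbf{g}}}\mathbf{g})(\partial_t)=0$ by $t$-independence, and each of the three terms in $(\Div_{\bar{\mathbf{g}}}\mathbf{g})_t$ vanishes because $\mathbf{g}_{ta}=0=\bar{\mathbf{g}}^{ta}$ and $\partial_t$ annihilates everything. (Alternatively, the isometry $t\mapsto -t$ of $\bar{\mathbf{g}}$ fixes $\mathbf{g}$ and so forces the $dt$-component to vanish.) For the $\partial_c$-component, I organize the divergence sum $\bar{\mathbf{g}}^{\mu\alpha}(\partial_\mu\mathbf{g}_{\alpha c} - \Gamma^\sigma_{\mu\alpha}\mathbf{g}_{\sigma c} - \Gamma^\sigma_{\mu c}\mathbf{g}_{\alpha\sigma})$ by index type. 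The purely spatial part (all four indices in $\{1,\dots,n\}$) combines with $\tfrac{1}{2}\partial_c\tr_{\bar g} g$ to reproduce $\beta_{\bar g} g(\partial_c)$, since the relevant Christoffels and inverse-metric components agree with those of $\bar g$. The only remaining contributions come from the $tt$-sector: the terms $-\bar{\mathbf{g}}^{tt}\Gamma^a_{tt} g_{ac}$ and $-\bar{\mathbf{g}}^{tt}\Gamma^t_{tc}\mathbf{g}_{tt}$, together with the extra trace piece $\tfrac{1}{2}\partial_c(\bar u^{-2} u^2) = -\bar u^{-3} u^2 \partial_c\bar u + \bar u^{-2} u\,\partial_c u$. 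Tracking signs through $\beta = -\Div + \tfrac{1}{2}\,d\tr$, the two $\bar u^{-3} u^2\partial_c\bar u$ contributions cancel in pairs, leaving precisely $\bar u^{-2} u\,\partial_c u - \bar u^{-1} g(\nabla_{\bar g}\bar u, \partial_c)$, as claimed.

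The only obstacle is careful bookkeeping — tracking the signs from the $\pm$ in the spacetime signature (they conveniently collapse to $+$ in the trace and to $-$ in the combination $\bar{\mathbf{g}}^{tt}\Gamma^a_{tt}$) and ensuring each of the three contributions in the divergence formula is accounted for. There is no conceptual difficulty, since the $t$-independence of every quantity eliminates most otherwise messy terms at sight.
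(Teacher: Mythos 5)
Your proposal is correct and follows essentially the same route as the paper: a direct computation of $\Div_{\bar {\mathbf{g}}} \mathbf{g}$ and $\tr_{\bar {\mathbf{g}}} \mathbf{g}$ from the warped-product connection (your Christoffel symbols are exactly the paper's covariant derivative formulas), with the same cancellation of the $\bar u^{-3}u^2\,d\bar u$ terms yielding the stated identity. The extra remark that the $t\mapsto -t$ isometry forces the $dt$-component to vanish is a pleasant shortcut but does not change the argument.
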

\begin{proof}
Let $\bm{\nabla}, \nabla$ be the covariant derivatives of $\bar{\mathbf g}, \bar g$, respectively.  We have
\begin{align*}
	\bm{\nabla}_{\partial t} \partial_t &=  \mp \bar u \nabla  \bar u\\
	\bm{\nabla}_{\partial a} \partial_t = \bm{\nabla}_{\partial t} \partial_a&=  \bar u^{-1}  {\partial_a}  \bar u\, \partial_t\\
		\bm{\nabla}_{\partial a} \partial_b &= \nabla_{\partial_a} \partial_b.
\end{align*}
Then we compute $\Div_{\bar {\mathbf{g}}} {\mathbf g}$:
\begin{align*}
	(\Div_{\bar {\mathbf g}} {\mathbf g})(\partial_t ) &= 0\\
	(\Div_{\bar {\mathbf g}}  {\mathbf g})(\partial_a ) &= \bar u ^{-1}  g(\nabla \bar u, \partial_a) - \bar u^{-3} u^2 \partial_a \bar u + (\Div_{\bar g} g)(\partial_a).
\end{align*}
Next we compute the trace term:
\begin{align*}
	\tr_{\bar {\mathbf{g}}} \mathbf g &= \bar u^{-2} u^2 + \tr_{\bar g} g\\
	\bm{d} (\tr_{\bar {\mathbf{g}}} \mathbf g ) &=-2 \bar u^{-3} u^2d\bar u + 2\bar u^{-2} u du + d (\tr_{\bar g} g). 
\end{align*}
Combining the above identities give
\begin{align*}
	\beta_{\bar {\mathbf{g}}} \mathbf {g} &= -\Div_{\bar {\mathbf{g}}} \mathbf{g} + \tfrac{1}{2} \bm d (\tr_{\bar {\mathbf{g}}} \mathbf{g} )\\
	&=\beta_{\bar g} g -\bar u ^{-1}  g( \nabla \bar u, \cdot) + \bar u^{-3} u^2 d \bar u  - \bar u^{-3} u^2 d\bar u + \bar u^{-2} u du \\
	&=\beta_{\bar g} g -\bar u ^{-1}  g( \nabla \bar u, \cdot)  + \bar u^{-2} u du.
\end{align*}
\end{proof}

If $M$ has nonempty boundary $\partial M$, then on the boundary $\partial \mathbf N := \mathbb{R}\times \partial M$, the Cauchy boundary data $({\mathbf g}|_{\partial \mathbf N}, A_{\bf g})$ of $\partial \bf N \subset (\mathbf N, \mathbf g)$ can be expressed in terms of $ u, \nu( u)$ and  $(g^\intercal, A_g)$ of $\partial M \subset (M, g)$. 

\begin{proposition}
On the boundary $\partial \bf N$, we have 
\begin{align*}
	{\bf g} |_{\partial  \bf N} &=  \pm u^2 dt^2  + g^\intercal\\
	A_{\bf g} &= \pm u \nu ( u) dt^2 + A_g.
\end{align*}
Consequently, the corresponding linearizations at $\bar {\bf g}$ along the deformation ${\bf h}$, which is generated by an infinitesimal deformation $(h,v)$ at $(\bar g, \bar u)$, are given by 
\begin{align*}
	{\bf h} |_{\partial  \bf N} &=  \pm 2 \bar u v dt^2  + h^\intercal\\
	A'|_{\bar {\bf g}}  (\bf h)&= \pm \big( v \nu (\bar u) + \bar u (\nu(u))' \big) dt^2 + A'|_g(h).
\end{align*}
\end{proposition}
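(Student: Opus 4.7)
\medskip

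\textbf{Proof proposal.} The statement contains two assertions, each at the unlinearized and linearized level, and both are essentially direct computations using the warped-product structure $\mathbf g=\pm u^2 dt^2+g$ on $\mathbf N=\mathbb R\times M$. I would proceed as follows.

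First, for the induced metric: the boundary $\partial\mathbf N=\mathbb R\times\partial M$ has tangent space spanned by $\partial_t$ together with vectors tangent to $\partial M$, so the restriction $\mathbf g|_{\partial\mathbf N}$ is simply $\pm u^2 dt^2+g^\intercal$ because $\partial_t$ is $\mathbf g$-orthogonal to every vector tangent to $M$ and $g$ restricts to $g^\intercal$ on $T(\partial M)$. The unit $\mathbf g$-normal to $\partial\mathbf N$ coincides with the unit $g$-normal $\nu$ of $\partial M$ in $M$, for the same orthogonality reason.

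Second, for the second fundamental form $A_{\mathbf g}$, I will compute in the basis $\{\partial_t,e_1,\dots,e_{n-1}\}$ of $T(\partial\mathbf N)$, where $\{e_a\}$ is a local frame tangent to $\partial M$. Using the Christoffel identities already established for $\bar{\mathbf g}$ in the proof of Proposition~\ref{pr:harmonic} (which hold verbatim for general $(g,u)$ with $u$ depending only on $M$):
\begin{align*}
\bm\nabla_{\partial_t}\partial_t&=\mp u\nabla_g u,\qquad
\bm\nabla_{\partial_t}\partial_a=u^{-1}(\partial_a u)\,\partial_t,\qquad
\bm\nabla_{\partial_a}\partial_b=\nabla_{\partial_a}\partial_b.
\end{align*}
From these and the sign convention $A_{\mathbf g}(\mathbf X,\mathbf Y)=\mathbf g(\bm\nabla_{\mathbf X}\mathbf Y,\nu)$, one reads off $A_{\mathbf g}(\partial_t,\partial_t)=\pm u\,\nu(u)$, $A_{\mathbf g}(\partial_t,e_a)=0$ (since $\bm\nabla_{\partial_t}e_a$ is proportional to $\partial_t\perp\nu$), and $A_{\mathbf g}(e_a,e_b)=A_g(e_a,e_b)$. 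This gives the claimed block-diagonal expression $A_{\mathbf g}=\pm u\nu(u)\,dt^2+A_g$.

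Third, for the linearizations, let $(g(s),u(s))$ be a smooth family with $(g(0),u(0))=(\bar g,\bar u)$ and $(g'(0),u'(0))=(h,v)$, and set $\mathbf g(s)=\pm u(s)^2 dt^2+g(s)$. Differentiating $\mathbf g(s)=\pm u(s)^2 dt^2+g(s)$ and restricting to $\partial\mathbf N$ immediately gives $\mathbf h|_{\partial\mathbf N}=\pm 2\bar u v\,dt^2+h^\intercal$. Differentiating the boundary formula $A_{\mathbf g(s)}=\pm u(s)\nu_{g(s)}(u(s))\,dt^2+A_{g(s)}$ at $s=0$ and using the notation $(\nu(u))'=\nu'(h)(\bar u)+\nu(v)$ from Section~\ref{sec:relaxed} yields
\[
A'|_{\bar{\mathbf g}}(\mathbf h)=\pm\big(v\,\nu(\bar u)+\bar u\,(\nu(u))'\big)\,dt^2+A'|_{\bar g}(h),
\]
as claimed.

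The entire proof is a sign-and-convention bookkeeping exercise; the only genuine point to verify is the vanishing of the mixed component $A_{\mathbf g}(\partial_t,e_a)$, which reduces to the identity $\bm\nabla_{\partial_t}e_a\in\mathrm{span}(\partial_t)$ coming from the warped-product Christoffel symbols. I anticipate no obstacle beyond choosing a consistent sign convention for $A$ compatible with the earlier sections of the paper.
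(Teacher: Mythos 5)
Your proposal is correct and follows essentially the same route as the paper: compute $A_{\mathbf g}$ in the frame $\{\partial_t, e_1,\dots,e_{n-1}\}$ using the warped-product covariant-derivative formulas from the proof of Proposition~\ref{pr:harmonic}, note the mixed components vanish, and differentiate the resulting boundary formulas to get the linearized statement. One small caveat on the point you yourself flag: the convention you state, $A_{\mathbf g}(\mathbf X,\mathbf Y)=\mathbf g(\bm{\nabla}_{\mathbf X}\mathbf Y,\nu)$, is the opposite of the paper's $A_g=(\nabla\nu)^\intercal$ (equivalently $A(X,Y)=-\mathbf g(\nu,\bm{\nabla}_X Y)$), so as literally written your $dt^2$ component would come out as $\mp u\,\nu(u)$; with the paper's convention the computation yields $\pm u\,\nu(u)$ exactly as claimed.
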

\begin{proof}
The identity for the induced metric $\bf g$ is obvious. For $A_{\bf g}$, we compute  in local frame $\{ \partial_t, \nu, e_1, \dots, e_{n-1}\}$ where  $\nu$ is the unit normal to $\partial  \bf N$ (which coincides with the unit normal for $\partial M$ and $e_1, \dots, e_{n-1}$ are tangential to $\partial M$. Then 
\begin{align*}
	A_{\bf g} (\partial_t, \partial_t) &= - {\bf g}( \nu, {\bm \nabla}_{\partial_t} \partial_t ) = \pm u \nu(u)\\
	 A_{\bf g} (e_a, e_b) &=  - {\bf g}( \nu, {\bm \nabla}_{e_a} e_b) =  A_g (e_a, e_b) \quad \mbox{ for } a, b = 1, \dots, n-1
\end{align*}
and all other components of $A_{\bf g}$ are zero. 
\end{proof}
\subsection{Analyticity}\label{se:analytic}

 We say a scalar function $f$ is (real) analytic in the coordinate chart $\{ x_1,\dots, x_n \}$ on  a manifold $M$ if for each $p\in M$, there is a neighborhood $U$ of $p$  such that
\[
	f(x) = \sum_{|I|=0, 1, \dots, } \frac{1}{|I|!} \partial^I f(p) (x-p)^I \quad \mbox{ for all } x\in U
\]
where $I$ is a multi-index. 

A tensor $h$ is said to be analytic in the coordinate chart $\{ x_1,\dots, x_n \}$ if all of its components are analytic. 
A Riemannian manifold $(M,g)$ is called analytic if it can be covered by coordinate charts $\{U_i\}$ where $g$ is analytic in each chart; and similarly, a hypersurface embedded in $(M,g)$ is called analytic if it is analytic in each $U_i$.

A classical result of M\"uller zum Hagen~\cite{Muller-zum-Hagen:1970} says that if  the spacetime $(\bf N, \bar {\bf g})$ is static vacuum, then  $\bar {\bf g}
$ is analytic in harmonic coordinates. Below, we state and prove a version of the corresponding result for the time-slice $(M, \bar g, \bar u)$. 

Let $(M, \bar g)$ be a Riemannian manifold with a scalar function $\bar u>0$  on $M$.  A coordinate chart $(x_1,\dots, x_n)$ on  $(M, \bar g, \bar u)$ is called \emph{static-harmonic} if 
\[
	\Delta x_k + \bar u^{-1} \nabla \bar u \cdot \nabla x_k=0 \mbox{ for all } k =1, \dots, n.
\]
Here and  below, the covariant derivatives and curvatures are all with respect to $\bar g$. In coordinates $\{ x_1,\dots, x_n\}$ we denote the Christoffel symbols $\Gamma^k:=\bar g^{ij} \Gamma^k_{ij} $ and compute 
\[
	\Delta x_k + \bar u^{-1} \nabla \bar u \cdot \nabla x_k = -\Gamma^k + \bar u^{-1} \bar g^{ik} \frac{\partial \bar u}{\partial x_i}.
\]
Therefore, a local coordinate chart $\{ x_1, \dots, x_n\}$ is static-harmonic if and only if 
\[
-\Gamma^k+ \bar u^{-1} \bar g^{ik} \frac{\partial \bar u}{\partial x_i} =0.
\]

\begin{theorem}\label{th:analytic}
Let $(M, \bar g, \bar u)$ be a static vacuum triple with $\bar u>0$. Then $\bar g$ and  $\bar u$ are analytic in  static-harmonic coordinates in  $\Int M$. 
\end{theorem}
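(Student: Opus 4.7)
The plan is to reduce the analyticity statement for the triple $(M,\bar g,\bar u)$ to M\"uller zum Hagen's theorem for Ricci-flat metrics, applied to the associated static spacetime. First, I would form the warped-product spacetime $\bar {\mathbf g}=\pm \bar u^2\, dt^2+\bar g$ on $\mathbf N=\mathbb{R}\times M$ as in \eqref{eq:spacetime}. Because $(\bar g,\bar u)$ is static vacuum with $\bar u>0$, it is a standard fact that $\Ric_{\bar {\mathbf g}}=0$. M\"uller zum Hagen's theorem (\cite{Muller-zum-Hagen:1970}) then asserts that $\bar{\mathbf g}$ is real analytic in any harmonic coordinate system on $\mathbf N$, so the task reduces to finding such a chart that is compatible with the product structure.

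The natural candidate is a chart of the form $(t,x_1,\dots,x_n)$ where $\{x_k\}$ is a coordinate chart on $M$ that I shall choose to be static-harmonic. I would verify that in such a chart the coordinate functions are spacetime-harmonic. The component $t$ is trivially $\bar {\mathbf g}$-harmonic because $\bar{\mathbf g}$ is independent of $t$ and $\bar{\mathbf g}^{tj}=0$ for $j\ne t$, so
\[
\Box_{\bar{\mathbf g}}t=\frac{1}{\sqrt{|\det\bar{\mathbf g}|}}\,\partial_t\!\left(\sqrt{|\det\bar{\mathbf g}|}\,\bar{\mathbf g}^{tt}\right)=0.
\]
For a spatial coordinate $x_k$, using $\sqrt{|\det\bar{\mathbf g}|}=\bar u\sqrt{|\det\bar g|}$ and $\bar{\mathbf g}^{ij}=\bar g^{ij}$, a direct computation gives
\[
\Box_{\bar{\mathbf g}}x_k=\frac{1}{\bar u\sqrt{|\det\bar g|}}\,\partial_l\!\left(\bar u\sqrt{|\det\bar g|}\,\bar g^{lk}\right)=\Delta_{\bar g} x_k+\bar u^{-1}\bar g^{lk}\,\partial_l\bar u,
\]
which vanishes precisely when $\{x_k\}$ is static-harmonic in the sense defined before the theorem. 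Therefore any static-harmonic chart $\{x_k\}$ on $M$, extended by $t$, yields a harmonic chart on $(\mathbf N,\bar{\mathbf g})$.

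Applying M\"uller zum Hagen's theorem in this chart then shows that each component $\bar{\mathbf g}_{AB}$ is a real-analytic function of $(t,x_1,\dots,x_n)$. Since the spacetime metric is independent of $t$, its components $\bar g_{ij}=\bar{\mathbf g}_{ij}$ and $\pm\bar u^2=\bar{\mathbf g}_{tt}$ are in fact analytic functions of $(x_1,\dots,x_n)$ alone. Because $\bar u>0$, taking the positive square root shows that $\bar u$ itself is analytic, completing the proof. The main technical point is verifying the identification of harmonic and static-harmonic coordinates described above; this is a short computation, and everything else is either the product structure of $\bar{\mathbf g}$ or the invocation of the classical M\"uller zum Hagen regularity result.
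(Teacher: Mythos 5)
Your proposal is correct in substance but takes a genuinely different route from the paper. The paper works entirely on the time slice: it first constructs a static-harmonic chart near any interior point by solving a linear elliptic equation, and then shows that in such a chart the static vacuum system \eqref{eq:static-A} becomes a quasilinear \emph{elliptic} system for $(\bar g_{ij},\bar u)$ --- the second-order terms coming from $\partial\Gamma^r$ in the Ricci formula cancel against $\bar u^{-1}\nabla^2\bar u$ precisely because of the gauge --- so analyticity follows from analytic regularity for elliptic systems. You instead pass to the warped product $\bar{\mathbf g}$ of \eqref{eq:spacetime}, verify (correctly, by the divergence-form computation) that $(t,x_1,\dots,x_n)$ with $\{x_k\}$ static-harmonic is a harmonic chart for $\bar{\mathbf g}$, quote analyticity of the Ricci-flat metric in harmonic coordinates, and use $t$-independence to conclude that $\bar g_{ij}$ and $\bar u^2$, hence $\bar u$, are analytic in $x$. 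Your route is shorter and avoids redoing the cancellation computation; the paper's route is self-contained on the slice, which is the point of the appendix, since M\"uller zum Hagen's theorem is itself proved by essentially that slice argument.

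Two points in your write-up need care. First, the step ``Ricci-flat plus harmonic coordinates implies analytic'' is an \emph{elliptic} statement: it is justified for the Riemannian choice of sign in \eqref{eq:spacetime} (this is the result of \cite{DeTurck-Kazdan:1981} that the paper cites in the remark after Theorem~\ref{th:analytic}), but with the Lorentzian sign the Einstein equations in harmonic gauge are hyperbolic, and Ricci-flatness alone does not give analyticity in an arbitrary harmonic chart of $\mathbf N$; M\"uller zum Hagen's theorem uses staticity essentially, via adapted coordinates. So you should fix the $+$ sign (or invoke the static result in its adapted form) rather than assert analyticity in ``any'' harmonic coordinate system. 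Second, the statement presupposes that static-harmonic charts exist near every interior point; this is one line of linear elliptic theory, carried out explicitly at the start of the paper's proof, and should be included.
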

\begin{remark}
As a direct consequence of \cite[Theorem 2.1]{DeTurck-Kazdan:1981}, $\bar g$ and $\bar u$ are analytic in harmonic coordinates and in geodesic normal coordinates  in $\Int M$. 
\end{remark}
\begin{proof}
Let $\{ y_1, \dots, y_n\}$ be an arbitrary local coordinate chart about the point $p\in \Int M$. By standard elliptic theory~\cite[p. 228]{Bers-John-Schechter:1979}, in a neighborhood of $p$ there are solutions $x_1, \dots, x_n$ of 
\begin{align*}
	&\Delta x_j - \bar u^{-1} \nabla \bar u \cdot \nabla x_j=0  \\
	&x_j (p)=0 \; \mbox{ and }\; \frac{\partial x_j}{\partial y_i } (p) = \delta_{ij}.
\end{align*}
Those functions $\{ x_1,\dots, x_n\}$ are the desired static-harmonic coordinates. 

The static vacuum pair satisfies 
\begin{align}\label{eq:static-A}
\begin{split}
	-  \Ric+\bar u^{-1} \nabla^2 \bar u&=0\\
	\Delta \bar u&=0.
\end{split}
\end{align}
Recall the well-known formula:
\[
	\Ric_{ij} =  -\frac{1}{2} \bar g^{rs} \frac{\partial^2 \bar g_{ij}}{\partial x_r \partial x_s} + \frac{1}{2} \left(\bar g_{ri} \frac{\partial \Gamma^r}{\partial x_j} + \bar g_{rj} \frac{\partial \Gamma^r}{\partial x_i } \right)+ \mathcal O(\bar g,\partial \bar g).
\]
where $ \mathcal O(\bar g,\partial \bar g)$ denotes the terms involving at most one derivative of the metric $\bar g$. In particular, in the static-harmonic coordinates:
\begin{align*}
	&-  \Ric_{ij}+\bar u^{-1} \bar u_{;ij} \\
	&= \frac{1}{2} \bar g^{rs} \frac{\partial^2 \bar g_{ij}}{\partial x_i \partial x_j} - \frac{1}{2} \left( \bar g_{ri} \frac{\partial }{\partial x_j} \left(\bar u^{-1} \bar g^{\ell r} \frac{\partial \bar u}{\partial x_\ell} \right) + \bar g_{rj} \frac{\partial }{\partial x_i} \left(\bar u^{-1} \bar g^{\ell r} \frac{\partial \bar u}{\partial x_\ell} \right) \right) \\
	&\quad + \bar u^{-1}  \bar u_{;ij} +  \mathcal O(\bar g,\partial \bar g)\\
	&= \frac{1}{2} \bar g^{rs} \frac{\partial^2\bar  g_{ij}}{\partial x_i \partial x_j} +  \mathcal O( \bar g,\partial \bar g, \bar u, \partial \bar u).
\end{align*}
Thus, \eqref{eq:static-A} is a quasi-linear elliptic system in static-harmonic coordinates, so the solutions $\bar g_{ij}, \bar u$ are analytic in static-harmonic coordinates. 
\end{proof}

\end{appendix}

\section*{Data Availability Statement}
All data generated or analysed during this study are included in this published article.



\bibliographystyle{amsplain}
\bibliography{../2020}
\end{document}